    \newtheorem{definition}{Definition}
    \newtheorem{lemma}[definition]{Lemma}
    \newtheorem{theorem}[definition]{Theorem}
    \newtheorem{proposition}[definition]{Proposition}
    \newtheorem{corollary}[definition]{Corollary}
    \newtheorem{remark}[definition]{Remark}
    \theoremstyle{remark}
    \def\thm@space@setup{\thm@preskip=0.5cm   \thm@postskip=0.5cm}
\newcommand{\N}{\mathbb{N}}
\newcommand{\Z}{\mathbb{Z}}
\newcommand{\Q}{\mathbb{Q}}
\newcommand{\R}{\mathbb{R}}
\newcommand{\E}{\mathrm{e}}
\newcommand{\I}{\mathrm{i}}
\newcommand{\esper}{\mathbb{E}}
\newcommand{\proba}{\mathbb{P}}
\newcommand{\eps}{\varepsilon}
\renewcommand{\Re}{\mathfrak R}
\renewcommand{\Im}{\mathfrak{I}}
\newcommand{\gz}{\gamma_Z}
\newcommand{\gw}{\gamma_W}
\newcommand{\gzn}{\gamma_Z^{\text{new}}}
\newcommand{\gwn}{\gamma_W^{\text{new}}}
\newcommand{\gwno}{\gamma_{W,1}^{\text{new}}}
\newcommand{\gwnt}{\gamma_{W,2}^{\text{new}}}
\newcommand{\gwnr}{\gamma_W^{\text{new,right}}}
\newcommand{\gwnl}{\gamma_W^{\text{new,left}}}
\newcommand{\bx}{\overline x}
\def\One{\bm{1}}
\def\la{\lambda}
\def\a{\alpha}
\def\be{\beta}
\newcommand{\hh}{h^{(x_1,t_1)}_{(x_2,t_2)}}
\def\DS{D_S}
\DeclareMathOperator\Arg{Arg}
\DeclareMathOperator{\Disc}{Disc}
\DeclareMathOperator{\Int}{Int}
\DeclareMathOperator{\LHS}{LHS}
\DeclareMathOperator{\RHS}{RHS}
\setlist[enumerate]{itemsep=10pt,topsep=10pt}
\setlist[itemize]{itemsep=5pt,topsep=5pt}
\title[DPP and random Young tableaux]
{A determinantal point process approach to scaling \\ and local limits
of random Young tableaux}
\author{Jacopo Borga}
	\address[JB]{Stanford University, Department of Mathematics, 50 Jane Stanford Way, Building 380, Stanford, CA 94305-2125, USA}
	\email{jborga@stanford.edu}
\author{Cédric Boutillier}
	\address[CB]{LPSM, Sorbonne Université, Case courrier 158, 4 Place Jussieu, 75252 Paris Cedex 05, France}
	\email{cedric.boutillier@sorbonne-universite.fr}
\author{Valentin Féray}
	\address[VF]{Université de Lorraine, CNRS, IECL, F-54000 Nancy, France}
	\email{valentin.feray@univ-lorraine.fr}
\author{Pierre-Loïc Méliot}
 	\address[PLM]{Université Paris-Saclay, Faculté des Sciences d’Orsay, Institut de mathématiques d'Orsay, Bâtiment 307, F-91405 Orsay, France.}
 	\email{pierre-loic.meliot@universite-paris-saclay.fr}
\begin{document}

\begin{abstract}
  We obtain scaling and local limit results for large random Young tableaux of fixed shape $\lambda^0$ via the asymptotic analysis of a determinantal point process due to Gorin and Rahman (2019). More precisely, we prove: \begin{itemize}
  	\item an explicit description of the limiting surface of a uniform random Young tableau of shape $\lambda^0$, based on solving a complex-valued polynomial equation; 
  	\item a simple criteria to determine if the limiting surface is continuous in the whole domain;
  	\item and a local limit result in the bulk of a random Poissonized Young tableau of shape $\lambda^0$.
  \end{itemize} 
  
  Our results have several consequences, for instance: they lead to explicit formulas for the limiting surface of $L$-shaped tableaux, generalizing the results of Pittel and Romik (2007) for rectangular shapes; they imply that the limiting surface for $L$-shaped tableaux is discontinuous for almost-every $L$-shape; and they give a new one-parameter family of infinite random Young tableaux, constructed from the so-called \emph{random infinite bead process}.
\end{abstract}
\maketitle

\thispagestyle{empty}

\vspace{-1.1cm}

\tableofcontents

\section{Introduction}

\subsection{Overview}
Random Young diagrams form a classical theme in probability theory, starting with the work of
Logan--Shepp and Vershik--Kerov on the Plancherel measure \cite{LoganShepp1977,VershikKerov1977},
motivated by Ulam's problem on the typical length of the longest increasing subsequence in a uniform random permutation.
The topic also has connections with random matrix theory and particle systems,
and has known an increase of interest
after the discovery of an underlying determinantal point process
for a Poissonized version of the Plancherel measure \cite{BorodinOkounkovOlshanski2000}.
It would be vain to do a complete review of the related literature,
we only refer to \cite{RomikLIS,HoraBookYoung} for books on the topic. We also refer the reader to \cref{sect:yt_pyt_bc} for precise definitions of the objects mentioned in this section.

In comparison, the random Young tableaux, which are in essence dynamic versions of random diagrams, have a shorter history.
Motivations to study random Young tableaux range from asymptotic representation theory
to connections with other models of combinatorial probability, such as random permutations
with short monotone subsequences \cite{romik2006RandomES} 
or most notably random sorting networks; see \cite{angel2007sorting} and many later papers.

As in most of the literature, we are interested in the simple model where we fix a shape $\la^0$ (or rather a sequence of growing shapes) and consider a uniform random tableau $T$ of shape $\la^0$. In \cite{pittel2007young}, Pittel and Romik derived a limiting surface result for uniform
random Young tableaux of {rectangular} shapes, based on the hook formula \cite{HookLengthFormula1954} and counting arguments.
An earlier result of Biane in asymptotic representation theory \cite{Biane2003}
implies in fact, the existence of such limiting surfaces for any underlying shape. However, unlike in the Pittel--Romik paper, explicit computations are usually intractable: they involve the Markov--Krein correspondence and the free compression of probability measures, the latter being rarely an explicit computation.
More recently, entropy optimization methods have been applied to prove the existence of limiting surfaces,
extending the result to skew shapes \cite{sun2018dimer} (see also \cite{gordenko2020limit}). {These techniques lead to some natural gradient variational problems in $\R^2$ whose solutions are explicitly parameterized by $\kappa$-harmonic functions, as show in \cite{KenyonPrause2022variational}.}

Recently, in \cite{GR19}, a determinantal point process structure was discovered
for a Poissonized version of random Young tableaux.
This determinantal structure was used for a specific problem motivated by the aforementioned sorting networks, namely describing
the local limit of uniform tableaux of staircase shape around their outer diagonal \cite{GR19,gorin2022random}.

The goal of the current paper is to exploit this determinantal point process structure in order to get limiting results
for a large family of shapes.
Namely, we consider shapes obtained as dilatations of any given
Young diagram $\lambda^0$.
Here is an informal description of our results.
\begin{itemize}
  \item We obtain a new description of the limiting surface corresponding to the shape $\lambda^0$, based on solving a complex-valued polynomial equation (\cref{thm:limiting_surface_formula} and \cref{eq:equation_surface,eq:lim_conv}).
  This new description is more explicit compared to the one obtained through the existence approaches.
  
  \item These results led us to discover a surprising discontinuity phenomenon for the limiting surface corresponding to $\lambda^0$.
    More precisely, we establish a simple criterion -- expressed in terms of some equations involving the so-called \emph{interlacing coordinates} of $\lambda^0$ -- to determine if the limiting surface is continuous (\cref{thm:continuity}). This shows that such limiting surfaces are typically discontinuous.
  
  \item We obtain a local limit result in the bulk of a random Young tableau (\cref{thm:cv_bead_process}).
    The limit is a new model of infinite random tableaux (\cref{defn:isyt}), 
    constructed from the so-called \emph{random infinite bead process} (\cref{defn:bead_process}) introduced by the second author in \cite{boutillier2009bead}.
    Interestingly (but somehow expectedly by analogy with results
    on lozenge tilings \cite{petrov2014tilings,aggarwal2019universality}),
    this local limit depends on the underlying shape $\la$
    and on the chosen position in $\la$ only through a single parameter $\beta\in(-1,1)$.
\end{itemize}

\begin{remark}
In parallel to this work, explicit formulas for the limiting surfaces
of random Young tableaux have been obtained by Prause \cite{Prause2023tableaux}
through a different method (solving a variational
problem obtained by the tangent plane method of Kenyon and Prause
\cite{KenyonPrause2022variational}).
\end{remark}

\subsection{Young tableaux,  Poissonized Young tableaux and bead configurations}\label{sect:yt_pyt_bc}
Let us start by fixing terminology and notation.
An \emph{integer partition} of $n$, or \emph{partition} of $n$ for short, is a non-increasing list 
$\la=(\la_1,\la_2, \dots ,\la_l)$ of positive integers with 
$n = \sum_{i=1}^l\la_i$. We write $|\lambda|=n$ for the \emph{size} of the partition and $\ell(\la)=l$ for the {\em length} of the partition,
and use the convention $\la_i=0$ when $i>\ell(\la)$.
We will represent partitions graphically with the \emph{Russian convention},
i.e.\ for each $i \le \ell(\la)$ and $j \le \la_i$ we have a square box
whose sides are parallel to the diagonals $x=y$ and $x=-y$
and whose center has coordinates $(j-i,i+j-1)$;
see the left-hand side of \cref{fig:interlacing1}. We call this graphical representation the \emph{Young diagram} of shape $\lambda$. Note that with the Russian convention, the area of a cell of a Young diagram equals $2$ (and not $1$).
\begin{figure}[t]
	\YRussian
	\Yboxdim{30pt}
	\[ \begin{array}{c}
	\newcommand\enta{(0,1)}
	\newcommand\entb{(1,2)}
	\newcommand\entc{(-1,2)}
	\newcommand\entd{(2,3)}
	\newcommand\ente{(0,3)}
	\newcommand\entf{(3,4)}
	\newcommand\entg{(-2,3)}
	\newcommand\enth{(1,4)}
	\newcommand\enti{(-3,4)}
	\newcommand\entj{(2,5)}
	\newcommand\entk{(-1,4)}
	\young(\enta\entb\entd\entf,\entc\ente\enth\entj,\entg\entk,\enti)
 \end{array} \qquad \begin{array}{c}
\includegraphics[height=5.5cm]{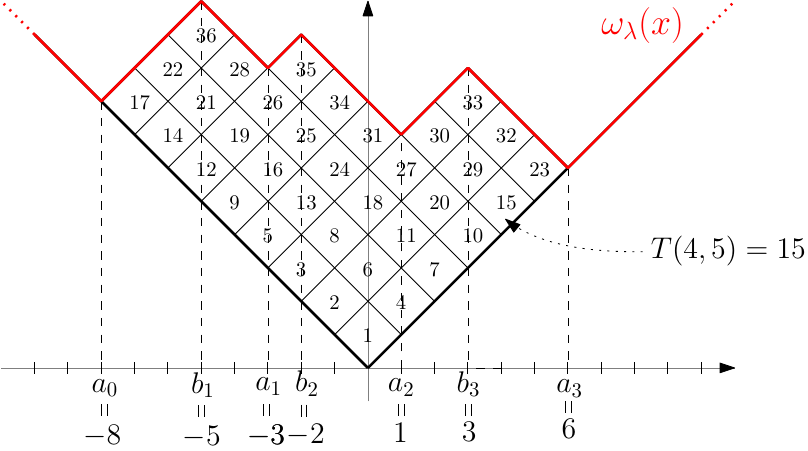}
\end{array}\]
	\captionsetup{width=\linewidth}
	\caption{\textbf{Left:} The Young diagram of the partition $(4,4,3,2)$ drawn in Russian convention,
	with the coordinates of each box inside it.
	\textbf{Right:} A Young tableau $T: \la \to [n]$ of shape $\la$ corresponding to the partition $(6,6,6,4,4,4,3,3)$ drawn with Russian convention; all the boxes are squares with area $2$. We indicate the interlacing coordinates $a_0 < b_1 < a_1 < b_2 < \dots < b_m < a_m$ below the $x$-axis.}
	\label{fig:interlacing1}
\end{figure}

When looking at a Young diagram $\lambda$, its upper boundary is the graph of a $1$-Lipschitz function, denoted by $\omega_{\lambda}:\R\to\R$,
and the diagram $\lambda$ can be encoded using the local minima and maxima of the function $\omega_{\lambda}$.
Following Kerov \cite{kerov2000anisotropic}, we denote them by
\begin{equation}\label{eq:int_coord}
	a_0 < b_1 < a_1<  b_2 < \dots < b_m < a_m, \qquad a_i,b_i\in\Z,
\end{equation}
and we call them \emph{interlacing coordinates}. See the right-hand side of \cref{fig:interlacing1} for an example.
Note that $a_0=-\ell(\la)$ and $a_m=\la_1$.
Furthermore, interlacing coordinates satisfy 
\begin{equation}\label{eq:int_rel}
	\sum_{i=0}^m   a_i = \sum_{i=1}^m  b_i,
\end{equation}
see, e.g., \cite[Proposition 2.4]{IvanovOlshanski2002}.

\medskip

A \emph{Young tableau} of shape $\la$ with $|\la|=n$ is a filling of the boxes
of $\la$ with the numbers $1, 2, \dots , n$ such that the numbers along every row and column are increasing. We encode a Young tableau as a function $T: \la \to [n]=\{1, 2, \dots , n\}$, where  the Young diagram $\la$ 
is identified with the set $\{(j-i,i+j-1), i \le \ell(\la), j \le \la_i\}$; see again the right-hand side of \cref{fig:interlacing1} for an example.
The function $T: \la \to [n]$ can be thought of as the graph of a
(non-continuous) surface above that set.

Note that, given a Young diagram $\la$, there are finitely many Young tableaux of shape $\la$, so it makes sense to consider a \emph{uniform random Young tableau} of shape $\la$.
In this paper, following \cite{GR19}, we also consider \emph{Poissonized Young tableaux} of shape $\la$,
which are functions $T: \la \to [0,1]$ with distinct real values that are increasing along rows and columns. See the left-hand side of \cref{fig:MT} for an example.
Note that for any fixed $\la$, the admissible functions $T: \la \to [0,1]$ form a subset of $[0,1]^\la$ of positive (and finite) Lebesgue measure, so it makes sense to consider a \emph{uniform random Poissonized Young tableau} of shape $\la$.
\medskip

A \emph{bead configuration} is a collection of points (called \emph{beads}) positioned on parallel vertical threads which is locally finite and satisfy an interlacing relation on the vertical positions of the beads: for every pair of consecutive beads on a thread, on each of its neighboring threads there is exactly one bead whose vertical position is between them. See the right-hand side of \cref{fig:MT} for an example. In the present paper, we will consider both  finite and infinite bead configurations, i.e.\ configurations containing finitely or infinitely many beads. In the finite case, a bead configuration
is a finite subset of $A\times[0,1]$ where $A$ is a finite sub-interval of $\mathbb{Z}$, while in the infinite case a bead configuration is a locally finite subset of $\mathbb{Z} \times \R$. In particular, each bead has a position $(x, h)$, where $x$ denotes the \emph{thread number} where the bead is positioned, and $h$ represents the \emph{height} of the bead on this thread.
\medskip

Given a Poissonized Young tableau $T: \la \to [0,1]$ of shape $\la$, we associate a finite bead configuration $M_{\la,T}$ in $([-\ell(\la),\la_1]\cap\Z) \times [0,1]=([a_0,a_m]\cap\Z)\times [0,1]$ defined by
\begin{equation}\label{eq:corr_bead_proc}
	M_{\la,T}= \Set{ (i,T(i,j)) | (i,j) \in \la }.
\end{equation}
An example is given in \cref{fig:MT}.
Note that the monotonicity condition of Poissonized Young tableaux implies that
points on neighboring vertical lines are interlacing, i.e.~satisfy the bead configuration constraints.
We also introduce the \emph{height function} $H_{\la,T}: ([a_0,a_m]\cap\Z) \times [0,1]\to \Z_{\geq 0}$, defined by
\begin{equation}\label{eq:defn_H}
	H_{\la,T}(x,t)= \#\left( M_{\la,T} \cap (\{x\} \times [0,t])\right), \quad \text{for all } (x,t)\in ([a_0,a_m]\cap\Z) \times [0,1],
\end{equation}
i.e.\ $H_{\la,T}(x,t)$ is the number of beads on the thread $x$ below height $t$.
We note that $H_{\la,T}$ is non-decreasing in $t$ and has the following boundary values:
\begin{equation}\label{eq:boundary_conditions}
	\begin{cases}
		H_{\la,T}(a_0,t)=H_{\la,T}(x,0)=H_{\la,T}(a_m,t)=0,\quad&\text{for all }t\in[0,1]\text{ and }x\in [a_0,a_m]\cap\Z,\\
		H_{\la,T}(x,1)=\tfrac12\, (\omega_\la(x)-|x|), \quad&\text{for all }x\in [a_0,a_m]\cap\Z,
	\end{cases}
\end{equation}
where we recall that $\omega_{\lambda}$ is defined above \cref{eq:int_coord}.
Clearly, the bead configuration $M_{\la,T}$ is entirely determined by the height function $H_{\la,T}$. Moreover, we have that
\begin{equation}\label{eq:syt_to_height}
	T(x,y) < t\quad\text{if and only if}\quad H(x,t) > \tfrac{1}{2}(y-|x|).
\end{equation}
Fixing a Young diagram $\la$ and taking a uniform random (Poissonized) Young tableau $T$ of shape $\la$ 
gives a random bead configuration $M_{\la,T}$
and a random height function $H_{\la,T}$, often simply denoted by $M_\la$ and $H_\la$.
\begin{figure}[ht]
\includegraphics[height=6cm]{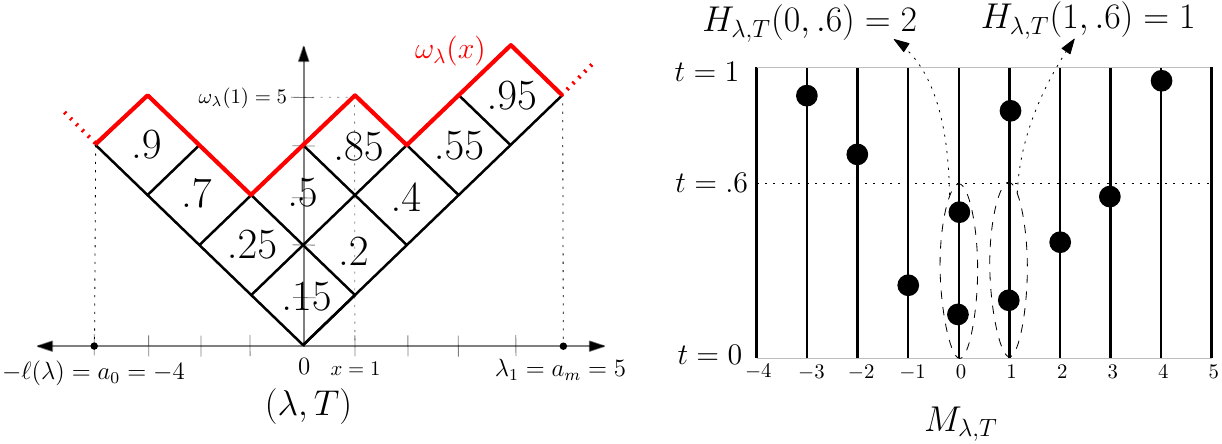}
\captionsetup{width=\linewidth}
\caption{\textbf{Left:} A Poissonized Young tableau $T$ of shape $\la=(5,3,1,1)$. \textbf{Right:} The corresponding bead configuration $M_{\la, T}$. 
To illustrate the definition of the height function,
we have indicated the values $H_{\la, T}(0,.6)$ and $H_{\la, T}(1,.6)$
and circled the corresponding beads.}
\label{fig:MT}
\end{figure}

\begin{remark}
We note that the notion of Poissonized Young tableaux had appeared in disguise
in earlier work than that of Gorin and Rahman.
Indeed, given a finite partially ordered set (or poset) $P$, 
it is standard to consider its \emph{order polytope}, i.e.~the subset of $[0,1]^P$
satisfying order constraints given by the poset ($i\le_P j \Rightarrow x_i \le x_j$).
Then the volume of this order polytope is known to be proportional to
the number of linear extensions \cite{stanley1986polytopes}.

Now, a Young diagram $\la$ can be seen as a partially ordered set,
where the elements are the cells and the order is given by coordinate-wise comparison.
Then the linear extensions are standard Young tableaux, and Poissonized Young tableaux
are points of the order polytopes.
This has been used for counting (skew) Young tableaux in \cite{elkies2003,baryshnikov_romik2010} and for the analysis of random tableaux
in \cite{marchal2016Young,banderier2020tableaux}, where the name ``continuous Young diagram'' is used.
\end{remark}

\subsection{The limiting height function and the limiting surface}
\label{sect:scaling_limit}

We fix a Young diagram $\la^0$ which will determine the shape of our growing sequence of diagrams. For an integer $n > 0$, 
we define $N=N(n,\la^0)=n^2 |\la^0|$ and consider the $(n \times n)$-dilated diagram $\la_N$
obtained by replacing each box of $\la^0$ by a square of $n \times n$ boxes.
Note that $\la_N$ has size $N$ and has the same (dilated) shape as $\la^0$.
We set $\eta=1/\sqrt{ |\la^0|}$ and consider the interval $[-\eta\, \ell(\la^0),\eta\, \la^0_1]\stackrel{\eqref{eq:int_coord}}{=}[\eta\, a_0, \eta\, a_m]$.
Informally, the interval $[\eta\, a_0, \eta\, a_m]$ is the projection on the $x$-axis of the Russian representation of $\la^0$,
scaled in both directions by a factor $\eta$ in order to have total area $2$.
In particular, given a Young tableau of shape $\la_N$, the corresponding bead configuration  $M_{\la_N}$ is supported on the set
$([{n}\, a_0, {n}\, a_m]\cap \Z) \times [0,1]$.

\medskip

The following convergence result for the height function of $M_{\la_N}$ is proved
in \cite[Theorem 7.15]{sun2018dimer} in the case of uniform random Poissonized Young tableaux. 
It also follows indirectly from concentration results on random Young diagrams
by Biane \cite{Biane2001}, as made explicit recently by \'Sniady and Ma\'slanka \cite[Proposition 10.1]{sniady2022evacuation} 
(the latter considers standard tableaux and not Poissonized tableaux,
but it is simple to see that this has no influence on the next statement).
\begin{theorem}[{\cite[Theorem 7.15]{sun2018dimer} and \cite[Proposition 10.1]{sniady2022evacuation}}]\label{thm:limiting_surface_existence}
Let $\la^0$ be a fixed Young diagram and let $T_N$
be a uniform random (Poissonized) Young tableau
of shape $\lambda_N$.
With the above notation, there exists a deterministic height function 
$H^\infty: [\eta\, a_0, \eta\, a_m] \times [0,1]\to \R$ such that the following convergence in probability holds:
\begin{equation}\label{eq:lim_height}
  \frac{1}{\sqrt N} \,H_{\la_N,T_N}(\lfloor x\sqrt N \rfloor,t) \xrightarrow[N \to +\infty]{} H^\infty(x,t),
\end{equation}
uniformly for all $(x,t)$ in $[\eta\, a_0, \eta\, a_m] \times [0,1]$.
\end{theorem}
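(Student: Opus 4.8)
The plan is to translate the bead--height picture into the classical language of growing Young diagrams, to invoke a law of large numbers for the intermediate diagrams along a uniform tableau, and finally to upgrade pointwise convergence to uniform convergence using monotonicity in $t$ and equicontinuity in $x$. As a first step, note that a (Poissonized) Young tableau $T$ of shape $\la_N$ is the same thing as an increasing family of Young subdiagrams: for $t\in[0,1]$ set $\la^{(t)}_T=\{(i,j)\in\la_N:T(i,j)\le t\}$, which is a Young subdiagram of $\la_N$ by the row/column monotonicity of $T$, with $\la^{(t)}_T\subseteq\la^{(t')}_T$ for $t\le t'$, $\la^{(0)}_T=\emptyset$ and $\la^{(1)}_T=\la_N$. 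Comparing \eqref{eq:defn_H} with \eqref{eq:boundary_conditions}, the height function is exactly the diagonal profile of this subdiagram: a bead on thread $x$ at height $h\le t$ corresponds to a cell of $\la^{(t)}_T$ on the diagonal $x$, so $H_{\la_N,T}(x,t)=\tfrac12\bigl(\omega_{\la^{(t)}_T}(x)-|x|\bigr)$. Hence the asserted convergence \eqref{eq:lim_height} is equivalent to the convergence of the rescaled profiles $\tfrac1{\sqrt N}\,\omega_{\la^{(t)}_{T_N}}(\lfloor x\sqrt N\rfloor)$ to a deterministic limit $\Omega_t(x)$, uniformly in $(x,t)$, and one then sets $H^\infty(x,t)=\tfrac12(\Omega_t(x)-|x|)$.

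Next I would reduce the Poissonized model to the standard one. For a standard tableau $T_N:\la_N\to[N]$, rescaling the entries to $[0,1]$ one has $\la^{(t)}_{T_N}=\mu^{(\lfloor tN\rfloor)}$, where $\bigl(\mu^{(k)}\bigr)_{0\le k\le N}$ is the chain of diagrams read off from $T_N$; its law is the uniform measure on maximal chains from $\emptyset$ to $\la_N$ in the Young lattice, equivalently the Plancherel growth process conditioned to pass through $\la_N$ at step $N$, and $\proba\bigl(\mu^{(k)}=\mu\bigr)=f^{\mu}f^{\la_N/\mu}/f^{\la_N}$ for $|\mu|=k$. For a Poissonized tableau, conditioning on the relative order of the entries (a uniform standard tableau) the entries become the order statistics of $N$ i.i.d.\ uniform variables on $[0,1]$, so $\la^{(t)}_{T_N}=\mu^{(K)}$ with $K\sim\mathrm{Bin}(N,t)$; since $K/N\to t$ in probability, the two models have the same scaling limit and it suffices to treat the standard one.

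The analytic core is then a law of large numbers for the intermediate diagram $\mu^{(\lfloor tN\rfloor)}$. Since $\la^0$ is fixed, the rescaled shapes $\la_N/\sqrt N$ are a fixed dilation of (the rescaled) $\la^0$ and in particular have a limiting continuous profile with convergent transition measure; Biane's concentration theorem for the Plancherel growth process \cite{Biane2001} (see also \cite{Biane2003}), equivalently the large-deviation/Laplace analysis of the skew-dimension product $f^{\mu}f^{\la_N/\mu}$ via the hook length formula carried out in \cite{sun2018dimer}, then shows that for each fixed $t$ the rescaled profile $\tfrac1{\sqrt N}\,\omega_{\mu^{(\lfloor tN\rfloor)}}(\lfloor x\sqrt N\rfloor)$ converges in probability, uniformly in $x$, to a deterministic $1$-Lipschitz profile $\Omega_t$; in free-probability terms $\Omega_t$ is described through a free-compression interpolation between the trivial corner ($t=0$) and $\Omega:=\lim\la_N/\sqrt N$ ($t=1$) of the corresponding transition measures. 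From this description, or directly from the fact that each diagonal gains only $\Theta(\sqrt N)$ cells over the whole interval $[0,1]$ so that no macroscopic number of cells is acquired in an infinitesimal time window, one gets that $t\mapsto H^\infty(x,t)=\tfrac12(\Omega_t(x)-|x|)$ is continuous and non-decreasing; together with the $1$-Lipschitz dependence on $x$, $H^\infty$ is jointly continuous on the rectangle $[\eta a_0,\eta a_m]\times[0,1]$.

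Finally I would upgrade the pointwise (in $(x,t)$) convergence in probability to uniform convergence in probability. The rescaled height function $x\mapsto\tfrac1{\sqrt N}H_{\la_N,T_N}(\lfloor x\sqrt N\rfloor,t)$ is the diagonal profile of a Young diagram, hence $1$-Lipschitz up to an $O(1/\sqrt N)$ discretization error, so it is equicontinuous in $x$; and $t\mapsto\tfrac1{\sqrt N}H_{\la_N,T_N}(\lfloor x\sqrt N\rfloor,t)$ is non-decreasing while $H^\infty(x,\cdot)$ is continuous and non-decreasing, so a P\'olya/Dini-type comparison on a fine partition of $[0,1]$ bounds $\sup_t$ of the error by finitely many pointwise errors plus the modulus of continuity of $H^\infty(x,\cdot)$. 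Combining the two reductions, for any $\eps>0$ the event $\{\sup_{x,t}|\tfrac1{\sqrt N}H_{\la_N,T_N}(\lfloor x\sqrt N\rfloor,t)-H^\infty(x,t)|>\eps\}$ is contained, for $N$ large, in a union of finitely many events of the form $\{|\tfrac1{\sqrt N}H_{\la_N,T_N}(\lfloor x_i\sqrt N\rfloor,t_j)-H^\infty(x_i,t_j)|>\eps/3\}$, each of which tends to $0$ by the pointwise statement; this proves the claim. The main obstacle is the third step --- the genuine concentration (not just convergence of the expectation) of the intermediate diagram $\mu^{(\lfloor tN\rfloor)}$ --- which relies either on Biane's representation-theoretic estimates or on a careful saddle-point analysis of $f^{\mu}f^{\la_N/\mu}$; once that is available, the fact that $\la^0$ is fixed makes all limiting profiles explicit enough that the remaining steps are routine.
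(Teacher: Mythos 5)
The paper does not prove this statement: it imports it from Sun (Theorem 7.15, via a variational principle for Poissonized tableaux) and from \'Sniady--Ma\'slanka (Proposition 10.1, via Biane's concentration result applied to the conditioned growth process). Your proposal reconstructs this second route, and the ingredients you list --- the identification $H_{\la_N,T}(x,t)=\tfrac12\bigl(\omega_{\la^{(t)}_T}(x)-|x|\bigr)$ between the height function and the intermediate diagram profile; the description of the chain $\mu^{(k)}$ with marginal $\proba(\mu^{(k)}=\mu)=f^\mu f^{\la_N/\mu}/f^{\la_N}$; Biane's concentration and the free-compression description of the limit profile $\Omega_t$; the coupling $\la^{(t)}_{T_N}=\mu^{(K)}$ with $K\sim\mathrm{Bin}(N,t)$ for the Poissonized case; and the monotone-plus-Lipschitz upgrade to uniform convergence --- are exactly what those references rely on.

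Two steps should be stated more carefully. First, the binomial coupling is not a one-line reduction: $|K-\lfloor tN\rfloor|=O(\sqrt N)$ together with the trivial fact that each added cell shifts the profile by $O(1)$ in sup-norm only yields a height change of $O(\sqrt N)$ on a fixed diagonal, the same order as the quantity being controlled. The reduction does work, but only after you have uniform (in $s$ near $t$, and in $x$) concentration of $\mu^{(\lfloor sN\rfloor)}$ for the standard model and joint continuity of $H^\infty$, so the logical order is: prove the standard case with uniformity, then deduce the Poissonized case. Second, the continuity of $t\mapsto\Omega_t$ is cleanest from conservation of area: the region enclosed between $\Omega_t$ and $|x|$ has measure $2t$, which is linear in $t$, and since profiles are $1$-Lipschitz a jump in $t$ would produce a positive-area gap, a contradiction. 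Your heuristic about $\Theta(\sqrt N)$ cells per diagonal is suggestive but does not by itself rule out a jump. With those two clarifications your sketch matches the cited sources and has no genuine gap.
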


In \cite{sun2018dimer}, the limiting function $H^\infty$ is implicitly found as the unique maximizer
of a certain entropy functional subject to some boundary conditions depending on the diagram $\la^0$.
Using the approach of \cite{Biane2001,sniady2022evacuation}, for each $t\in[0,1]$,
the section $H^\infty(\cdot,t)$ is described via the free cumulants of an associated
measure.  Both descriptions are hard to manipulate.
Our first result gives an alternative and more explicit description of $H^\infty$
via the solution of a polynomial equation, called the \emph{critical equation}.

\subsubsection{Critical equations, liquid regions and limiting height functions for bead processes}
\label{subsec:def_liquid} 
Let $a_0 < b_1 < a_1<  b_2 < \dots < a_m$ be the interlacing coordinates of $\la^0$, introduced in \eqref{eq:int_coord}.
For $(x,t)$ in $[\eta\, a_0, \eta\, a_m]  \times [0,1]$, we consider the following polynomial equation, referred to throughout the paper as the \emph{critical equation}:\footnote{This terminology is justified by the results at the beginning of \cref{ssec:localization-critical-points}.}
\begin{equation}
\label{eq:critical_intro}
	U \, \prod_{i=1}^m (x-\eta\, b_i+U) = (1-t) \, \prod_{i=0}^m (x-\eta\, a_i+U).
\end{equation}
This is a polynomial equation in the complex variable $U$ of degree $m+1$.
Using the fact that the $a_i$'s and $b_i$'s are alternating,
one can easily prove that \eqref{eq:critical_intro} has at least $m-1$ real solutions; see \cref{lem:critical_points_general} below. Hence it has either $0$ or $2$ non-real solutions.

\begin{definition}[Liquid region]\label{defn:liquid_region}
	We let $L$ be the set of pairs $(x,t)$ in $[\eta\, a_0, \eta\, a_m] \times [0,1]$ such that \eqref{eq:critical_intro} has two non-real solutions and we call it the {\em liquid region}.
	The complement of the liquid region in  $[\eta\, a_0, \eta\, a_m] \times [0,1]$
	will be referred to as the \emph{frozen region}.\footnote{The terminology \emph{liquid and frozen region} is standard in the dimer literature. See for instance \cref{thm:cv_bead_process} for a justification.}
\end{definition}

{Several equivalent descriptions of the liquid region are given in \cref{prop:description_liquid_region}. For instance, we will show that $L$ is an open subset of $[\eta\, a_0, \eta\, a_m] \times [0,1]$ and give an explicit parametrization of its boundary, i.e.\ the so-called \emph{frozen boundary curve}.}

For $(x,t) \in L$, we denote by $U_c=U_c(x,t)$ the unique solution with a positive imaginary part of the critical equation~\eqref{eq:critical_intro} 
and we define
\begin{equation}\label{eq:parm_liq}
	\a(x,t):=\frac{\Im U_c}{(1-t)}\qquad\text{and}\qquad\be(x,t):=\frac{\Re U_c}{|U_c|},
\end{equation}
where $\Im$ and $\Re$ denote the imaginary and real parts of a complex number.
For $(x,t) \notin L$, we set $\a(x,t):=0$, and we leave $\be(x,t)$ undefined. In particular, note that this defines a function $\alpha(x,t)$ for all $(x,t)\in [\eta\, a_0, \eta\, a_m]\times [0,1]$.
It turns out that the limiting height function $H^\infty$ 
is directly related to the function $\alpha$ via the following simple formula.

\begin{theorem}\label{thm:limiting_surface_formula}
With the above notation,
the limiting height function $H^\infty$ from  \cref{thm:limiting_surface_existence}
takes the form
\begin{equation*}
	H^\infty(x,t) = \frac{1}{\pi}\int_0^t \alpha(x,s)\dd{s}, \quad \text{for all }(x,t)\in [\eta\, a_0, \eta\, a_m] \times[0,1].
\end{equation*}
\end{theorem}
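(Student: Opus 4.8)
The plan is to extract $H^\infty$ from the determinantal structure of Gorin–Rahman rather than from the entropy/free-cumulant descriptions, and to identify its $t$-derivative with $\alpha/\pi$ via a saddle-point (steepest descent) analysis of the associated correlation kernel. First I would recall, from the Gorin–Rahman construction, that for each finite $N$ the bead configuration $M_{\lambda_N}$ (equivalently the family of particle positions at each fixed "time" coordinate, or more precisely the determinantal process on $\mathbb Z \times [0,1]$ underlying $H_{\lambda_N, T_N}$) is a determinantal point process with an explicit double-contour-integral kernel $K_N$. The density of beads on thread $\lfloor x\sqrt N\rfloor$ near height $t$ is then $K_N$ evaluated on the diagonal, and $\tfrac{1}{\sqrt N} H_{\lambda_N, T_N}(\lfloor x\sqrt N\rfloor, t)$ is (up to negligible fluctuations, controlled by the determinantal variance bound) the integral over $[0,t]$ of this rescaled diagonal density. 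So the theorem reduces to showing that the rescaled diagonal density converges to $\tfrac1\pi \alpha(x,s)$.

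Next I would carry out the asymptotic analysis of the diagonal kernel. After the $n\times n$ dilation and the scaling $x \mapsto \lfloor x\sqrt N\rfloor$, $t$ fixed, the double integral defining $K_N$ takes the form $\oint\oint e^{N(S(z)-S(w))}\,\frac{\dd z\,\dd w}{z-w}$ (times lower-order factors), where the action $S$ is built from the product terms $\prod_i(x-\eta a_i + \cdot)$ and $\prod_i(x - \eta b_i + \cdot)$ appearing in \eqref{eq:critical_intro} together with a $(1-t)$ weight — this is exactly where the critical equation comes from, and it is the content of the footnote after \eqref{eq:critical_intro} and of \cref{ssec:localization-critical-points}. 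The critical points of $S$ are precisely the solutions $U$ of \eqref{eq:critical_intro} (after the change of variables relating $z,w$ to $U$), so on the liquid region $L$ there is a complex-conjugate pair of saddles $U_c, \overline{U_c}$. A standard steepest-descent contour deformation through these saddles yields that the diagonal kernel converges to a discrete-sine-type kernel whose density is $\tfrac1\pi$ times the argument swept out, which one computes to be $\tfrac1\pi \Im(\text{something})$; matching the bookkeeping in \eqref{eq:parm_liq}, this argument equals $\alpha(x,s)\cdot\,(\text{the }\dd s\text{-normalization})$, giving density $\tfrac1\pi\alpha(x,s)$ on $L$. Off $L$ (the frozen region) the saddles are real, the kernel degenerates, and the rescaled density is $0$, consistent with $\alpha \equiv 0$ there by definition.

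Then I would assemble the pieces: integrate the convergence of the rescaled density in $s$ over $[0,t]$, using dominated convergence (the densities are uniformly bounded by $1$, being diagonal values of a projection-type kernel) plus the fact that the frozen/liquid boundary has measure zero in $t$ for a.e. $x$, to get pointwise convergence of $\tfrac{1}{\sqrt N}H_{\lambda_N, T_N}(\lfloor x\sqrt N\rfloor, t)$ to $\tfrac1\pi\int_0^t\alpha(x,s)\,\dd s$. Since \cref{thm:limiting_surface_existence} already guarantees this quantity converges (uniformly, in probability) to $H^\infty(x,t)$, the two limits must agree, proving the formula. Uniformity is automatic once we have pointwise agreement of two continuous functions on a compact set, one of which converges uniformly.

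The main obstacle I expect is the rigorous steepest-descent analysis on the full domain: one must deform the double contour through the conjugate saddle pair uniformly in $(x,t)$ on compact subsets of $L$, control the non-contributing parts of the contours (tails and the portion near $z=w$), and handle the behavior near the frozen boundary and near the special points $\eta a_i, \eta b_i$ where the action $S$ has singularities and the saddles may collide. Establishing that the global topology of the steepest-descent contours is the expected one — so that exactly the conjugate pair of saddles contributes and with the right orientation/sign — is the delicate combinatorial-analytic heart of the argument; the identification of the limiting density with $\tfrac1\pi\alpha$ once the contours are in place is then a direct computation using \eqref{eq:parm_liq}.
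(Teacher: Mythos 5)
Your proposal follows essentially the same route as the paper: identify $\mathbb{E}[H_{\lambda_N}(\lfloor x\sqrt N\rfloor,t)]$ as $\int_0^t K_{\lambda_N}((\lfloor x\sqrt N\rfloor,s),(\lfloor x\sqrt N\rfloor,s))\,\dd s$, establish via the saddle-point analysis (the paper's Proposition \ref{prop:limit_Mla} in the liquid region and Propositions \ref{prop:convergence_kernel_frozen1bis}, \ref{prop:convergence_kernel_frozen2}, \ref{prop:convergence_kernel_frozen3} in the frozen region) that the rescaled diagonal kernel converges to $\alpha(x,s)/\pi$, and then compare against the a-priori existence result of Theorem \ref{thm:limiting_surface_existence}, using the deterministic bound $\frac{1}{\sqrt N}H_{\lambda_N}\le\frac12(\omega_{\eta\lambda^0}(x)-|x|)$ to pass from convergence in probability to convergence of means. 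One small inaccuracy: the rescaled diagonal kernel $\tfrac1{\sqrt N}K_{\lambda_N}$ is \emph{not} bounded by $1$ — the projection bound $K(x,x)\le 1$ holds for DPPs on a discrete space with counting measure, but here the reference measure on each thread is Lebesgue on $[0,1]$ and the unrescaled diagonal grows like $\sqrt N$; the interchange of limit and $\int_0^t\dd s$ is instead justified, as in the paper, by the local uniform convergence of the rescaled kernel coming from the steepest-descent estimates, which you correctly identify as the technical heart of the argument.
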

Informally, the asymptotic density of beads at position $(\lfloor x \sqrt N \rfloor,t)$ in $M_{\la_N}$
is $\sqrt N \, \alpha(x,t)$.
In particular, the liquid region coincides with the region
where there is a macroscopic quantity of beads.

\subsubsection{Limiting surfaces for Young tableaux and discontinuity phenomena} 
\label{ssec:limiting-surface}
It is natural to try to translate the limiting result for the bead process
to a limit result for the tableau itself, seen as a discrete surface.
Namely, we set 
\begin{equation}\label{eq:domain_tableaux}
	D_{\lambda^0}:=\left\{(x,y) \in \R^2 : |x| < y < \omega_{\eta \la^0}(x)\right\},
\end{equation}
which is the shape (seen as an open domain of the plane) of the diagram $\lambda^0$ in Russian notation, normalized to have area $2$.
For $(x,y)$ in $D_{\lambda^0}$, letting $T_N$ be a uniform Poissonized tableau
of shape $\la_N$, we consider 
\begin{equation}\label{eq:rescaling_T}
	\widetilde T_N(x,y):=  T_N(\lfloor x \sqrt N \rfloor,\lfloor y \sqrt N \rfloor +\delta),
\end{equation}
where $\delta \in \{0,1\}$ is chosen so that the arguments of $T_N$ have distinct parities.
We want to find a scaling limit for the function $\widetilde T_N(x,y)$. 
(Again, it is simple to see that considering uniform Poissonized or (classical) uniform Young tableaux of shape $\la$ is irrelevant here.)
To this end, we set for all $(x,y)\in D_{\lambda^0}$,
\begin{align}\label{eq:Tplusminus}
		&T^\infty_+=T^\infty_+(x,y):= \sup \left\{t\in[0,1] \,:\, H^\infty(x,t) \le {\tfrac 1 2 (y-|x|)} \right\},\\
		&T^\infty_-=T^\infty_-(x,y):= \inf \left\{t\in[0,1] \,:\, H^\infty(x,t) \ge {\tfrac 1 2 (y-|x|)} \right\}.\notag
\end{align}
Since $t \mapsto H^\infty(x,t)$ is non-decreasing, for $x$ fixed, the maps $y \mapsto T^\infty_-(x,y)$ and $y \mapsto T^\infty_+(x,y)$ are non-decreasing, and we also have $T^\infty_-(x,y) \leq T^\infty_+(x,y)$ for any $(x,y) \in D_{\lambda^0}$.

\begin{proposition}\label{prop:reg_points}
For all $\eps>0$, the following limit holds uniformly for all $(x,y) \in D_{\lambda^0}$:
\[\lim_{N \to +\infty} \mathbb P\big(\widetilde T_N(x,y) < T^\infty_- -\eps \big) 
=\lim_{N \to +\infty} \mathbb P\big(\widetilde T_N(x,y) > T^\infty_+ + \eps\big) =0.\]
\end{proposition}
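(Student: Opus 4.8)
The plan is to deduce Proposition~\ref{prop:reg_points} from the convergence in probability of the rescaled height function in Theorem~\ref{thm:limiting_surface_existence}, using the pointwise equivalence \eqref{eq:syt_to_height} between the tableau and its height function. First I would fix $(x,y)\in D_{\lambda^0}$ and abbreviate $y'=\tfrac12(y-|x|)$, so that by \eqref{eq:syt_to_height} the event $\{T_N(\lfloor x\sqrt N\rfloor,\lfloor y\sqrt N\rfloor+\delta)<t\}$ is exactly the event $\{H_{\lambda_N,T_N}(\lfloor x\sqrt N\rfloor,t)>\tfrac12(\lfloor y\sqrt N\rfloor+\delta-|\lfloor x\sqrt N\rfloor|)\}$; after dividing by $\sqrt N$ the threshold on the right tends to $y'$. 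The key observation is the following: if $\widetilde T_N(x,y)<T^\infty_- -\eps$, then choosing $t=T^\infty_- -\eps/2$ (assuming it lies in $[0,1]$; the boundary cases are handled separately) we get $\widetilde T_N(x,y)<t$, hence $\tfrac{1}{\sqrt N}H_{\lambda_N,T_N}(\lfloor x\sqrt N\rfloor,t)$ exceeds roughly $y'/\sqrt N\cdot\sqrt N=y'$ up to $o(1)$. But by definition of $T^\infty_-$ in \eqref{eq:Tplusminus}, $H^\infty(x,t)<y'$ strictly for $t<T^\infty_-$ (using monotonicity of $s\mapsto H^\infty(x,s)$ and the fact that the infimum defining $T^\infty_-$ is over $\{s:H^\infty(x,s)\ge y'\}$). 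So on this event the rescaled height function is bounded below by $y'-o(1)$ while the limiting value $H^\infty(x,t)$ is strictly below $y'$ — contradicting the uniform convergence in probability, and this forces the probability of the event to vanish.

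More precisely, I would write, for any small $\rho>0$,
\[
\mathbb P\big(\widetilde T_N(x,y)<T^\infty_- -\eps\big)
\le \mathbb P\Big(\tfrac{1}{\sqrt N}H_{\lambda_N,T_N}(\lfloor x\sqrt N\rfloor,t)> y'-\rho\Big)
\]
with $t=T^\infty_- -\eps/2$, valid for $N$ large (so that the $\sqrt N$-rescaled integer threshold is within $\rho$ of $y'$), and then choose $\rho$ small enough that $y'-\rho>H^\infty(x,t)$, which is possible since $H^\infty(x,t)<y'$. The right-hand side is then bounded by $\mathbb P\big(|\tfrac{1}{\sqrt N}H_{\lambda_N,T_N}(\lfloor x\sqrt N\rfloor,t)-H^\infty(x,t)|>y'-\rho-H^\infty(x,t)\big)$, which tends to $0$ by Theorem~\ref{thm:limiting_surface_existence}. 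The argument for $\mathbb P(\widetilde T_N(x,y)>T^\infty_+ +\eps)$ is symmetric: using $t=T^\infty_+ +\eps/2$ and the fact that $H^\infty(x,t)>y'$ strictly for $t>T^\infty_+$, the event $\{\widetilde T_N(x,y)>t\}$ forces $\tfrac{1}{\sqrt N}H_{\lambda_N,T_N}(\lfloor x\sqrt N\rfloor,t)\le y'+o(1)<H^\infty(x,t)$, again contradicting uniform convergence.

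The main technical point — and the reason the statement asks for uniformity in $(x,y)$ — is that all the error terms above (the gap $o(1)$ between the discrete threshold $\tfrac12(\lfloor y\sqrt N\rfloor+\delta-|\lfloor x\sqrt N\rfloor|)/\sqrt N$ and $y'$, and the deviation $\tfrac{1}{\sqrt N}H-H^\infty$) must be controlled uniformly. The first is uniform simply because $|\lfloor u\rfloor - u|\le 1$ always; the second is exactly the uniform-in-$(x,t)$ convergence in probability granted by Theorem~\ref{thm:limiting_surface_existence}. The one genuinely delicate issue is that the gap $y'-H^\infty(x,t)$ (for $t=T^\infty_- -\eps/2$) need not be bounded below uniformly in $(x,y)$: near points where $y\mapsto T^\infty_-(x,y)$ has a long plateau, or near the boundary of $D_{\lambda^0}$ where $T^\infty_-$ may hit $0$ or $1$, this gap can degenerate. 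I would handle this by covering $\overline{D_{\lambda^0}}$ with finitely many small balls, using monotonicity of $y\mapsto T^\infty_\pm(x,y)$ and $s\mapsto H^\infty(x,s)$ plus the (at least) continuity of $H^\infty$ from Theorem~\ref{thm:limiting_surface_existence} to get a uniform lower bound on the relevant gaps away from a boundary layer, and treating the boundary layer separately (there $\widetilde T_N$ is forced to be close to $0$ or $1$ anyway). This compactness/uniformity bookkeeping is where most of the real work lies; the core probabilistic estimate is a one-line consequence of \eqref{eq:syt_to_height} and Theorem~\ref{thm:limiting_surface_existence}.
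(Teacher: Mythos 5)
Your proof follows the same route as the paper's: use \eqref{eq:syt_to_height} together with \eqref{eq:rescaling_T} to rewrite the event $\{\widetilde T_N(x,y)<T^\infty_--\eps\}$ as a height-function inequality, invoke the uniform convergence in probability of \cref{thm:limiting_surface_existence}, and use the strict inequality $H^\infty(x,T^\infty_--\eps)<\tfrac12(y-|x|)$, which is immediate from the infimum definition of $T^\infty_-$. Using $t=T^\infty_--\eps/2$ rather than $t=T^\infty_--\eps$ is immaterial; the paper works directly at $T^\infty_--\eps$.

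Where you differ is that you explicitly flag the uniformity-in-$(x,y)$ issue, which the paper's two-line proof passes over silently: uniform convergence of $\tfrac1{\sqrt N}H_{\la_N}(\lfloor x\sqrt N\rfloor,\cdot)$ to $H^\infty(x,\cdot)$ together with a merely pointwise strict inequality does not on its own yield a uniform bound on the probability — one needs the gap
$g(x,y):=\tfrac12(y-|x|)-H^\infty\big(x,T^\infty_-(x,y)-\eps\big)$
to be bounded away from $0$ uniformly. You are right that this deserves a comment.

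However, the patch you sketch — cover $\overline{D_{\la^0}}$ with small balls and handle a boundary layer separately — does not address the actual source of degeneration. You locate the trouble near $\partial D_{\la^0}$, but the delicate case is in the interior, near $D_{\la^0}\setminus D^{\mathrm{reg}}_{\la^0}$. Indeed, if $t\mapsto H^\infty(x_0,t)$ has a plateau of length $>\eps$ at height $c$ on $[t_1,t_2]$ (the discontinuous-surface scenario of \cref{thm:continuity}), then for $\tfrac12(y-|x_0|)$ slightly above $c$ the infimum $T^\infty_-(x_0,y)$ jumps past $t_2$, so $T^\infty_-(x_0,y)-\eps$ falls inside the plateau and $g(x_0,y)=\tfrac12(y-|x_0|)-c$ is arbitrarily small, with $(x_0,y)$ staying in the interior and bounded away from $\partial D_{\la^0}$. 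Excising a boundary layer does not remove such points, and monotonicity plus continuity of $H^\infty$ give no uniform lower bound on $g$ there. So the bookkeeping as you describe it does not complete the claimed uniformity over all of $D_{\la^0}$; note, though, that this is a gap shared with the paper's own proof, and what is in fact used downstream (after \eqref{eq:lim_conv}) is only uniformity on compact subsets of $D^{\mathrm{reg}}_{\la^0}$, where $T^\infty_\pm$ and hence $g$ are continuous and a genuine compactness argument does close the gap.
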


\begin{proof}
Recalling the relation~\eqref{eq:syt_to_height} and the rescaling~\eqref{eq:rescaling_T}, we note that
\[ \widetilde T_N(x,y) <  T^\infty_- -\eps \quad\text{if and only if}\quad H_{\la_N}\big(\lfloor x \sqrt N \rfloor,T^\infty_- -\eps\big) > \frac 1 2 (\lfloor y \sqrt N \rfloor +\delta-|\lfloor x \sqrt N \rfloor |).\]
We claim that the latter event happens with probability tending to $0$. Indeed, \cref{thm:limiting_surface_existence} guarantees 
the following convergence in probability uniformly for all $(x,y) \in D_{\lambda^0}$:
\[ \lim_{N \to +\infty} \frac{1}{\sqrt N}H_{\la_N}\big(\lfloor x \sqrt N \rfloor,T^\infty_- -\eps \big) = H^\infty(x,T^\infty_- -\eps) < \tfrac 1 2 (y-|x|),\]
where the last inequality follows by definition of $T^\infty_-$. 
The statement for $T^\infty_+$ is proved similarly.
\end{proof}

We let $D^{\mathrm{reg}}_{\lambda^0}$ be the set of coordinates $(x,y)\in D_{\lambda^0}$ such that 
$T^\infty_-(x,y)=T^\infty_+(x,y)$. For such points, we simply write $T^\infty(x,y)$
for this common value.  By definition, on $D^{\mathrm{reg}}_{\lambda^0}$, one has
	\begin{equation}\label{eq:equation_surface}
		H^\infty(x,T^\infty(x,y))=\tfrac 1 2 (y-|x|).
	\end{equation}
Moreover, \cref{prop:reg_points} implies
the following convergence in probability for  $(x,y)\in D^{\mathrm{reg}}_{\lambda^0}$:
	\begin{equation}\label{eq:lim_conv}
		\lim_{N \to +\infty} \widetilde T_N(x,y) = T^\infty(x,y),
	\end{equation}
Note that this convergence holds uniformly on compact subsets of $D^{\mathrm{reg}}_{\lambda^0}$.

\begin{remark}\label{rem:cont_points}
	For any $x$ fixed, since $t\mapsto H^\infty(x,t)$ is non-decreasing, the points $(x,y)$ are in
	$D^{\mathrm{reg}}_{\lambda^0}$ for all but countably many $y$. As a consequence, $D_{\lambda^0} \setminus D^{\mathrm{reg}}_{\lambda^0}$ has zero (two-dimensional) Lebesgue measure. Moreover, if $(x,y)\in D_{\lambda^0} \setminus D^{\mathrm{reg}}_{\lambda^0}$ and $(x,y\pm \eps) \in D^{\mathrm{reg}}_{\lambda^0}$ for $\eps>0$ small enough, then
	$T^\infty(x,y-\eps)$ and $T^\infty(x,y+\eps)$ converge respectively
	to $T^\infty_-(x,y)$ and $T^\infty_+(x,y)$ as $\eps$ tends to $0$ with $\eps>0$.
	Indeed, since the map $y \mapsto T^\infty_-(x,y)$ is non-decreasing, we have
	$$
	\limsup_{\eps \to 0} \left(T^\infty(x,y-\eps)\right) = \limsup_{\eps \to 0} \left(T^\infty_-(x,y-\eps)\right) \leq T^\infty_-(x,y) .
	$$
	Conversely, $H^\infty(x,T^\infty(x,y-\eps)) = \frac{1}{2}(y - |x|-\eps)$, so $t = \liminf_{\eps \to 0} \left(T^\infty(x,y-\eps)\right)$ satisfies $H^\infty(x,t) \geq \frac{1}{2}(y-|x|-\eps)$ for any $\eps>0$. Therefore, $H^\infty(x,t) \geq \frac{1}{2}(y-|x|)$, and $t \geq T^\infty_-(x,y)$. We conclude that $\lim_{\eps \to 0} T^\infty(x,y-\eps) = T^\infty_-(x,y)$, and similarly, $\lim_{\eps \to 0} T^\infty(x,y+\eps) = T^\infty_+(x,y)$.
	Thus, the limiting surface $T_\infty$ is discontinuous at points $(x,y)$ in $D_{\lambda^0} \setminus D^{\mathrm{reg}}_{\lambda^0}$
	and we do not know whether $\widetilde T_N(x,y)$ converges or not.
	This discontinuity phenomenon was overlooked in \cite[Theorem 9.1]{sun2018dimer},
	where it is claimed that the convergence holds for all $(x,y)$ in $D_{\lambda^0}$.
\end{remark}

A natural question is whether such discontinuity points $(x,y)$ exist at all in $D_{\lambda^0}$.
The following result shows that such points indeed exist unless $\la^0$
is a rectangle, or unless its interlacing coordinates satisfy 
some specific equations.
\begin{theorem}\label{thm:continuity}
	For a Young diagram $\la^0$, the following assertions are equivalent:
	\begin{enumerate}
		\item The limiting surface $T^\infty$ is continuous in the whole domain $D_{\lambda^0}$;
		\item The interlacing coordinates defined in \eqref{eq:int_coord} satisfy the system of equations:
		\begin{equation}\label{eq:cont_cond}
			\sum_{\substack{i=0\\
				i\neq i_0}}^m \frac{1}{a_{i_0}-a_i}=\sum_{i=1}^m \frac{1}{a_{i_0}-b_i}, \qquad \text{for all }i_0=1,\dots,m-1.
		\end{equation}
	\end{enumerate}
\end{theorem}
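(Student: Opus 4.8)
The plan is to analyze when the liquid region $L$ reaches all of $D_{\lambda^0}$ in the following precise sense: continuity of $T^\infty$ on $D_{\lambda^0}$ should be equivalent to the statement that for every $x\in(\eta a_0,\eta a_m)$, the section $t\mapsto H^\infty(x,t)$ is \emph{strictly} increasing on $\{t: H^\infty(x,t)\in(0,\tfrac12(\omega_{\eta\lambda^0}(x)-|x|))\}$, i.e.\ that $\alpha(x,t)>0$ for all $t$ in the relevant range. Indeed, by \cref{rem:cont_points} the surface $T^\infty$ is discontinuous at $(x,y)$ exactly when $t\mapsto H^\infty(x,t)$ has a flat piece at level $\tfrac12(y-|x|)$; since $H^\infty(x,t)=\tfrac1\pi\int_0^t\alpha(x,s)\,\dd s$ by \cref{thm:limiting_surface_formula}, a flat piece corresponds to an interval of $t$'s with $\alpha(x,t)=0$, that is, $(x,t)\notin L$. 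So the first step is: \emph{$T^\infty$ is continuous on $D_{\lambda^0}$ if and only if for each $x\in(\eta a_0,\eta a_m)$ the set $\{t\in(0,1): (x,t)\in L\}$ is an interval $(0,t^*(x))$ containing all $t$ with $H^\infty(x,t)<\tfrac12(\omega_{\eta\lambda^0}(x)-|x|)$} (i.e.\ once beads at column $x$ stop accumulating, the column is already full). For this I would use the monotonicity of $H^\infty(x,\cdot)$ together with the boundary condition $H^\infty(x,1)=\tfrac12(\omega_{\eta\lambda^0}(x)-|x|)$ inherited from \eqref{eq:boundary_conditions}.

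The second step is to translate the condition ``$(x,t)\in L$ for all small $t>0$ and up to the filling time'' into a statement about the critical equation \eqref{eq:critical_intro}. The slices of $L$ in the $t$-direction degenerate at the boundary of $L$, where two complex roots $U_c$ collide and become real — i.e.\ the discriminant of \eqref{eq:critical_intro} in $U$ vanishes. So continuity is governed by the structure of the frozen boundary curve, for which \cref{prop:description_liquid_region} gives an explicit parametrization. The key geometric fact I expect to need is: a column $x_0=\eta a_{i_0}$ sitting over an \emph{interior local maximum} of $\omega_{\lambda^0}$ (these are exactly the coordinates $a_{i_0}$, $i_0=1,\dots,m-1$, since $a_0=-\ell$ and $a_m=\lambda^0_1$ are the endpoints) is the ``most dangerous'' location: it is where a tentacle of the frozen region is most likely to pinch the liquid region off from the top boundary before the column is full. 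The surface fails to be continuous precisely when, at such an $x_0$, the liquid region does \emph{not} extend all the way up to $t$ such that $H^\infty(x_0,t)$ reaches the full height $\tfrac12(\omega_{\eta\lambda^0}(x_0)-|x_0|)$ — equivalently the frozen boundary curve, restricted to the vertical line $x=\eta a_{i_0}$, has a piece hitting $t=1$. Setting $x=\eta a_{i_0}$ in \eqref{eq:critical_intro}, the factor $(x-\eta a_{i_0}+U)=U$ appears on the right-hand side, and one can factor out a root $U=0$ at $t=1$; the condition that this root is a \emph{double} root of the degree-$(m+1)$ polynomial at $t=1$ — the borderline case separating ``liquid reaches the top'' from ``frozen tentacle pinches it off'' — is obtained by differentiating \eqref{eq:critical_intro} in $U$ at $U=0$, $t=1$, $x=\eta a_{i_0}$ and setting the derivative to zero. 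Carrying this out, the $\prod$'s produce logarithmic-derivative sums and the equation becomes exactly
\[
\sum_{\substack{i=0\\ i\neq i_0}}^m\frac{1}{a_{i_0}-a_i}=\sum_{i=1}^m\frac{1}{a_{i_0}-b_i},
\]
for each $i_0=1,\dots,m-1$, which is \eqref{eq:cont_cond}. (That $i_0=0$ and $i_0=m$ are excluded is automatic: there $U=0$ is always at least a double root because $x-\eta a_0+U=x-\eta a_0+U$ coincides with one of the right-hand factors and a left-hand factor simultaneously only at the endpoints — more simply, near the two corners of $D_{\lambda^0}$ the shape is locally a single staircase and local continuity holds unconditionally, which I would check directly.)

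For the implication ``(2) $\Rightarrow$ (1)'' I would argue that if \eqref{eq:cont_cond} holds then at every interior local max $x_0$ the double root at $(U,t,x)=(0,1,\eta a_{i_0})$ forces the frozen boundary to be tangent to, rather than transversal to, the behavior needed to pinch off the liquid region, so $L$ does contain $\{(\eta a_{i_0},t):0<t<1\}$ up to the filling height, hence $\alpha(\eta a_{i_0},\cdot)>0$ there, hence $H^\infty(\eta a_{i_0},\cdot)$ is strictly increasing on the relevant range; by continuity/monotonicity in $x$ the same holds for all $x$, giving continuity of $T^\infty$ via Step 1. For ``(1) $\Rightarrow$ (2)'' I would show the contrapositive: if \eqref{eq:cont_cond} fails at some $i_0$, then $U=0$ is a simple root at $t=1$, $x=\eta a_{i_0}$, so by the implicit function theorem the real root crosses zero with nonzero speed and for $t$ slightly below $1$ the critical equation has a real solution near $0$ that has split off from a complex-conjugate pair — i.e.\ $(\eta a_{i_0},t)\notin L$ for an interval of $t<1$, while $H^\infty(\eta a_{i_0},t)<\tfrac12(\omega(\eta a_{i_0})-|\eta a_{i_0}|)$ there — producing a flat piece of $H^\infty(\eta a_{i_0},\cdot)$ and hence a discontinuity of $T^\infty$ by \cref{rem:cont_points}. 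The main obstacle I anticipate is Step 1 / the direction (2)$\Rightarrow$(1): ruling out that $L$ is pinched off at some \emph{non-special} value of $x$ (not of the form $\eta a_{i_0}$) when \eqref{eq:cont_cond} holds — i.e.\ showing that the interior local maxima are genuinely the only places where pinch-off can first occur. I expect this to follow from the explicit parametrization of the frozen boundary in \cref{prop:description_liquid_region} together with a monotonicity/convexity argument in $x$, but making it rigorous — in particular handling the possibility that the frozen region has several components touching the top boundary — is where the real work lies.
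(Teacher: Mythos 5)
Your global strategy — reduce continuity of $T^\infty$ to the absence of gaps in the vertical slices of the liquid region, then pin down the dangerous columns $x=\eta a_{i_0}$ as the interior local maxima of $\omega_{\lambda^0}$, then translate into algebra via the critical equation — is essentially the paper's strategy. But the central computation you sketch is off by one derivative and one value of $t$, and as written it does not produce \eqref{eq:cont_cond}.

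Concretely: at $x=\eta a_{i_0}$ the critical polynomial factors as $P_{\eta a_{i_0},t}(U)=U\cdot Q_{i_0,t}(U)$ with
\[
Q_{i_0,t}(U)=\prod_{i=1}^m(\eta a_{i_0}-\eta b_i+U)-(1-t)\prod_{i\ne i_0}(\eta a_{i_0}-\eta a_i+U),
\]
so $U=0$ is automatically a root for every $t$. Your proposed condition — that $U=0$ is a \emph{double} root at $t=1$ — fails outright: $P'_{\eta a_{i_0},1}(0)=Q_{i_0,1}(0)=\prod_{i=1}^m(\eta a_{i_0}-\eta b_i)$, which is a fixed nonzero constant (strict interlacing) independent of the $a_j$'s other than $a_{i_0}$; setting it to zero yields no constraint of the form \eqref{eq:cont_cond}. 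The right object is a \emph{triple}-root condition at the value
\[
t^\ast := 1-\frac{\prod_{j=1}^m(\eta a_{i_0}-\eta b_j)}{\prod_{j\ne i_0}(\eta a_{i_0}-\eta a_j)},
\]
which is exactly the $t$ at which $Q_{i_0,t}(0)=0$, i.e.\ at which $U=0$ is a double root; the extra equation $P''_{\eta a_{i_0},t^\ast}(0)=2\,Q'_{i_0,t^\ast}(0)=0$ reduces, after cancelling $\prod_j(\eta a_{i_0}-\eta b_j)$, to
\[
\sum_{j\ne i_0}\frac{1}{a_{i_0}-a_j}=\sum_{j=1}^m\frac{1}{a_{i_0}-b_j},
\]
which is \eqref{eq:cont_cond}. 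This $(x,t)=(\eta a_{i_0},t^\ast)$ is precisely the point $(x(s),t(s))$ at $s=\eta a_{i_0}$ in the parametrization of the frozen boundary in \cref{prop:description_liquid_region}, and $P''=0$ is the cusp condition there. So your instinct — logarithmic-derivative sums from the products at $U=0$, $x=\eta a_{i_0}$ — is right, but the point sits in the interior $t^\ast<1$, not at $t=1$ (indeed \cref{prop:t_regions} shows $t_+(x)=1$ only at $x=\eta b_{i_0}$, never at $x=\eta a_{i_0}$), and one must go to the second derivative, not the first.

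The other gap you flag — ruling out that pinch-off starts at some $x$ not of the form $\eta a_{i_0}$, or that several frozen components interfere — is precisely what the paper handles by working with the full frozen-boundary parametrization of \cref{prop:description_liquid_region}: the boundary has tangent slope $\dot t(s)/\dot x(s)=G(s)$, so a vertical tangent forces $s=\eta a_i$, and one then shows $\dot x(\eta a_i)=0$ automatically while $\dot t(\eta a_i)=0$ is the nontrivial condition equivalent to \eqref{eq:cont_cond}. Your sketch would need an analogous global analysis of the boundary curve (not just a local computation near $U=0$ on the special lines) to upgrade \emph{necessary} to \emph{sufficient}; without it the direction $(2)\Rightarrow(1)$ remains open, as you yourself note.
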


Note that when $m=1$, i.e.\ when $\la^0$ has a rectangular shape, there are no equations in the second item. Indeed, the limiting surface $T^\infty$ is always continuous in this case.

\subsection{Applications for simple limit shapes}\label{sect:appl_intro}
In this section, we illustrate our results in the cases $m=1$ (rectangular shapes)
and $m=2$ ($L$-shapes). 
Before starting, let us note that our model and all results are invariant
when multiplying all interlacing coordinates of $\la^0$ by the same positive integers.
We will therefore allow ourselves to work with diagrams $\la^0$
with rational (non-necessarily integer) interlacing coordinates.
The statements of this section are proved in \cref{sect:applications}.

\subsubsection{An explicit formula for the rectangular case}
In this section, we consider a rectangular diagram $\la^0$.
Without loss of generality, we assume $a_0=-1$ and write $r=a_1$.
Necessarily, $b_1=r-1$. Solving explicitly the second degree critical equation \eqref{eq:critical_intro} yields:

\begin{proposition}\label{prop:limit_shape}
	The limiting height function corresponding to a $1 \times r$ rectangular shape $\la^0$
    is given by
	\begin{equation}
      \label{eq:limit_H_rectangular}
		H^\infty_r(x,t)=\frac{1}{\pi}\int_0^t\frac{\sqrt{s (4 r - (1 + r)^2 s) + 2 (r-1) \sqrt{r} s x - r x^2}}{2 \sqrt{r} (1 - s) s} \dd{s}
	\end{equation} 
	with the convention that $\sqrt{x}=0$ if $x\leq 0$.
    Moreover, the limiting surface $T^\infty_r$ is continuous on $D_{\lambda^0}$
    and is therefore implicitly determined by the equation
	\begin{equation}\label{eq:eq_cont_case}
		H^\infty_r(x,T^\infty_r(x,y))=\tfrac 1 2 (y-|x|).
	\end{equation}
\end{proposition}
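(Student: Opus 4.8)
The plan is to specialize the general machinery to $m=1$, where the critical equation \eqref{eq:critical_intro} becomes a genuine quadratic that can be solved by the quadratic formula. First I would substitute $a_0 = -1$, $a_1 = r$, $b_1 = r-1$ and $\eta = 1/\sqrt{|\lambda^0|} = 1/\sqrt{r}$ into \eqref{eq:critical_intro}; the equation reads $U(x - \eta b_1 + U) = (1-t)(x - \eta a_0 + U)(x - \eta a_1 + U)$, i.e. a polynomial equation of degree $2$ in $U$ (the leading $U^2$ terms on both sides cancel, leaving degree $2$ on the right, degree $2$ on the left — in fact one checks the net degree is $2$ after rearranging). I would collect terms to put it in the form $AU^2 + BU + C = 0$ with $A = t$ (say; up to sign), and compute the discriminant $\Delta = B^2 - 4AC$. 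A direct but routine calculation should show $\Delta$ is, up to a positive factor, the quantity $s(4r - (1+r)^2 s) + 2(r-1)\sqrt r\, s x - r x^2$ appearing under the square root in \eqref{eq:limit_H_rectangular} (with $s$ playing the role of $t$), and that the solution $U_c$ with positive imaginary part has $\Im U_c = \sqrt{\Delta}\,/(2A)$ precisely when $\Delta > 0$.

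Next I would plug into the definition \eqref{eq:parm_liq}: $\alpha(x,t) = \Im U_c/(1-t)$. From the expression for $\Im U_c$ just obtained, $\alpha(x,t)$ equals the integrand of \eqref{eq:limit_H_rectangular} (checking the algebra that $2A(1-t) = 2\sqrt r (1-t) t$ matches the stated denominator $2\sqrt r (1-s)s$). When $\Delta \le 0$ the critical equation has no non-real roots, so $(x,t) \notin L$ and $\alpha(x,t) := 0$, which is consistent with the convention $\sqrt{x} = 0$ for $x \le 0$ in the statement. Then \cref{thm:limiting_surface_formula} gives $H^\infty_r(x,t) = \frac{1}{\pi}\int_0^t \alpha(x,s)\,\dd s$, which is exactly \eqref{eq:limit_H_rectangular}.

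For the continuity claim, I would invoke \cref{thm:continuity}: when $m=1$ the system \eqref{eq:cont_cond} is empty (the index $i_0$ ranges over $1,\dots,m-1 = \emptyset$), so condition (2) is vacuously satisfied, hence condition (1) holds and $T^\infty_r$ is continuous on $D_{\lambda^0}$. By \cref{rem:cont_points} and the discussion around \eqref{eq:equation_surface}, continuity means $D^{\mathrm{reg}}_{\lambda^0} = D_{\lambda^0}$, so $T^\infty_r$ is defined everywhere and satisfies \eqref{eq:equation_surface}, which is \eqref{eq:eq_cont_case}. (One should also note that for each fixed $x$ the map $t \mapsto H^\infty_r(x,t)$ is strictly increasing on the relevant range — because $\alpha(x,s) > 0$ on an interval of $s$ values whenever $(x,y)\in D_{\lambda^0}$ — so the implicit equation \eqref{eq:eq_cont_case} indeed determines $T^\infty_r$ uniquely.)

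The main obstacle is purely computational: carrying out the discriminant calculation cleanly and matching it, sign-for-sign and factor-for-factor, with the polynomial $s(4r - (1+r)^2 s) + 2(r-1)\sqrt r\, s x - r x^2$ and the denominator $2\sqrt r(1-s)s$ in \eqref{eq:limit_H_rectangular}. There is some bookkeeping subtlety in tracking which of the two complex roots has positive imaginary part (and verifying the sign of $A$, equivalently of $t$, so that $\Im U_c = +\sqrt{\Delta}/(2A) > 0$), and in confirming that the locus $\{\Delta > 0\}$ is exactly the liquid region for this shape. None of this is conceptually hard once the substitution is in place, but it requires care to get the constants right.
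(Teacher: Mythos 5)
Your proposal matches the paper's proof: it likewise plugs the interlacing coordinates $a_0=-1$, $b_1=r-1$, $a_1=r$, $\eta=1/\sqrt r$ into the critical equation, solves the resulting quadratic, integrates $\Im U_c/(1-s)$ via \cref{thm:limiting_surface_formula}, and deduces continuity from \cref{thm:continuity} noting that for $m=1$ the system \eqref{eq:cont_cond} is empty. One small correction to your bookkeeping: with $A=t>0$ the non-real roots occur when $\Delta=B^2-4AC<0$, in which case $\Im U_c=\sqrt{-\Delta}/(2t)$, and the identity to verify is $-\Delta=\frac{1}{r}\bigl(s(4r-(1+r)^2s)+2(r-1)\sqrt r\,sx-rx^2\bigr)$, so the extra $\sqrt r$ in the denominator of \eqref{eq:limit_H_rectangular} comes from this $1/\sqrt r$ factor inside $\sqrt{-\Delta}$, not from $2A(1-s)=2s(1-s)$.
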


\begin{remark}\label{rem:square_case}
	In the case $r=1$ (square Young tableaux), we get
	\begin{equation*}
		H^\infty_1(x,t)=\frac{1}{\pi} \int_{0}^{t} \frac{\sqrt{4 s - 4 s^2 - x^2}}{2 s - 2 s^2}\dd{s}.
	\end{equation*}
        The graph of the function $\frac{\sqrt{4 s - 4 s^2 - x^2}}{2 s - 2 s^2}$ is plotted on the left-hand side of \cref{fig:density_plot}, while the corresponding limiting surface $T^\infty_1$ is on the right.
	The above integral can be explicitly computed, 
    recovering the formula found by Pittel and Romik from \cite{pittel2007young}.
    Pittel and Romik also give formulas for the general rectangular case, which
    should coincide with \eqref{eq:limit_H_rectangular}, though we could not verify it.\end{remark}

Using precisely the same proof of \cref{prop:limit_shape}, one can also obtain explicit formulas for $L$-shaped diagrams; since the latter expressions are pretty involved, we decided not to display them.

\begin{figure}[ht]
	\includegraphics[height=4.6cm]{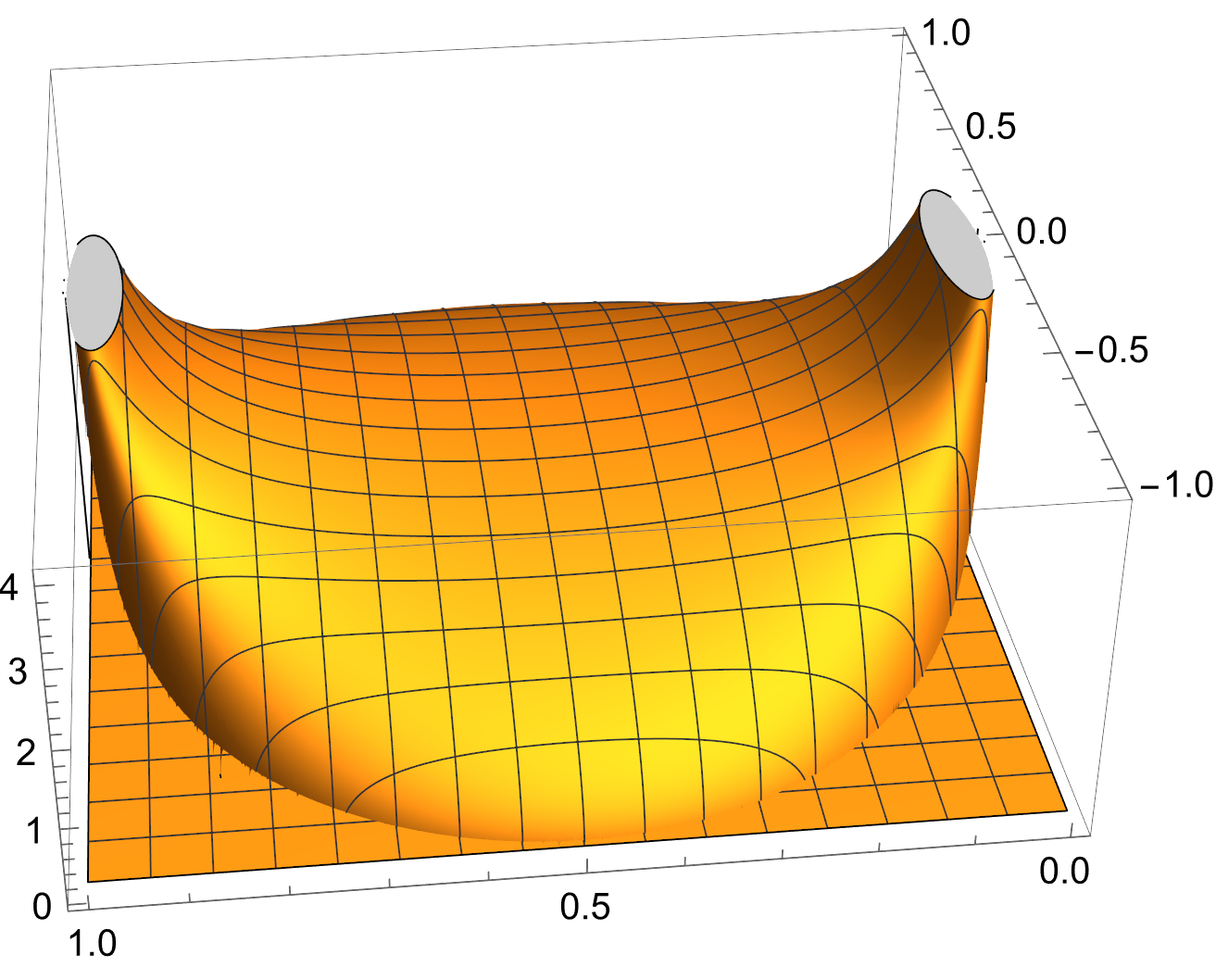}
	\qquad\qquad
	\includegraphics[height=4.6cm]{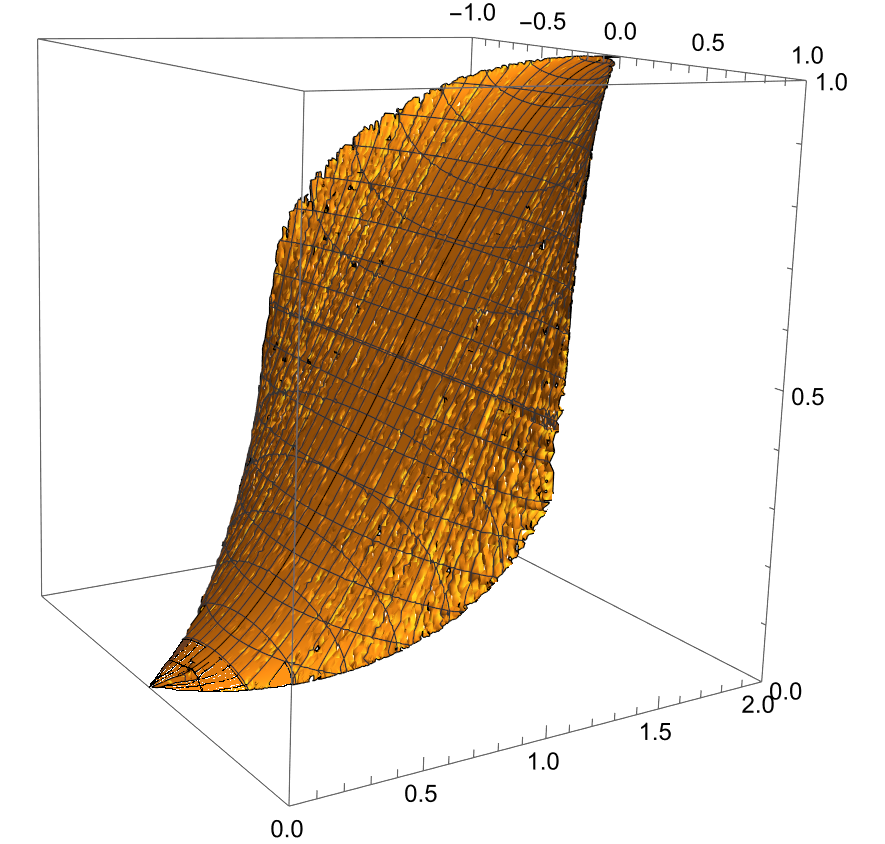}
	\captionsetup{width=\linewidth}
	\caption{\textbf{Left:} The graphs of the function $\frac{\sqrt{4 s - 4 s^2 - x^2}}{2 s - 2 s^2}$ from \cref{rem:square_case}. \textbf{Right:} The corresponding limiting surface $T^\infty_1$ for squared diagrams.
	Note that we are using two different axes' orientations to improve the visualization's quality.
		\label{fig:density_plot}}
\end{figure}

\subsubsection{Two concrete examples of $L$-shape diagrams}
\label{ssec:examples}
We now consider two specific diagrams $\la^0$ and $\widetilde \la^0$
which are both $L$-shaped (i.e.~$m=2$).
Because of the shape of the corresponding liquid regions (see pictures in \cref{fig:Example1,fig:Example2}),
the first one is called the \emph{heart example} and the second one the \emph{pipe example}. 

\medskip

In the heart example (c.f.\ \cref{fig:Example1}), the Young diagram $\la^0$ has
interlacing coordinates 
\begin{equation}\label{eq:example_1}
	a_0=-5\;<\;b_1=-4\;<\;a_1=-1\;<\;b_2=3\;<\;a_2=5.
\end{equation} 
In this case we have $|\la^0|=13$, so that $\eta=1/\sqrt{ |\la^0|}=1/\sqrt{13}$ and 
$[\eta\, a_0, \eta\, a_m]=[-5/\sqrt{13},5/\sqrt{13}]\approx[-1.39,1.39]$.

In the pipe example (c.f.\ \cref{fig:Example2}), the Young diagram $\widetilde \la^0$
has interlacing coordinates
\begin{equation}\label{eq:example_2}
	\widetilde a_0=-200\;<\;\widetilde b_1=-197\;<\; \widetilde a_1=-90\;<\;\widetilde b_2=10\;<\;\widetilde a_2=103.
\end{equation} 
In this case, we have $|\widetilde \la^0|=9900$, so that $\widetilde\eta=\frac{1}{30 \sqrt{11}}$
and
$[\widetilde \eta\, \widetilde a_0, \widetilde \eta\, \widetilde a_m]=[-\frac{200}{30 \sqrt{11}},\frac{103}{30 \sqrt{11}}]\approx[-2.01,1.04]$. 

\medskip

Various pictures of these two examples, discussed in the sequel,
are presented in \cref{fig:Example1,fig:Example2}.
\begin{figure}[p]
		\vspace{1mm}
		\includegraphics[width=0.32\linewidth]{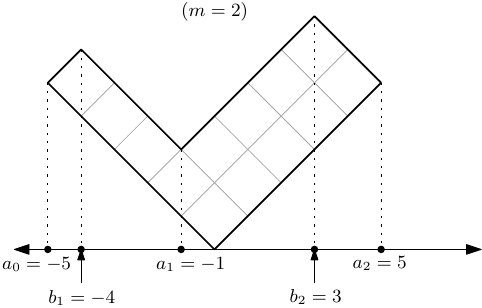}
		\includegraphics[width=0.32\linewidth]{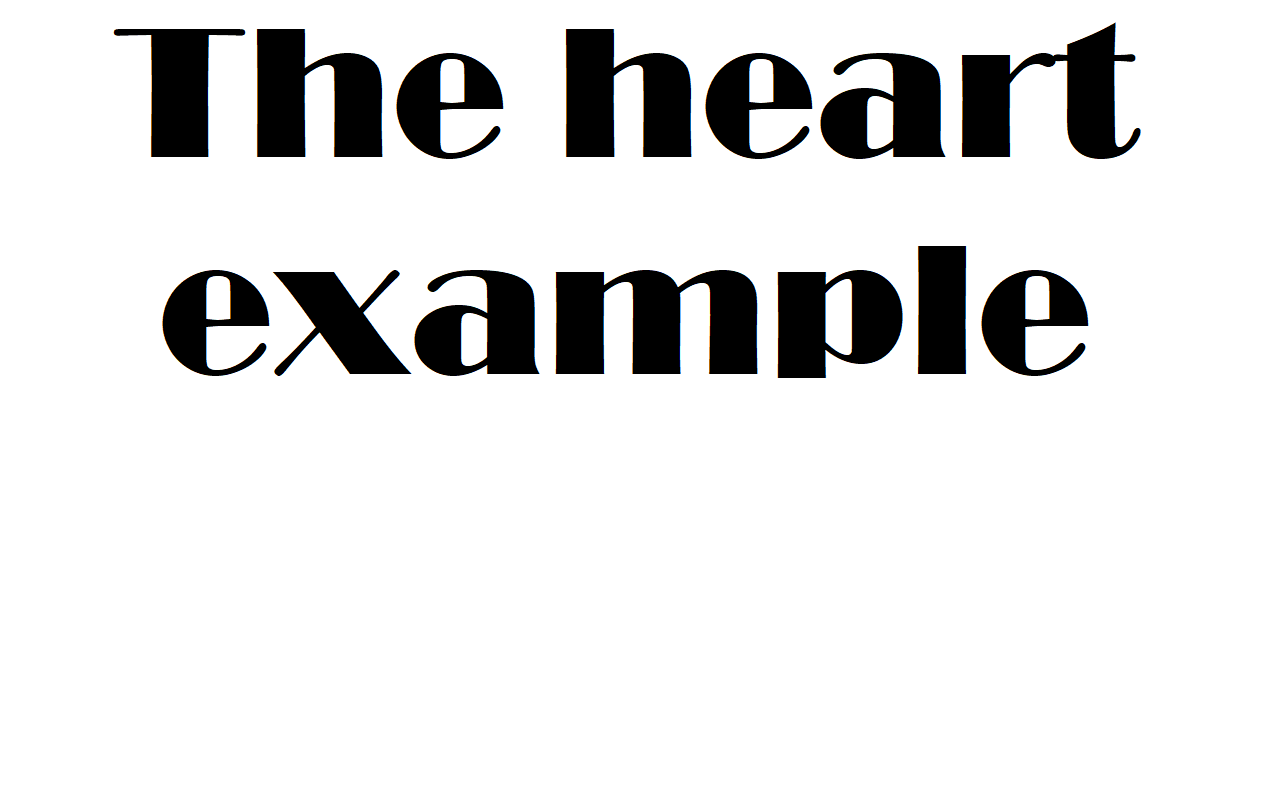}
		\includegraphics[width=0.32\linewidth]{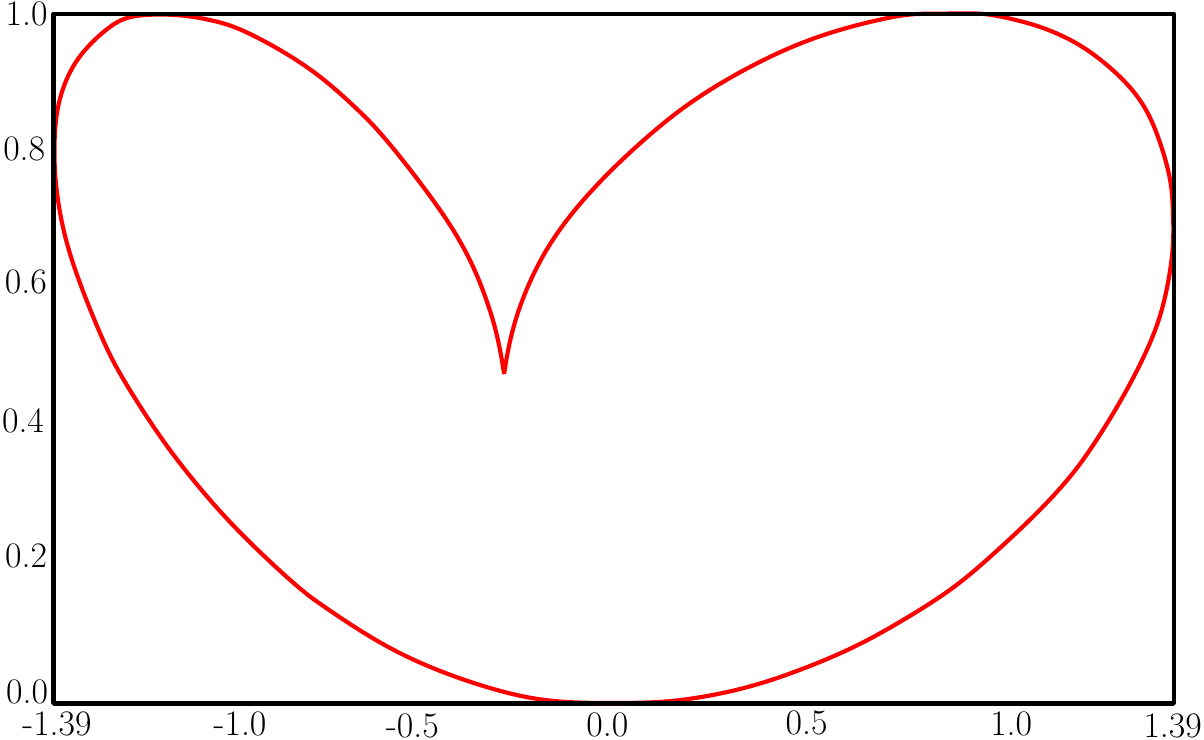}
		 \includegraphics[width=0.32\linewidth]{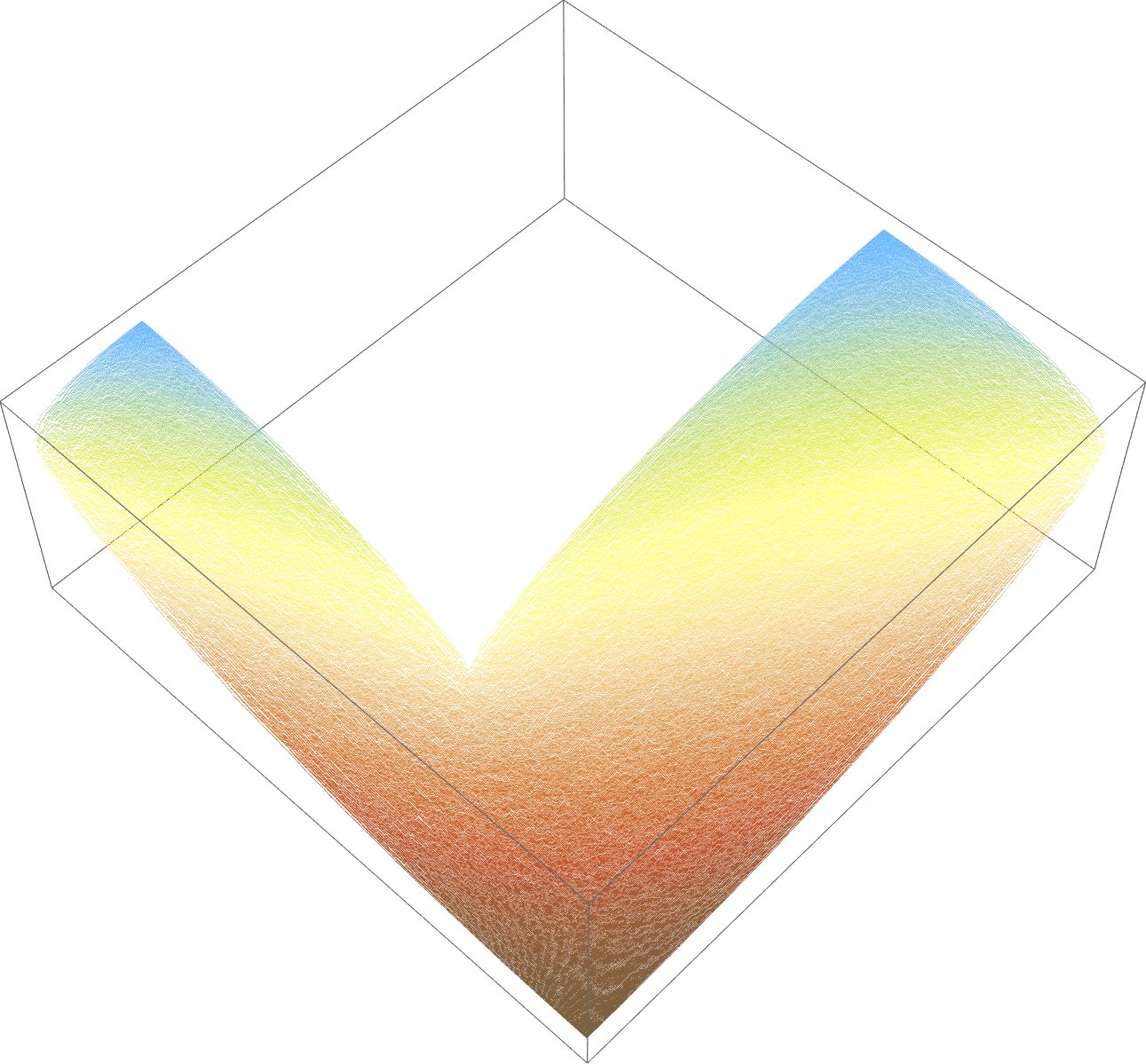}
		\includegraphics[width=0.32\linewidth]{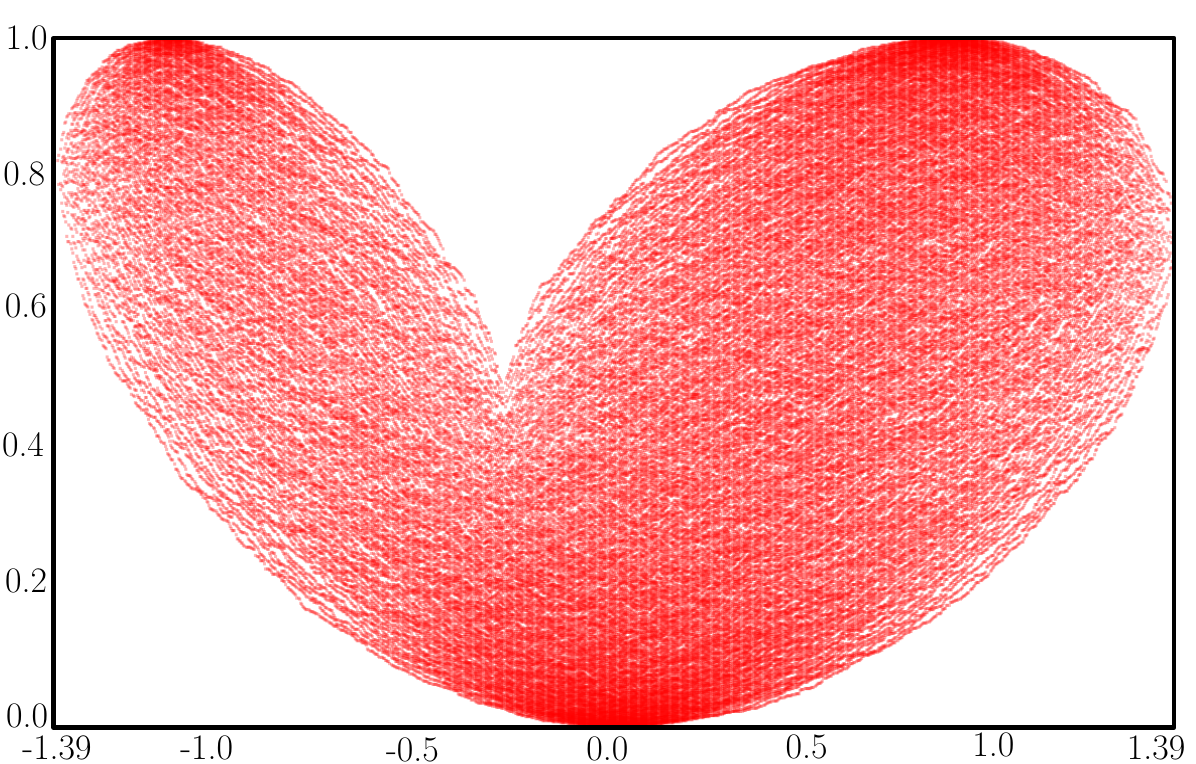}
		\includegraphics[width=0.32\linewidth]{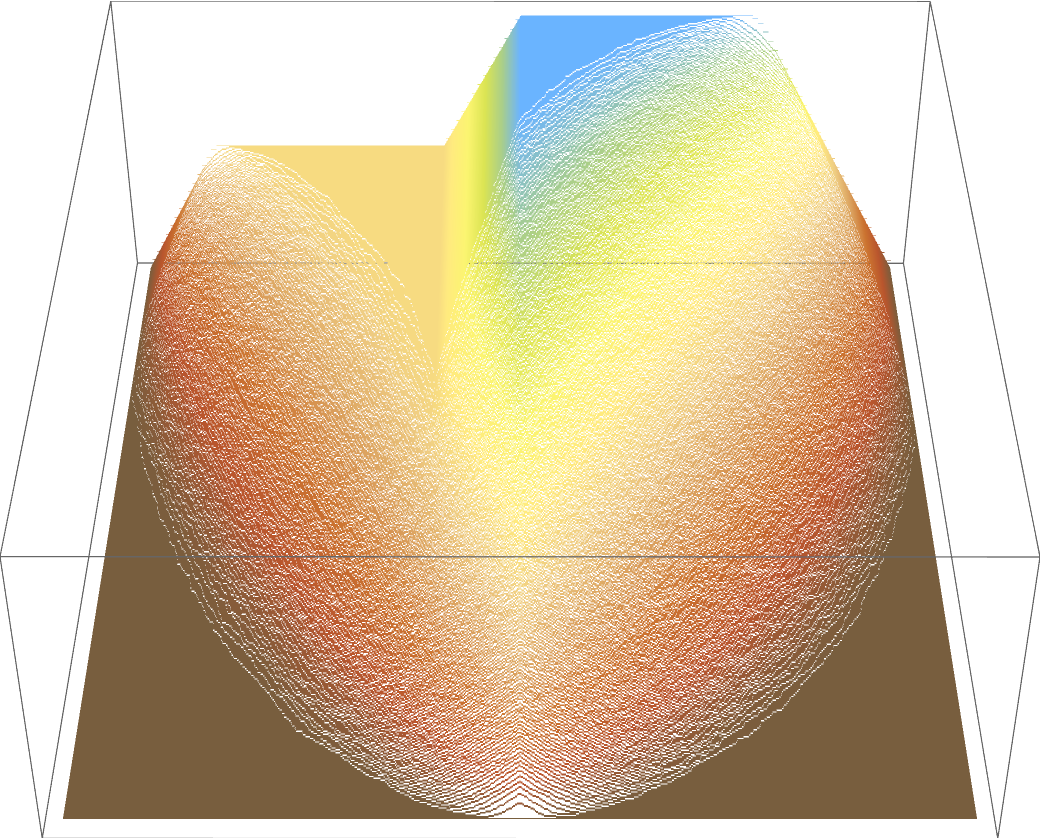}
		\vspace{1mm}
		\captionsetup{width=\linewidth}
		\caption{Figures for the heart example. \textbf{Top-left:} The Young diagram $\la^0$ considered in the heart example, see \eqref{eq:example_1}. \textbf{Top-right:} The frozen boundary curve of the corresponding liquid region.
			\textbf{Bottom (from left to right):} a uniform random tableau
			of shape $\la_N$ with $N=130000$ boxes ($n=100$) displayed as a discrete surface in a 3D space (brown is used for small values of the surface and blue for large ones); the corresponding bead processes $M_{\la_N}$; the corresponding height function $H_{\la_N}$.\label{fig:Example1}}
		\vspace{6mm}
		\includegraphics[width=0.32\linewidth]{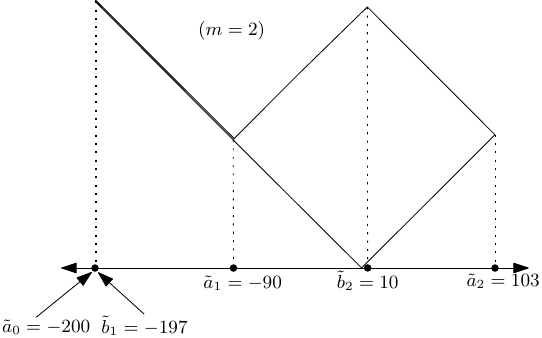}
		\includegraphics[width=0.32\linewidth]{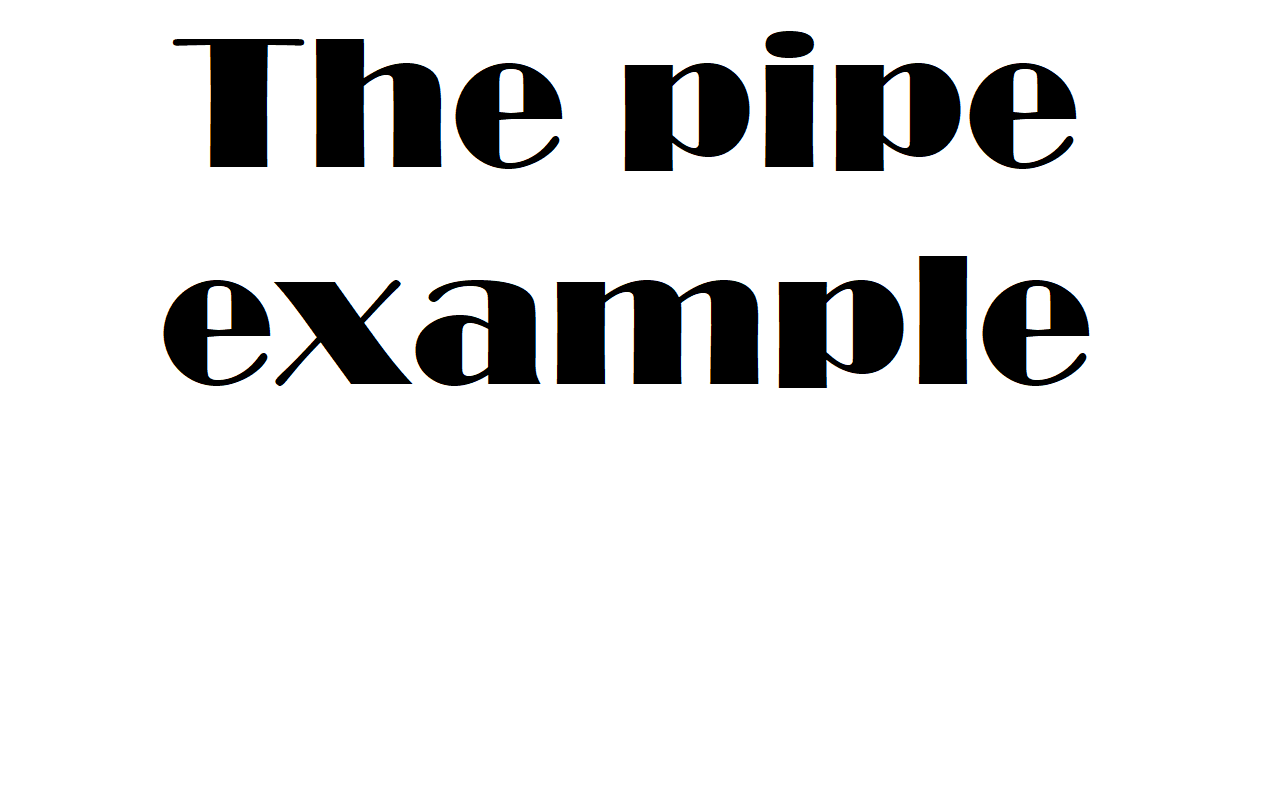} 
		\includegraphics[width=0.32\linewidth]{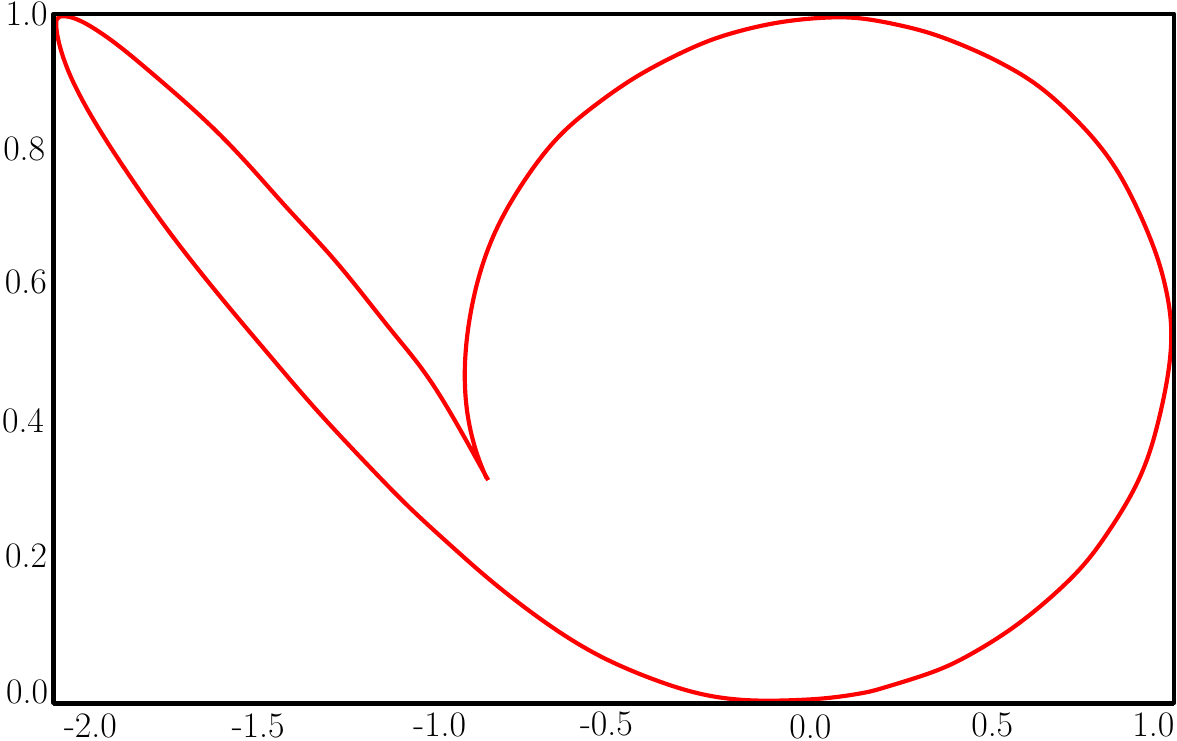}
		\includegraphics[width=0.32\linewidth]{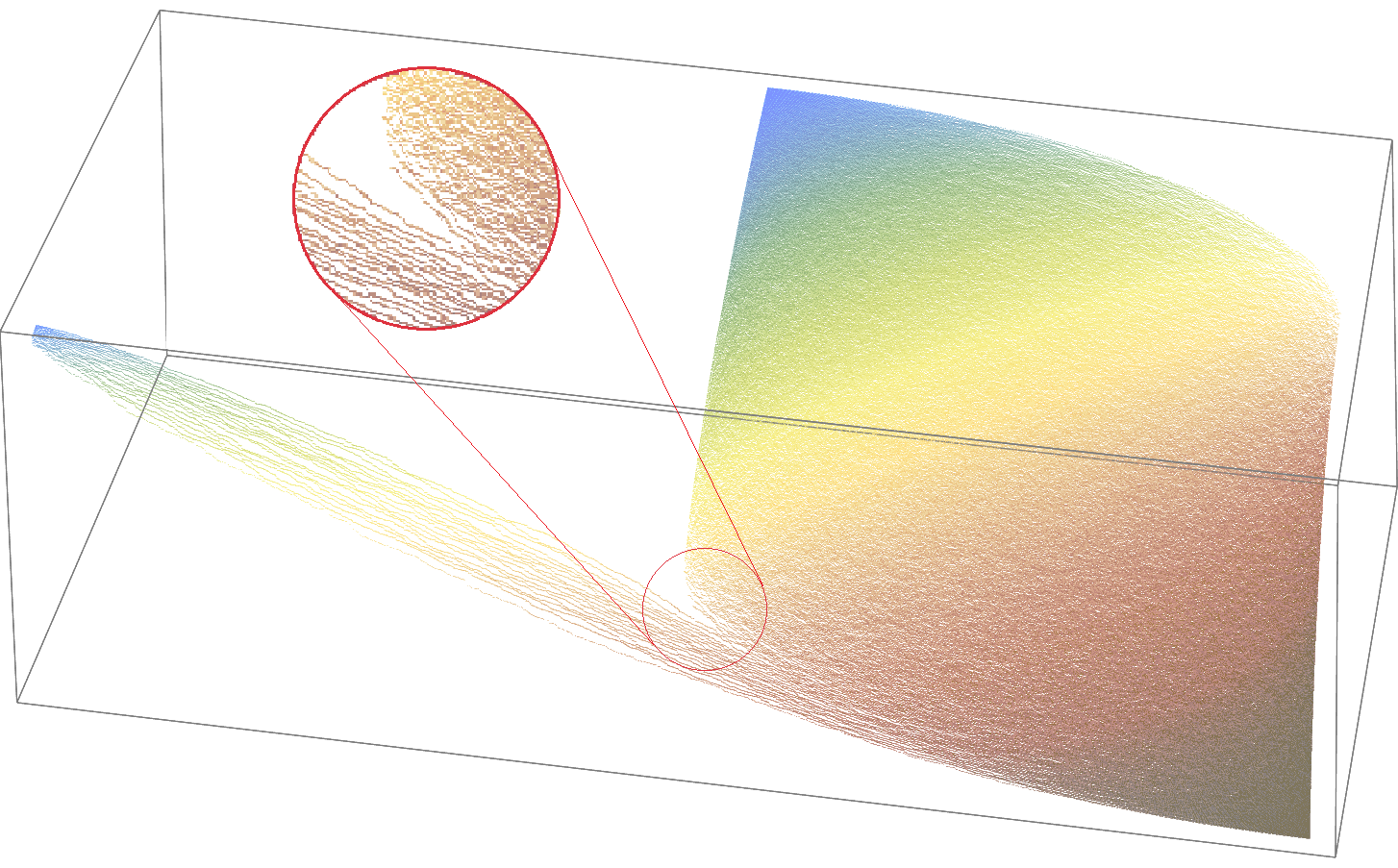}
		\includegraphics[width=0.32\linewidth]{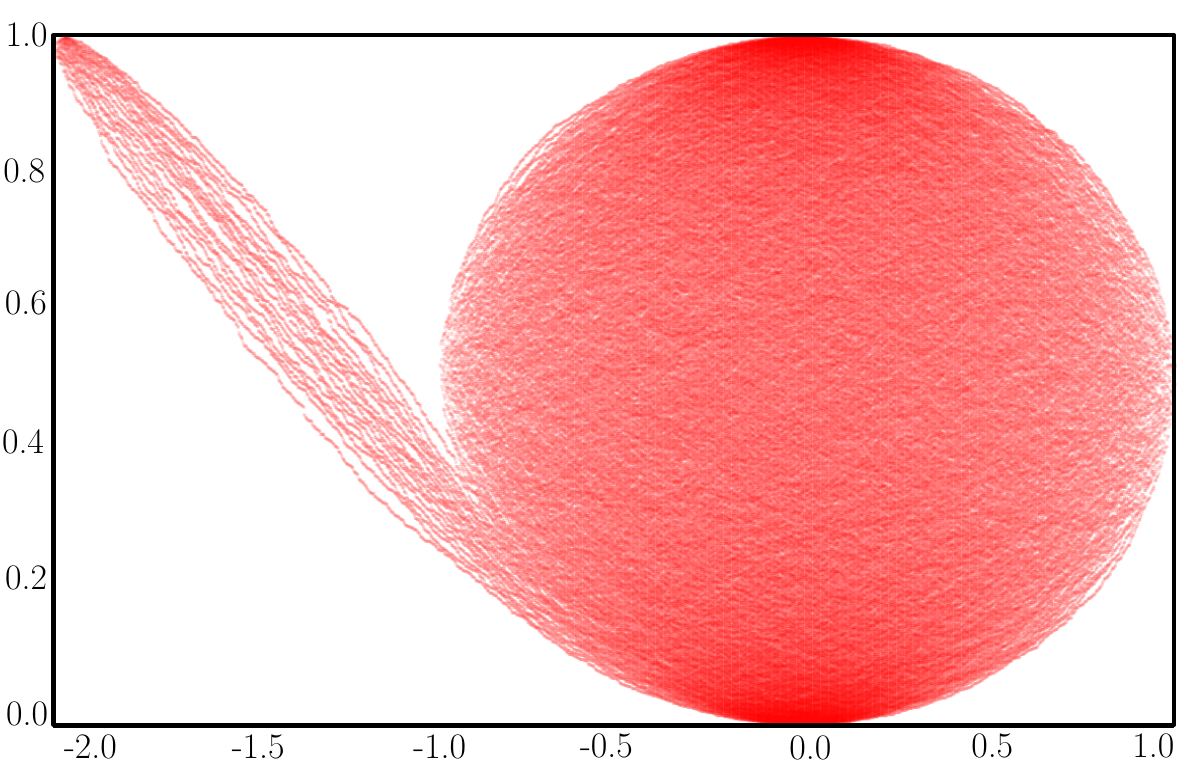}
		\includegraphics[width=0.32\linewidth]{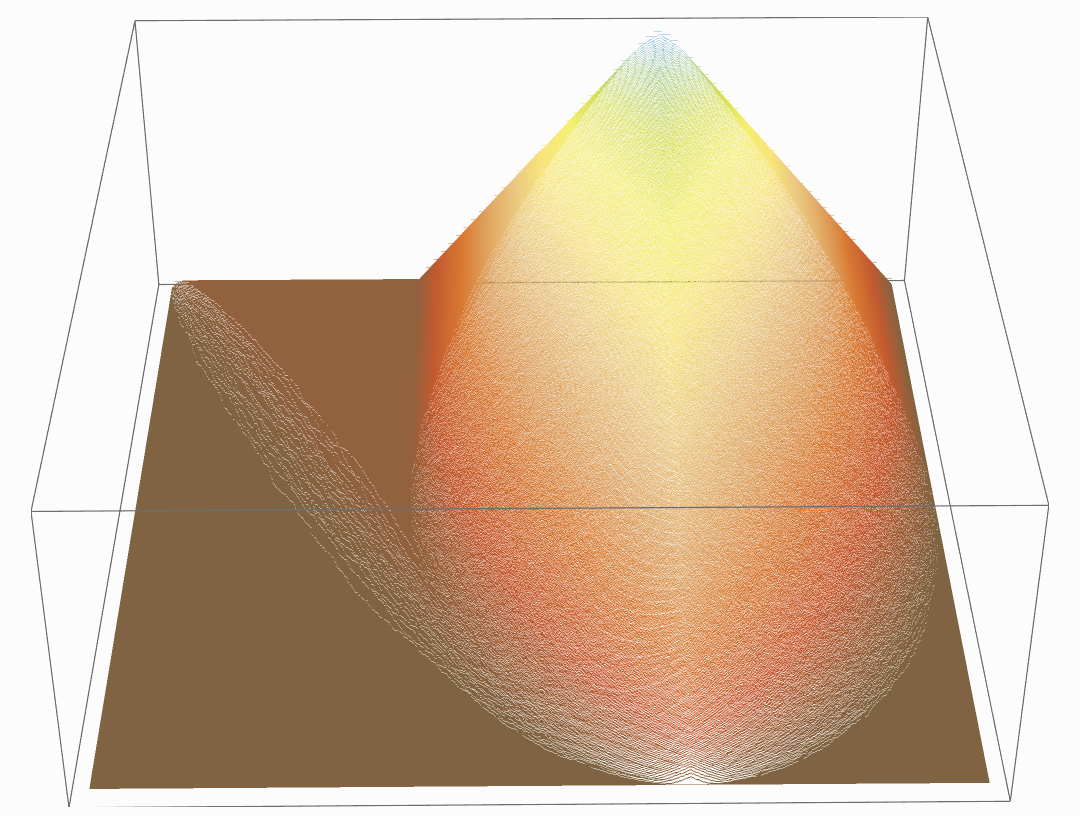}
		\vspace{1mm}
		\captionsetup{width=\linewidth}
		\caption{Figures for the pipe example. \textbf{Top-left:} The Young diagram $\widetilde \la^0$ considered in the pipe example, see \eqref{eq:example_2}. \textbf{Top-right:} The frozen boundary curve of the corresponding liquid region.
			\textbf{Bottom (from left to right):} a uniform random tableau
			of shape $\la_N$ with $N=59400$ boxes ($n=6$) displayed as a discrete surface in a 3D space (brown is used for small values of the surface and blue for large ones); the corresponding bead process $M_{\la_N}$; the corresponding height function $H_{\la_N}$. The red circle in the picture in the left-hand side highlights the discontinuous location of the limiting surface.  \label{fig:Example2}}
	\end{figure}
For both examples, we have computed (using the parametrization from \cref{prop:description_liquid_region} below) the boundary of the liquid region
defined in \cref{defn:liquid_region}. Independently, we have also generated uniform random tableaux of shape $\lambda_N$ for large $N$ (using the Greene--Nijenhuis--Wilf
hook walk algorithm \cite{greene1982probabilistic}). 
The bead processes corresponding to these uniform random tableaux
are also plotted,
and we see that, in both cases, the position of the beads coincides with the liquid region,
as predicted by \cref{thm:limiting_surface_formula}. Finally, we plotted the height functions associated with the bead processes.
\medskip

An essential difference between the two examples is that in the heart example, the interlacing coordinates satisfy Condition~\eqref{eq:cont_cond},
while this is not the case in the pipe example.
From \cref{thm:continuity}, we, therefore, expect to see a continuous limiting surface
in the heart example and not in the pipe example. This is indeed the case,
as we now explain.

In the heart example, the intersection of the liquid region with any vertical line
is connected; in other terms, for every $x\in[\eta\, a_0,\eta\, a_m]$, the function $t \mapsto H^\infty(x,t)$ is first constant equal to $0$, then strictly increasing, and then constant equal to its maximal value.
Therefore, with the notation of \eqref{eq:Tplusminus}, 
we have $T^\infty_-(x,y)=T^\infty_+(x,y)$ for all $(x,y)$ in $D_{\la^0}$ and the limiting surface $T^{\infty}$ is defined and continuous on the whole set $D_{\la^0}$.
Looking at the random tableau drawn as a discrete surface, it is indeed plausible
that it converges to a continuous surface.

In the pipe example, however, we can find some $x_0$ (on the right of $\eta\, a_1=-\frac{3}{\sqrt{11}}\approx-0.9$) so that
the liquid region intersects the line $x_0 \times [0,1]$ in two disjoint intervals.
The function $t \mapsto H^\infty(x,t)$ is then constant between these two intervals,
and the limiting surface $T^\infty$ is discontinuous.
This discontinuity can be observed on the simulation of a uniform random tableau (see the zoom inside the red circle on the left-hand side).

\subsubsection{Characterizing continuity for $L$-shapes}\label{sect:disc_phen}
In numerical simulations, we need to consider extremely unbalanced diagrams $\la^0$ 
(as the one in the pipe example above) to observe a discontinuity in the limit shape.
We will see, however, in this section that such discontinuity occurs for generic $L$-shape diagrams 
$\la^0$.

To this end, let us parametrize $L$-shape Young diagrams as follows.
For $(p,q,r)\in\diamond:=\left\{(p,q,r)\in\Q^3\,|\, r > 0, p\in(-1,r), |p| < q \leq \min\{p + 2, 2r-p\}\right\}$,
we let $\la^0_{p,q,r}$ have interlacing coordinates
\begin{equation}\label{eq:param_int}
	a_0=-1\quad<\quad b_1=\frac{p + q - 2}{2}\quad<\quad a_1=p\quad<\quad b_2=\frac{p - q + 2r}{2}\quad <\quad a_2=r.
\end{equation} 
Then the inner corner of $\la^0_{p,q,r}$ has coordinates $(p,q)$
and $\eta=\frac{2}{\sqrt{p^2 - 2 p (-1 + r) + q (2 - q + 2 r)}}$;
see \cref{fig:phase_diagram-param} for an illustration. 
\begin{figure}[ht]
	\includegraphics[height=9cm]{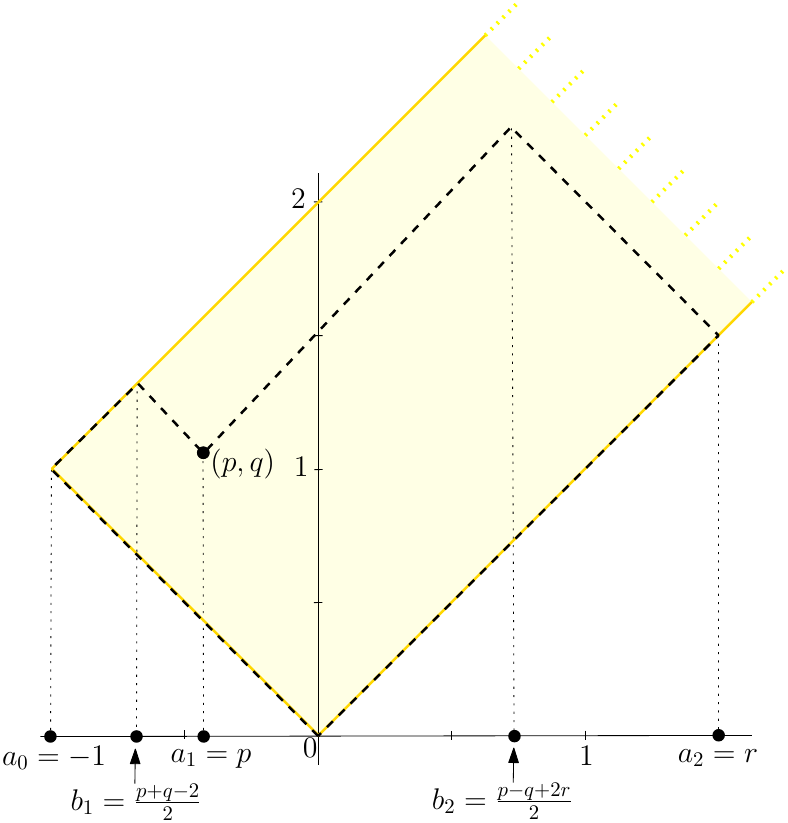}
	\captionsetup{width=\linewidth}
	\caption{The Young diagram $\la^0_{p,q,r}$. In yellow, the region $\diamond$. \label{fig:phase_diagram-param}}
\end{figure}

Given the diagram $\la^0_{p,q,r}$, we denote by $D_{p,q,r}=D_{\la^0_{p,q,r}}$ and $T^\infty_{p,q,r}$
the corresponding domain and limiting surface, as defined in \cref{ssec:limiting-surface}.
The following results characterizes the triplets $(p,q,r)$, for which
$T^\infty_{p,q,r}$ is defined and continuous on the whole domain $D_{p,q,r}$.
\begin{proposition}\label{prop:phase_diagram}
	The following results hold:
	\begin{itemize} 
		\item If $r=1$ then the surface $T^\infty_{p,q,1}$ is defined and 
          continuous on $D_{p,q,1}$
          if and only if \begin{equation*}
			 p=0\qquad \text{or}\qquad q=2-\sqrt{2-p^2}=:Q(p).
		\end{equation*}
		\item If $r\neq 1$, then the surface $T^\infty_{p,q,r}$ is 
          defined and continuous on $D_{p,q,r}$ if and only if 
		\begin{equation*}
			p\leq 0 \text{ and } q=Q^+_r(p)  \qquad \text{or}\qquad  p\geq r-1 \text{ and } q=Q^-_r(p),
		\end{equation*}
		where
		\begin{align*}
			Q^\pm_r(p)=1 + r \pm \frac{ \sqrt{(1 + p - r) (1 + 2 p - r) ( 
					p r + (1 + r)^2-p - 2 p^2)}}{1 + 2 p - r}.
		\end{align*}
	\end{itemize}
\end{proposition}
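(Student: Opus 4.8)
The plan is to specialize the general continuity criterion of \cref{thm:continuity} to the $L$-shape case $m=2$, where the system \eqref{eq:cont_cond} reduces to a single equation (the index $i_0$ runs over $\{1,\dots,m-1\}=\{1\}$). Substituting the interlacing coordinates \eqref{eq:param_int} into this one equation, with $i_0=1$,
\[
\frac{1}{a_1-a_0}+\frac{1}{a_1-a_2}=\frac{1}{a_1-b_1}+\frac{1}{a_1-b_2},
\]
and using $a_0=-1$, $a_1=p$, $a_2=r$, $b_1=\tfrac{p+q-2}{2}$, $b_2=\tfrac{p-q+2r}{2}$, one gets $a_1-a_0=p+1$, $a_1-a_2=p-r$, $a_1-b_1=\tfrac{p-q+2}{2}$, $a_1-b_2=\tfrac{p+q-2r}{2}$. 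Clearing denominators turns the continuity condition into a polynomial equation $P(p,q,r)=0$. First I would carry out this elimination and factor $P$; I expect it to factor with one factor linear in the parameter measuring the ``position'' of the inner corner (which should vanish for rectangular-like degenerations, i.e.\ $p=0$ when $r=1$, or the edge cases $p=0$ resp.\ $p=r-1$ in general) and one quadratic factor in $q$ whose roots are exactly the functions $Q(p)$, $Q^\pm_r(p)$ in the statement.

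Next I would treat the case $r=1$ separately, since then $a_1-a_2=p-1$ and several expressions simplify; solving the resulting equation for $q$ should give the two branches $q=0$ (discard, or rather corresponding to a degenerate/rectangular situation where the $L$ collapses) — here the genuine solution is the pair $p=0$ (where $\la^0$ degenerates to a rectangle and \cref{thm:continuity} gives unconditional continuity, consistently with the remark after that theorem) or $q=2-\sqrt{2-p^2}=Q(p)$; I should check the sign of the discriminant $2-p^2$ is positive on $\diamond\cap\{r=1\}$ so $Q(p)$ is real, and verify $Q(p)$ lands in the admissible range $|p|<q\le\min\{p+2,2-p\}$. For $r\neq 1$ I would solve the quadratic in $q$ coming from the quadratic factor of $P$, obtaining $q=Q^\pm_r(p)$ with the stated radical; then a case analysis on the linear factor shows the ``$\pm$'' alternative is tied to the sign condition $p\le 0$ (paired with $Q^+_r$) versus $p\ge r-1$ (paired with $Q^-_r$). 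The denominator $1+2p-r$ appearing in $Q^\pm_r$ suggests that $p=\tfrac{r-1}{2}$ is a special value where the two branches meet or degenerate, and I would handle that point by a limiting argument or by noting it is excluded/covered by the linear factor.

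The main obstacle I anticipate is twofold. First, purely bookkeeping: after clearing denominators the polynomial $P(p,q,r)$ has moderately high degree and one must correctly identify and discard spurious factors introduced by multiplying through (e.g.\ factors like $(p+1)$ or $(p-r)$ that cannot vanish on $\diamond$ since $p\in(-1,r)$), so that the surviving locus matches exactly the claimed branches — this requires care with the geometry of the admissible region $\diamond$. Second, and more substantively, one must verify that each solution branch actually lies inside $\diamond$ (the constraints $r>0$, $p\in(-1,r)$, $|p|<q\le\min\{p+2,2r-p\}$) and, conversely, that no admissible parameter outside these branches satisfies \eqref{eq:cont_cond}; showing the radicand in $Q^\pm_r(p)$ is nonnegative precisely on the indicated $p$-ranges and that the resulting $q$ respects the upper and lower bounds is the delicate inequality-chasing part. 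I would organize this as a sequence of elementary but careful sign computations, possibly aided by factoring the radicand $(1+p-r)(1+2p-r)(pr+(1+r)^2-p-2p^2)$ and tracking the sign of each factor against $p\le 0$ versus $p\ge r-1$.
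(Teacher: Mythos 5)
Your proposal is correct and follows essentially the same route as the paper: specialize \cref{thm:continuity} to $m=2$, substitute the parametrization \eqref{eq:param_int} with $i_0=1$, clear denominators, and solve the resulting algebraic equation for $q$ (treating $r=1$ and $r\neq 1$ separately, and checking admissibility in $\diamond$). The paper's own proof is exactly this, stated very tersely ("Solving the above equation, one gets the solutions claimed"). One minor correction in your sketch: in the $r=1$ case the equation does not have a factor producing $q=0$; rather, after cross-multiplying both sides carry a common factor of $2p$, which yields the branch $p=0$ (a symmetric $L$-shape, not a rectangle — the diagram is genuinely $L$-shaped but invariant under $x\mapsto-x$), and dividing by $2p$ leaves the quadratic $q^2-4q+2+p^2=0$ with admissible root $q=2-\sqrt{2-p^2}$. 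Your subsequent identification of the correct branches is nonetheless right, as is your plan for the $r\neq 1$ case including the role of the sign of $1+2p-r$ in distinguishing $Q^\pm_r$.
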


\begin{remark}
	In the case $r=1$, we note that there is a dense set (dense in the continuous curve $\set{(x,y)\in\R^2 | y=Q(x),x\in(0,1)}$) of rational solutions $(p,q)$ to the equation $q=Q(p)$. They are parametrized by
	\begin{equation*}
		\Set{(p,q)\in\Q^2 | q=Q(p)}=\Set{\left(\frac{s (2 r + s) - r^2}{s^2 + r^2},1 + \frac{2 s (s - r)}{s^2 + r^2}\right) |  (s,r)\in\Q^2,sr\neq 0}.
	\end{equation*}
	These results can be obtained in the same way as one obtains the parametrization for the rational points of the unit circle, noting that $x^2+y^2=1$ if and only if
	$(x-y)^2+(x+y)^2=2$. A similar remark holds also in the case $r>1$.
\end{remark}

\begin{figure}[ht]
	\includegraphics[height=6.5cm]{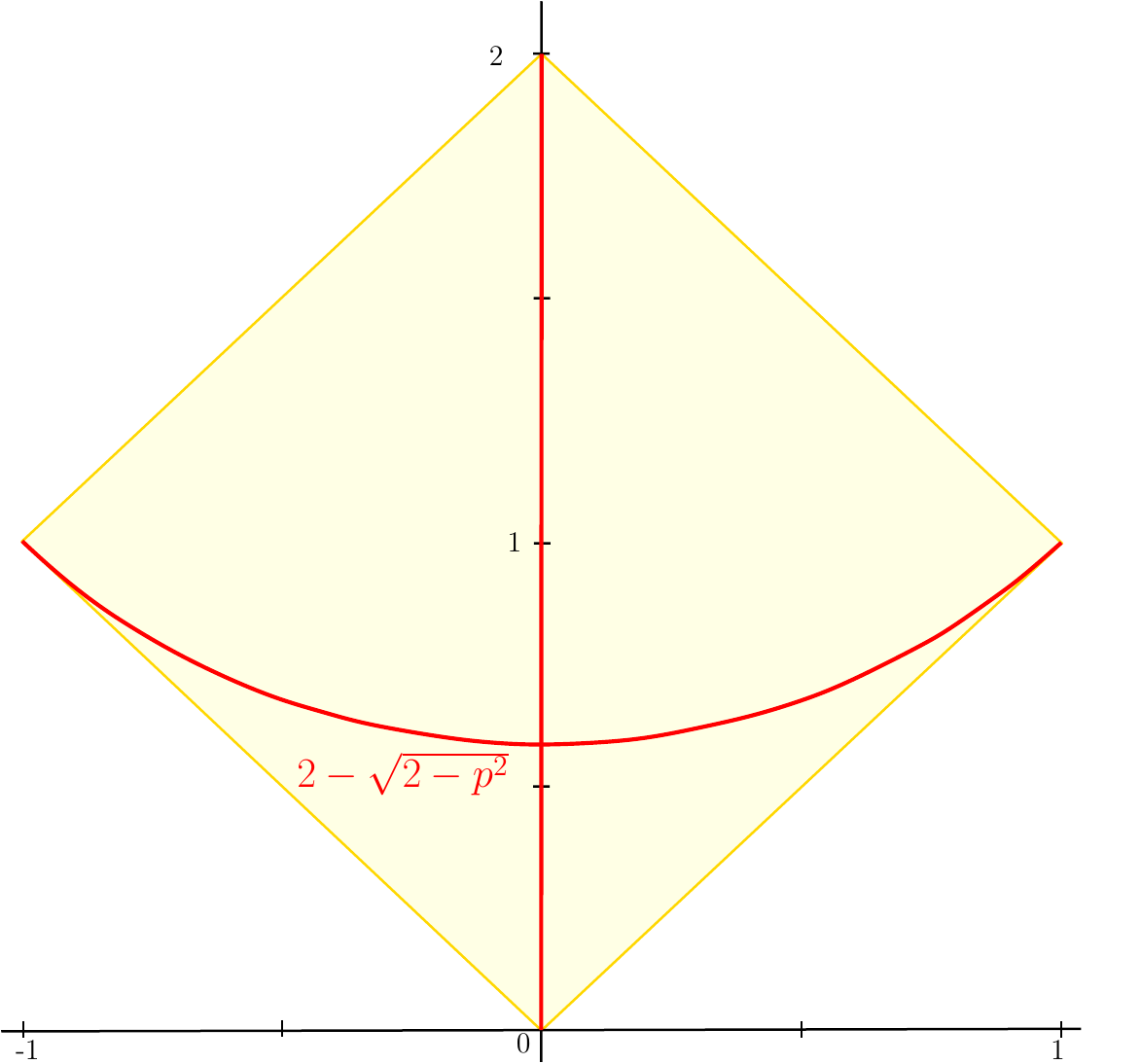}
	\qquad\qquad
	\includegraphics[height=6.5cm]{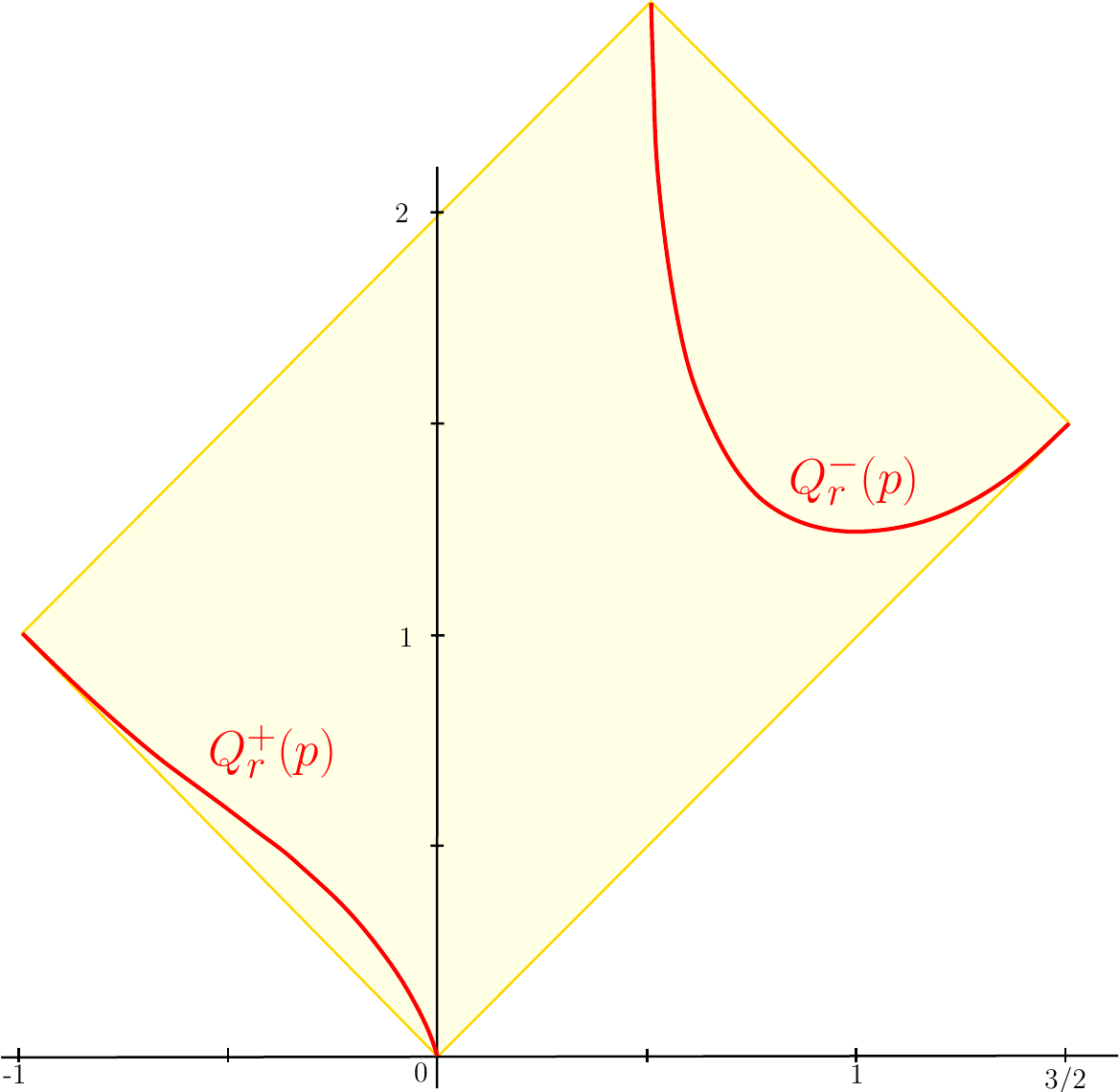}
	\captionsetup{width=\linewidth}
	\caption{\textbf{Left:} Here we fixed the parameter $r=1$. The yellow region is the region $\left\{(p,q)\in\R^2\,|\, p\in(-1,1), |p| < q \leq 2-|p|\right\}$. \cref{prop:phase_diagram} states that the limiting surface $T^\infty_{p,q,1}$ is continuous if and only if $(p,q)$ lies on one of the two red curves. \textbf{Right (case $r>1$):} Here $r=3/2$. The yellow region is the region $\left\{(p,q)\in\R^2\,|\, p\in(-1,3/2), |p| < q \leq \min\{p + 2, 2r-p\}\right\}$. \cref{prop:phase_diagram} states that the limiting surface $T^\infty_{p,q,r}$ is continuous if and only if $(p,q)$ lies on one of the two red curves.
	\label{fig:phase_diagram}}
\end{figure}

\subsection{Local limit results}

We now present our local limit results. We first introduce the random infinite bead process mentioned before.

\subsubsection{The random infinite bead process}
\label{sec:bead-cedric}
The second author \cite{boutillier2009bead} constructed a two-parameter family of ergodic Gibbs measures on the set of infinite bead configurations (recall the terminology introduced in \cref{sect:yt_pyt_bc}). 
These measures are constructed as limits of some dimer model measures on some bipartite graphs when certain weights degenerate. These Gibbs measures are shown to be determinantal point processes; we refer the reader to \cite{hough2009determinantal} for background on determinantal point processes. 
In particular, the following result is a slight reformulation of \cite[Theorem 2]{boutillier2009bead}, see \cref{rem:kernel_Jalphabeta} below.

\begin{theorem}[\cite{boutillier2009bead}]\label{thm:bead_kernel_cedric}
	Let $(\a,\be)$ be in $\mathbb R_+ \times (-1,1)$.
	There exists a determinantal point process $M_{\a,\be}$ on $\mathbb Z \times \mathbb R$
	with correlation kernel
	\begin{equation}\label{eq:BeadKernel}
		J_{\a,\be}\big((x_1,t_1),(x_2,t_2)\big) =\begin{cases}
			\frac{\a}{2\pi} \int_{[-1,1]} \E^{\I (t_1-t_2) \a u}
			\left( \be + \I u \sqrt{1-\be^2} \right)^{x_2-x_1} \dd{u} &\text{ if}\quad x_2 \ge x_1;\\
			-  \frac{\a}{2\pi} \int_{\mathbb R \setminus [-1,1]} \E^{\I (t_1-t_2) \a u}
			\left( \be + \I u \sqrt{1-\be^2} \right)^{x_2-x_1} \dd{u} &\text{ if}\quad x_2 < x_1.
		\end{cases}
	\end{equation}
\end{theorem}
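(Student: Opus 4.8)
The plan is to deduce the statement directly from \cite[Theorem~2]{boutillier2009bead}. That reference constructs, as a limit of dimer measures on certain bipartite graphs, a two-parameter family of ergodic Gibbs measures on infinite bead configurations, proves that each of them is a determinantal point process, and computes the corresponding correlation kernel. Thus both the existence of the process $M_{\a,\be}$ and its determinantal structure are already available, and the only point requiring work is to check that, under the appropriate change of parameters, the kernel of \cite[Theorem~2]{boutillier2009bead} can be rewritten in the closed form \eqref{eq:BeadKernel}. This identification is the content of \cref{rem:kernel_Jalphabeta}; I sketch it now.

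I would first recall the kernel of \cite[Theorem~2]{boutillier2009bead} in its original shape --- a contour integral in one complex variable, the Gibbs measure being indexed by the magnetic (equivalently, slope) parameters inherited from the underlying dimer model. The next step is to set up the dictionary between those parameters and the pair $(\a,\be)\in\R_+\times(-1,1)$: $\be$ encodes the horizontal tilt of the configuration and $\a$ its scale along the threads, so chosen that, as $u$ runs over $\R$, the point $\be+\I u\sqrt{1-\be^2}$ sweeps a vertical line in $\CC$ meeting the unit circle exactly at $u=\pm1$ (indeed $|\be+\I u\sqrt{1-\be^2}|^2=\be^2+u^2(1-\be^2)$, which is $<1$, $=1$ or $>1$ according to whether $|u|<1$, $|u|=1$ or $|u|>1$). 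Performing the change of integration variable that carries the original contour onto this line, and observing that $\left(\be+\I u\sqrt{1-\be^2}\right)^{x_2-x_1}$ is controlled on the part of the line inside the disk when $x_2\ge x_1$ and on the part outside it when $x_2<x_1$, one splits the contour at $|u|=1$; the two pieces give exactly the two alternatives of \eqref{eq:BeadKernel}. As for the discrete sine kernel, the case distinction reflects which portion of the deformed contour actually contributes, and the relative minus sign together with the passage to the complementary contour when $x_2<x_1$ comes from the elementary fact that $\frac{1}{2\pi\I}\oint_{|z|=1}z^{k-1}\dd z$ vanishes for every nonzero integer $k$; the prefactor $\tfrac{\a}{2\pi}$ absorbs the Jacobian of the substitution and the normalisation of \cite[Theorem~2]{boutillier2009bead}.

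The only genuine difficulty is the bookkeeping: one has to track the normalisations used in \cite{boutillier2009bead} for the spacing of the threads, for the intensity of the beads along a thread, and for the orientation of the contour and the branch of $z\mapsto z^{x_2-x_1}$, and verify that they combine into precisely \eqref{eq:BeadKernel} rather than a reflected or rescaled variant. A minor analytic point must also be dealt with: when $x_1-x_2=1$ the integrand over $\R\setminus[-1,1]$ decays only like $|u|^{-1}$, so that integral is to be understood as an oscillatory (equivalently, principal value) integral, which is harmless since its imaginary part is odd in $u$ and its real part is $O(|u|^{-2})$. None of this affects the probabilistic content, for any discrepancy between \eqref{eq:BeadKernel} and the kernel of \cite{boutillier2009bead} would amount to a gauge conjugation $J\mapsto\frac{g(x_1,t_1)}{g(x_2,t_2)}J$, which leaves every correlation determinant $\det\big[J(z_i,z_j)\big]$, and hence the determinantal point process, unchanged (see \cite{hough2009determinantal}). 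Once the kernel identity \eqref{eq:BeadKernel} is established, the theorem follows from \cite[Theorem~2]{boutillier2009bead}.
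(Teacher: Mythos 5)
Your proposal reaches the right conclusion and identifies the right source, but it imagines considerably more work than the paper actually does. The paper's own justification is Remark~\ref{rem:kernel_Jalphabeta}, which consists of two short observations: (i) for $\a=1$ the kernel $J_{1,\be}$ is \emph{identical} to the kernel $J_\gamma$ of \cite[Theorem~2]{boutillier2009bead} under the renaming $\gamma=\be$ — no contour deformation, no splitting at $|u|=1$, because the original formula is already written as a pair of integrals over $[-1,1]$ and $\R\setminus[-1,1]$; and (ii) for general $\a$ the process $M_{\a,\be}$ is the image of $M_{1,\be}$ under a vertical dilation by $1/\a$, so the kernel transforms as $J_{\a,\be}\big((x_1,t_1),(x_2,t_2)\big)=\a\,J_{1,\be}\big((x_1,\a t_1),(x_2,\a t_2)\big)$, which matches \eqref{eq:BeadKernel}. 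Your sketch instead posits an original ``contour integral in one complex variable'' that must be pushed onto the vertical line through $\be$ and then split at the unit circle; this re-derivation is not needed, and the discussion of principal values for $x_1-x_2=1$ and the safety net via gauge conjugation, while not incorrect, are extraneous. You do correctly notice that $\a$ is a time-scale parameter and that the prefactor comes from the Jacobian of that rescaling — this is exactly the content of step (ii) — so the substance is there; it is the reconstruction of step (i) that is off the mark. In short, your route buys nothing over the paper's and runs the risk of introducing sign and branch errors precisely because it reopens a computation that \cite[Theorem~2]{boutillier2009bead} already finished.
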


\begin{definition}[Random infinite bead process]\label{defn:bead_process}
	The point process $M_{\a,\be}$ is called the {\em random infinite bead process} of intensity $\a$ and skewness $\be$.
\end{definition}
These bead processes have nice properties. First, they are translation invariant
in both directions and induce the uniform measure on bead configurations
on every finite domain (see \cite{boutillier2009bead} for a precise statement).
Moreover, they appear as limits of dimer tilings \cite{boutillier2009bead,fleming2012finitization} and of the eigenvalues of imbricated GUE matrices (called GUE corners process), see \cite{adler2014minor}.
We also note that:
\begin{itemize}
 	\item the expected number of beads in a portion of a thread of length 1 is $\frac{\alpha}{\pi}$;
 	\item the expected ratio
between the {vertical} distance of a bead $b$ from its neighbor on the left and
{below}\footnote{The interpretation of the parameter $\beta$ in the text of
  \cite{boutillier2009bead}, where \emph{above} is used instead of \emph{below},
  is wrong because of a missing minus sign inside the
$\arccos$ in Eq.\ (29) there.}
it, and
the distance between $b$ and the successive bead on the same thread, is $\frac{\arccos(\beta)}{\pi}$; see \cite[Eq.\ (29)]{boutillier2009bead}.
 \end{itemize}

\begin{remark}\label{rem:kernel_Jalphabeta}
	We note that \cref{thm:bead_kernel_cedric} is a simple reformulation of \cite[Theorem 2]{boutillier2009bead}. Indeed, for $\a=1$, the above kernel $J_{1,\beta}$ correspond to $J_\gamma$ in \cite{boutillier2009bead} for $\gamma=\beta$.\footnote{In the present paper, we use $\beta$ instead of $\gamma$ to avoid conflicts of notation with integration paths.}
	For general $\a$, the bead process $M_{\a,\be}$ is simply obtained by applying a dilatation with scaling factor $\frac{1}{\alpha}$ to $M_{1,\be}$. Then the kernel of $M_{\a,\be}$ is given by $J_{\a,\be}\big((x_1,t_1),(x_2,t_2)\big) = \alpha J_{1,\be}\big((x_1,\a t_1),(x_2,\a t_2)\big)$, which is consistent with the statement above.
\end{remark}

\subsubsection{Local limit result for the bead process}

Given a Young diagram $\la^0$, we now fix $(x_0,t_0)$ in $[\eta\, a_0, \eta\, a_m]\times [0,1]$.
In the rest of the paper, we assume that $x_0 \sqrt{|\la^0|}$ is an integer\footnote{This is not as restrictive as it might seem. Indeed, since we are going to look at the limit when $n\to \infty$, 
	as soon as $x_0 \sqrt{|\la^0|}$ is a rational number, it is possible to replace $\la^0$ with a rescaled version of $\la^0$ so that $x_0 \sqrt{|\la^0|}$ is an integer. It is also likely that,
	for a generic value of $x_0\in\mathbb{R}$, one can obtain the same result using a sequence $x_0^{(N)}$
	with $x_0^{(N)}=x_0+O(N^{-1/2})$ so that  $x_0^{(N)}\sqrt N$ is an integer.
	The proof would however need some adjustement;
	indeed, a modification of order $O(N^{-1/2})$ of $x_0$ would modify for example
	the function $h$ appearing in Eq.~\eqref{eq:equiv_integrand}.
	Therefore, since
	the proof is already rather technical as is, we have not pursued this direction.}
so that $x_0 \sqrt{N}$ is always an integer.
We look at the random bead process  $M_{\la_N}$ in a window of size $O(1) \times O(1/\sqrt{N})$
around $(x_0 \sqrt{N},t_0)$.
To this end, we define
\begin{equation}\label{eq:defn_of_Mla}
	\widetilde{M}_{\la_N}^{(x_0,t_0)} = \Set{ (x,t) \in \Z \times \R |  \big(x_0 \sqrt{N} + x,t_0 + \tfrac{t}{\sqrt{N}} \big) \in  M_{\la_N} }.
\end{equation}

In the next result, we consider the standard topology for point processes (see \cref{sec:bead} for further details). Recall also the definition of the liquid region from \cref{defn:liquid_region},
and the subsequent definitions of $\a(x_0,t_0)$ and $\beta(x_0,t_0)$.

\begin{theorem}[Local limit of the bead process associated with a Poissonized Young tableau]\label{thm:cv_bead_process}
Fix a Young diagram $\la^0$ and consider a sequence $T_N:\la_N\to [0,1]$ of uniform random Poissonized Young tableaux of shape $\la_N$. We choose $x_0\in [\eta\, a_0, \eta\, a_m]$ and $t_0 \in [0,1]$, and we let $n$ go to infinity (so $N$ goes to infinity).
\begin{itemize}
	\item If $(x_0,t_0)$ is in the liquid region, then the random bead process $\widetilde{M}_{\la_N}^{(x_0,t_0)}$ in \eqref{eq:defn_of_Mla} converges in distribution to the random infinite bead process of intensity $\alpha(x_0,t_0)$ and skewness $\beta(x_0,t_0)$.
	
	\item If $(x_0,t_0)$ is in the complement of the liquid region, then the random bead process $\widetilde{M}_{\la_N}^{(x_0,t_0)}$  converges in probability to the empty set.
\end{itemize}
\end{theorem}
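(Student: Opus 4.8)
The plan is to establish the local limit via convergence of correlation kernels of the determinantal point processes involved, exploiting the determinantal structure of Poissonized Young tableaux due to Gorin and Rahman. First I would recall the exact correlation kernel $K_{\la_N}$ for the bead process $M_{\la_N}$ coming from \cite{GR19}: it is a double-contour integral (or a sum/integral over the discrete spectral data of $\la_N$) whose integrand involves a ratio of products over the rows/columns of $\la_N$. After the rescaling \eqref{eq:defn_of_Mla}, the kernel of $\widetilde{M}_{\la_N}^{(x_0,t_0)}$ is $\tfrac{1}{\sqrt N} K_{\la_N}$ evaluated at rescaled arguments. The goal is to show this converges, uniformly on compact sets, to the bead kernel $J_{\a(x_0,t_0),\be(x_0,t_0)}$ of \eqref{eq:BeadKernel}; convergence of correlation kernels (together with a standard tightness/uniform-integrability argument, or the general machinery for convergence of determinantal processes — see \cite{hough2009determinantal}) then yields convergence in distribution of the point processes in the liquid case. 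In the frozen case, the same kernel analysis should show that the one-point intensity $\tfrac{1}{\sqrt N}K_{\la_N}((x,t),(x,t)) \to 0$, i.e. $\a(x_0,t_0)=0$, which forces convergence in probability to the empty configuration.

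**Key steps.**
(1) Write the Gorin--Rahman kernel for $M_{\la_N}$ as a contour integral $\oint\!\oint e^{N(\,\cdot\,)}(\text{lower order})\,dZ\,dW$, identifying the ``action'' $S(Z)$ (resp. $S(W)$) whose exponential dominates; here the product structure $\prod(x-\eta a_i+U)$ and $\prod(x-\eta b_i+U)$ of the critical equation \eqref{eq:critical_intro} should emerge as $e^{S'}=0$, i.e. the saddle points of $S$ are exactly the roots $U$ of \eqref{eq:critical_intro} (this is the justification of the term ``critical equation'' alluded to in the footnote). (2) Perform a steepest-descent / saddle-point analysis: in the liquid region the relevant saddle is the complex root $U_c$ with $\Im U_c>0$ and its conjugate; deform the $Z$- and $W$-contours through these saddles. (3) The local contribution near the saddles, after the $O(1/\sqrt N)$ vertical rescaling and $O(1)$ horizontal shift, produces an integral over a segment; a change of variables should turn it into $\tfrac{\a}{2\pi}\int_{[-1,1]} e^{\I(t_1-t_2)\a u}(\be+\I u\sqrt{1-\be^2})^{x_2-x_1}du$, matching \eqref{eq:BeadKernel}, with $\a,\be$ read off from $U_c$ precisely via \eqref{eq:parm_liq} — this is where the somewhat mysterious formulas $\a=\Im U_c/(1-t)$, $\be=\Re U_c/|U_c|$ get their meaning. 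The case split $x_2\gtrless x_1$ in \eqref{eq:BeadKernel} corresponds to which way one closes the contour / picks up a residue. (4) Bound the contributions away from the saddles (tails of the contours) to show they are negligible, uniformly on compacts in $(x_1,t_1,x_2,t_2)$. (5) In the frozen region, show no complex saddle is available, the contour can be kept on the real axis, and the kernel's diagonal decays, giving the empty-set limit. (6) Finally, invoke the standard criterion: locally uniform convergence of correlation kernels of uniformly bounded determinantal processes implies convergence in distribution with respect to the vague/point-process topology (cite \cref{sec:bead} and \cite{hough2009determinantal}); handle the possible non-Hermitian/non-self-adjoint nature of $K_{\la_N}$ by the conjugation-invariance of correlation functions (gauge transformations $K(u,v)\mapsto \tfrac{g(u)}{g(v)}K(u,v)$).

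**Main obstacle.**
The hard part will be step (2)--(4): carrying out the saddle-point analysis \emph{uniformly} in $(x_0,t_0)$ as it approaches the frozen boundary, and controlling the contours globally (they must be deformable to pass through $U_c$ and $\overline{U_c}$ while staying on steepest-descent paths, despite the presence of the poles at $x-\eta a_i+U=0$ and $U=0$ coming from the discrete nature of the original ensemble). One must locate all the critical points of $S$ precisely — this is presumably the content of \cref{lem:critical_points_general} and \cref{ssec:localization-critical-points} referenced in the excerpt — and verify the correct ``one complex conjugate pair'' picture holds exactly on the liquid region of \cref{defn:liquid_region}. A secondary technical point is matching the $O(1/\sqrt N)$ vs $O(1)$ anisotropic scaling (vertical vs. thread direction) correctly so that the limiting integral has the claimed form; a shift of $x_0$ by $O(N^{-1/2})$ would spoil this (as the footnote in the excerpt already warns), which is why $x_0\sqrt N\in\Z$ is assumed. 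Everything else — the reduction to kernel convergence, the frozen case, and the final point-process convergence — is comparatively routine.
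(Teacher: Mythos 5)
Your proposal follows essentially the same route as the paper: rewrite the Gorin--Rahman kernel as a double contour integral, identify the action $S$ whose critical points are the roots of \eqref{eq:critical_intro}, deform the $W$- and $Z$-contours through $U_c$ and $\overline{U_c}$ in the liquid region (the limiting kernel arising from the residue at $Z=W$ as the contours exchange relative position), match the resulting integral to $J_{\alpha,\beta}$ up to a conjugation factor, and show the diagonal of the rescaled kernel vanishes in the frozen region. The only part your outline glosses over is the frozen-region casework (the small/large/intermediate $t$ regimes of \cref{prop:t_regions}, with the intermediate case forcing one to split a contour into two pieces), but your overall strategy is the one carried out in \cref{sect:liquid_region,sect:frozen_region}.
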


Note that the second case contains the case where $(x_0,t_0)$ lies on the boundary of the liquid region
(recall from \cref{prop:description_liquid_region} that the liquid region is an open set).
See \cref{sect:future_work} for a discussion on this case.

\subsubsection{Local limit result for the tableau}
\label{ssec:local_tableau_intro}

Our next result describes the local limit of uniform random Poissonized Young tableaux. We need some preliminary definitions.

A \emph{marked standard Young tableau} is a triplet $(\la,T,(x,y))$ where $(\la,T)$ is a standard Young tableau of shape $\la$ and $(x,y)$ are the coordinates of a distinguished box in $\la$. A \emph{marked Poissonized Young tableau} is defined analogously.

We introduce a one-parameter family of random infinite standard Young tableaux directly constructed from the random infinite bead process. These will be our candidate local limits for random uniform Poissonized Young tableaux. 
Let $M_{\be}:=M_{1,\beta}$ be the random infinite bead process of intensity $1$ and skewness $\be\in (-1,1)$ introduced in \cref{defn:bead_process}. Recall that each bead in $M_{\be}$ has a position $(x, h)$, where $x$ denotes the thread number where the bead is positioned, and $h$ represents the height of the bead on this $x$-thread. There is a natural bijection between the set of beads in $M_{\be}$ and the set $\set{(x,y)\in\Z^2 | x+y \text{ is odd}}$: we label by $(0,1)$ the first bead in the zero-thread with positive height, and then we label all the other beads as shown in the left-hand side of \cref{fig:beads_to_SYT2}.

\begin{figure}[ht]
	\includegraphics[scale=0.8]{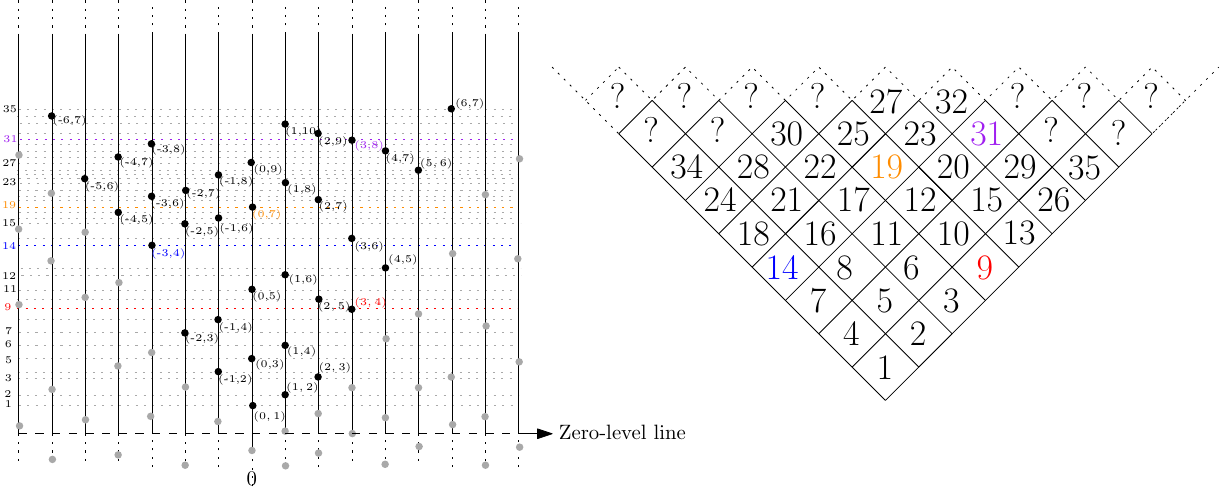}
	\captionsetup{width=\linewidth}
	\caption{\textbf{Left:} A section of a sampling of the infinite bead process $M_{\be}$. We only show the labels of the beads with a label $(x,y)\in \diamond_{\infty}$. These beads are highlighted in black. The ranking of some black beads is shown on the left-hand side of the diagram.  \textbf{Right:} The corresponding random infinite standard Young tableau $(\diamond_{\infty},R_{\be})$. On both sides of the picture, we colored some elements in correspondence to help the reader to compare the two pictures.}
	\label{fig:beads_to_SYT2}
\end{figure}

Given a bead in $M_{\be}$ labeled by $(x,y)$ and with position $(x,h)$, we denote by $H_{\be}(x,y)=h$ the height of the bead labeled by $(x,y)$. 
The collection of heights $\left\{H_{\be}(x,y)\right\}$ with $(x,y)\in\set{(x,y)\in\Z^2 | x+y \text{ is odd and } y> {|x|}}$ can be ranked increasingly, starting from the smallest one to which we assign rank 1 (see the left-hand side of the bead diagram in \cref{fig:beads_to_SYT2}; we prove in \cref{prop:inifinite_tableau_well_defined} below
that the heights $\left\{H_{\be}(x,y)\right\}$ are all distinct and have no accumulation points a.s.). Given a bead $(x,y)$, we denote its ranking by $R_{\be}(x,y)$.

\begin{definition}[Random infinite standard Young tableau]\label{defn:isyt}
	The \emph{random infinite standard Young tableau} of skewness $\be\in (-1,1)$ is the random infinite standard Young tableau $(\diamond_{\infty},R_{\be})$, where $\diamond_{\infty}$ is the infinite Young diagram formed by all the boxes at positions $(x,y)\in \Z^2$ such that $x+y$ is odd and $y> |x|$. 
\end{definition}

See the right-hand side of \cref{fig:beads_to_SYT2} for an example of random infinite standard Young tableau $(\diamond_{\infty},R_{\be})$.

\begin{remark}
It is, of course, possible to do the same construction starting from a bead process
of general intensity $\alpha$ instead of $1$. However, a bead process of intensity $\alpha$
is obtained from one of intensity $1$ by rescaling the vertical axis. Since the
construction of the tableau only involves comparing the heights of various beads and not their actual heights, such a rescaling does not modify the law of the resulting
tableau. Hence the skewness $\beta$ is indeed the only relevant parameter.
\end{remark}

In the following result, we endow the space of marked standard Young tableaux with the topology induced by the \emph{local convergence}, formally introduced in \cref{defn:local_conv}. Roughly speaking, this topology says that a (deterministic) sequence of marked Young tableaux $(\la_n,T_n,(x_n,y_n))$ converges to an infinite Young tableau $(\diamond_{\infty},R)$, if the values of the boxes in $(\la_n,T_n)$ contained in any finite neighborhood above the marked box at $(x_n,y_n)$ are eventually {\em in the same relative order} as the values of the boxes of $(\diamond_{\infty},R)$ contained in the same finite neighborhood above the box at $(0,0)$.

\begin{corollary}[Local limit for the uniform Poissonized Young tableau; corollary of \cref{thm:cv_bead_process}]\label{corol:local_lim_standard Young tableau}
	Fix a Young diagram $\la^0$ and $(x_0, t_0)$ in the corresponding liquid region $L$.
	Consider a sequence $T_N$ of uniform random Poissonized Young tableaux of shape $\la_N$.
	We denote by $\oblong_N$ the box corresponding to
	the first bead of $M_{\la_N,T_N}$ above $t_0$ in the $x_0 \sqrt{N}$-th thread.
	
	Then, as $N\to \infty$, the sequence of marked standard Young tableaux $(\la_N,T_N,\oblong_N)_N$ locally converges in distribution
	to the random infinite standard Young tableau $(\diamond_{\infty},R_{\be})$ of 
	 skewness $\beta=\beta(x_0,t_0)$.
\end{corollary}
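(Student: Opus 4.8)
The plan is to deduce the local convergence of the marked tableau directly from the local convergence of the bead process established in Theorem~\ref{thm:cv_bead_process}, by showing that the map ``bead configuration in a window $\mapsto$ relative order of the labels of beads in that window'' is continuous (where it is well-defined) with respect to the relevant topologies, and that the candidate limit is exactly the tableau $(\diamond_\infty, R_\beta)$ built from the infinite bead process.

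First I would set up the correspondence precisely. Recall from \eqref{eq:corr_bead_proc} that the Poissonized tableau $T_N$ is encoded by the bead configuration $M_{\la_N,T_N}$, with the value $T_N(i,j)$ equal to the height of the bead on thread $i$ at vertical rank $\tfrac12(j-|i|)$ counting from the bottom (this is the content of \eqref{eq:syt_to_height}). Consequently, for two boxes $(i,j)$ and $(i',j')$ of $\la_N$, the comparison $T_N(i,j) < T_N(i',j')$ is exactly the comparison of the heights of the corresponding two beads. The marked box $\oblong_N$ is, by definition, the box of the first bead above height $t_0$ on thread $x_0\sqrt N$; after the recentering and rescaling of \eqref{eq:defn_of_Mla}, this bead is precisely the lowest bead with positive $t$-coordinate in $\widetilde M_{\la_N}^{(x_0,t_0)}$ on thread $0$, which under the bijection of \cref{defn:isyt} is the bead labeled $(0,1)$. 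More generally, for any fixed finite ``upward neighborhood'' $W \subset \diamond_\infty$ of the box $(0,0)$, the labels assigned to the beads of $\widetilde M_{\la_N}^{(x_0,t_0)}$ by the local recentering agree, for $N$ large, with the labels assigned by the rule of \cref{defn:isyt} to the finitely many lowest beads above height $0$ on the relevant threads — this is a purely combinatorial matching of the two labelling conventions, and I would isolate it as a lemma. The relative order of the $T_N$-values on $W$ is then a deterministic function $\Phi_W$ of the finite point configuration $\widetilde M_{\la_N}^{(x_0,t_0)}$ restricted to a large enough compact box $K_W \subset \Z\times\R$.

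Next I would invoke Theorem~\ref{thm:cv_bead_process}: since $(x_0,t_0)\in L$, we have $\widetilde M_{\la_N}^{(x_0,t_0)} \Rightarrow M_{\alpha(x_0,t_0),\beta(x_0,t_0)}$ in the topology of point processes, hence the restriction to $K_W$ converges in distribution as a finite point configuration. The function $\Phi_W$ is continuous at a configuration provided: (i) no two beads on the threads meeting $W$ share the same height, and (ii) no bead sits exactly on the ``cutoff'' heights used to select the window. Both events have probability zero for the limiting process $M_{\alpha,\beta}$: this is exactly \cref{prop:inifinite_tableau_well_defined} (the heights $\{H_\beta(x,y)\}$ are a.s.\ distinct with no accumulation point), together with the fact that a determinantal point process with a locally integrable kernel has no atoms on any fixed hyperplane $\{t = \text{const}\}$. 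By the continuous mapping theorem, $\Phi_W\big(\widetilde M_{\la_N}^{(x_0,t_0)}\big)$ converges in distribution to $\Phi_W$ of the infinite bead process, which by the labelling lemma of the previous paragraph is the restriction of $R_\beta$ to $W$. Since this holds for every finite $W$, and the local topology on marked tableaux (\cref{defn:local_conv}) is generated by these finite-window order patterns, the convergence in distribution of $(\la_N,T_N,\oblong_N)$ to $(\diamond_\infty, R_\beta)$ follows — one should phrase this last step via convergence of finite-dimensional marginals determining convergence in the (metrizable, by the discussion in \cref{sec:bead}) space of marked tableaux, plus tightness which is automatic here since the limit object is supported on a single fixed diagram $\diamond_\infty$.

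The main obstacle I anticipate is the bookkeeping in the labelling lemma: one must check carefully that the ``spiral''/diagonal labelling convention of \cref{fig:beads_to_SYT2} applied to $\widetilde M_{\la_N}^{(x_0,t_0)}$ stabilizes — i.e.\ that the label of a given bead in the window does not depend on beads far outside the window once $N$ is large — and that it matches the labelling of $(\diamond_\infty, R_\beta)$ with the correct identification of the marked bead as $(0,1)$ versus $(0,0)$. This is delicate because the labelling of a bead a priori references counting ``from the outer boundary'' of $\la_N$, which is far away; one resolves it by noting that in the liquid region the height function $H^\infty$ is strictly increasing near $(x_0,t_0)$ (\cref{thm:limiting_surface_formula} with $\alpha(x_0,t_0)>0$), so the vertical rank of the marked bead among all beads on its thread is $\tfrac12 n^2|\la^0|$-of-order but its rank \emph{relative to the bottom of the window} is $O(1)$, and only the latter enters the order pattern on any fixed finite $W$. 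A secondary, more routine point is to make the topology on point processes and on marked tableaux explicit enough (via test functions / cylinder events on finitely many threads) that the continuous mapping theorem applies verbatim.
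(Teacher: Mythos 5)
Your proposal is correct and follows essentially the same strategy as the paper's proof: reduce to the finite restrictions $r_h$, observe that $r_h$ of both the finite marked tableau and the limit $(\diamond_\infty,R_\beta)$ are determined by the relative order of bead heights in a finite window around the origin, invoke \cref{thm:cv_bead_process} for convergence of the local bead processes, and use \cref{prop:inifinite_tableau_well_defined} to rule out ties and accumulation points in the limit. The one cosmetic difference is that you transfer weak convergence through the order-pattern map via the continuous mapping theorem, whereas the paper uses Skorohod representation to upgrade to almost-sure convergence and then argues pathwise; these are interchangeable devices and buy you the same thing, with the paper's stopping-level $X_h<A$ conditioning playing the role of your ``large enough compact box $K_W$.''

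One small caveat on your anticipated ``main obstacle'': the resolution you sketch via strict monotonicity of $H^\infty$ and counting ranks from the outer boundary is a red herring. The relative order of $T_N$-values on a fixed window above $\oblong_N$ is a purely local function of the bead heights on $O(h)$ threads, since the box $(x_0\sqrt N+x',y_0+y')$ corresponds to the bead at vertical rank $\tfrac12(y_0+y'-|x_0\sqrt N+x'|)$ on thread $x_0\sqrt N+x'$, and the rank differences from the marked bead are $O(h)$ (the sign of $x_0\sqrt N+x'$ is eventually constant once $x_0\neq 0$, and matches the sign convention of $\diamond_\infty$ when $x_0=0$). What actually makes the map well-defined and continuous a.s.\ at the limit is precisely \cref{prop:inifinite_tableau_well_defined} together with the absence of accumulation of heights; no appeal to $\alpha(x_0,t_0)>0$ beyond $(x_0,t_0)\in L$ is needed.
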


\subsection{The complex Burger's equation and a PDE for the limiting height function}
\label{sec:Burger}
A standard fact when describing limit shapes and local limits via determinantal point
processes is that the solution $U_c(x,t)$ of the critical equation \eqref{eq:critical_intro},
sometimes called {\em complex slope}, satisfy some partial differential equation (PDE).
This PDE is normally related to the so-called {\em complex Burger's equation},
see \cite{KenyonOkounkov2007Burgers,petrov2014tilings}.
In our model, we get the following PDE .

\begin{proposition}\label{prop:Burgers_eq} In the sequel, the indices ${\_}_x$ and ${\_}_t$ denote partial derivatives with respect to the variables $x$ and $t$.
	The solution $U_c=U_c(x,t)$ of the critical equation \eqref{eq:critical_intro}
    satisfies the following PDE in the liquid region:
	\begin{align}
      \label{eq:burgers}
		(U_c)_t+ U_c\,\frac{(U_c)_x+1}{1-t}&=0.
    \end{align}
\end{proposition}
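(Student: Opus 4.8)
The plan is to differentiate the critical equation \eqref{eq:critical_intro} implicitly with respect to $x$ and with respect to $t$, and then eliminate the logarithmic-derivative sum that appears. Write the critical equation in the form
\begin{equation*}
	\log U_c + \sum_{i=1}^m \log(x-\eta\, b_i+U_c) = \log(1-t) + \sum_{i=0}^m \log(x-\eta\, a_i+U_c),
\end{equation*}
valid locally in the liquid region where $U_c$ is a smooth (indeed analytic) function of $(x,t)$ by the implicit function theorem — one checks the relevant partial derivative of the defining polynomial is nonzero at a non-real root, since a non-real root is automatically simple. Introduce the shorthand
\begin{equation*}
	S := \frac{1}{U_c} + \sum_{i=1}^m \frac{1}{x-\eta\, b_i+U_c} - \sum_{i=0}^m \frac{1}{x-\eta\, a_i+U_c}.
\end{equation*}

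First I would differentiate in $x$: this gives $S\cdot\big(1 + (U_c)_x\big) + \big(\tfrac{1}{U_c}\big)\cdot 0 = 0$ — more precisely, differentiating term by term, each $\log(x - c + U_c)$ contributes $\frac{1+(U_c)_x}{x-c+U_c}$ while $\log U_c$ contributes $\frac{(U_c)_x}{U_c}$, so collecting terms one obtains
\begin{equation*}
	\big(1+(U_c)_x\big)\, S_1 + (U_c)_x \cdot \frac{1}{U_c} = 0, \qquad \text{where } S_1 := \sum_{i=1}^m \frac{1}{x-\eta\, b_i+U_c} - \sum_{i=0}^m \frac{1}{x-\eta\, a_i+U_c}.
\end{equation*}
Then I would differentiate in $t$: the right-hand side picks up the extra term $-\tfrac{1}{1-t}$ from $\log(1-t)$, and the $U_c$-chain-rule contributions are $(U_c)_t$ times the same coefficient structure, yielding
\begin{equation*}
	(U_c)_t\, S_1 + (U_c)_t \cdot \frac{1}{U_c} = -\frac{1}{1-t}.
\end{equation*}
Note $S_1 + \tfrac{1}{U_c} = S$, so these two relations read $(1+(U_c)_x)\,S_1 + (U_c)_x/U_c = 0$ and $(U_c)_t\, S = -\tfrac{1}{1-t}$; rewriting the first as $(1+(U_c)_x)\, S = (1+(U_c)_x)/U_c + (U_c)_x/U_c - \dots$ — cleaner is to note both equations are linear in the unknown $S$ once one isolates it. From the $t$-equation, $S = -\tfrac{1}{(1-t)(U_c)_t}$; from the $x$-equation, $(1+(U_c)_x)S = (1+(U_c)_x)/U_c + (U_c)_x/U_c - (U_c)_x \cdot \big(\tfrac1{U_c}\big)$, i.e. after the cancellation $(1+(U_c)_x)\,S = -\tfrac{(U_c)_x}{U_c} + \tfrac{1+(U_c)_x}{U_c}\cdot$ — the bookkeeping here is the one routine computation I would carry out carefully, but its outcome is forced: substituting $S$ from the $t$-equation into the $x$-equation and clearing denominators gives exactly \eqref{eq:burgers}, namely $(U_c)_t + U_c\,\frac{(U_c)_x+1}{1-t}=0$.

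The only genuine subtlety — and the step I expect to be the main obstacle — is not the algebra but justifying the differentiations: one must know that on the liquid region $L$ (shown to be open in \cref{prop:description_liquid_region}) the branch $U_c(x,t)$ with $\Im U_c>0$ is well-defined and continuously differentiable, and that $S\neq 0$ and $(U_c)_t\neq 0$ there so the divisions are legitimate. The first follows from the implicit function theorem applied to the polynomial $P(U,x,t):=U\prod_{i=1}^m(x-\eta b_i+U)-(1-t)\prod_{i=0}^m(x-\eta a_i+U)$, using that a non-real root is simple (if $U_c$ were a double root it would share it with $\overline{U_c}$, forcing degree $\geq 4$ issues or, more directly, $P$ would have $\geq 2$ non-real roots counted with multiplicity beyond the pair, contradicting that there are exactly two non-real roots in $L$); hence $\partial_U P(U_c,x,t)\neq 0$. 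The non-vanishing of $S$ is equivalent to $\partial_U P \neq 0$ up to a nonzero factor (differentiating $\log P$-type expression), so it comes for free; and $(U_c)_t\neq 0$ then follows from the $t$-derivative relation $(U_c)_t\,S=-\tfrac1{1-t}\neq 0$. With these smoothness and non-degeneracy facts in hand, the PDE \eqref{eq:burgers} drops out of the two implicit-differentiation identities by elimination of $S$.
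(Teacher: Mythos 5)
Your proposal is correct and follows essentially the same route as the paper's proof: implicit differentiation of the logarithmic form of the critical equation with respect to $x$ and $t$, followed by elimination of the common factor $S = S''(U_c)$, whose non-vanishing is justified exactly as you do (a non-real root of the critical polynomial must be simple). The only cosmetic difference is that the paper first substitutes $\widetilde U_c = U_c + x$ so that the $x$-dependence enters only through the single term $\log(\widetilde U_c - x)$, which streamlines the bookkeeping you found tangled in the middle step, but the underlying computation is identical.
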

The proposition is proved in \cref{proof:burger}.
This PDE yields a PDE for the complex-valued function 
$\mathcal H^\infty(x,t):=\frac1{\pi} \int_0^t \frac{U_c(x,s)}{1-s} \dd{s}$.
Namely,
\begin{equation}\label{eq:complex_diff_height}
		 \mathcal{H}^\infty_{tt}+\pi\,\mathcal{H}^\infty_t \,\mathcal{H}^\infty_{xt}=0.
	\end{equation}
We recall from \cref{thm:limiting_surface_formula} that the limiting height function $H^\infty(x,t)$ is simply obtained as 
$H^\infty(x,t) = \Im \mathcal{H}^\infty(x,t)$.
Taking the imaginary part in \cref{eq:complex_diff_height}, we get
\begin{equation}
\label{eq:mixed-PDE-H}
 {H}^\infty_{tt} +\pi\, \widetilde H^\infty_t {H}^\infty_{xt} + \pi {H}^\infty_t \widetilde H^\infty_{xt}=0,
 \end{equation}
where $\widetilde H^\infty(x,t) = \Re \mathcal{H}^\infty(x,t)$.
To get a PDE involving only the height function $H^\infty(x,t)$, we need an extra ingredient.
At least on an informal level, this extra ingredient is provided
by a combination of the local convergence to the infinite bead process in \cref{thm:cv_bead_process} and the scaling limit result in \cref{thm:limiting_surface_formula}, as we explain now.

Recall from \cref{sec:bead-cedric} that for the infinite bead process $M_{\alpha,\beta}$, the expected ratio
between the {vertical distance} of a bead $b$ from its neighbor on the left and {below} it, and
the distance between $b$ and the successive bead on the same thread, is $\frac{\arccos(\beta)}{\pi}$. But successive beads on the same thread are at expected distance $\pi/\alpha$ so that the expected {vertical distance} between a bead $b$ and its neighbor on the left and {below} it is $\arccos(\beta)/\alpha$. Using this fact for $M_{\alpha,\beta}$, and approximating the bead process $\widetilde{M}_{\la_N}^{(x,t)}$ with the infinite bead process $M_{\alpha,\beta}$ with parameters $\alpha=\alpha(x,t)$ and $\beta=\beta(x,t)$ (\cref{thm:cv_bead_process}), we get that for large $N$ the
height function $ H_{\lambda_N}$ around $(x,t)$ with $x>0$ satisfies
\begin{equation}\label{eq:approxslope}
	H_{\lambda_N}(x\sqrt N,t) \approx H_{\lambda_N}\left(x\sqrt N -1, t{-} \frac{\arccos(\beta)}{\alpha \sqrt N} \right),
\end{equation}
where we used the fact that when $x>0$, the first bead on the $(x\sqrt N)$-thread is above the first bead on the $(x\sqrt N-1)$-thread.
Iterating this argument $\eps \sqrt N$ times, and multiplying both sides by $1/\sqrt{N}$, we get
\[ \frac{1}{\sqrt{N}}\, H_{\lambda_N}\left(x\sqrt N,t \right) \approx \frac{1}{\sqrt{N}} H_{\lambda_N}\left((x-\eps)\sqrt N , t-\eps\, \frac{\arccos(\beta)}{\alpha} \right).\]
Finally, using \cref{thm:limiting_surface_existence} and sending $N$ to infinity, we deduce that
\[ H^\infty(x,t) = H^\infty \left(x-\eps, t - \eps\,\frac{\arccos(\beta)}{\alpha} \right).\]
This implies $H^\infty_x + \frac{\arccos(\beta)}{\alpha} H^\infty_t=0$ for $x>0$. When $x\leq 0$, we need to use the fact that the first bead on the $(x\sqrt N)$-thread is now \emph{below} the first bead on the $(x\sqrt N-1)$-thread.
As a consequence, \eqref{eq:approxslope} is replaced by 
\begin{equation*}
	H_{\lambda_N}(x\sqrt N,t) \approx H_{\lambda_N}\left(x\sqrt N -1, t+ \frac{\pi-\arccos(\beta)}{\alpha \sqrt N} \right),
\end{equation*}
where we also used the fact that the expected {vertical distance} between a bead $b$ and its neighbor on the left 
and {\emph{above}} it is $(\pi-\arccos(\beta))/\alpha$.
Hence, for $x \le 0$ one gets $H^\infty_x - \frac{\pi - \arccos(\beta)}{\alpha} H^\infty_t=0$.
The equations obtained for the cases $x>0$ and $x \le 0$ can be grouped in a single formula as follows:
\begin{equation*}
H^\infty_x + \left(\frac{\arccos(\beta)}{\alpha}-\frac{\pi \delta_{x\leq0}}{\alpha}\right) H^\infty_t=0.
\end{equation*}
Since $H^\infty_t= \alpha/\pi$ by \cref{thm:limiting_surface_formula}, the previous equation simplifies to
$\beta=\cos( \pi \delta_{x\leq0} -\pi H^\infty_x)$. Recalling from \eqref{eq:parm_liq} that $\be=\frac{\Re U_c}{|U_c|}$, we deduce that 
$\Arg(U_c)=\pi \delta_{x\leq0} -\pi \, H^\infty_x$.
Recalling also from \eqref{eq:parm_liq} that $\alpha=\frac{\Im U_c}{1-t}$,  we conclude that  
\begin{equation*}
	\widetilde H^\infty_t= \frac{\Re U_c}{\pi(1-t)} = \frac{\alpha}{\pi \tan(\Arg(U_c))} =\frac{H^\infty_t}{\tan(\pi\delta_{x\leq0} -\pi \, H^\infty_x)}=-\,\frac{H^\infty_t}{\tan(\pi \, H^\infty_x)},
\end{equation*}
and
\begin{equation*}
	\widetilde H^\infty_{xt}=-\frac{ H^\infty_{xt}}{\tan(\pi H^\infty_{x})}+\pi  H^\infty_{t} H^\infty_{xx}\left(1+\frac{1}{\tan^2(\pi H^\infty_{x})}\right).
\end{equation*}
Combining the latter two equations with \eqref{eq:mixed-PDE-H}, we get the desired PDE for the height function:
\begin{equation}
\label{eqPDE-H}
 {H}^\infty_{tt} -2\pi\, \frac{H^\infty_t {H}^\infty_{xt}}{\tan(\pi\, H^\infty_x)}  +\pi^2 (H^\infty_t)^2 H^\infty_{xx}
 \left(1 + \frac1{\tan^2(\pi\, H^\infty_x)}\right)=0.
 \end{equation}
Such a PDE for the limiting surface was obtained by Sun as a consequence of its variational principle \cite[Eq.~(28)]{sun2018dimer}. Note that our equation is slightly different from \cite[Eq.~(28)]{sun2018dimer} since we do not use the same convention for the height function ${H}^\infty$.
 
We do not know if this heuristic argument can be transformed into an actual proof of \eqref{eqPDE-H}; we did not pursue in this direction because of the earlier appearance of the equation in the work of Sun.

\subsection{Methods}
As mentioned in the background section,
our starting point is the determinantal structure of
the bead process associated to a
uniform Poissonized tableau of given shape \cite{GR19}.
The local limit theorem for bead processes around $(x_0 \sqrt N,t_0)$ (\cref{thm:cv_bead_process}) is then obtained
by an asymptotic analysis of the kernel in the appropriate regime.
Since the determinantal kernel is expressed in terms of a double contour integral,
such asymptotic analysis is performed via saddle point analysis,
following the method used, e.g., in the papers
\cite{okounkov2003SchurProcess,borodin2008unitary,petrov2014tilings}.

The saddle points are precisely the two non-real roots of the critical equation~\eqref{eq:critical_intro}
if they exist (liquid region), or some of its real roots when all roots are real (frozen region).
In the case of the frozen region, the construction of the contours
depend on the relative position of $(x_0,t_0)$ with respect to the liquid region.
The case where we have an alternation of liquid and frozen phases along a line $\{x_0\} \times [0,1]$ (leading to a discontinuity of the surface) is particularly interesting
since one has to split one of the integration contours into two new distinguished integration contours
(\cref{ssec:frozen-intermediate}).
Such alternation of liquid and frozen phases has appeared in certain models of random tilings,
  such as 
  plane partitions with 2-periodic weights~\cite[Figure 3]{mkrtchyan2014}
  or
  pyramid partitions~\cite[Figure~1.1]{li_vuletic2022},
  and in a different context
(asymptotic representation theory of the unitary group) in \cite[Theorem 4.6]{borodin2008unitary}.

From the local limit theorem, 
we know that the parameter $\alpha(x_0,t_0)$ controls
 the local density of beads around $(x_0,t_0)$.
 It is, therefore, not surprising that the limiting height function is obtained as an integral
 of this local density. Formally, we use a special case of the convergence
 of the determinantal kernel established for the local limit theorem 
 to prove our formula for the limiting height function.
 The relation between local limit results via determinantal point processes,
 and limit shape results were remarked in other contexts,
 see e.g.~\cite[Remark 1.7]{BorodinOkounkovOlshanski2000} 
 or~\cite[Section 3.1.10]{okounkov2003SchurProcess}, but not formally used
 as we do in the present paper.

\subsection{Future work}\label{sect:future_work}
Here are a few directions that could be interesting to investigate.
\begin{enumerate}
\item We have limited ourselves here to a sequence of diagrams $\la_N$,
obtained as the dilatation of a base diagram $\la^0$. It would be also
interesting to consider more general sequences of diagrams with a 
continuous limit shape $\omega(x)$ in the sense of \cite{Biane1998,IvanovOlshanski2002}.
In this case, the critical equation is not polynomial anymore, but it should rewrite as
$$1-t = U\,G_{\mu_\omega}(U+x),$$
where $G_{\mu_\omega}(z)=\int_\R \frac{1}{z-s}\,\mu(\dd{s})$ is the Cauchy transform of the transition measure of the limit shape (see \cite{Biane2003}). 

\item We believe that the discontinuities observed in this paper
are created by the non-smoothness of the limit shape $\omega(x)$.
We conjecture that if $\omega$ is smooth (probably $\mathcal{C}^1$ is enough),
then the limiting surface is continuous. An interesting question is also
to find a general criterium for the limiting surface to be continuous, extending \cref{thm:continuity}.
The latter suggests that there should be some identity to check at each 
singular point of the limit shape $\omega$; but at the moment we do not know what such
criterium should be.

\item In \cref{corol:local_lim_standard Young tableau}, we consider the local
limit of a Young diagram around a box $\Box_N$, whose coordinates are random
(they depend on the tableau $T_N$). Considering the limit around a fixed box in the Young diagram would be more satisfactory. This corresponds, however, to a random
position in the bead process.
One potential strategy to attack this problem would be to refine our analysis of the kernel in \cref{sec:analysis_kernel} replacing the fixed point $((x_1,t_1),(x_2,t_2))$ with a point which might depend on $N$. We also believe that one can extend the results in \cref{corol:local_lim_standard Young tableau} to uniform Young tableaux (instead of uniform Poissonized Young tableaux).
\item On the boundary of the liquid region, \cref{thm:cv_bead_process} states that there are
with high probability no beads in a window of size $O(1) \times O(1/\sqrt{N})$.
We expect, however, that with a different rescaling in $t$, the bead process
will converge either to the Airy process (for typical points of the boundary) or to the Cusp-Airy/Pearcey process (for cusps points of the boundary). For instance, this type of behavior for typical points of the boundary was proved for lozenge tilings in \cite{petrov2014tilings}; while the aforementioned behavior for cusps points of the boundary was for instance investigated in \cite{OkounkovReshetikhin2006birth,OkounkovReshetikhin2007Pearcey,DuseJohanssonMetcalfe2016cuspAiry} in the setting of random tiling models. The appearance of the Airy process could be used to 
establish Tracy-Widom fluctuations of a generic entry in the first row of random Young tableaux.
Such a result was proved with a different and specific method
for uniform random tableaux of square shape by Marchal \cite{marchal2016Young}.

\item We believe that the family of infinite tableaux $(\diamond_{\infty},R_{\be})$ constructed in this paper
deserves more attention. Unlike Plancherel infinite tableaux \cite{KerovVershik1986},
the tableaux $(\diamond_{\infty},R_{\be})$ do not come from a Markovian growth process. But,
as the local limit of finite random Young tableaux, they should have an interesting re-rooting invariance property.
\end{enumerate}

\subsection{Outline of the paper}
The rest of the paper is organized as follows.
In \cref{eq:prelim}, we first recall the determinantal structure of
the bead process associated to a
uniform Poissonized tableau of given shape \cite{GR19}, and then we rewrite the corresponding kernel in a more suitable way for our analysis. Moreover, in \cref{sec:bead}, we recall some basic topological facts for determinantal point processes.
\cref{sec:analysis_kernel} is devoted to the analysis of the kernel of the random bead process $\widetilde{M}_{\la_N}^{(x_0,t_0)} $ in \eqref{eq:defn_of_Mla}. The main goal of this section will be to identify the \emph{critical points of the action} (\cref{ssec:localization-critical-points}).
The next two sections are devoted to the proof of the local limit of the bead process stated in \cref{thm:cv_bead_process}, which is the building block for all the other results in the paper: in \cref{sect:liquid_region} we look at the asymptotics of the kernel inside the liquid region, while in \cref{sect:frozen_region}  we look at the asymptotics of the kernel in the frozen region.
Finally, in the last three sections of this paper, we deduce Theorems \ref{thm:limiting_surface_formula} and \ref{thm:continuity} (\cref{sec:LimitShape}), we give the proofs of our applications discussed in \cref{sect:appl_intro} (\cref{sect:applications}) 
and we conclude with the proof of the local limit for Young tableaux stated in \cref{corol:local_lim_standard Young tableau} (\cref{sect:local_syt}).

\section{Preliminaries}\label{eq:prelim}

\subsection{The Gorin--Rahman determinantal formula}

\subsubsection{Young diagrams and associated meromorphic functions}

Recall the interlacing coordinates introduced in \cref{eq:int_coord}. With a Young diagram $\lambda$, we associate a function of a complex variable $u$. 
Throughout the paper, $\Gamma$ stands for the usual gamma function. 
We recall that the gamma function is a meromorphic function with simple poles at the non-positive integers $\left\{0,-1,-2,\dots \right\}$. Moreover, it has no zeros, so the reciprocal gamma function $\frac{1}{\Gamma}$ is an entire function.
We set
\begin{equation}\label{eq:f_lambda}
F_\la(u) := \Gamma(u+1)\,\prod_{i=1}^\infty \frac{u+i}{u-\la_i+i} =
\frac{\prod_{i=0}^m \Gamma(u-a_i+1)}{\prod_{i=1}^m \Gamma(u-b_i+1)},
\end{equation}
where the equivalence between the two formulas
is proved  in\footnote{The quantity $\Phi(z;\la)$
in \cite{IvanovOlshanski2002} is defined right before Proposition 1.2. It is related to $F_\la$ by the equation $F_\la(u) = \Gamma(u+1) \,\Phi(u+\frac{1}{2};\la)$. Beware that notations for interlacing coordinates are different here and in \cite{IvanovOlshanski2002}.} \cite[Eq.~(2.7)]{IvanovOlshanski2002}.
This meromorphic function $F_\la$ has an infinite countable set of simple poles,
namely $\Set{\la_i-i |\, i \ge 1}$.

From now on, $(a_i)_{0\le i \le m}$ and $(b_i)_{1\le i \le m}$ 
are the interlacing coordinates of our fixed base diagram $\la^0$.
The interlacing coordinates of the dilatation $\la_N$
 are simply $(n\, a_0, n \, b_1, \dots, n\, b_m, n\, a_m)$,
 where $n$ is the dilatation factor 
 (each box in $\la^0$ corresponds to an $n \times n$ square in $\la_N$).
 Recalling the relation $N=n^2 |\la^0|=n^2 \eta^{-2}$, we have
 \begin{equation}
\label{eq:Flambda_dilatation}
F_{\la_N}(u) = \frac{\prod_{i=0}^m \Gamma(u- \eta\, a_i \, \sqrt N +1)}{\prod_{i=1}^m \Gamma(u-\eta\, b_i \, \sqrt N+1)}. 
\end{equation}

\subsubsection{The Gorin--Rahman correlation kernel}
From \cite[Theorem 1.5]{GR19}, for any fixed diagram $\la$, taking a uniform random Poissonized Young tableau $T$ of shape $\la$, the bead process $M_\la=M_{\la,T}$ introduced in \eqref{eq:corr_bead_proc} is a determinantal point process on $\Z \times [0,1]$ with correlation kernel
\begin{align}\label{eq:corr_kern_1}
	K_\la((x_1,t_1),(x_2,t_2)) &= \One_{x_1>x_2,\,t_1<t_2}\,\frac{(t_1-t_2)^{x_1-x_2-1}}{(x_1-x_2-1)!} \\
	&\quad+ \frac{1}{(2\I \pi)^2}\oint_{\widetilde\gamma_z}\!\oint_{\widetilde\gamma_w} \frac{F_\la(x_2+z)}{F_\la(x_1-1-w)}\,\frac{\Gamma(-w)}{\Gamma(z+1)}\,\frac{(1-t_2)^z\,(1-t_1)^{w}}{z+w+x_2-x_1+1}\dd{w}\dd{z},\notag
\end{align}
 where the double contour integral runs over counterclockwise paths $\widetilde \gamma_w$ and $\widetilde \gamma_z$ such that
 \begin{itemize} 
 \item $\widetilde\gamma_w$ is inside $\widetilde\gamma_z$ ;
 \item $\widetilde \gamma_w$ and $\widetilde \gamma_z$ 
contain all the integers in $[ 0,\ell(\la)-1+x_1]$ and in $[0,\la_1-1-x_2 ]$
respectively;
\item the ratio $\frac{1}{z+w+x_2-x_1+1}$ remains uniformly bounded.
 \end{itemize}
Recalling the expression for $F_\la$ in \eqref{eq:f_lambda}, we get that 
\begin{itemize}
	\item the simple poles of $\frac{F_\la(x_2+z)}{\Gamma(z+1)}$ are $\Set{\la_i-i-x_2 | i \ge 1}\cap\Z_{\geq 0}$;
	\item the simple poles of $\frac{\Gamma(-w)}{F_\la(x_1-1-w)}$ are $\Z_{\geq 0}\setminus \Set{x_1-1-\la_i+i | i \ge 1}$.
\end{itemize}
\bigskip

For later convenience, we perform the following change of variables:
\begin{equation*}
	\begin{cases}
		z'=x_2+z,\\
		w'=x_1-1-w.
	\end{cases}
\end{equation*}
If  $\widetilde\gamma_z$ was sufficiently large, after this change of variable,
the new $z$-contour still encloses the new $w$-contour.
Therefore, we
obtain the following expression for the correlation kernel:
\begin{align}\label{eq:corr_kern_2}
	K_\la((x_1,t_1),(x_2,t_2)) &= \One_{x_1>x_2,\,t_1<t_2}\,\frac{(t_1-t_2)^{x_1-x_2-1}}{(x_1-x_2-1)!} \\
	&\quad- \frac{1}{(2\I \pi)^2}\oint_{\gamma_z}\!\oint_{\gamma_w} \frac{F_\la(z)}{F_\la(w)}\,\frac{\Gamma(w-x_1+1)}{\Gamma(z-x_2+1)}\,\frac{(1-t_2)^{z-x_2}\,(1-t_1)^{-w+x_1-1}}{z-w}\dd{w}\dd{z},\notag
\end{align}
 where the double contour integral runs over counterclockwise paths $\gamma_w$ and $\gamma_z$ such that
\begin{itemize} 
	\item $\gamma_w$ is inside $\gamma_z$;
	\item $\gamma_w$ and $\gamma_z$ 
	contain all the integers in $[-\ell(\la),x_1-1 ]$ and in $[ x_2 ,\la_1-1 ]$ respectively;
	\item the ratio $\frac{1}{z-w}$ remains uniformly bounded.
\end{itemize}
We note that in the new integrand in \eqref{eq:corr_kern_2},
\begin{itemize}
\item the set of $w$-poles (which are all simple) is $\Z_{\leq x_1-1} \setminus \Set{\la_i-i | i\geq 1}$, which is included in $[-\ell(\la),x_1-1]$;
\item  the set of $z$-poles (which are all simple) is 
$\Set{\la_i-i | i\geq 1} \cap \Z_{\geq x_2}$, which is included in $[x_2,\la_1-1]$; 
\item there is an additional simple pole for $z=w$.
\end{itemize}
Hence the second property of $\gamma_w$ and $\gamma_z$ given above ensures that the integration contours contain all poles.
We refer to \cref{fig:poles} for a visual representation of these poles and integration contours.

\begin{figure}[t]
\[\begin{array}{c}
\includegraphics[height=4cm]{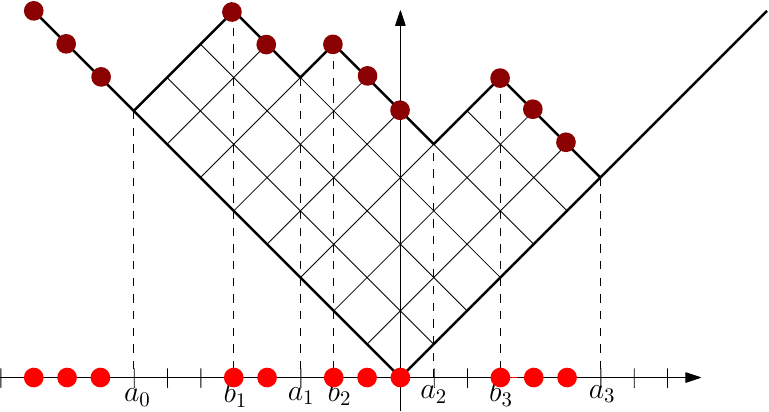}
\end{array} \quad 
\begin{array}{c}
\includegraphics[height=6cm]{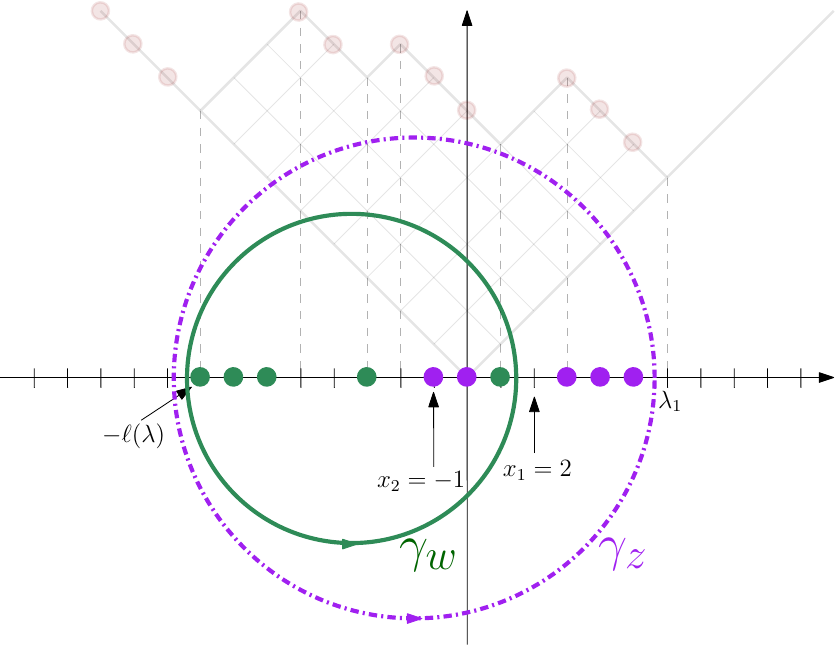}
\end{array}\]
	\captionsetup{width=\linewidth}
	\caption{We consider here the Young diagram $\la=(6,6,6,4,4,4,3,3)$ drawn with Russian convention. \textbf{Left:} We indicate the poles of $F_\lambda$
	in red  (and in brown the corresponding points on the boundary of the diagram).
	\textbf{Right:} We show the poles of the integrand, the $w$-poles in green, and the
    $z$-poles in purple
     in the case when $x_1=2$ and $x_2=-1$ (recall that there is an additional simple pole for $z=w$).
    The integration contours in \cref{eq:corr_kern_2} are also represented.
	}
	\label{fig:poles}
\end{figure}

\subsubsection{Removing the indicator function from the correlation kernel.} Let $\gamma'_z$ and $\gamma'_w$ be the two contours satisfying  the same conditions as $\gamma_z$ and $\gamma_w$, 
except that $\gamma'_z$ lies inside $\gamma'_w$.
A simple residue computation, done below, yields the following result.

\begin{lemma} For all $(x_1,t_1),(x_2,t_2)\in\Z \times [0,1]$, it holds that
	\begin{multline}
		\One_{x_1>x_2} \frac{(t_1-t_2)^{x_1-x_2-1}}{(x_1-x_2-1)!} 
		- \frac{1}{(2\I \pi)^2}\oint_{\gamma_z}\!\oint_{\gamma_w} \frac{F_\la(z)}{F_\la(w)}\,\frac{\Gamma(w-x_1+1)}{\Gamma(z-x_2+1)}\,\frac{(1-t_2)^{z-x_2}\,(1-t_1)^{-w+x_1-1}}{z-w}\dd{w}\dd{z}\\
		= -
		\frac{1}{(2\I \pi)^2}\oint_{\gamma'_z}\!\oint_{\gamma'_w} \frac{F_\la(z)}{F_\la(w)}\,\frac{\Gamma(w-x_1+1)}{\Gamma(z-x_2+1)}\,\frac{(1-t_2)^{z-x_2}\,(1-t_1)^{-w+x_1-1}}{z-w}\dd{w}\dd{z}.
	\end{multline}
\end{lemma}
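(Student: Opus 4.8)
The plan is to compute the difference between the two double-contour integrals by a residue argument: the integrands are identical, only the relative position of the $z$- and $w$-contours differs, so the difference is governed entirely by the pole at $z=w$. First I would fix $(x_1,t_1),(x_2,t_2)$ and consider moving the $z$-contour $\gamma_z$ (which surrounds $\gamma_w$) so that it ends up inside $\gamma_w$, becoming $\gamma_z'$; the $w$-contour can be taken to be essentially unchanged. Since the only pole of the integrand that is crossed in this deformation is the simple pole at $z=w$ (the $z$-poles $\{\lambda_i-i\}\cap\Z_{\ge x_2}$ and the $w$-poles $\Z_{\le x_1-1}\setminus\{\lambda_i-i\}$ stay on the same sides of both contours throughout, because $\gamma_z,\gamma_z'$ both enclose all the $z$-poles and $\gamma_w,\gamma_w'$ both enclose all the $w$-poles), the residue theorem gives
\begin{equation*}
-\frac{1}{(2\I\pi)^2}\left(\oint_{\gamma_z}\!\oint_{\gamma_w}-\oint_{\gamma_z'}\!\oint_{\gamma_w}\right)(\cdots)\dd{w}\dd{z}
= -\frac{1}{2\I\pi}\oint_{\gamma_w}\operatorname*{Res}_{z=w}\left[\frac{F_\la(z)}{F_\la(w)}\,\frac{\Gamma(w-x_1+1)}{\Gamma(z-x_2+1)}\,\frac{(1-t_2)^{z-x_2}(1-t_1)^{-w+x_1-1}}{z-w}\right]\dd{w}.
\end{equation*}

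Next I would compute the residue at $z=w$: since $z-w$ appears to the first power in the denominator and everything else is holomorphic near $z=w$, the residue is simply the rest of the integrand evaluated at $z=w$, namely $\frac{\Gamma(w-x_1+1)}{\Gamma(w-x_2+1)}\,(1-t_2)^{w-x_2}(1-t_1)^{-w+x_1-1}$, the factors $F_\la(z)/F_\la(w)$ cancelling. So the correction term equals $-\frac{1}{2\I\pi}\oint_{\gamma_w}\frac{\Gamma(w-x_1+1)}{\Gamma(w-x_2+1)}(1-t_2)^{w-x_2}(1-t_1)^{x_1-1-w}\dd{w}$. Now the sign of $x_1-x_2$ matters. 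If $x_1\le x_2$, the ratio $\Gamma(w-x_1+1)/\Gamma(w-x_2+1)$ is a polynomial in $w$ (it equals $(w-x_1)(w-x_1-1)\cdots(w-x_2+1)$, an empty product equal to $1$ when $x_1=x_2$), hence the integrand is entire in $w$ and the contour integral vanishes — matching the vanishing of the indicator $\One_{x_1>x_2}$ on the left side. If $x_1>x_2$, then $\Gamma(w-x_1+1)/\Gamma(w-x_2+1)=1/\big[(w-x_2)(w-x_2-1)\cdots(w-x_1+1)\big]$ has simple poles at $w=x_2,x_2-1,\dots,x_1-1$, all of which lie inside $\gamma_w$ (they are in $[-\ell(\la),x_1-1]$); summing residues and using $\sum_{k} \binom{x_1-x_2-1}{k}(\text{powers of }(1-t_1),(1-t_2))$ recombines — via the binomial theorem, since $(1-t_2)+(t_2-t_1)=(1-t_1)$ — into $\frac{(t_1-t_2)^{x_1-x_2-1}}{(x_1-x_2-1)!}$, exactly the indicator term on the left. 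Moving it to the right-hand side produces the claimed identity.

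I expect the main obstacle to be the bookkeeping in the second case: one must carefully justify that deforming $\gamma_z$ past $\gamma_w$ crosses only the pole at $z=w$ and no $z$-pole of the integrand, which requires recalling that both $\gamma_z$ and $\gamma_z'$ are required to enclose exactly the $z$-poles in $[x_2,\la_1-1]$ while both $\gamma_w,\gamma_w'$ enclose the $w$-poles in $[-\ell(\la),x_1-1]$, and that these two pole sets are disjoint; then one must evaluate $\sum_{j=x_2}^{x_1-1}\operatorname{Res}_{w=j}$ and recognize the resulting finite sum as a Taylor/binomial expansion of $(t_1-t_2)^{x_1-x_2-1}/(x_1-x_2-1)!$. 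The residue computations themselves are elementary (simple poles, no higher-order terms), so beyond this combinatorial identity the proof is routine; a clean way to present the binomial step is to write $(1-t_1)^{x_1-1-w}=(1-t_2)^{x_1-1-w}\big(1+\tfrac{t_2-t_1}{1-t_2}\big)^{x_1-1-w}$ is not needed — instead directly observe that $-\frac{1}{2\I\pi}\oint_{\gamma_w}\frac{(1-t_2)^{w-x_2}(1-t_1)^{x_1-1-w}}{(w-x_2)(w-x_2-1)\cdots(w-x_1+1)}\dd{w}$ is, by residues, the coefficient extraction computing the divided difference of $w\mapsto (1-t_2)^{w-x_2}(1-t_1)^{x_1-1-w}$ at the integer nodes $x_2,\dots,x_1-1$, which by a standard identity equals $\frac{(t_1-t_2)^{x_1-x_2-1}}{(x_1-x_2-1)!}$.
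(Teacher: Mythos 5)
Your proposal is correct and follows essentially the same route as the paper: move the $z$-contour past $\gamma_w$, pick up the residue at $z=w$ (in which the $F_\la$ factors cancel), and then evaluate the resulting $w$-integral by distinguishing the cases $x_1\le x_2$ (entire integrand, zero contribution) and $x_1>x_2$ (sum of residues at the integers $x_2,\dots,x_1-1$, recombined by the binomial theorem). The only slip is a typo in your listing of the poles of $\Gamma(w-x_1+1)/\Gamma(w-x_2+1)$ — they are $w=x_2,x_2+1,\dots,x_1-1$, not $x_2,x_2-1,\dots$ — but your identification of the range $[-\ell(\la),x_1-1]$ and the rest of the argument are accurate.
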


\begin{proof}
	To simplify, we can assume that $\gamma'_w=\gamma_w$.
	Fixing some $w \in \gamma_w$,
	we can deform $\gamma_z$ into $\gamma'_z$, crossing uniquely the pole $z=w$.
	 Hence, for $w \in \gamma_w$, by residue theorem we have that
	\begin{multline}
		\frac{1}{(2\I \pi)^2}\oint_{\gamma_z}\! \frac{F_\la(z)}{F_\la(w)}\,\frac{\Gamma(w-x_1+1)}{\Gamma(z-x_2+1)}\,\frac{(1-t_2)^{z-x_2}\,(1-t_1)^{-w+x_1-1}}{z-w}\dd{w}\dd{z}\\
		-
		\frac{1}{(2\I \pi)^2}\oint_{\gamma'_z} \frac{F_\la(z)}{F_\la(w)}\,\frac{\Gamma(w-x_1+1)}{\Gamma(z-x_2+1)}\,\frac{(1-t_2)^{z-x_2}\,(1-t_1)^{-w+x_1-1}}{z-w}\dd{w}\dd{z}\\
		=
		\frac{1}{2\I \pi}  \frac{\Gamma(w-x_1+1)}{\Gamma(w-x_2+1)}\,(1-t_2)^{w-x_2}\,(1-t_1)^{-w+x_1-1}
		\label{eq:Tech2}
	\end{multline}
	We now integrate the right-hand side over the closed contour $\gamma_w$.
In the case $x_1\le x_2$, we have 
\[\frac{\Gamma(w-x_1+1)}{\Gamma(w-x_2+1)}=(w-x_1)(w-x_1-1) \cdots (w-x_2+1), \]
and the right-hand side of \eqref{eq:Tech2} is an entire function. Hence integrating over $\gamma_w$ gives $0$.
On the contrary when $x_1>x_2$, we have  
\[\frac{\Gamma(w-x_1+1)}{\Gamma(w-x_2+1)}=\frac{1}{(w-x_2)(w-x_2-1)\cdots(w-x_1+1)}.\]
In this case, the RHS of \eqref{eq:Tech2} is a meromorphic function of $w$
with simple poles in $x_1-1-i$, for $i \in \{0,1,\dots,x_1-x_2-1\}$.
Bringing both cases together, we have
	\begin{align*}
		\frac{1}{2\I \pi}\oint_{\gamma_w} \frac{\Gamma(w-x_1+1)}{\Gamma(w-x_2+1)}&\,(1-t_2)^{w-x_2}\,(1-t_1)^{-w+x_1-1}\dd{w}\\
		&=
		\One_{x_1>x_2} \sum_{i=0}^{x_1-x_2-1}\frac{(1-t_2)^{(x_1-1-i)-x_2}\,(1-t_1)^{-(x_1-1-i)+x_1-1}}{((x_1-1-i)-x_2) \cdots 1 \cdot (-1) \cdots (-i)}\\
		&=
                \frac{\One_{x_1>x_2}}{(x_1-x_2-1)!}
              \sum_{i=0}^{x_1-x_2-1}\binom{x_1-x_2-1}{i}(1-t_2)^{x_1-x_2-1-i}(t_1-1)^{i}\\
		&=
		\One_{x_1>x_2}
                \frac{(t_1-t_2)^{x_1-x_2-1}}{(x_1-x_2-1)!}.\qedhere
	\end{align*}
\end{proof}
We, therefore, have the following simplified (and final) representation of the correlation kernel:
\begin{multline}
  \label{eq:kernel_without_indicator}
	K_\la((x_1,t_1),(x_2,t_2)) =
        -\frac{1}{(2\I \pi)^2}\oint_{\gamma_z}\!\oint_{\gamma_w} \frac{F_\la(z)}{F_\la(w)}\,\frac{\Gamma(w-x_1+1)}{\Gamma(z-x_2+1)}\,\frac{(1-t_2)^{z-x_2}\,(1-t_1)^{-w+x_1-1}}{z-w}\dd{w}\dd{z}, 
\end{multline}
where the double contour integral runs over counterclockwise paths $\gamma_w$ and $\gamma_z$ such that
\begin{itemize} 
  \item $\gamma_w$ is inside (resp.\ outside) $\gamma_z$ if $t_1\geq t_2$ (resp.\ $t_1 < t_2$);
	\item $\gamma_w$ and $\gamma_z$
	contain all the integers in $[ -\ell(\la) ,x_1-1 ]=[a_0,x_1-1]$ and in $[ x_2 ,\la_1-1 ]=[x_2 ,a_m-1]$ respectively;
	\item the ratio $\frac{1}{z-w}$ remains uniformly bounded.
\end{itemize}
Moreover, the integrand has simple poles for $w$ at $\Z_{\leq x_1-1}\setminus\Set{\la_i-i | i\geq 1}\subseteq[-\ell(\la),x_1-1]$,  simple poles for $z$ at 
$\Set{\la_i-i | i\geq 1 }\cap\Z_{\geq x_2}\subseteq[x_2,\la_1-1]$, and a simple pole for $z=w$.

\subsection{A topology for bead processes}
\label{sec:bead}

We discuss in this section a (standard) notion of convergence for bead processes, or more generally for determinantal point processes. The expert reader may consider skipping this section.

A bead process can be naturally interpreted as a random locally finite counting measure on the complete and separable metric space $\mathcal X:=\Z\times \R$, i.e.\ as a random variable in the space of locally finite counting measures on $\mathcal X$, denoted by ${\mathcal M}^{\#}_{\mathcal X}$. We endow ${\mathcal M}^{\#}_{\mathcal X}$ with the $\sigma$-algebra $\mathcal F_{\mathcal X}$ generated by the cylinder sets
\begin{equation}
	C^n_B=\Set{\mu\in{\mathcal M}^{\#}_{\mathcal X} | \mu(B)=n},
\end{equation}
defined for all $n\in\mathbb N$ and all Borel subsets $B$ of $\mathcal X$.
\medskip

The elements of this $\sigma$-algebra are the Borel subsets for the weak topology on the space $({\mathcal M}^{\#}_{\mathcal X},\mathcal F_{\mathcal X})$ of locally finite counting measures on ${\mathcal X}$. The convergent sequences for the weak topology are defined as follows: if $(\mu_n)_n$ is a sequence in ${\mathcal M}^{\#}_{\mathcal X}$ and $\mu \in {\mathcal M}^{\#}_{\mathcal X}$, then
\begin{equation*}
	 \mu_n\xrightarrow{w}\mu \qquad\text{if}\qquad \int_{{\mathcal X}} f(x) \dd{\mu_n}(x)\to\int_{{\mathcal X}} f(x) \dd{\mu}(x),
\end{equation*}
for all bounded continuous functions $f$ on ${\mathcal X}$ with bounded support. 
This topology makes ${\mathcal M}^{\#}_{\mathcal X}$ a Polish space, which eases the manipulation of sequences of random measures in ${\mathcal M}^{\#}_{\mathcal X}$. In particular, by \cite[Theorem 11.1.VII]{daley2008introduction}, if $(\nu_n)_n$ is a sequence of random measures in ${\mathcal M}^{\#}_{\mathcal X}$ and $\nu$ is another random measure in ${\mathcal M}^{\#}_{\mathcal X}$, then $\nu_n\xrightarrow{d}\nu$ w.r.t.\ the weak topology if for all $k\geq 1$,
\begin{equation}\label{eq:conv_random_meas}
	(\nu_n(B_i))_{1\leq i \leq k}\xrightarrow{d} (\nu(B_i))_{1\leq i \leq k}, 
\end{equation}
for all collections $(B_i)_{1\leq i \leq k}$ of  bounded Borel $\nu$-continuity subsets\footnote{Recall that a bounded Borel subset of ${\mathcal X}$ is a \emph{continuity subset} for a random measure $\nu$ in ${\mathcal M}^{\#}_{\mathcal X}$ if $\nu(\partial B)=0$ almost surely.} of ${\mathcal X}$. Since both $\nu_n$ and $\nu$ are random counting measures, the condition in \eqref{eq:conv_random_meas} is equivalent to requiring that for all $k\geq 1$,
\begin{equation}\label{eq:conv_random_meas2}
	\proba(\nu_n(B_i)=m_i, \forall 1\leq i \leq k)
	\to 
	\proba(\nu(B_i)=m_i, \forall 1\leq i \leq k), 
\end{equation}
for all collections $(B_i)_{1\leq i \leq k}$ of  bounded Borel $\nu$-continuity subsets of ${\mathcal X}$ and all integer-valued vectors $(m_i)_{1\leq i \leq k}\in(\Z_{\geq 0})^k$. In particular, we highlight for later scopes the following result.

\begin{proposition}\label{prop:cons_conv_bead}
	 Let $(\nu_n)_n$ be a sequence of random measures in ${\mathcal M}^{\#}_{\mathcal X}$ and $\nu$ is another random measure in ${\mathcal M}^{\#}_{\mathcal X}$. If $\nu_n\xrightarrow{d}\nu$ w.r.t.\ the weak topology, then 
	 \begin{equation*}
	 	\proba(\nu_n \in A)
	 	\to 
	 	\proba(\nu \in A)
	 \end{equation*}
 	for all sets $A\in\mathcal F_{\mathcal X}$ which can be written as a countable union/intersection/complementation of cylinder sets $C^n_B$ such that $B$ is a bounded Borel $\nu$-continuity subset of $\mathcal X$.
\end{proposition}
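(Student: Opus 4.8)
The plan is to reduce the statement to the standard convergence criterion \eqref{eq:conv_random_meas2}, using the fact that the class of sets $A$ described in the proposition is generated, in a countable way, by the cylinder sets $C^n_B$ with $B$ a bounded Borel $\nu$-continuity set. First I would record that, by the hypothesis $\nu_n \xrightarrow{d}\nu$ and the equivalence of \eqref{eq:conv_random_meas} and \eqref{eq:conv_random_meas2} recalled above, for every finite collection $(B_i)_{1\le i\le k}$ of bounded Borel $\nu$-continuity sets and every $(m_i)_{1\le i\le k}$ one has $\proba(\nu_n\in \bigcap_i C^{m_i}_{B_i})\to\proba(\nu\in\bigcap_i C^{m_i}_{B_i})$. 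Thus convergence of probabilities already holds for finite intersections of cylinder sets of the prescribed type; the issue is to upgrade this to the countable Boolean combinations allowed in the statement.

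The key step is a monotone-class / continuity-of-measure argument. Write $\mathcal C$ for the collection of cylinder sets $C^n_B$ with $B$ a bounded Borel $\nu$-continuity subset of $\mathcal X$, and let $\mathcal A$ be the family of all $A\in\mathcal F_{\mathcal X}$ obtained from $\mathcal C$ by countable unions, intersections and complementations. Since a complement $C^n_B{}^{c}=\bigcup_{m\ne n}C^m_B$ is already a countable union of elements of $\mathcal C$, and since $\mathcal C$ is closed under finite intersections only up to the same underlying $B$'s (here one uses that a finite intersection $\bigcap_{i} C^{m_i}_{B_i}$ is handled directly by \eqref{eq:conv_random_meas2}), one sees that $\mathcal A$ is exactly the $\sigma$-algebra generated by $\mathcal C$. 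Now fix $A\in\mathcal A$. Approximating $A$ from the generating cylinders, for any $\eps>0$ one can find a set $A_\eps$ which is a \emph{finite} Boolean combination of members of $\mathcal C$ (hence a finite disjoint union of finite intersections $\bigcap_i C^{m_i}_{B_i}$) with $\proba(\nu\in A\triangle A_\eps)<\eps$; this uses continuity of the measure $\proba\circ\nu^{-1}$ along the countable operations defining $A$. One must also check that the \emph{same} $A_\eps$ satisfies $\limsup_n\proba(\nu_n\in A\triangle A_\eps)\le C\eps$ for a constant independent of $n$, which follows because $\proba(\nu_n\in A_\eps)\to\proba(\nu\in A_\eps)$ (finite Boolean combination, handled by \eqref{eq:conv_random_meas2}) and because, on the approximating layer, $\proba(\nu_n\in A\triangle A_\eps)$ is controlled by $\proba(\nu_n\in A'_\eps)$ for a slightly enlarged finite combination $A'_\eps$ with $\proba(\nu\in A'_\eps)$ small. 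Combining the triangle inequality
\[
\bigl|\proba(\nu_n\in A)-\proba(\nu\in A)\bigr|\le \proba(\nu_n\in A\triangle A_\eps)+\bigl|\proba(\nu_n\in A_\eps)-\proba(\nu\in A_\eps)\bigr|+\proba(\nu\in A\triangle A_\eps),
\]
letting $n\to\infty$ and then $\eps\to 0$ gives the claim.

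The main obstacle I expect is the second uniformity: controlling $\proba(\nu_n\in A\triangle A_\eps)$ uniformly in $n$, since a priori the approximation $A_\eps$ is chosen only to be good for the limit measure $\proba\circ\nu^{-1}$. The clean way around this is to build the finite approximation inside-out: realize $A$ as a countable monotone combination (e.g. an increasing union of finite unions of finite intersections, or a decreasing intersection thereof) of cylinder sets in $\mathcal C$, so that there is a \emph{monotone} sequence $A^{(1)}\subseteq A^{(2)}\subseteq\cdots$ (or the reverse) of finite Boolean combinations with $A^{(j)}\uparrow A$ (resp.\ $\downarrow A$). For monotone sequences, $\proba(\nu_n\in A)\ge\proba(\nu_n\in A^{(j)})\to\proba(\nu\in A^{(j)})$ for each $j$, so $\liminf_n\proba(\nu_n\in A)\ge\proba(\nu\in A)$ by letting $j\to\infty$; applying the same to the complement $A^c$ (which also lies in $\mathcal A$, again via the observation that $C^n_B{}^c$ is a countable union of cylinders in $\mathcal C$) yields $\limsup_n\proba(\nu_n\in A)\le\proba(\nu\in A)$, and the two inequalities combine to the desired convergence. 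The remaining routine points are that countable unions/intersections of sets expressible through $\mathcal C$ can indeed be rearranged into such monotone sequences, and that membership in $\mathcal F_{\mathcal X}$ is preserved throughout — both of which are immediate from the definitions of $\mathcal F_{\mathcal X}$ and of $\nu$-continuity sets.
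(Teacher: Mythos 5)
The paper does not actually prove this proposition: it is flagged as a consequence of \eqref{eq:conv_random_meas2}, which establishes convergence only for \emph{finite} collections $(B_i)_{1\le i\le k}$. The passage from finite to countable Boolean operations is the entire content of the statement, and your argument does not make it. Your monotone-approximation approach has the right instinct but delivers only one inequality: if $A=\bigcup_j A^{(j)}$ increasingly with $A^{(j)}$ finite combinations, you get $\liminf_n \proba(\nu_n\in A)\ge\proba(\nu\in A)$; applying ``the same'' to $A^c=\bigcap_j (A^{(j)})^c$ (a \emph{decreasing} intersection) returns the identical inequality ($\limsup_n\proba(\nu_n\in A^c)\le\proba(\nu\in A^c)$ is the same statement), not the complementary $\limsup$ bound. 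Your closing ``routine point'' --- that countable Boolean combinations can always be rearranged into such two-sided monotone sequences of finite combinations --- is false; running the argument both ways requires both an increasing \emph{and} a decreasing finite approximation of $A$, which is a genuine extra hypothesis.

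The gap is not cosmetic, because the proposition is in fact false as stated once arbitrary countable operations are allowed. Take $\mathcal X=\Z\times\R$, $\nu=\delta_{(0,0)}$ and $\nu_n=\delta_{(0,1/n)}$ (deterministic counting measures), so that $\nu_n\to\nu$ in the weak topology. Each $B_j=\{0\}\times(-1/j,1/j)$ is bounded, Borel, and a $\nu$-continuity set, since its boundary $\{0\}\times\{\pm 1/j\}$ carries no $\nu$-mass. Then $A:=\bigcap_{j\ge 1}C^1_{B_j}$ is a countable intersection of admissible cylinders with $\proba(\nu\in A)=1$, yet $\nu_n(B_j)=0$ whenever $j\ge n$, so $\proba(\nu_n\in A)=0$ for every $n$. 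What is true --- and what the surrounding text actually uses --- is convergence over finite Boolean combinations of admissible cylinders, i.e.\ exactly \eqref{eq:conv_random_meas2}; a correct countable extension must add the hypothesis that $A$ is a continuity set for the law of $\nu$ on ${\mathcal M}^{\#}_{\mathcal X}$, which is precisely what your missing two-sided monotone approximation would provide were it available.
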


\medskip

Recalling that a bead process is a determinantal point process on ${\mathcal X}=\Z\times \R$, we now look at the specific case where $\nu$ and $\nu_n$ are determinantal point process on ${\mathcal X}$. 

Let $J=J\big((x_1,t_1),(x_2,t_2)\big)$ and $J_n=J_n\big((x_1,t_1),(x_2,t_2)\big)$ be the kernels of $\nu$ and $\nu_n$ respectively. We also introduce the following generating function for the determinantal point process $\nu$ (and analogously for the determinantal point processes $\nu_n$): for $z=(z_1,\dots,z_k)$ and $B=(B_1,\dots, B_k)$, we set
\begin{equation*}
	Q^{\nu}_B(z)
	:=\sum_{\ell\in\N^k}\frac{(-z)^\ell}{\ell !} \int_{B^\ell} \det\left(\left[J\big((x_i,t_i),(x_j,t_j)\big)\right]_{1\leq i,j \leq |\ell|}\right)\dd{\la}^{\otimes |\ell|}\big((x_1,t_1),\dots,(x_{|\ell|},t_{|\ell|})\big),
\end{equation*}
where $\la$ denotes the product measure of the counting measure on $\Z$ and the Lebesgue measure on $\R$, and where we also used the following multi-index notation
\begin{equation*}
	|\ell| = \sum_{i=1}^k \ell_i,\qquad \ell!=\prod_{i=1}^k \ell_i!,\qquad B^\ell=B_1^{\ell_1}\times\dots\times B_k^{\ell_k},\qquad z^{\ell}=z_1^{\ell_1}\cdots z_k^{\ell_k}.
\end{equation*}
With some simple manipulations (see for instance \cite[Equation 3.4]{boutillier2005modeles}), one can show that for $m=(m_1,\dots,m_k)\in(\Z_{\geq 0})^k$, it holds that
\begin{equation*}
	\proba\big(\nu(B_i)=m_i, \forall 1\leq i \leq k\big)=\frac{(-1)^{|m|}}{m!} \left.\frac{\partial^{m}}{\partial z^{m}}\,Q^{\nu}_B(z)\right|_{z=(1,\dots,1)},
\end{equation*}
where $\frac{\partial^{m}}{\partial z^{m}}$ stands for the differential operator $\frac{\partial^{m_1}}{(\partial z_1)^{m_1}}\cdots\frac{\partial^{m_k}}{(\partial z_k)^{|m_k|}}$ and $|m|=\sum_{i=1}^k m_i$.
Therefore, thanks to the latter expression, the condition in \eqref{eq:conv_random_meas2} is equivalent to require that for all $k\geq 1$,
\begin{equation}\label{eq:conv_random_meas3}
	\left.\frac{\partial^{m}}{\partial z^{m}}\,Q^{\nu_n}_B(z)\right|_{z=(1,\dots,1)}
	\xrightarrow{n\to \infty} 
	\left.\frac{\partial^{m}}{\partial z^{m}}\,Q^{\nu}_B(z)\right|_{z=(1,\dots,1)}
\end{equation}
for all collections $(B_i)_{1\leq i \leq k}$ of  bounded Borel $\nu$-continuity subsets of ${\mathcal X}$ and all integer-valued vectors $(m_i)_{1\leq i \leq k}\in(\Z_{\geq 0})^k$. We have the following useful result.

\begin{proposition}\label{prop:dpp_conv}
	If the kernel $J=J\big((x_1,t_1),(x_2,t_2)\big)$ is locally bounded and $J_n$ converges locally uniformly to $J$, then $\nu_n\xrightarrow{d}\nu$ w.r.t.\ the weak topology.
\end{proposition}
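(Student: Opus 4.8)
The plan is to reduce the convergence in distribution of the determinantal point processes to the analytic convergence condition \eqref{eq:conv_random_meas3}, and then to verify that condition using the hypotheses on the kernels. Concretely, by the discussion preceding the proposition, it suffices to show that for every $k\geq 1$, every collection $(B_i)_{1\leq i\leq k}$ of bounded Borel $\nu$-continuity subsets of $\mathcal X$, and every multi-index $m\in(\Z_{\geq 0})^k$, the derivatives $\partial^m_z Q^{\nu_n}_B(z)$ evaluated at $z=(1,\dots,1)$ converge to $\partial^m_z Q^{\nu}_B(z)$ evaluated there. So the whole proof is about passing to the limit inside the series defining $Q^\nu_B$ and inside the integrals defining its coefficients.

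First I would fix $B=(B_1,\dots,B_k)$ and set $B^\circ:=B_1\cup\dots\cup B_k$, a bounded Borel subset of $\mathcal X$; let $C:=\sup_{B^\circ\times B^\circ}|J|<\infty$ by local boundedness, and let $\Lambda:=\lambda(B^\circ)<\infty$. For each fixed $\ell\in\N^k$, Hadamard's inequality gives $|\det([J((x_i,t_i),(x_j,t_j))]_{1\leq i,j\leq|\ell|})|\leq |\ell|^{|\ell|/2} C^{|\ell|}$ on $B^\ell$, and the same bound with the same constants holds for $J_n$ once $n$ is large enough that $\sup_{B^\circ\times B^\circ}|J_n|\leq 2C$ (this follows from locally uniform convergence). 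Hence the $\ell$-th term of the series defining $Q^{\nu_n}_B(z)$ is bounded in absolute value by $\frac{|z|^\ell}{\ell!}|\ell|^{|\ell|/2}(C\Lambda)^{|\ell|}$, uniformly in $n$ large, and this majorant series converges for all $z$ (the factorial beats the $|\ell|^{|\ell|/2}$ growth). Therefore $Q^{\nu_n}_B$ and $Q^\nu_B$ are entire functions of $z\in\CC^k$, uniformly bounded on compact sets for $n$ large. Now for each fixed $\ell$, the integrand $\det([J_n(\cdots)]_{1\leq i,j\leq|\ell|})$ converges pointwise on $B^\ell$ to $\det([J(\cdots)]_{1\leq i,j\leq|\ell|})$ by continuity of the determinant together with the pointwise (indeed locally uniform) convergence $J_n\to J$, and it is dominated by the fixed integrable function $(x_1,t_1,\dots)\mapsto|\ell|^{|\ell|/2}(2C)^{|\ell|}$ on $B^\ell$; by dominated convergence each coefficient of $Q^{\nu_n}_B$ converges to the corresponding coefficient of $Q^\nu_B$. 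Combining coefficient-wise convergence with the uniform majorant, a standard Weierstrass/Vitali argument shows $Q^{\nu_n}_B\to Q^\nu_B$ uniformly on compact subsets of $\CC^k$, hence all partial derivatives converge uniformly on compact sets as well; in particular \eqref{eq:conv_random_meas3} holds at $z=(1,\dots,1)$.

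Finally, I would invoke the chain of equivalences already recorded in the text: \eqref{eq:conv_random_meas3} $\Leftrightarrow$ \eqref{eq:conv_random_meas2} $\Leftrightarrow$ \eqref{eq:conv_random_meas}, and then \eqref{eq:conv_random_meas} together with the criterion \cite[Theorem 11.1.VII]{daley2008introduction} gives $\nu_n\xrightarrow{d}\nu$ with respect to the weak topology on $\mathcal M^{\#}_{\mathcal X}$. The main obstacle, and the only place where care is genuinely needed, is the interchange of limit and infinite sum: one must produce an $n$-uniform integrable majorant for the full series (not just term by term), which is exactly what the Hadamard bound plus local boundedness of $J$ and locally uniform convergence of $J_n$ provide; the rest is routine dominated convergence and elementary complex analysis. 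One should also note that the $\nu$-continuity hypothesis on the $B_i$ plays no role in this particular implication — it is only needed for the converse direction in \cite{daley2008introduction} — so it can simply be carried along.
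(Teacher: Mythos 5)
Your proof is correct and follows essentially the same approach as the paper: reduce to \eqref{eq:conv_random_meas3}, apply Hadamard's inequality together with local boundedness of $J$ and locally uniform convergence of $J_n$ to get a uniform majorant, use dominated convergence term-by-term and then for the series, and finish with Cauchy/Weierstrass to pass to derivatives. The only cosmetic difference is that you bound $\lambda(B)^\ell$ by $\Lambda^{|\ell|}$ with $\Lambda=\lambda(B_1\cup\cdots\cup B_k)$ rather than keeping the multi-index product, which works equally well.
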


\begin{proof}
	Thanks to our previous discussions it is enough to show that the condition in \eqref{eq:conv_random_meas3} holds.
	Fix a collection $B=(B_i)_{1\leq i \leq k}$ of bounded Borel $\nu$-continuity subsets of ${\mathcal X}$, and $\ell\in\N^k$. Since $B^\ell$ is relatively compact, by using the Hadamard inequality and the locally uniform convergence of $J_n$ towards the locally bounded kernel $J$, we see that for all $ (x_i,t_i)_{1\leq i\leq |\ell|} \in B^\ell$,
\begin{equation*}
    \left|\det\left(\left[J_n\big((x_i,t_i),(x_j,t_j)\big)\right]_{1\leq i,j \leq |\ell|}\right)\right|\leq \prod_{j=1}^{|\ell|} \left(\sum_{i=1}^{|\ell|} J_n\big((x_i,t_i),(x_j,t_j)\big)^2\right)^{\!\frac{1}{2}}\leq  \left(C\sqrt{|\ell|}\right)^{|\ell|}
  \end{equation*}
  for some positive constant $C$. Since $J_n$ converges pointwise to $J$, for every fixed $(x_i,t_i)_{1\leq i\leq |\ell|}\in B^\ell$,
  \begin{equation*}
    \det\left(\left[J_n\big((x_i,t_i),(x_j,t_j)\big)\right]_{1\leq i,j \leq |\ell|}\right)
    \to \det\left(\left[J\big((x_i,t_i),(x_j,t_j)\big)\right]_{1\leq i,j \leq |\ell|}\right),
  \end{equation*}
   and by dominated convergence, we get
	\begin{multline}\label{eq:conv_goal}
		\int_{B^\ell} \det\left(\left[J_n\big((x_i,t_i),(x_j,t_j)\big)\right]_{1\leq i,j \leq |\ell|}\right)
		\dd{\la}^{\otimes |\ell|}\big((x_1,t_1),\dots,(x_{|\ell|},t_{|\ell|})\big)\\
		\to
		\int_{B^\ell} \det\left(\left[J\big((x_i,t_i),(x_j,t_j)\big)\right]_{1\leq i,j \leq |\ell|}\right)
		\dd{\la}^{\otimes |\ell|}\big((x_1,t_1),\dots,(x_{|\ell|},t_{|\ell|})\big).
	\end{multline}
	It remains to prove that \eqref{eq:conv_goal} implies the condition in \eqref{eq:conv_random_meas3}. Thanks to the estimate given above, the term with label $\ell$ in the series $Q^{\nu_n}_B(z)$ or $Q^{\nu}_B(z)$ is bounded from above by 
  $$\frac{|z|^\ell \,|\la(B)|^\ell }{\ell!}(C\sqrt{|\ell|})^{|\ell|} \leq \left(\frac{\|z\|_\infty\, \|\la(B)\|_\infty\, C\,k\E }{\sqrt{|\ell|}}\right)^{|\ell|} .$$
  Here we used the fact that for any $k$-tuple $\ell = (\ell_1,\ldots,\ell_k)$, $\ell ! \geq (\Gamma(\frac{|\ell|}{k}+1))^k \geq (\frac{|\ell|}{k\E})^{|\ell|}$; see also the discussion below \cite[Lemma 3.1]{boutillier2005modeles}. Therefore, $Q^{\nu_n}_B(z)$ and $Q^{\nu}_B(z)$ 
  are entire functions of $z$, and by dominated convergence of the terms of the series, $Q^{\nu_n}_B(z)$ converges locally uniformly on $\mathbb{C}^k$ towards $Q^\nu_B(z)$. By Cauchy's differentiation formula, this implies the (locally uniform) convergence of all the partial derivatives, whence the general condition \eqref{eq:conv_random_meas3}.
\end{proof}

We finish this section by recalling a standard observation in the theory of determinantal point processes.
Assume that $K_1$ and $K_2$ are two kernels on a set $X$ differing by a {\em conjugation factor}, i.e.~such that
there exists a function $g$ on $X$ satisfying
\[K_1(x,y) =\frac{g(x)}{g(y)} K_2(x,y),\quad \text{for all } x,y\in X.\]
Then, for all $x_1,\dots,x_k$ in $X$, we have
\[\det\left( K_1(x_i,x_j) \right)_{1 \le i,j \le k} = \frac{\prod_{i=1}^k g(x_i)}{\prod_{j=1}^k g(x_j)}
\det\left( K_2(x_i,x_j) \right)_{1 \le i,j \le k} = \det\left( K_2(x_i,x_j) \right)_{1 \le i,j \le k}.\]
Therefore the two kernels induce the same correlation functions (of any order)
and therefore the associated point processes are equal in distribution.

\section{Identifying the critical points of the action}
\label{sec:analysis_kernel}

\subsection{The kernel of the renormalized bead process} 
\label{sec:KernelRenormalized}

We start our analysis by looking at the random bead process $M_{\la_N}$ introduced in \eqref{eq:corr_bead_proc} in a window of size $O(1) \times O(1/\sqrt{N})$
around a fixed point $(x_0 \sqrt{N},t_0)$ with $(x_0,t_0)\in[\eta\, a_0, \eta\, a_m]\times [0,1]$. In particular, in \eqref{eq:defn_of_Mla} we introduced the renormalized bead process
\begin{equation*}
	\widetilde{M}_{\la_N}^{(x_0,t_0)} = \Set{ (x,t) \in \Z \times \R |  \big(x_0 \sqrt{N} + x,t_0 + \tfrac{t}{\sqrt{N}} \big) \in  M_{\la_N} }.
\end{equation*}
Using simple properties of determinantal point processes,
we know that  $\widetilde{M}_{\la_N}^{(x_0,t_0)}$ is a determinantal point process with correlation kernel
\begin{equation}\label{eq:renorm_kern}
\widetilde{K}_{\la_N}^{(x_0,t_0)}((x_1,t_1),(x_2,t_2)) := \frac1{\sqrt{N}}\, K_{\la_N}\left(\left( x_0 \sqrt{N} + x_1,t_0 +\frac{t_1}{\sqrt{N}}\right),\left( x_0 \sqrt{N} + x_2,t_0 +\frac{t_2}{\sqrt{N}}\right)\right).
\end{equation}
We will use the double integral expression for
$K_{\la_N}$ given in \eqref{eq:kernel_without_indicator}.
The integration contours are different in the case $t_1 \ge t_2$
and $t_1<t_2$.
Unless stated otherwise, expressions below are valid for $t_1 \ge t_2$,
and we indicate only when necessary the needed modification for $t_1 < t_2$.

\medskip

Performing the change of variables
$(w,z) = (\sqrt{N}(W+x_0),\,\sqrt{N}(Z+x_0))$,
we have
 \begin{multline}\label{eq:double_int_integr} \widetilde{K}_{\la_N}^{(x_0,t_0)}((x_1,t_1),(x_2,t_2)) 
   = - \frac{1}{(2\I \pi)^2}\oint_{\gz}\!\oint_{\gw} \frac{F_{\la_N}(\sqrt{N}(Z+x_0))}{F_{\la_N}(\sqrt{N}(W+x_0))}\,\frac{\Gamma(\sqrt{N}W- x_1+1)}{\Gamma(\sqrt{N}Z - x_2+1)}
 \\ \cdot \frac{(1-\widetilde{t}_2)^{\sqrt{N}Z - x_2}\,(1-\widetilde{t}_1)^{-\sqrt{N}W + x_1-1}}{(Z-W)}\dd{W}\dd{Z},
 \end{multline}
where $\widetilde{t}_1=t_0 +\frac{t_1}{\sqrt{N}}$ and $\widetilde{t}_2=t_0 +\frac{t_2}{\sqrt{N}}$. 
With the new variables $W$ and $Z$, recalling the comments below \eqref{eq:kernel_without_indicator}, we get that the poles of the integrand in \eqref{eq:double_int_integr} occur
\begin{itemize}
	\item for $W=Z$;
	\item for some values of $W$ in an interval of the form
	$I_W:=[\frac{n\,a_0}{\sqrt N}-x_0,\frac{x_1-1}{\sqrt{N}}]=[\eta \,a_0-x_0,o(1)]$ (referred to as $W$-poles below);
	\item for some values of $Z$ in an interval $I_Z:=[\frac{x_2}{\sqrt N}, \frac{n\, a_m}{\sqrt N}-x_0]=[o(1),\eta\, a_m- x_0]$ (referred to as $Z$-poles below).
\end{itemize}
The integration contours $\gw$ and $\gz$ above should be such that $\gw$ is inside $\gz$
and contain $I_W$ and $I_Z$ respectively. Setting $L=\eta \max(|a_0|,a_m)$ and recalling that $x_0$ is in $[\eta\, a_0, \eta\, a_m]$, we have
\begin{equation}\label{eq:bound_L}
	(\eta\, a_0-x_0),(\eta\, a_m-x_0)\in[-2L,2L].
\end{equation}
Hence, for $N$ large enough, we can take, for instance, $\gw=\partial D(0,3L)$ and $\gz=\partial D(0,4L)$ (both followed in counterclockwise order), where $D(z,r)$ denotes the open disk centered at $z\in\mathbb{C}$ and of radius $r\in\mathbb{R}_{\geq 0}$ and $\partial D(z,r)$ denotes its boundary, i.e. the circle with center $z$ and radius $r$. For convenience, we will often write $\Int_N(W,Z)$ for the integrand in \eqref{eq:double_int_integr}.

\subsection{Asymptotic analysis of the integrand}

Our next goal is to write the double integral in \eqref{eq:double_int_integr} under an exponential form
using asymptotic approximations.
In the following, for two sequences $A=(A_N)$ and $B=(B_N)$, we write $A \simeq B$ when $A=B(1+O(N^{-1/2}))$.
Moreover, unless stated otherwise, when $A$ and $B$ depend on $Z$ and $W$,
the estimates are uniform for $Z$ and $W$ lying in the integration contours 
 $\gw=\partial D(0,3L)$ and $\gz=\partial D(0,4L)$
 (we recall that $x_0$, $x_1$, $x_2$, and $t_0$, $t_1$, $t_2$ are fixed).
First, since $\widetilde{t}_1=t_0 +\frac{t_1}{\sqrt{N}}$ and $\widetilde{t}_2=t_0 +\frac{t_2}{\sqrt{N}}$, we have that
\begin{align}
&(1-\widetilde{t}_2)^{\sqrt{N}Z - x_2} = (1-t_0)^{\sqrt{N}Z - x_2} \left( 1- \frac{t_2}{(1-t_0)\sqrt{N}}\right)^{\sqrt{N}Z - x_2}
 \simeq (1-t_0)^{-x_2} \E^{ \sqrt{N} Z\log(1-t_0)} \E^{-\frac{Zt_2}{1-t_0}};\notag\\
&(1-\widetilde{t}_1)^{-\sqrt{N}W + x_1-1} \simeq (1-t_0)^{x_1-1} \E^{ -\sqrt{N} W\log(1-t_0)}\,\E^{\frac{Wt_1}{1-t_0}}.\label{eq:asympt1}
\end{align}
We now estimate the quotients involving $F_{\la_N}$ and $\Gamma$ in \eqref{eq:double_int_integr}. In what follows we use the principal branch of the logarithm, defined and continuous on $\mathbb C \setminus \mathbb R_-$. 
We also use the convention that, for negative real $x$, we have $\log(x)=\log(-x)+\I\pi$. In particular $\Re \log(x)$ is continuous on $\mathbb C \setminus \{0\}$.

The following function $S_1:\mathbb C \to \mathbb C$ will play a crucial role in our analysis (see \cref{lem:Asymp_Fla} and \cref{eq:equiv_integrand} below):
\begin{equation}\label{eq:s1}
  S_1(U):=-g(U)+ \sum_{i=0}^m g(U+x_0-\eta\, a_i) -\sum_{i=1}^m g(U+x_0-\eta\, b_i),\quad\text{ where }\quad g(U)=U\log(U).
\end{equation}
Let us also introduce a specific subset $\DS$ of the complex plane (we invite the reader to compare the next explanation with \cref{fig:diagram-for-DS}).
An interval of the form $[\eta\, a_{i-1}-x_0, \eta b_i-x_0]$ or $[\eta\, b_i-x_0, \eta a_i-x_0]$
for some $1 \le i\le m$ will be called {\em negative} (resp.~{\em positive})
if it is included in $(-\infty,0]$ (resp.~$[0,+\infty)$).
Then we define $\DS$ as the complex plane $\mathbb C$
from which we remove the following closed real intervals:
\begin{itemize}
  \item the negative intervals of the form $[\eta\, a_{i-1}-x_0, \eta b_i-x_0]$;
  \item the positive intervals of the form $[\eta\, b_i-x_0, \eta a_i-x_0]$;
  \item either $[\eta\, a_{i_0-1}-x_0,0]$ if $\eta\, a_{i_0-1}-x_0 < 0 \le \eta\, b_{i_0}-x_0$
     for some $i_0$, or $[0,\eta\, a_{i_0}-x_0]$ if $\eta\, b_{i_0}-x_0 \le 0 < \eta\, a_{i_0}-x_0$ for some $i_0$.
\end{itemize}

\begin{figure}[ht]
	\includegraphics[height=8cm]{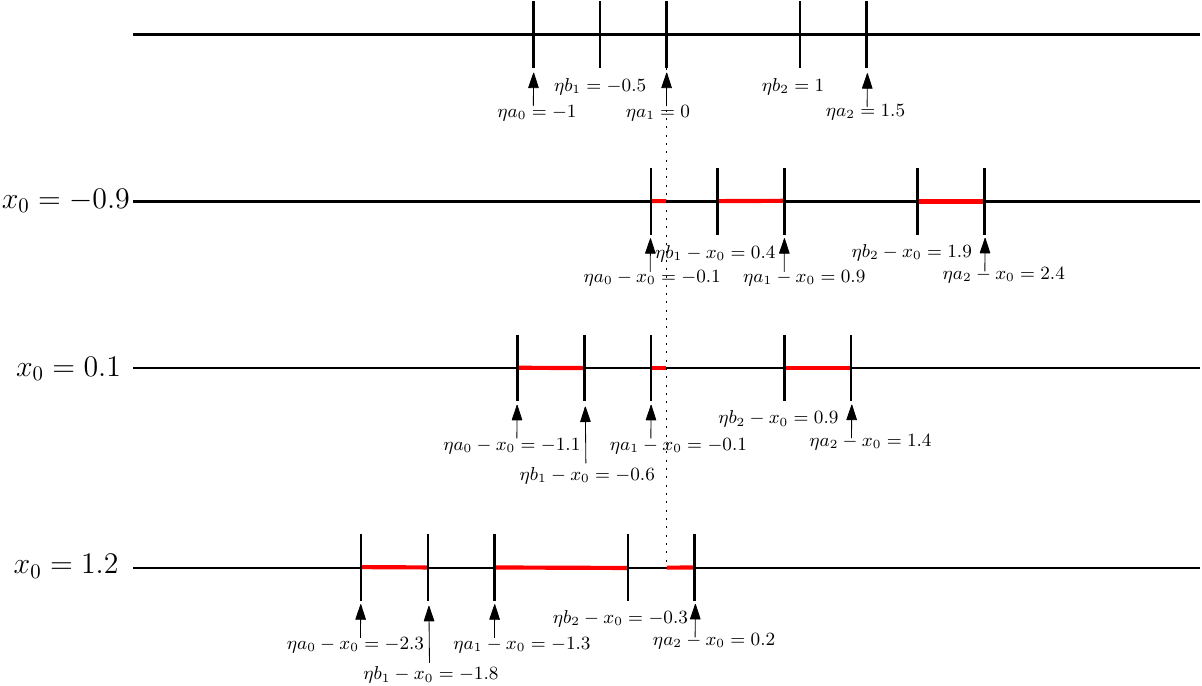}
	\caption{A schema for the  subset $\DS$ of the complex plane introduced below \cref{eq:s1}. Here the interlacing coordinates are $a_0=-2<b_1=-1<a_1=0<b_2=2<a_2=3$. In particular, according to our notation $m=2$ and $\eta=1/\sqrt{ |\la^0|}=1/2$, so that $[\eta\, a_0, \eta\, a_m]=[-1,3/2]$.  We sketch the subset $\DS$ in three different cases, when $x_0=-0.9$, $x_0=0.1$, and $x_0=1.2$: $\DS$ is equal to the complex plane $\mathbb C$ from which we remove the red closed real intervals.}
	\label{fig:diagram-for-DS}
\end{figure}

Finally, following e.g.~\cite{Biane2003}, we introduce the Cauchy transform 
of the \emph{transition measure} of the renormalized diagram $\eta  \la^0$:
\[G(U):=\frac{\prod_{i=1}^m (U-\eta\, b_i)}{\prod_{i=0}^m (U-\eta\, a_i)}.\]

\begin{lemma}\label{lem:Asymp_Fla}
Fix an integer $x\in\Z$.
As $N  \to +\infty$, the following approximation holds, 
uniformly for $U$ in compact subsets of $\DS$:
\begin{equation}
\label{eq:Asymp_Fla}
\frac{F_{\la_N}(\sqrt{N}(U+x_0))}{\Gamma(\sqrt N U-x+1)}\simeq 
\E^{x_0 (g(\sqrt{N})-\sqrt{N})}\,
\E^{S_1(U) \sqrt{N}}  \left(U\sqrt N\right)^{\!x} \big(U \cdot G(U+x_0)\big)^{-\frac{1}{2}},
\end{equation}
Moreover, writing $\LHS(U)$ and $\RHS(U)$ for the left-hand and right-hand sides of the previous estimates, for any $\eps>0$, we have the uniform bound $\frac{|\LHS(U)|}{|\RHS(U)|}= O \left( \exp(\eps \pi \sqrt N) \right)$ for $U$ in compact subsets of 
\[\{V: \Re(V)<0 \text{ and } |\Im(V)| \le \eps\}
\setminus \{\eta\, a_0-x_0,\eta \,b_0-x_0,\dots,\eta\, b_m-x_0,\eta\, a_m-x_0\}.\]
Similarly, for any $\eps>0$, we have the uniform bound $\frac{|\RHS(U)|}{|\LHS(U)|}= O \left(\exp(\eps \pi \sqrt N)\right)$
for $U$ in compact subsets of 
\[\{V: \Re(V)>0 \text{ and } |\Im(V)| \le \eps\}
\setminus \{\eta\, a_0-x_0,\eta \,b_0-x_0,\dots,\eta\, b_m-x_0,\eta\, a_m-x_0\}.\]
\end{lemma}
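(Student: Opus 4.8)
The plan is a Stirling expansion of every Gamma factor, with a reflection step for the factors whose argument lies on the negative real axis, followed by a combinatorial bookkeeping in which the definition of $\DS$ and the two integrality facts $\sqrt N\,\eta=n\in\Z$ and $\sqrt N\,x_0\in\Z$ play the decisive role. First I would use \eqref{eq:Flambda_dilatation} to write
\[
\frac{F_{\la_N}(\sqrt N(U+x_0))}{\Gamma(\sqrt N U-x+1)}
=(\sqrt N U)^{x}\bigl(1+O(N^{-1/2})\bigr)\,
\frac{\prod_{i=0}^m\Gamma\!\bigl(\sqrt N(U+x_0-\eta\,a_i)+1\bigr)}
{\Gamma(\sqrt N U+1)\,\prod_{i=1}^m\Gamma\!\bigl(\sqrt N(U+x_0-\eta\,b_i)+1\bigr)},
\]
the factor $(\sqrt N U)^{x}$ coming from the standard ratio asymptotic $\Gamma(z+1-x)/\Gamma(z+1)=z^{-x}(1+O(1/z))$ with $z=\sqrt N U$ (uniform on compacts away from $U=0$). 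Writing $g(U)=U\log U$ and applying Stirling $\log\Gamma(w+1)=w\log w-w+\tfrac12\log(2\pi w)+O(1/|w|)$ to each factor $w=\sqrt N V$ — directly when $\Re V>0$ or $V\notin\R$, and, when $V$ is a negative real, after the reflection identity $\Gamma(\sqrt N V+1)=-\pi\bigl(\sin(\pi\sqrt N V)\,\Gamma(-\sqrt N V)\bigr)^{-1}$ followed by Stirling for $\Gamma(-\sqrt N V)$ — one obtains, for each factor, the four pieces $V\,g(\sqrt N)$, $\sqrt N\,g(V)$, $-\sqrt N V$, $\tfrac12\log(2\pi\sqrt N V)$, plus, for a reflected factor, the extra term $-\I\pi\sqrt N V-\log\sin(\pi\sqrt N V)$ together with a fixed constant.

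The bookkeeping then runs as follows. The coefficients of $g(\sqrt N)$ and of $-\sqrt N$ each sum over all factors to $x_0$ by \eqref{eq:int_rel}, producing the prefactor $\E^{x_0(g(\sqrt N)-\sqrt N)}$; the terms $\sqrt N\,g(V)$ assemble into $\sqrt N\,S_1(U)$ by the very definition \eqref{eq:s1}; the $\log(2\pi\sqrt N)$-parts cancel since there are $m+1$ Gamma factors on top and on the bottom, while the surviving $\tfrac12\log V$-parts recombine, through the definition of $G$, into $\bigl(U\,G(U+x_0)\bigr)^{-1/2}$. For the reflection terms I would note that $-\I\pi w-\log\sin(\pi w)=\log\bigl(2\I/(\E^{2\I\pi w}-1)\bigr)$, and that, because $\sqrt N\,\eta$ and $\sqrt N\,x_0$ are integers, every argument $\sqrt N V$ differs from $\sqrt N U$ by an integer, so $\E^{2\I\pi\sqrt N V}=\E^{2\I\pi\sqrt N U}$; hence this extra term equals one and the same function of $U$ for every reflected factor. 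Since the precise definition of $\DS$ guarantees — via the interlacing $a_0<b_1<\dots<a_m$ and the location of the poles of $F_{\la_N}$ — that on $\DS$ the number of reflected (negative‑real) factors in the numerator equals that in the denominator, all the reflection terms and the accompanying constants cancel in pairs, leaving exactly \eqref{eq:Asymp_Fla}. One should also observe that $\E^{\sqrt N S_1(U)}$ is single‑valued on $\DS$ although $S_1$ is not: across a surviving real interval the jump of $S_1$ is $2\I\pi$ times an integer combination of the $\eta(a_j-b_j)$, hence $\sqrt N$ times that jump lies in $2\I\pi\Z$.

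To make the error genuinely uniform on compacts of $\DS$, it is convenient to differentiate the logarithm of the ratio in $U$: this turns $S_1$ into a sum of digammas, removes the $\log\sqrt N$‑nuisances, and reduces everything to the uniform digamma asymptotics (directly, or via $\psi(w)=\psi(1-w)-\pi\cot(\pi w)$ for negative real arguments), with the $\cot(\pi\sqrt N U)$‑contributions cancelling by the same equal‑count property. Integrating back on the connected set $\DS$ recovers the identity up to a $U$‑independent constant, which is pinned to $x_0(g(\sqrt N)-\sqrt N)$ by evaluating at one large positive real value of $U$ where plain Stirling applies.

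Finally, for the two one‑sided bounds I would run the same computation on the strips $\{\Re V<0,\ |\Im V|\le\eps\}$ and $\{\Re V>0,\ |\Im V|\le\eps\}$ with the listed points removed. There the number of reflected factors in numerator and denominator differs by exactly one, the sign of the discrepancy being $-$ in the left strip and $+$ in the right strip; the single uncancelled reflection term then contributes the factor $\bigl(2\I/(\E^{2\I\pi\sqrt N U}-1)\bigr)^{\pm1}$, whose modulus is at most $\E^{\mathcal O(\eps\sqrt N)}$ in the direction claimed and $\mathcal O(1)$ in the other, which after renaming $\eps$ is exactly the asserted $O(\exp(\eps\pi\sqrt N))$ estimate. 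The main obstacle is precisely this reflection‑and‑branch bookkeeping near the real axis: it is conceptually routine, but one must keep the $\log$‑convention, the $\I\pi$‑shifts, the parities and the combinatorics hidden in the definition of $\DS$ perfectly synchronised, and in particular verify that on $\DS$ the reflected‑factor counts really do match — which is a statement about where the poles of $F_{\la_N}$ sit relative to the interlacing coordinates.
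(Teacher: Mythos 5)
Your proposal is correct and follows essentially the same route as the paper: Stirling's approximation for the Gamma factors with non-negative real part of the argument, Euler's reflection for the ones with negative real part, and the observation that on $\DS$ the number of reflected factors in numerator and denominator coincide (so the resulting sine factors cancel, up to signs which disappear because $x_0\sqrt{N}$ and $\eta\sqrt{N}$ are integers). The paper carries out the reflection bookkeeping explicitly interval by interval and then shows $S_2(U)=S_1(U)\pm\I\pi(-x_0+\eta(\dots))$, rather than packaging the reflection contribution into $\log\bigl(2\I/(\E^{2\I\pi w}-1)\bigr)$ as you do, but the count-matching argument and the use of the two integrality facts are identical. The one-sided strip estimates are also handled the same way in both: there the reflection counts are off by exactly one, and the single surviving sine factor (in the numerator of $|\LHS|$ for the negative strip, in its denominator for the positive one) is bounded by $\E^{\eps\pi\sqrt N}$ via $|\sin(z)|\le\E^{|\Im z|}$. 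Your auxiliary suggestion to differentiate $\log$ of the ratio, turning Stirling into digamma asymptotics and removing additive constants, is a clean alternative normalization trick that the paper does not use; the paper instead relies directly on the uniformity of Stirling's expansion on sectors bounded away from $\R_-$. Either route works; yours adds a nice sanity check that the $U$-independent prefactor is pinned down by evaluating at a single large real point.
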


Before proving the lemma, let us discuss some implication.
Recall that we write $\Int_N(W,Z)$ for the integrand in \eqref{eq:double_int_integr}.
Bringing together the estimates in \cref{eq:asympt1,eq:Asymp_Fla},
we get that uniformly for $Z$ and $W$ in compact subsets of $\DS$,
\begin{equation}
	\label{eq:equiv_integrand}
	\Int_N(W,Z) \simeq (\sqrt{N})^{x_2-x_1}
	\E^{\sqrt{N}(S(W) - S(Z))}\,\hh(W,Z),
\end{equation}
where 
\begin{align}\label{eq:action}
	&S(U):=-S_1(U)-U\log(1-t_0)\notag\\
	&\qquad\stackrel{\eqref{eq:s1}}{=} g(U) - U\log(1-t_0)- \sum_{i=0}^m g (x_0-\eta\, a_i+U) + \sum_{i=1}^m g(x_0-\eta\, b_i+U); \notag \\
	&\hh(W,Z):= \frac{\E^{\frac{Wt_1-Zt_2}{1-t_0}}\, (1-t_0)^{ x_1- x_2-1}}{W^{x_1}Z^{-x_2}(Z-W)}\,
	\left(\frac{W\, G(x_0+W)}{Z\, G(x_0+Z)} \right)^{\frac{1}{2}}.
\end{align}
We note for future reference, that as $|U|$ tends to infinity, we have that
\begin{equation}\label{eq:as_action}
	S(U) = |\log(1-t_0)|\, U +O(\log(|U|)).
\end{equation}
Indeed, the terms of order $U\log U$ cancel out.

\begin{proof}[Proof of \cref{lem:Asymp_Fla}]
From \eqref{eq:Flambda_dilatation} we have that
\[ F_{\la_N}\big(\sqrt{N}(U+x_0)\big)=\frac{\prod_{i=0}^m \Gamma\big((U+x_0-\eta\, a_i)\sqrt N+1\big)}{\prod_{i=1}^m \Gamma\big((U+x_0-\eta\, b_i)\sqrt N+1\big)}.
\]
We use Stirling's approximation for the $\Gamma$ function:
\begin{equation}\label{eq:stirl_form}
	 \Gamma(U+1) = \E^{U \log U - U}\,\sqrt{2\pi U}\,(1+O(U^{-1})); 
\end{equation}
this approximation is uniform for $\Arg(U)$ in a compact sub-interval of $(-\pi,\pi)$.
Then a straightforward computation
shows that \eqref{eq:Asymp_Fla} holds uniformly for $U$ in compact subsets of 
{$\mathbb C \setminus (-\infty, \eta\, a_m-x_0]$}.

We need to extend this estimate to compact subsets of $\DS$.
For this we will prove that \eqref{eq:Asymp_Fla} holds on compact subsets of $I +\I\mathbb R$,
where $I$ is one interval among: the interval $(-\infty, \eta\, a_0-x_0)$; the negative intervals
$(\eta\, b_i-x_0, \eta a_i-x_0)$; the positive intervals $(\eta\, a_{i-1}-x_0, \eta b_i-x_0)$;                                                                                                                                                                                
and either $(0,\eta\, b_{i_0}-x_0)$ or $(\eta\, b_{i_0}-x_0,0)$.
We will treat the case where $I$ is a negative interval $(\eta\, b_i-x_0, \eta a_i-x_0)$,
the other cases being similar.

The idea is to use Euler's reflection formula 
$\Gamma(U)\, \Gamma(1-U)=\frac{\pi}{\sin(\pi U)}$ to get rid of $\Gamma$ functions
applied to negative real numbers.
For $U \in I+\I \mathbb R$, we have
\begin{multline*}
\frac{F_{\la_N}(\sqrt{N}(U+x_0))}{\Gamma(\sqrt N U-x+1)}=
(-1)^{({+}x_0+\eta(b_{i+1}+\dots+b_m-a_{i}-\dots-a_m))\sqrt{N}{+}x}\,\Gamma\!\left(x-\sqrt N U\right)\\
\frac{\prod_{j=0}^{i-1} \Gamma\big((U+x_0-\eta\, a_j)\sqrt N+1\big)}{\prod_{j=1}^i \Gamma\big((U+x_0-\eta\, b_j)\sqrt N+1\big)}\,
\frac{\prod_{j=i+1}^m \Gamma\!\left(-(U+x_0-\eta\, b_j)\sqrt N\right)}{\prod_{j=i}^m \Gamma\!\left(-(U+x_0-\eta\, a_j)\sqrt N\right)};
\end{multline*}
the sign comes from the quotient of the $\sin(\pi U)$ factors in Euler's reflection factor,
noting that we have applied the reflection formulas as many times in the numerator as in the denominator,
and that the argument of the various $\Gamma$ functions differ by integer values.
For $U \in I+\I \mathbb R$, all arguments of $\Gamma$ functions in the above formula
have positive real part.
Thus we can apply Stirling's formula \eqref{eq:stirl_form} and 
we find that, uniformly on compact subsets of $I +\I\mathbb R$,
\begin{multline}
\label{eq:Asymp_Fla2}
\frac{F_{\la_N}(\sqrt{N}(U+x_0))}{\Gamma(\sqrt N U-x+1)}\simeq 
(-1)^{({+}x_0+\eta(b_{i+1}+\dots+b_m-a_{i}-\dots-a_m))\sqrt{N}{+}x} \\
 \,\E^{x_0 (g(\sqrt{N})-\sqrt N)}\,
\E^{S_2(U) \sqrt{N}}  \left(-U\sqrt N\right)^{x} \left(U G(U+x_0)\right)^{-\frac{1}{2}},
\end{multline}
where 
\begin{multline*}
S_2(U)=g(-U) + \sum_{j=0}^{i-1} g (U+x_0-\eta\, a_j) - \sum_{j=1}^i g(U+x_0-\eta\, b_i)\\
- \sum_{j=i}^{m} g (-U-x_0+\eta\, a_j) + \sum_{j=i+1}^{m} g (-U-x_0+\eta\, b_j) .
\end{multline*}
For $\Re(U)< 0$, we have $g(-U)=-g(U) + i\pi U$ if $\Im(U) \ge 0$ 
and $g(-U)=-g(U) - i\pi U$ if $\Im(U) <  0$.
Hence, comparing the latter displayed equation with \eqref{eq:s1}, we have for $U \in I+\I \mathbb R$ that (recall that $I$ is a negative interval):
\[S_2(U) =\begin{cases}
  S_1(U) + \I\pi ( -x_0 +\eta (a_{i}{+}\dots{+}a_m{-}b_{i+1}{-}\dots{-}b_m)) &\text{ if $\Im(U) \ge 0$},\\
  S_1(U) - \I\pi ( -x_0 +\eta (a_{i}{+}\dots{+}a_m{-}b_{i+1}{-}\dots{-}b_m))  &\text{ if $\Im(U) <  0$}.
\end{cases}\]
In particular, recalling that $x_0 \sqrt N$ and $\eta \sqrt{N}$ are integers,
 we obtain
 \[ \E^{S_2(U) \sqrt{N}}=(-1)^{( -x_0 +\eta (a_{i}{+}\dots{+}a_m{-}b_{i+1}{-}\dots{-}b_m)) \sqrt{N}}\, \E^{S_1(U) \sqrt{N}}.\]
 Consequently, signs cancel out in \eqref{eq:Asymp_Fla2} and we get that 
 \eqref{eq:Asymp_Fla} is also valid uniformly on compact subsets of $I +\I \mathbb R$.
Doing similar reasoning for the other intervals $I$ listed above,
we conclude that \eqref{eq:Asymp_Fla} is valid uniformly on compact subsets of $\DS$.

\medskip

We now prove the bound $\frac{|\LHS(U)|}{|\RHS(U)|}= O \left( \exp(\eps \pi \sqrt N) \right)$
on compact subsets of 
\[\{V: \Re(V)<0 \text{ and } |\Im(V)| < \eps\}
\setminus \{\eta\, a_0-x_0,\eta b_0-x_0,\dots,\eta b_m-x_0,\eta a_m-x_0\}.\]
Thanks to the first part of the lemma, it only remains to prove that this quantity
is bounded on compact subsets of $I + \I [-\eps,\eps]$, 
for any negative interval $I$ of the form $(\eta\, a_{i-1}-x_0, \eta b_i-x_0)$. 
Let us take $U$ in $I + \I [-\eps,\eps]$. Using Euler's reflection formula (once more in the denominator than
in the numerator), we have
\begin{multline*}
  |\LHS(U)|=\frac{|F_{\la_N}(\sqrt{N}(U+x_0))|}{|\Gamma(\sqrt N U-x+1)|}=\frac{\left|\sin({-\pi\sqrt{N}} (U+x_0-\eta b_i) )\right|}{\pi}
 \big| \Gamma\!\left(x-\sqrt N U\right)\!\big|\\
\cdot \frac{\prod_{j=0}^{i-1} \big|\Gamma\big((U+x_0-\eta\, a_j)\sqrt N+1\big)\big|}{\prod_{j=1}^{{i-1}} \big|\Gamma\big((U+x_0-\eta\, b_j)\sqrt N+1\big)\big|}\,
\frac{\prod_{j=i}^m \big|\Gamma\!\left(-(U+x_0-\eta\, b_j)\sqrt N\right)\!\big|}{\prod_{j=i}^m \big|\Gamma\!\left(-(U+x_0-\eta\, a_j)\sqrt N\right)\!\big|}. 
\end{multline*}
Using the trivial bound $|\sin(z)| \le \exp(|\Im(z)|)$,
we get that
$\left|\sin({-\pi\sqrt{N}} (U+x_0-\eta b_i) )\right| \le \exp(\eps \pi \sqrt N)$ for any $U$ with $|\Im(U)| \le \eps$.
Moreover, the gamma factors can be controlled as above using Stirling's formula (all their arguments
have positive real parts for $U$ in $I+ \I [-\eps,\eps]$).
This proves the second statement in the lemma.

\medskip

For the third statement, we need to work on rectangles $I + \I [-\eps,\eps]$,
where $I$ is a positive interval of the form $I=( \eta b_i-x_0,\eta a_i-x_0)$.
Using again the reflection formula to get rid of the $\Gamma$ functions applied to arguments with negative
real parts yields:
\begin{multline*}
  |\LHS(U)|=\frac{|F_{\la_N}(\sqrt{N}(U+x_0))|}{|\Gamma(\sqrt N U-x+1)|}=\frac{\pi}{\left|\sin({-\pi\sqrt{N}} (U+x_0-\eta a_i) )\right|}
  \frac{1}{|\Gamma(\sqrt N U-x+1)|} \\
\cdot \frac{\prod_{j=0}^{i-1} \big|\Gamma\big((U+x_0-\eta\, a_j)\sqrt N+1\big)\big|}{\prod_{j=1}^{i} \big|\Gamma\big((U+x_0-\eta\, b_j)\sqrt N+1\big)\big|}\,
\frac{\prod_{j={i+1}}^m \big|\Gamma\!\left(-(U+x_0-\eta\, b_j)\sqrt N\right) \!\big|}{\prod_{j=i}^m \big|\Gamma\!\left(-(U+x_0-\eta\, a_j)\sqrt N\right)\!\big|}. 
\end{multline*}
Note that, this time, we applied the reflection formula once more in the numerator than in the denominator,
yielding an extra sine term at the denominator.
The bound $\frac{|\RHS(U)|}{|\LHS(U)|}= O \left(\exp(\eps \pi \sqrt N)\right)$  is then obtained bounding once again the sine term via the inequality
$|\sin(z)| \le \exp(|\Im(z)|)$, and using Stirling's formula for the $\Gamma$ factors.
\end{proof}

\subsection{The critical points of the action} 
\label{ssec:localization-critical-points}
The function $S$ in the estimate in \eqref{eq:equiv_integrand} is called the {\em action} of the model.
In order to perform a saddle point analysis of the double integral,
we look for the critical points of the action, i.e., complex solutions
of the critical equation $\frac{\partial S}{\partial U}(U) = 0$.
This critical equation  writes as
\begin{equation}
	\log U - \log(1-t_0) - \sum_{i=0}^m \log(x_0-\eta\, a_i+U)
	+\sum_{i=1}^m \log(x_0-\eta\, b_i+U) =0,
	\label{eq:critical_eq}
\end{equation}
and taking the exponential, we recover the critical equation~\eqref{eq:critical_intro} from the introduction, that is
\begin{equation}\label{eq:critical_eq2}
	U \, \prod_{i=1}^m (x_0-\eta\, b_i+U) = (1-t_0) \, \prod_{i=0}^m (x_0-\eta\, a_i+U).
\end{equation} 
Here is an analogue of \cite[Proposition 7.6]{petrov2014tilings} in our setting. Recall that $x_0 \in [\eta \,a_0,\eta\, a_m]$ and  $t_0\in[0,1]$. In the next analysis we exclude the cases $x_0=\eta\, a_i$ for $i\in\{0,1,\dots,m\}$; these cases will be treated later in \cref{rem:pat_cases}.
\begin{lemma}
	\label{lem:critical_points_general}
	Let $i_0\geq 1$ be such that $x_0 \in  (\eta \,a_{i_0-1} , \eta\, a_{i_0})$.
	Then for $i$ in $\{1,\dots,i_0-1\}$,
	the critical equation \eqref{eq:critical_intro} (see \eqref{eq:critical_eq2} for a closer reference) has 
	\begin{itemize}
		\item no root in the interval ${[}\eta\, a_{i-1} -x_0, \eta\, b_i -x_0)$,
		\item at least one real root in the interval $[\eta\, b_{i} -x_0, \eta\, a_i -x_0)$,
	\end{itemize}
	while, for $i$ in $\{i_0+1,\dots,m\}$, it has 
	\begin{itemize}
		\item no root in the interval ${[}\eta\, b_{i} -x_0, \eta\, a_i -x_0)$,
		\item at least one real root in $(\eta \,a_{i-1} -x_0, \eta\, b_{i} -x_0{]}$.
	\end{itemize}
\end{lemma}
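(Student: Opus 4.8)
The plan is to reduce the statement to a sign and intermediate-value analysis of a single real rational function. For $0\le i\le m$ and $1\le i\le m$ set (locally in this proof) $A_i := \eta\, a_i - x_0$ and $B_i := \eta\, b_i - x_0$, so that \eqref{eq:int_coord} becomes $A_0 < B_1 < A_1 < \dots < B_m < A_m$ and the hypothesis $x_0\in(\eta\, a_{i_0-1},\eta\, a_{i_0})$ becomes $A_{i_0-1}<0<A_{i_0}$. In these variables the critical equation \eqref{eq:critical_eq2} reads $U\prod_{i=1}^m(U-B_i) = (1-t_0)\prod_{i=0}^m(U-A_i)$; since its left-hand side does not vanish at any $A_j$ (because no $A_j$ equals $0$ or any $B_i$), a real $U$ is a root if and only if $U\notin\{A_0,\dots,A_m\}$ and $\Phi(U)=1-t_0$, where
\[
  \Phi(U)\;:=\;\frac{U\,\prod_{i=1}^m(U-B_i)}{\prod_{i=0}^m(U-A_i)}.
\]
I will assume $t_0<1$, so that $1-t_0>0$; the value $t_0=1$ does not occur in the later use of this lemma. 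The function $\Phi$ is a quotient of two monic degree-$(m+1)$ polynomials with simple real zeros $0,B_1,\dots,B_m$ and simple real poles $A_0,\dots,A_m$; between two consecutive poles it is continuous, hence on each subinterval delimited by the $A_j$ and not containing a zero it has constant sign.

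The heart of the argument is the determination of these signs. By the interlacing, the only zero of $\Phi$ inside $(A_{i-1},A_i)$ is $B_i$ when $i\ne i_0$, while $(A_{i_0-1},A_{i_0})$ contains both $B_{i_0}$ and $0$; moreover $A_{i-1},B_i,A_i<0$ for $i<i_0$ and $A_{i-1},B_i,A_i>0$ for $i>i_0$. For $U$ in one of the intervals $(A_{i-1},B_i)$ or $(B_i,A_i)$ I would count how many of the factors $U$, the $U-B_j$ and the $U-A_j$ are negative, using only the alternation of the $a$'s and $b$'s and the position of $0$. The outcome is: for $i<i_0$, $\Phi<0$ on $(A_{i-1},B_i)$ and $\Phi>0$ on $(B_i,A_i)$; for $i>i_0$, $\Phi>0$ on $(A_{i-1},B_i)$ and $\Phi<0$ on $(B_i,A_i)$. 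This bookkeeping is routine but is the step demanding the most care, and it is precisely where the interlacing property and the definition of $i_0$ are used.

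Granting this, the four assertions follow at once. For the ``no root'' claims: when $i<i_0$ we have $\Phi<0<1-t_0$ on $(A_{i-1},B_i)$ and $A_{i-1}$ is a pole, hence not a root, so the critical equation has no root in $[A_{i-1},B_i)$; when $i>i_0$ we have $\Phi<0<1-t_0$ on $(B_i,A_i)$ and $\Phi(B_i)=0\ne 1-t_0$, so there is no root in $[B_i,A_i)$. For the ``at least one root'' claims: when $i<i_0$, $\Phi$ is continuous on $[B_i,A_i)$ with $\Phi(B_i)=0$, and since $A_i$ is a simple pole of $\Phi$ while $\Phi$ stays positive on $(B_i,A_i)$, one has $\Phi(U)\to+\infty$ as $U\to A_i^-$; as $1-t_0>0$, the intermediate value theorem yields a root in $(B_i,A_i)\subseteq[B_i,A_i)$. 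The case $i>i_0$ is the mirror image: $\Phi$ is continuous on $(A_{i-1},B_i]$, tends to $+\infty$ as $U\to A_{i-1}^+$, and vanishes at $B_i$, so $\Phi(U)=1-t_0$ for some $U\in(A_{i-1},B_i]$.

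Thus the only genuine computation is the sign analysis of the middle paragraph; everything else is continuity, the intermediate value theorem, and the fact that a simple pole forces $|\Phi|\to\infty$. I do not anticipate any real obstacle beyond keeping the combinatorics of signs straight, which I would handle by treating the index ranges $\{1,\dots,i_0-1\}$ and $\{i_0+1,\dots,m\}$ separately, exactly as in the statement.
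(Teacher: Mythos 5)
Your proof is correct and takes essentially the same route as the paper: both rest on determining the signs of the two sides of the polynomial critical equation on each subinterval cut out by the $\eta\, a_i - x_0$ and $\eta\, b_i - x_0$, and then invoke the intermediate value theorem. You package the sign analysis into the ratio $\Phi(U)$ and use the blow-up at the simple poles $A_i$ instead of evaluating $L-R$ at both endpoints as the paper does, but this is only a cosmetic reformulation of the same argument.
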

\begin{proof}
	{If $t_0=1$ then the lemma statement is obvious,
		hence we assume that $t_0\in[0,1)$ (so that the factor $(1-t_0)$ is non-zero).}
	The lemma is easily obtained by looking at the signs of the left and right-hand sides 
	of \eqref{eq:critical_intro}, that is
	\begin{equation}\label{eq:left_right_fct}
		L(U) := U\prod_{j=1}^m(x_0-\eta\,b_j + U)
		\qquad \text{and}\qquad 
		R(U) := (1-t_0)\,\prod_{j=0}^m(x_0-\eta\,a_j+U).
	\end{equation}
	For instance, for $1 \leq i \leq i_0-1$ and $U \in {[}\eta\, a_{i-1} -x_0, \eta\, b_i -x_0)$, the quantity $L(U)$ has sign $(-1)^{m-i+2}$ (note that $U<0$ on this interval since $i \le i_0-1$), while $R(U)$ has sign $(-1)^{m-i+1}$. Hence \eqref{eq:critical_intro} has no root on this interval, as claimed.
	
	On the opposite, if $1 \leq i \leq i_0-1$, we look at the signs
	of the boundary points of the interval $[\eta\, b_{i} -x_0, \eta\, a_i -x_0)$ in order to prove the existence of a root. Note that
	\begin{itemize}
		\item $L(\eta\,b_i-x_0)=0$ and $R(\eta\,b_i-x_0)$ {is non-zero and} has sign $(-1)^{m-i+1}$;
		\item $L(\eta\,a_i-x_0)$ {is non-zero and} has sign $(-1)^{m-i+1}$ {(the factor $U= \eta\,a_i-x_0$ is negative since $i \le i_0-1$)} and $R(\eta\,a_i-x_0)=0$.
	\end{itemize}   
	Thus, the difference $L(U)-R(U)$ has to vanish between these two boundary points.
	The case $i\in\{i_0+1,\dots,m\}$ is then treated in a similar manner.
\end{proof}

The lemma locates $m-1$ solutions out of the $m+1$ solutions of the polynomial critical equation~\eqref{eq:critical_intro} (see \eqref{eq:critical_eq2} for a closer reference).
The two extra solutions are either both real or non-real complex conjugate. {Note that if $t_0=0$ then the polynomial equation \eqref{eq:critical_intro} is of degree $m$ and so all the $m$ solutions are real, while if $t_0=1$ then the polynomial equation \eqref{eq:critical_intro} has clearly only real solutions; we exclude these trivial cases from the next discussion.}
We recall from \cref{defn:liquid_region} that the
liquid region $L$ is the set of pairs $(x_0,t_0)\in {[\eta \,a_0,\eta\, a_m]\times [0,1]}$
such that the critical equation~\eqref{eq:critical_intro} has exactly two non-real solutions.
The following proposition gives us some information
about the shape of the liquid region,
and the localization of the critical points (in addition to those identified in
\cref{lem:critical_points_general}).
\begin{proposition}
\label{prop:t_regions}
  Fix $x_0 \in [\eta \,a_0,\eta\, a_m]$. As above, let $i_0\geq 1$
  be such that $x_0 \in  (\eta \,a_{i_0-1}, \eta \,a_{i_0})$ (note that we do not know how $ \eta\, b_{i_0}$ compares with $x_0$). 
  Then the following assertions hold:
  \begin{itemize}
  \item There exists $t_-=t_-(x_0)\geq 0$ such that the critical equation~\eqref{eq:critical_intro} has two real solutions (counted with multiplicities) outside the interval 
  $(\eta\, a_0 -x_0,\eta\, a_m -x_0)$ if and only if $t_0 \le t_-$.
  Moreover, $t_-(x_0)=0$ if and only if $x_0=0$.
  For $x_0<0$, these solutions are in $(-\infty,\eta\, a_0 -x_0)$,
  while for $x_0>0$, they lie in $(\eta \,a_m -x_0,+\infty)$.
  \item {There exists $t_+=t_+(x_0)\leq 1$ such that the critical equation~\eqref{eq:critical_intro} 
   has two real solutions (counted with multiplicities) inside the interval $(\eta\, a_{i_0-1}-x_0,\eta\, a_{i_0}-x_0)$
   if and only if $t_0 \ge t_+$. If this is the case, these solutions are inside the sub-interval $(0\wedge(\eta\, b_{i_0}-x_0),0\vee(\eta\, b_{i_0}-x_0))$.}
  Moreover, $t_+(x_0)=1$ if and only if $x_0=\eta\, b_{i_0}$.
\item Finally, if $t_0\in(t_-(x_0),t_+(x_0))$ and the critical equation~\eqref{eq:critical_intro} has only real solutions, then the two extra real solutions\footnote{in addition to the ones determined by \cref{lem:critical_points_general}.} (counted with multiplicities) are either both inside a negative interval $(\eta\,b_{j_0} -x_0,\eta\,a_{j_0} -x_0)$ for some $j_0<i_0$,
  or both inside a positive interval $(\eta\,a_{j_0}-x_0,\eta\,b_{{j_0}+1} -x_0)$ for some $j_0\geq i_0$. In particular, this interval contains three real solutions.
    \end{itemize}
\end{proposition}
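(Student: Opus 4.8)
The plan is to reduce the critical equation to a level‑set problem for a single real rational function, and then read off every root count from the qualitative shape of its graph. Write the critical equation~\eqref{eq:critical_intro} (equivalently~\eqref{eq:critical_eq2}) as $\varphi(U)=1-t_0$, where
\[
\varphi(U):=\frac{U\,\prod_{i=1}^m(x_0-\eta\,b_i+U)}{\prod_{i=0}^m(x_0-\eta\,a_i+U)}=U\cdot G(U+x_0),
\]
so that the real critical points for the parameter $t_0$ are exactly the real solutions of $\varphi(U)=1-t_0$. Put $\alpha_i:=\eta\,a_i-x_0$ and $\beta_i:=\eta\,b_i-x_0$; then $\varphi$ has simple poles at $\alpha_0<\alpha_1<\dots<\alpha_m$ and zeros at $0$ and at $\beta_1<\dots<\beta_m$, it satisfies $\varphi(U)=1-x_0/U+O(U^{-2})$ as $U\to\pm\infty$ by~\eqref{eq:int_rel}, and the partial‑fraction identity $\varphi(U)=1+\sum_{i=0}^m\mu_i\alpha_i/(U-\alpha_i)$ holds, where the $\mu_i>0$ are the masses of the transition measure of $\eta\,\la^0$, a probability measure of mean $0$ (again by~\eqref{eq:int_rel}) and positive variance. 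By the reflection $(U,x_0,\la^0)\mapsto(-U,-x_0,\text{reflected }\la^0)$, which exchanges the $a_i$'s with the $b_i$'s while preserving~\eqref{eq:int_coord}--\eqref{eq:int_rel}, it suffices to treat $x_0>0$; the case $x_0=0$ I would handle separately at the end.

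Next I would do the sign analysis of $\varphi$ near each pole $\alpha_i$, using only the interlacing of the $a_i$'s and $b_i$'s and the fact that $x_0\in(\eta\,a_{i_0-1},\eta\,a_{i_0})$: for $i<i_0$ one gets $\varphi(\alpha_i^-)=+\infty$, $\varphi(\alpha_i^+)=-\infty$, and for $i\ge i_0$ one gets $\varphi(\alpha_i^-)=-\infty$, $\varphi(\alpha_i^+)=+\infty$. Hence on each ``attractive'' interval $(\alpha_{i-1},\alpha_i)$ with $i\ne i_0$ the function $\varphi$ has a single zero $\beta_i$, is negative on the sub‑interval separating $\beta_i$ from the pole on that side where it would be negative, and runs from $0$ to $+\infty$ on the complementary sub‑interval; this reproves \cref{lem:critical_points_general} and shows that on an attractive interval $\varphi$ meets each level $c>0$ an \emph{odd} number of times. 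On $(\alpha_{i_0-1},\alpha_{i_0})$ the function $\varphi$ tends to $-\infty$ at both ends and has exactly the two zeros $0$ and $\beta_{i_0}$, hence forms a positive bump between them; and on each of $(-\infty,\alpha_0)$ and $(\alpha_m,\infty)$ it tends to $+\infty$ at the finite end and to $1$ at the infinite end. Combining this with a degree/parity count: for $t_0\in(0,1)$ the polynomial~\eqref{eq:critical_intro} has degree $m+1$, \cref{lem:critical_points_general} places at least one root in each of the $m-1$ attractive intervals, so the remaining two roots (real, possibly coincident, or a complex‑conjugate pair) are shared among the attractive intervals, the interval $(\alpha_{i_0-1},\alpha_{i_0})$, and the two infinite intervals; and since $\varphi$ has the same end‑behaviour sign at both ends of each such region (both $\pm\infty$, or both $\approx 1>c$ at the infinite ends), $\varphi=c$ has an even number of solutions in each, so each region absorbs $0$ or $2$ of the two extra roots.

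From here the three bullets follow. For the first, I would first show $\varphi>1$ on the far infinite interval $(-\infty,\alpha_0)$: with $z=U+x_0<\eta\,a_0$, the inequality $\varphi(U)>1$ is equivalent to $\sum_i\mu_i/(\eta\,a_i-z)>1/(x_0-z)$, and Jensen's inequality for the convex map $w\mapsto 1/(w-z)$ together with the mean‑zero property gives $\sum_i\mu_i/(\eta\,a_i-z)\ge 1/(-z)>1/(x_0-z)$ as $x_0>0$; hence $(-\infty,\alpha_0)$ contains no root for $t_0\in[0,1)$, and the two ``escape'' roots, when present, lie in $(\alpha_m,\infty)=(\eta\,a_m-x_0,\infty)$. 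On $(\alpha_m,\infty)$ the function $\varphi$ is positive, equals $+\infty$ at $\alpha_m^+$ and tends to $1^-$ at $+\infty$, so $\inf_{(\alpha_m,\infty)}\varphi\in(0,1)$; since $\varphi>c$ at both ends, the root count there equals $2$ precisely when $\varphi$ dips to or below $c=1-t_0$, i.e.\ when $t_0\le t_-(x_0):=1-\inf_{(\alpha_m,\infty)}\varphi\in(0,1)$ (the value $t_0=0$ being degenerate, the leading coefficient of~\eqref{eq:critical_intro} vanishing and one root escaping to $+\infty$). For the second bullet, the bump on $(\alpha_{i_0-1},\alpha_{i_0})$ has height $M:=\sup_{(\alpha_{i_0-1},\alpha_{i_0})}\varphi>0$ unless $\beta_{i_0}=0$, i.e.\ $x_0=\eta\,b_{i_0}$, in which case $\varphi\le0$ there and there is no root; the degree count at $t_0=0$ forces $M<1$; since $\varphi<c$ at both ends, the root count there equals $2$ precisely when $M\ge c$, i.e.\ when $t_0\ge t_+(x_0):=1-M\in(0,1]$, with those roots lying in the bump, hence in $\bigl(0\wedge\beta_{i_0},\,0\vee\beta_{i_0}\bigr)$; and $t_+=1\iff x_0=\eta\,b_{i_0}$.

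For the last bullet, suppose $t_0\in(t_-,t_+)$ and all roots of~\eqref{eq:critical_intro} are real. Then the two extra roots are not in $(\alpha_m,\infty)$ (that is the regime $t_0\le t_-$), not in $(-\infty,\alpha_0)$ ($\varphi>1>c$ there), and not in $(\alpha_{i_0-1},\alpha_{i_0})$ (which holds $0$ or $2$ roots, and $2$ only when $t_0\ge t_+$), so both lie in the attractive intervals; but an attractive interval meets the level $c$ an odd number of times, so no single attractive interval can carry exactly $2$ of the roots, forcing both extra roots into the \emph{same} attractive interval, which then contains three roots; and since $\varphi<0<c$ on that interval's negative side, all three lie on its positive side, namely $(\eta\,b_{j_0}-x_0,\eta\,a_{j_0}-x_0)$ for some $j_0<i_0$ or $(\eta\,a_{j_0}-x_0,\eta\,b_{j_0+1}-x_0)$ for some $i_0\le j_0\le m-1$, as claimed. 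Finally, for $x_0=0$ the same Jensen computation (the comparison now strict by positivity of the variance even without $x_0>0$) gives $\varphi>1$ on both infinite intervals, so the escape roots never occur and $t_-(0)=0$, consistent with the claim $t_-(x_0)=0\iff x_0=0$. The main obstacle is not any single inequality but the bookkeeping: getting the signs of $\varphi$ near the poles exactly right (this is where the interlacing and the position of $x_0$ enter), making the degree/parity count watertight, and treating cleanly the degenerate values $t_0\in\{0,1\}$, $x_0=0$, and $x_0=\eta\,b_{i_0}$.
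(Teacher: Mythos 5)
Your proof is correct, and it takes a genuinely different route from the paper's. The paper works directly with the two sides $L(U)$ and $R(U,t_0)$ of the critical equation, introduces the extremum $\theta(t_0)=\inf(L-R)$ (or $\sup$) over the relevant interval, and exploits its monotonicity in $t_0$; you instead divide through and study the level sets of the single $t_0$-independent rational function $\varphi(U)=U\,G(U+x_0)=L(U)/\tilde R(U)$, so that the $t_0$-monotonicity is built into the formulation $\varphi=1-t_0$. Bringing in the transition measure via the partial-fraction identity $\varphi=1+\sum_i\mu_i\alpha_i/(U-\alpha_i)$, and then Jensen's inequality to establish $\varphi>1$ on the far infinite interval, is a clean ingredient that the paper does not use: the paper reaches the corresponding conclusion ($t_-^2=0$ for $x_0>0$) more indirectly, by a counting argument based on \cref{lem:critical_points_general}. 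Your ``degree count at $t_0=0$'' argument for $M<1$ (hence $t_+>0$) is also a step the paper does not make explicit (the paper only proves $t_+\le 1$ with equality iff $x_0=\eta b_{i_0}$), so you are proving a slightly stronger statement there. The parity/sign bookkeeping for the third bullet is essentially the same in both.

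One small imprecision: the reflection that reduces to $x_0>0$ does not ``exchange the $a_i$'s with the $b_i$'s''. For the conjugate diagram the interlacing coordinates become $a_i'=-a_{m-i}$ and $b_i'=-b_{m+1-i}$, so local minima map to local minima (and \eqref{eq:int_rel} is still preserved). The invariance $\varphi'(-U;-x_0)=\varphi(U;x_0)$ you rely on does hold — it follows from $G'(-z)=-G(z)$ for the reflected transition measure — but the stated reason should be corrected. This does not affect the validity of the argument.
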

We call the regions $\{0\leq t_0 \le t_-(x_0)\}$, $\{t_+(x_0) \leq t_0 \leq 1\}$ and $\{t_-(x_0)<t_0<t_+(x_0)\}$
the \emph{small} $t$, \emph{large} $t$, and \emph{intermediate} $t$ regions, respectively.
By \cref{defn:liquid_region}, the liquid region is included in the intermediate $t$ region,
 but the converse is not true;
 i.e.\ it might happen that $t_0$ is in the intermediate $t$ region, but not in the liquid region,
 which corresponds to the discontinuity phenomenon discussed in the introduction in \cref{ssec:limiting-surface}.

An illustrative example of the various results obtained in this section is given in \cref{fig:liquid_region_and_landscape}.

\begin{figure}[ht]
	\includegraphics[height=4cm]{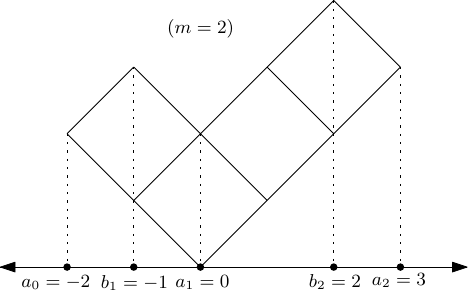}
	\hspace{1cm}
	\includegraphics[height=4cm]{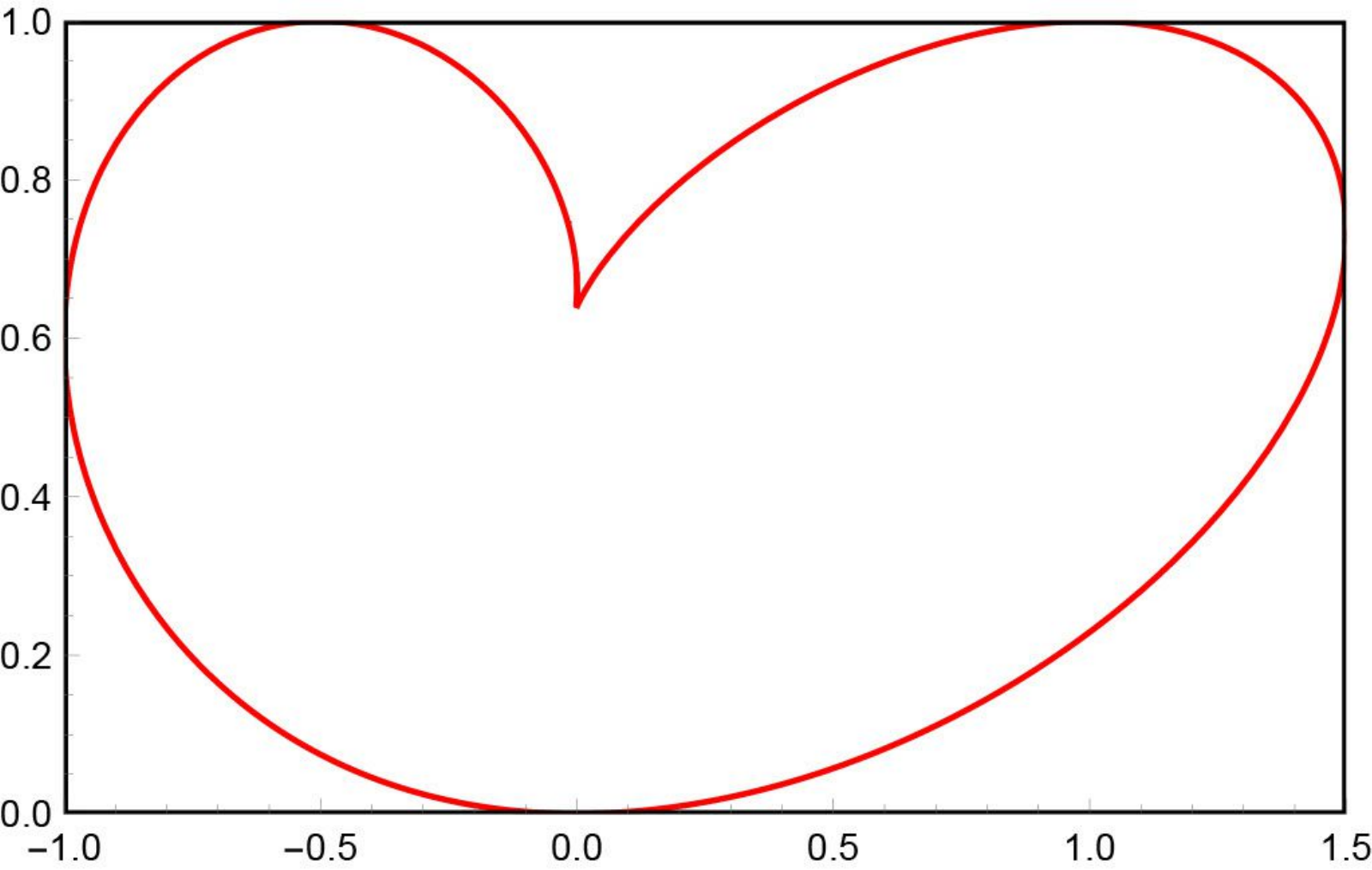}\vspace{1mm}
	\includegraphics[height=4cm]{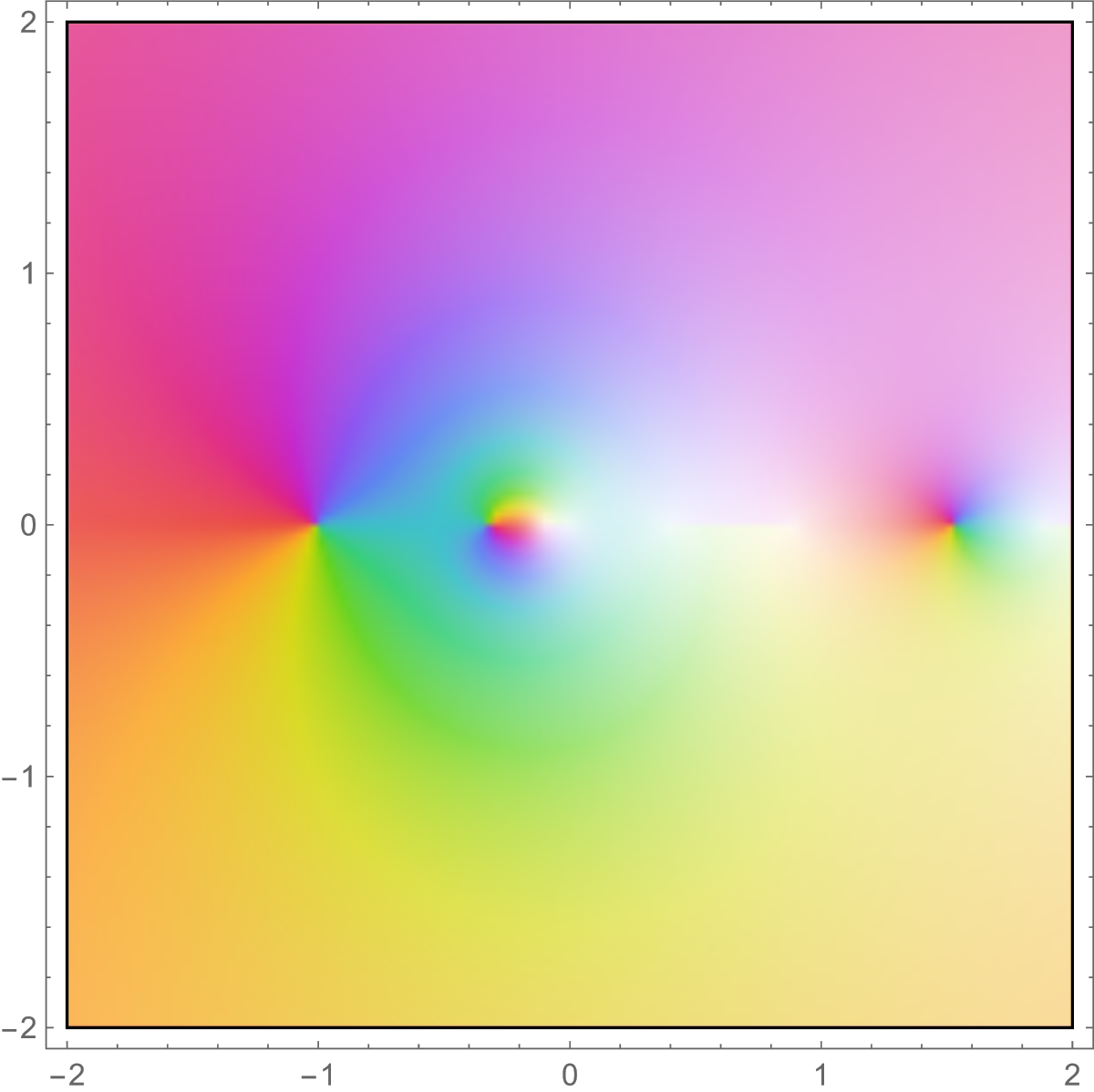}
	\includegraphics[height=4cm]{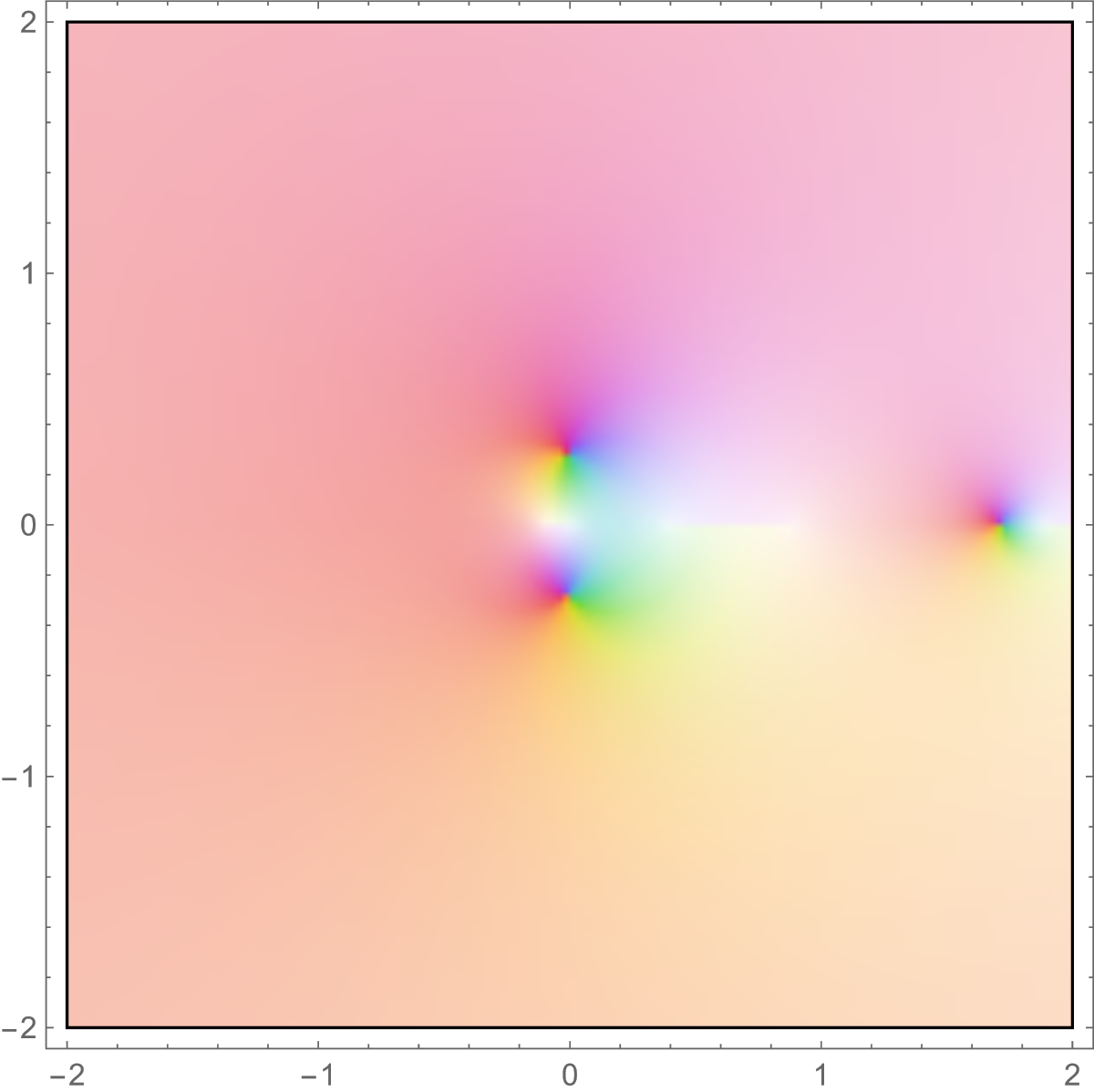}
	\includegraphics[height=4cm]{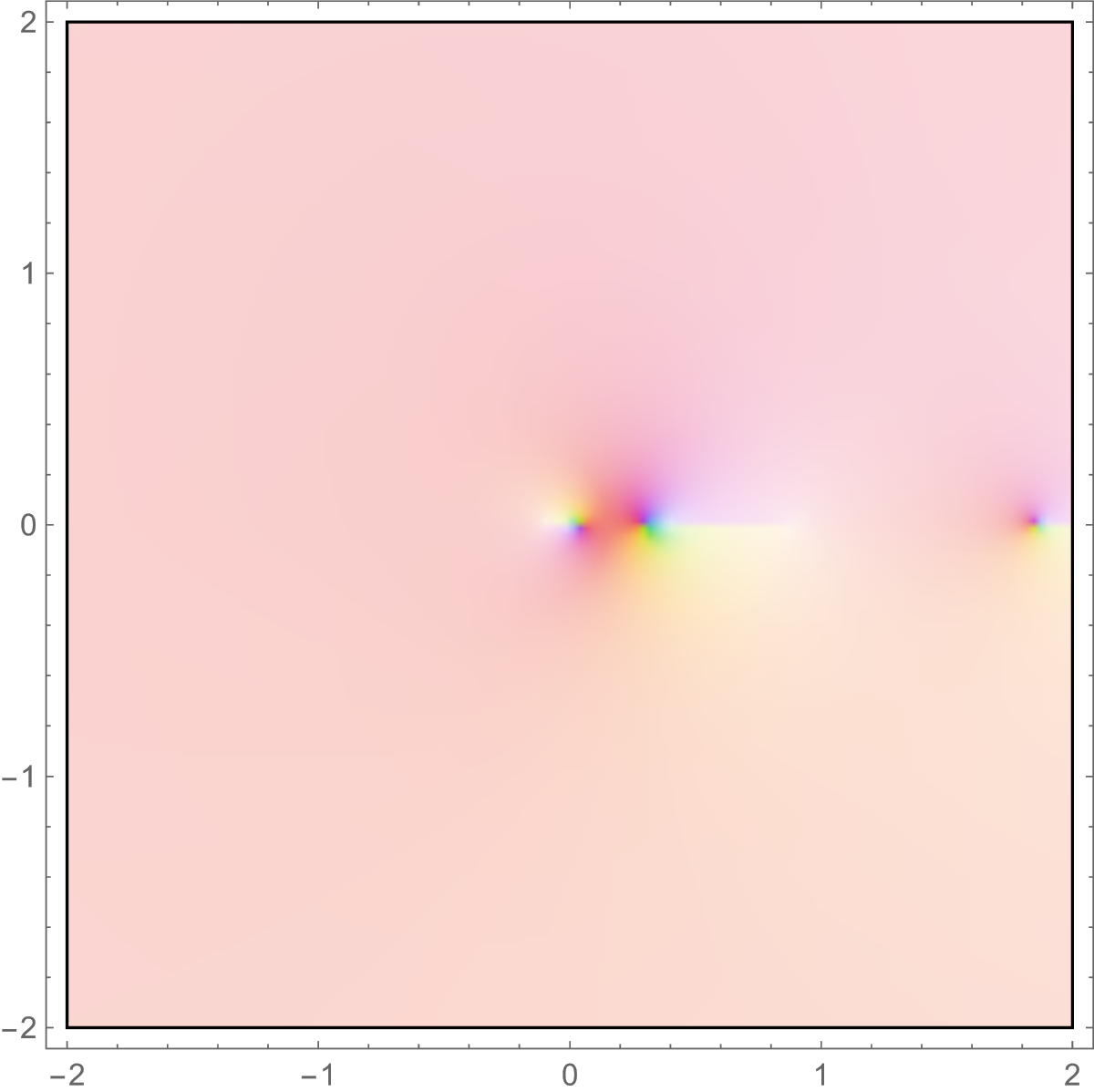}
	\includegraphics[height=4cm]{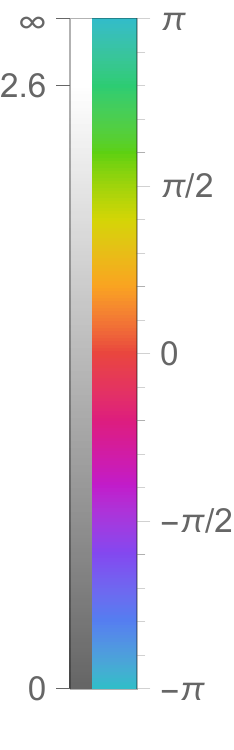}
	
	\vspace{0.2 cm}
	
	\includegraphics[height=0.98cm]{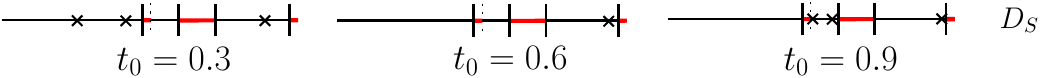}
	\captionsetup{width=\linewidth}
	\caption{\textbf{Top-left:} A Young diagram $\la^0$ with interlacing coordinates $a_0=-2<b_1=-1<a_1=0<b_2=2<a_2=3$. In particular, according to our notation $m=2$ and $\eta=1/\sqrt{ |\la^0|}=1/2$, so that $[\eta\, a_0, \eta\, a_m]=[-1,3/2]$. \textbf{Top-right:} In black the boundary of the region $[\eta\, a_0, \eta\, a_m]\times[0,1]$ for the points $(x_0,t_0)$. In red the frozen boundary of the liquid region $L$ corresponding to the diagram $\la^0$ (the liquid region $L$ is in the interior of the red curve). \textbf{Middle:} The landscapes of the function $\frac{\partial S}{\partial U}(U)$ associated with $\la^0$ from \eqref{eq:critical_eq}. In particular, we fixed $x_0=-0.9$ and we plotted the landscape of $\frac{\partial S}{\partial U}(U)$ for three different points $t_0\in\{0.3,0.6,0.9\}$. The plots are obtained using a cyclic color function over $\mathrm{Arg}\left(\frac{\partial S}{\partial U}(U)\right)$ as explained in the colored column in the legend on the right-hand side. Moreover, the shading of colors is based on $|\frac{\partial S}{\partial U}(U)|$ as explained in the gray column in the legend on the right-hand side. In particular, black points (at the center of the rainbow spirals) correspond to roots of the function $\frac{\partial S}{\partial U}(U)$. \textbf{Bottom:} We copied the portion of the sketch of $D_S$ from \cref{fig:diagram-for-DS} in the case $x_0=-0.9$ and we reported the position of the real roots (black crosses) from the middle pictures.\\
		\cref{lem:critical_points_general} predicts the existence of a real root of the critical
		equation in the interval $( \eta\, a_1-x_0, \eta\, b_2-x_0)=(0.9,1.9)$
		independently of the value of $t_0$, and we can indeed find such a root in all three cases (this is the right-most root).
		For small $t_0$, \cref{prop:t_regions} asserts that there are two additional roots
		in {$(-\infty,\eta\, a_0  -x_0)=(-\infty, -0.1)$}, which seems to be the case
		for $t_0=0.3$.
		For large $t_0$, again from \cref{prop:t_regions}, these two additional roots are
		in $(0,\eta\, b_1-x_0)=(0,0.4)$, which seems to be the case for $t_0=0.9$.
		For $t_0=0.6$, the point $(x_0,t_0)$ is in the liquid region
		and numerically, we indeed see two complex conjugate roots
		of the critical equation.\label{fig:liquid_region_and_landscape}}
\end{figure}

\begin{proof}[Proof of \cref{prop:t_regions}]
Recall that we can restrict ourselves to the case $t_0\in(0,1)$. We subdivide the proof into four main steps.

\smallskip 

\noindent{\emph{\underline{Step 1:} We fix $x_0>0$ and show that there exists $t_{-}^1(x_0)>0$ such that the critical equation~\eqref{eq:critical_intro}
has two real solutions (counted with multiplicities) in the interval $(\eta \,a_m -x_0,+\infty)$
if and only if $t_0 \le t_{-}^1(x_0)$, and no real solutions in this interval otherwise.}}

\smallskip 

First note that, as a consequence of \cref{lem:critical_points_general},
the critical equation~\eqref{eq:critical_intro} cannot have more than two real
solutions on this interval. Now, recall from \eqref{eq:left_right_fct} the functions $L(U)=L(U,x_0)$ and $R(U) = R(U,t_0)$ characterizing the left and right-hand sides of the critical equation~\eqref{eq:critical_intro}, i.e.\ 
\[ L(U)= U \, \prod_{i=1}^m (x_0-\eta\, b_i+U) \qquad\text{and}\qquad R(U)= (1-t_0) \, \prod_{i=0}^m (x_0-\eta\, a_i+U).\]
For $U$ tending to $\eta \,a_m -x_0$ from the right,
$L(U)$ stays positive while $R(U)$ tends to $0$, so that $L(U)-R(U)$ is positive.
For $U$ tending to $+\infty$, the difference $L(U)-R(U)$
is asymptotically equivalent to $t_0 U^{m+1}$, and hence is also positive {(since $t_0>0$)}.
Thus $L(U)-R(U)$ has two zeroes (counted with multiplicities) in the interval $(\eta \,a_m -x_0,+\infty)$
if and only if
\[\theta(t_0):= \inf_{U \geq \eta\,a_m-x_0} \left\{L(U)-R(U,t_0)\right\}, \qquad t_0\in[0,1],\]
is non-positive; and $L(U)-R(U,t_0)$ has no zeroes in this interval otherwise.
{Since $\theta(t_0)$ is continuous and non-decreasing in $t_0$, if we show that $\theta(t_0)$ is non-positive for $t_0$ small enough, this would complete the proof of Step 1.} Note that, as $U$ tends to $+\infty$, we have that
\begin{align*}
 L(U) &= U^{m+1} + \left(mx_0-{\eta}\sum_{i=1}^m b_i\right)U^m+ O(U^{m-1}) ,\\ 
 R(U,0) &= U^{m+1} + \left( (m+1)x_0 - {\eta}\sum_{i=0}^{m} a_i\right)U^m + O(U^{m-1}).
 \end{align*}
 Using the identity $\sum_{i=0}^m a_i = \sum_{i=1}^m b_i$  in \eqref{eq:int_rel}, this implies 
 that when $x_0>0$, $L(U) - R(U,0)$ tends to $-\infty$ as $U$ goes to $+\infty$.
 This implies that $\theta(t_0)<0$ for $t_0$ small enough,
and conclude the proof of Step 1.
 
\smallskip 
 
\noindent{\emph{\underline{Step 2:} We complete the proof of the first item in the proposition statement.}} 

\smallskip 

By symmetry, if $x_0<0$ there exists $t_{-}^2(x_0)>0$ such that the critical equation has two zeroes (counted with multiplicities) in the interval $(-\infty,\eta \,a_0 -x_0)$ if and only if $t_0 \le t_{-}^2(x_0)$, and no real solutions in this interval otherwise.

From \cref{lem:critical_points_general}, the critical equation~\eqref{eq:critical_intro} cannot have at the same time
two solutions in the intervals $(-\infty,\eta \,a_0 -x_0)$ and $(\eta \,a_m -x_0,+\infty)$.
Therefore $t_{-}^1(x_0)=0$ for $x_0<0$ and $t_{-}^2(x_0)=0$ for $x_0>0$.
Setting $t_-(x_0)= t_{-}^1(x_0)$ for $x_0>0$ and $t_-(x_0)= t_{-}^2(x_0)$ for $x_0<0$,
we have proven the first item in the Proposition statement for $x_0 \ne 0$.
The case $x_0 = 0$ follows from continuity arguments.

\smallskip 
 
\noindent{\emph{\underline{Step 3:} We complete the proof of the second item in the proposition statement.}}

\smallskip 

To fix ideas and notation, let us suppose that 
$$\eta\,a_{i_0-1} - x_0 <  0 \le \eta\,b_{i_0}-x_0 < \eta\,a_{i_0} - x_0;$$
 the other case $\eta\,b_{i_0}-x_0 < 0$ being symmetric; 
 while the case $\eta\,a_{i_0}-x_0=0$ needs minor adjustments
 discussed below.
 The left-hand side $L(U)$ vanishes only at the edges of the
 interval $[0,\eta\,b_{i_0}-x_0]$,
 while the  right-hand side $R(U)$  has  sign $(-1)^{m-i_0+1}$.
 Thus the difference $L(U) - R(U)$ has two zeroes on $[0,\eta\,b_{i_0}-x_0]$
 if and only if
 \[\theta(t_0) :=\sup_{0 \leq U \leq \eta\,b_{i_0}-x_0} \left\{(-1)^{m-i_0+1}(L(U) - R(U,t_0) )\right\}\]
 is non-negative, and no zeroes otherwise (it cannot have more than two zeroes,
 as a consequence of \cref{lem:critical_points_general}).
The quantity $\theta(t_0)$ is increasing with $t_0$
and $\theta(1)= \sup_{0 \leq U \leq \eta\,b_{i_0}-x_0} (-1)^{m-i_0+1} L(U) \ge 0$,
which implies the existence of $t_+(x_0)\leq 1$ as in the proposition statement.

If $x_0=\eta\, b_{i_0}$, then, for all $t_0<1$, one has
\[\theta(t_0) =  (-1)^{m-i_0+1} (L(0)-R(0)) = - (1-t_0) |R(0)| <0,\]
proving $t_+(x_0)=1$.
On the other hand, if $x_0 \ne \eta\, b_{i_0}$ one has $\theta(t_0)>0$
as soon as 
\[1-t_0 < \frac{\sup_{0 \leq U \leq \eta\,b_{i_0}-x_0} |L(U)|}{\sup_{0 \leq U \leq \eta\,b_{i_0}-x_0} |R(U)|},\]
proving that $t_+(x_0)<1$.

\smallskip 

\noindent{\emph{\underline{Step 4:} We complete the proof of the third item in the proposition statement.}}

\smallskip 

Note that since $t_0\in(t_-(x_0),t_+(x_0))$, as a consequence of the first two items in the proposition statement, there are no real solutions outside the interval $(\eta\, a_0 -x_0,\eta\, a_m -x_0)$ and inside the interval $(\eta\, a_{i_0-1}-x_0,\eta\, a_{i_0}-x_0)$. Combining this observation with the sign discussion in the proof of \cref{lem:critical_points_general}, we see that the only possibility is that either an interval of the form $(\eta\,b_{j_0} -x_0,\eta\,a_{j_0} -x_0)$ for some $j_0<i_0$ 
or an interval of the form $(\eta\,a_{j_0}-x_0,\eta\,b_{{j_0}+1} -x_0)$ for $j_0>i_0$ contains three roots of the critical equation \eqref{eq:critical_intro}.
\end{proof}

\begin{remark}\label{rem:pat_cases}
	As already mentioned, the case where $x_0=\eta\, a_{i_0}$ is not considered in the above results,
	because it needs some adjustment.
	In this case, the factor $U$ in $L(U)$ cancels out with the factor $(x_0-\eta\, a_{i_0} +U)$
	in $R(U)$ and the critical equation~\eqref{eq:critical_intro} (see \eqref{eq:critical_eq2} for a closer reference) has degree only $m$.
	An analogue of \cref{lem:critical_points_general} states that 
	the critical equation~\eqref{eq:critical_intro} has at least one root in each interval $(\eta\, b_i-x_0,\eta\, a_i-x_0)$
	for $i \in \{1,\dots,i_0-1\}$ 
	and in each interval $(\eta\, a_{i-1}-x_0,\eta\, b_i -x_0)$ for $i \in \{i_0+2,\dots,m\}$.
	
	If $i_0 \notin \{0,m\}$, this locates $m-2$ roots out of the $m$ roots of the equation.
	As in the generic case, the location of the two last roots is 
	partially described by an analogue of \cref{prop:t_regions}.
	The first item of \cref{prop:t_regions} holds true
	in the case $x_0=\eta\, a_{i_0}$ without any modification.
	For the second item, 
	it holds that the critical equation~\eqref{eq:critical_intro}
	has two real solutions (counted with multiplicities) inside the interval 
	{$(\eta\, b_{i_0}-x_0,\eta\, b_{i_0+1}-x_0)$}
	if and only if $t_0$ is at least equal to some $t_+(x_0)$.
	The proof is a simple adaptation of that of \cref{prop:t_regions}.
	
	Finally, if $x_0=\eta\, a_{0}$ (resp.\ $x_0=\eta\, a_m$), the critical equation~\eqref{eq:critical_intro} has degree $m$,
	and has at least one root in each interval $(\eta\, a_{i-1}-x_0,\eta\, b_i -x_0)$ for $i \ge 2$
	(resp.\ $(\eta\, b_i-x_0,\eta\, a_i-x_0)$ for $i \le m-1$). In both case, it has at least $m-1$ real roots, and hence cannot have complex roots.
	The vertical lines $\{\eta\, a_{0}\} \times [0,1]$ and $\{\eta\, a_{m}\} \times [0,1]$ thus entirely lie in the frozen region, {as well as the horizontal lines $[\eta\, a_0,\eta\, a_m] \times \{0\}$ and $[\eta\, a_0,\eta\, a_m] \times \{1\}$ (recall the discussion above \cref{prop:t_regions})}.
\end{remark}

\subsubsection{The shape of the liquid region}

We conclude this section proving \cref{prop:Burgers_eq} from the introduction, and giving an alternative description of the liquid region (see \cref{prop:description_liquid_region} below). These results have all rather standard proofs, which we include for the sake of completeness.

\begin{proof}[Proof of \cref{prop:Burgers_eq}]
  \label{proof:burger}
	Recall that $U_c(x,t)$ is the unique solution with positive imaginary part of the critical equation $\frac{\partial S}{\partial U}(U) = 0$, where $S(U)$ is as in \eqref{eq:action}. Setting\footnote{This substitution is not strictly needed here but makes the computations nicer and also useful for the proof of \cref{prop:description_liquid_region}.} $\widetilde U_c(x,t):=U_c(x,t) + x$, we get that $\widetilde U_c(x,t)$ solves the equation
	\begin{equation}\label{eq:new_crit_eq}
		\log (\widetilde U_c-x) - \log(1-t) - \sum_{i=0}^m \log(\widetilde U_c-\eta\, a_i)
		+\sum_{i=1}^m \log(\widetilde U_c-\eta\, b_i) =0.
	\end{equation}
	Differentiating in $x$ and $t$ the above equation, we obtain that
	\begin{equation*}
		\begin{cases}
          \frac{-1}{\widetilde U_c-x}=( \widetilde U_c)_x \cdot \big(\Sigma(\widetilde U_c) -\tfrac1{\widetilde U_c-x}\big),\\
			\frac{1}{1-t}=(\widetilde U_c)_t \cdot \big(\Sigma(\widetilde U_c)-\tfrac1{\widetilde U_c-x}\big).
		\end{cases}
	\end{equation*}
	where $\Sigma(s)=\sum_{i=0}^m\frac{1}{s-\eta\, a_i}-\sum_{i=1}^m\frac{1}{s-\eta\, b_i}$. 
    The quantity $\Sigma(\widetilde U_c) -\tfrac1{\widetilde U_c-x}$ corresponds to $\frac{\partial^2 S}{\partial U^2}$,
    and is thus nonzero in the liquid region; otherwise $U_c$ would be a double root of the critical equation,
    which is impossible since the latter has at most 2 non-real conjugate solutions, counted with multiplicities (\cref{lem:critical_points_general}).
    Consequently, $\widetilde U_c(x,t)$ solves the equation
	\begin{equation*}
	    -\frac{(\widetilde U_c)_t}{\widetilde U_c-x} = \frac{ (\widetilde U_c)_x}{1-t}.
      \end{equation*}
	Substituting $U_c(x,t)= \widetilde U_c(x,t) - x$ in the equation above, we get  \eqref{eq:burgers}.
\end{proof}

\begin{proposition}\label{prop:description_liquid_region}
	{We have the following equivalent description of the liquid region $L$:
		\begin{equation*}
			L=\left\{(x,t)\in[\eta\, a_0, \eta\, a_m] \times [0,1]\,\middle|\, \Disc_{U}(P_{x,t})<0\right\},
		\end{equation*}
		where $\Disc_{U}(P_{x,t})$ denotes the discriminant\footnote{Since $L$ is defined in terms of the sign of the discriminant, let us recall the standard convention of normalisation of the discriminant: $\Disc_U(P_{x,t}) = (-1)^{\frac{m(m+1)}{2}} t^{-1}\,\mathrm{Res}_U(P_{x,t},P_{x,t}')$, where $\mathrm{Res}$ is the resultant.} of the polynomial $P_{x,t}(U)$ appearing in the critical equation~\eqref{eq:critical_intro}.
		As a consequence, $L$ is an open subset of $[\eta\, a_0, \eta\, a_m] \times [0,1]$ and the boundary of $L$, called \emph{frozen boundary curve}, is given by
		\begin{equation}\label{eq:boundary_liquid}
			\partial L=\left\{(x,t)\in[\eta\, a_0, \eta\, a_m] \times [0,1]\,\middle|\, \Disc_{U}(P_{x,t})=0\right\}.
		\end{equation}
	Moreover, the frozen boundary curve $\partial L$ can be parametrized by $-\infty<s<\infty$, as follows:
	\begin{equation*}
		\begin{cases}
			x(s)=s-\frac{1}{\Sigma(s)},\\
			t(s)=1-\frac{G(s)}{\Sigma(s)},
		\end{cases}
	\end{equation*}
	where $\Sigma(s):=\sum_{i=0}^m\frac{1}{s-\eta\, a_i}-\sum_{i=1}^m\frac{1}{s-\eta\, b_i}$ and $G(s):=\frac{\prod_{i=1}^{m} (s-\eta\, b_i)}{\prod_{i=0}^{m} (s-\eta\, a_i)}$. The tangent vector $(\dot{x}(s),\dot{t}(s))$ to the frozen boundary curve is parametrized by:
	\begin{equation*}
		\begin{cases}
			\dot x(s)=1+\frac{\dot \Sigma(s)}{\Sigma(s)^2},\\
			\dot t(s)=G(s)\left(1+\frac{\dot \Sigma(s)}{\Sigma(s)^2}\right).
		\end{cases}
	\end{equation*}
	In particular, it has slope $\frac{\dot t(s)}{\dot x(s)}=G(s)$.
	}
\end{proposition}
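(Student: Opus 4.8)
The plan is to establish the four claims in the order they appear: the discriminant characterization of $L$, then openness and the boundary formula, then the rational parametrization, and finally the tangent vector. For the first claim, fix $(x,t)$ with $t\in(0,1)$; then the polynomial $P_{x,t}(U)$ appearing in the critical equation \eqref{eq:critical_intro} has degree exactly $m+1$ and leading coefficient $t>0$ (the two $U^{m+1}$ terms combine to $t\,U^{m+1}$). Since $P_{x,t}$ has real coefficients and, by \cref{lem:critical_points_general}, at least $m-1$ distinct real roots, it has $0$ or $2$ non-real roots counted with multiplicity; in particular any repeated root is real (a non-real repeated root would force four non-real roots with multiplicity). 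The standard sign rule for the discriminant of a real polynomial — each ``diagonal'' factor $(\alpha-\bar\alpha)^2$ is negative, all other factors in $\prod_{i<j}(r_i-r_j)^2$ are positive — then gives that $\Disc_U(P_{x,t})$ has sign $(-1)^k$, where $k\in\{0,1\}$ is the number of conjugate non-real pairs. Hence $\Disc_U(P_{x,t})<0$ if and only if $k=1$, i.e.\ if and only if $(x,t)\in L$ (using \cref{defn:liquid_region}). The degenerate boundary values $t\in\{0,1\}$ can be handled directly: these points lie in the frozen region (cf.\ \cref{rem:pat_cases}) and one checks $\Disc_U(P_{x,t})\ge 0$ there, so the equality $L=\{\Disc_U(P_{x,t})<0\}$ holds on the whole domain.

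Next, since the coefficients of $P_{x,t}$ are polynomials in $(x,t)$, the discriminant $\Disc_U(P_{x,t})$ is a polynomial, hence continuous, function of $(x,t)$; therefore $L$ is open. For the boundary, openness of $L$ gives $\partial L=\overline{L}\setminus L$, and for $(x,t)\in\partial L$ one has both $\Disc_U(P_{x,t})\ge 0$ (as $(x,t)\notin L$) and $\Disc_U(P_{x,t})\le 0$ (by continuity along a sequence in $L$), so $\Disc_U(P_{x,t})=0$; this proves $\partial L\subseteq\{\Disc_U(P_{x,t})=0\}$. The reverse inclusion is where some care is needed: I would argue that if $\Disc_U(P_{x,t})=0$ with $t\in(0,1)$, then $P_{x,t}$ has a real double root $U_0$, and — provided $U_0$ is not a root of $\prod_{i=0}^m(x-\eta a_i+U)$ — perturbing $t$ in the appropriate direction (keeping the $m-1$ real roots of \cref{lem:critical_points_general} away from $U_0$ and using $\partial_t P_{x,t}=\prod_i(x-\eta a_i+U)\neq0$ at $U_0$) turns this double root into a conjugate pair, so $(x,t)\in\overline L$; combined with $(x,t)\notin L$ this gives $(x,t)\in\partial L$. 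The finitely many exceptional points (where $U_0$ coincides with a root of that product, or $(x,t)$ is on an edge of the domain) are dealt with by perturbing $x$ instead, or by noting from \cref{rem:pat_cases} that the edges are frozen.

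For the parametrization I would use the substitution $\widetilde U=U+x$ from the proof of \cref{prop:Burgers_eq}: the critical equation \eqref{eq:critical_intro} rewrites as $(\widetilde U-x)\,G(\widetilde U)=1-t$ away from the zeros and poles of $G$, where $G(s)=\frac{\prod_{i=1}^m(s-\eta b_i)}{\prod_{i=0}^m(s-\eta a_i)}$. By the previous step, $(x,t)\in\partial L$ iff $P_{x,t}$ has a real double root $U_0$; writing $s=\widetilde U_0=U_0+x\in\R$, the conditions ``$s$ is a root'' and ``$s$ is a critical point of $\widetilde U\mapsto(\widetilde U-x)G(\widetilde U)$'' read $(s-x)G(s)=1-t$ and $G(s)+(s-x)G'(s)=0$. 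The key computation is the logarithmic derivative $\frac{G'(s)}{G(s)}=\sum_{i=1}^m\frac1{s-\eta b_i}-\sum_{i=0}^m\frac1{s-\eta a_i}=-\Sigma(s)$, which turns the second equation into $1=(s-x)\Sigma(s)$, i.e.\ $x=s-\frac1{\Sigma(s)}$, and then the first into $\frac{G(s)}{\Sigma(s)}=1-t$, i.e.\ $t=1-\frac{G(s)}{\Sigma(s)}$. This is exactly the claimed parametrization, with $s$ the double root in the shifted variable; conversely, reversing the computation shows that for any real $s$ with $\Sigma(s)\neq0$ and $s$ not a pole of $G$, the point $(x(s),t(s))$ satisfies $\Disc_U(P_{x(s),t(s)})=0$, and one checks from the formulas that it lies in $[\eta a_0,\eta a_m]\times[0,1]$. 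Together with the boundary description this identifies the image of $s\mapsto(x(s),t(s))$ with $\partial L$, the values of $s$ at poles of $\Sigma$ or $G$ being recovered by continuity.

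Finally, the tangent vector is a direct differentiation using the same identity $G'=-G\Sigma$: one has $\dot x(s)=1+\frac{\dot\Sigma(s)}{\Sigma(s)^2}$ immediately, and $\dot t(s)=-\frac{G'(s)}{\Sigma(s)}+\frac{G(s)\dot\Sigma(s)}{\Sigma(s)^2}=G(s)+\frac{G(s)\dot\Sigma(s)}{\Sigma(s)^2}=G(s)\,\dot x(s)$, whence the slope $\dot t(s)/\dot x(s)=G(s)$ wherever $\dot x(s)\neq0$. The only genuinely delicate point in the whole argument is the identification $\partial L=\{\Disc_U(P_{x,t})=0\}$, i.e.\ ruling out zeros of the discriminant that sit in the interior of the frozen region; everything else (the sign rule for discriminants, the substitution $\widetilde U=U+x$, and the logarithmic derivative identity) is routine, and once these are in place the parametrization and the tangent formula drop out.
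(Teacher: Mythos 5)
Your proof is correct and follows essentially the same route as the paper: the sign rule for the discriminant of a real polynomial combined with \cref{lem:critical_points_general} gives $L=\{\Disc_U(P_{x,t})<0\}$, openness follows because $\Disc_U(P_{x,t})$ is a polynomial in $(x,t)$, the parametrization is obtained from the two double-root conditions after the shift $\widetilde U=U+x$ (equivalently $\partial_U S=\partial^2_U S=0$), and the tangent vector from $\dot G=-\Sigma G$. The one place you go beyond what the paper actually writes is the reverse inclusion $\{\Disc_U(P_{x,t})=0\}\subseteq\partial L$: the paper asserts this as a consequence of the polynomial structure of the discriminant, whereas you correctly observe that it does not follow automatically and supply the needed perturbation argument (split a real double root $U_0$ into a conjugate pair by moving $t$, using $\partial_t P_{x,t}(U_0)=\prod_{i=0}^m(x-\eta a_i+U_0)\neq 0$, with $x$-perturbations covering the finitely many exceptional points); this is a small but genuine improvement in rigor over the paper's presentation.
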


\begin{proof}[Proof of \cref{prop:description_liquid_region}]
	{Recall that for a polynomial with real coefficients, its discriminant is positive (resp.\ zero) if and only if the number of non-real roots is a multiple of 4 (resp.\ the polynomial has a multiple root). 
	Since \cref{lem:critical_points_general} shows that at least $m-1$ roots of the critical equation \eqref{eq:critical_intro} are simple and real, we conclude that the existence of two non-real solutions for the critical equation \eqref{eq:critical_intro} is equivalent to $\Disc_{U}(P_{x,t})<0$, where we recall that $\Disc_{U}(P_{x,t})$ denotes the discriminant of the polynomial $P_{x,t}(U)$ appearing in the critical equation \eqref{eq:critical_intro}. 
	$\Disc_{U}(P_{x,t})$ is itself a polynomial in $(x,t)$, which implies that the
	liquid region $L$ is an open subset of  $[\eta\, a_0, \eta\, a_m] \times [0,1]$ and its boundary is described by \eqref{eq:boundary_liquid}.\medskip

	We now turn to the proof of the claimed parametrization for the frozen boundary curve. Note that, if a point $(x,t)\in [\eta\, a_0, \eta\, a_m] \times [0,1]$ approaches the frozen boundary curve $\partial L$, the solution $U_c(x,t)$ of the critical equation becomes real and merges with $\overline{U_c(x,t)}$. Equivalently, the frozen boundary can be characterized as the set of points $(x,t)\in [\eta\, a_0, \eta\, a_m] \times [0,1]$ such that the action $S$ in \eqref{eq:action} has a double critical point. For computational reasons, it is convenient to take $\widetilde U_c(x,t)=U_c(x,t) + x$ as the parameter to describe $\partial L$, and express $x$ and $t$ in terms of $\widetilde U_c(x,t)$.
	Hence, we need to impose that $\widetilde U_c(x,t)$ solves the equation $\frac{\partial \widetilde S}{\partial U}(U)=0$ (written in \eqref{eq:new_crit_eq}) and the equation $\frac{\partial^2 \widetilde S}{\partial U^2}(U)=0$. We obtain:
	\begin{equation*}
		\begin{cases}
			\frac{\widetilde U_c-x}{1-t}=(G(\widetilde U_c))^{-1},\\
			\frac{1}{\widetilde U_c-x}=\Sigma(\widetilde U_c).
		\end{cases}
	\end{equation*}
	Solving the above linear system for $x$ and $t$, we get the parametrization claimed in the proposition statement (with $s=\widetilde U_c$). The claims for the tangent vector follow from standard computations, noting that $\dot G(s)=-\Sigma(s) \cdot G(s)$.}
\end{proof}

\subsection{The imaginary and real parts of the action on the real line}
\label{ssec:im_real_line}

The imaginary and real part of the action $S$ (introduced in~\eqref{eq:action}) on the real line play a key-role in our analysis in the next sections. Hence we analyze them here.

In what follows, we keep working with the principal branch of the logarithm,
defined on $\mathbb C \setminus \{0\}$ and continuous on $\mathbb C \setminus \mathbb R_-$; with the convention that, for negative real $x$, we have $\log(x)=\log(-x)+\I\pi$.

For $u\in \R$, we have
\begin{equation}
	\Im S(u) = - \pi u^{-} + \sum_{i=0}^m \pi(u+x_0-\eta\, a_i)^{-}
	- \sum_{i=1}^m \pi(u+x_0-\eta\, b_i)^{-},
	\label{eq:im_action}
\end{equation}
where $x^{-} = \max(0,-x)$. 
In particular, $\Im S(u)$
is piecewise affine, has value $-\pi x_0$
for $u<\eta\, a_0-x_0$ (recall from \eqref{eq:int_rel} that $\sum_{i=0}^m a_i- \sum_{i=1}^m b_i=0$),
has slope alternatively $+\pi$ and $0$ for $u<0$,
then alternatively $0$ or $-\pi$ for $u>0$,
and finally takes value $0$ for $u> \eta\, a_m -x_0$. See \cref{fig:Imaginary_action} for an example.

On the other hand, for $u\in \R$, we have
\begin{equation}
	\Re S(u) = \widetilde g(u) - u \cdot \ln(|1-t_0|)- \sum_{i=0}^m \widetilde g (x_0-\eta\, a_i+u) + \sum_{i=1}^m \widetilde g (x_0-\eta\, b_i+u),
	\label{eq:re_action}
\end{equation}
where $\widetilde g(u)=u\cdot\ln (|u|)$ and we use the convention that $\widetilde g(0)=0$. See again \cref{fig:Imaginary_action} for an example. In particular, the map $u \to \Re S(u)$ is well-defined and continuous on the real line. 
Moreover, since $\widetilde g'(u)=\ln(|u|)+1$, the map $u \to \Re S(u)$ is differentiable as a function $\mathbb R \to \mathbb R$
except at the points $\eta\,a_i-x_0$ (where it has a positive infinite slope)
and at the points $0$ and $\eta\,b_i-x_0$ (where it has a negative infinite slope).
Its derivative vanishes exactly when $u$ satisfies
\[
|u|  \prod_{i=1}^m |u+x_0-\eta\, b_i| = (1-t_0)  \prod_{i=0}^m |u+x_0-\eta\, a_i|,
\]
i.e.\ when $u$ satisfies the critical equation~\eqref{eq:critical_intro} or the companion equation
\begin{equation}
	u  \prod_{i=1}^m (u+x_0-\eta\, b_i) = - (1-t_0)  \prod_{i=0}^m (u+x_0-\eta\, a_i).
	\label{eq:companion_critical}
\end{equation}

\begin{figure}
	\includegraphics[width=7cm]{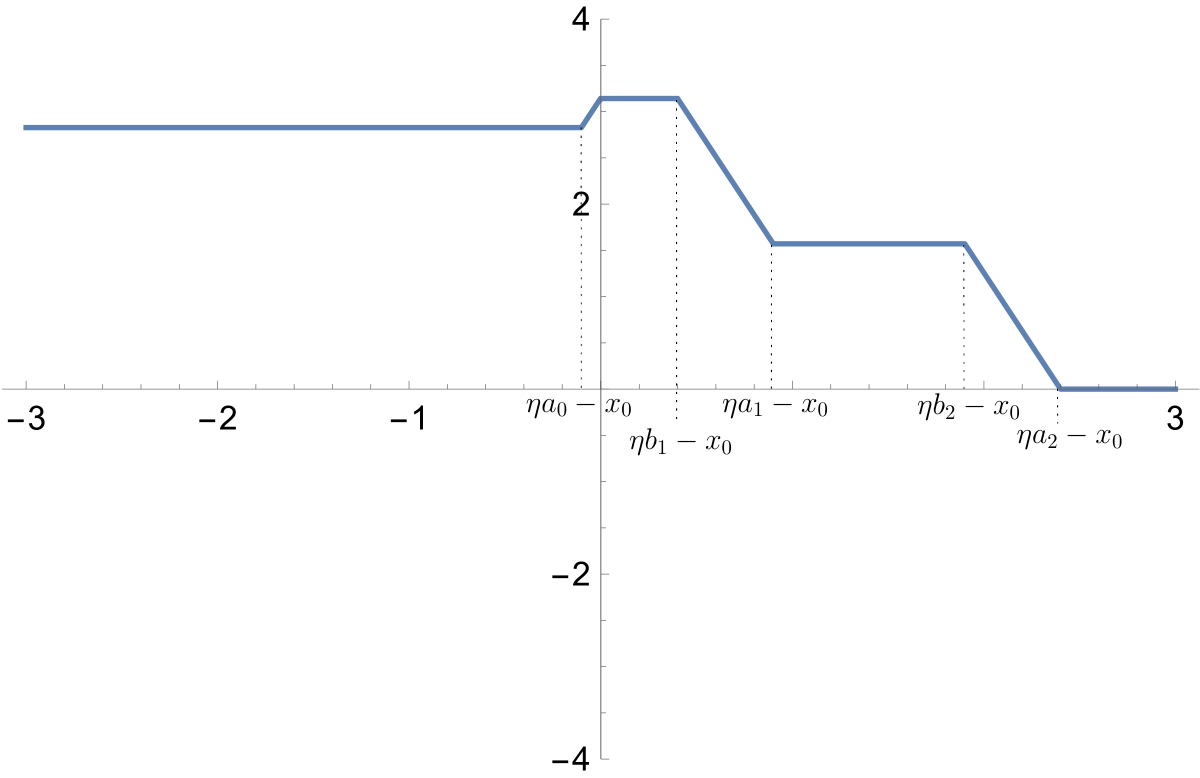}
	\hspace{1cm}
	\includegraphics[width=7cm]{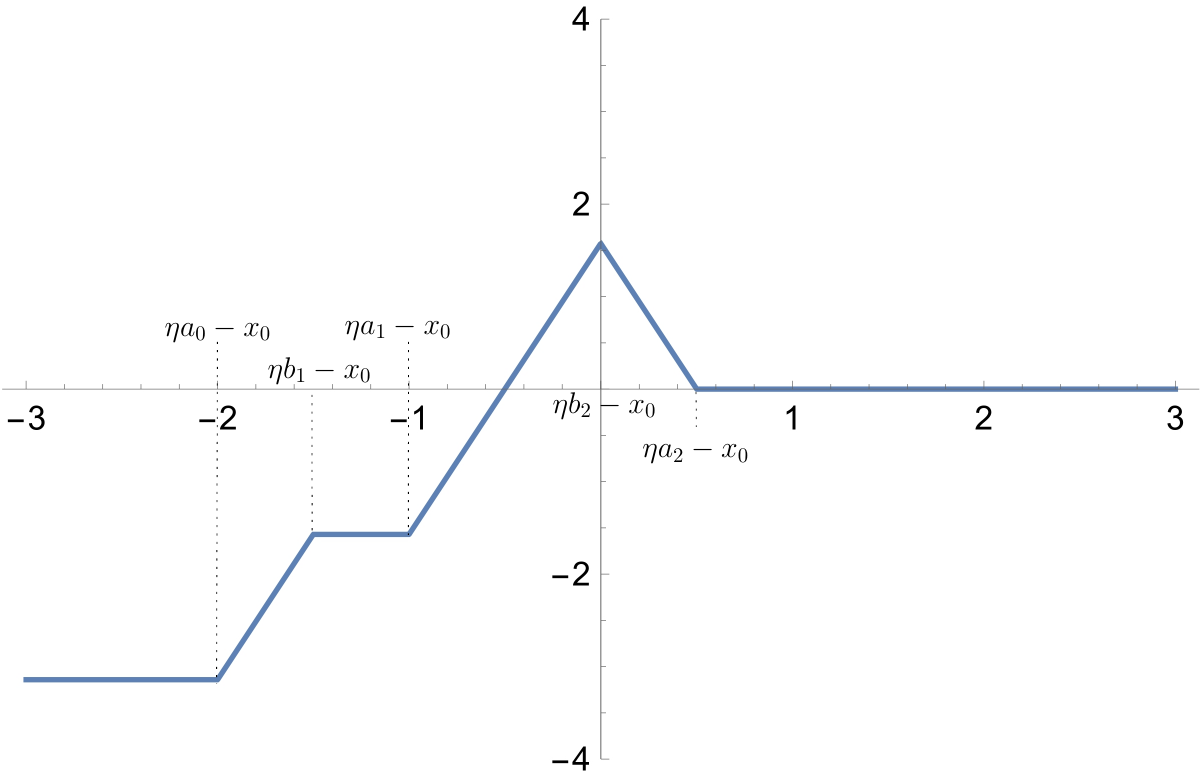}
	\captionsetup{width=\linewidth}
	\includegraphics[width=7cm]{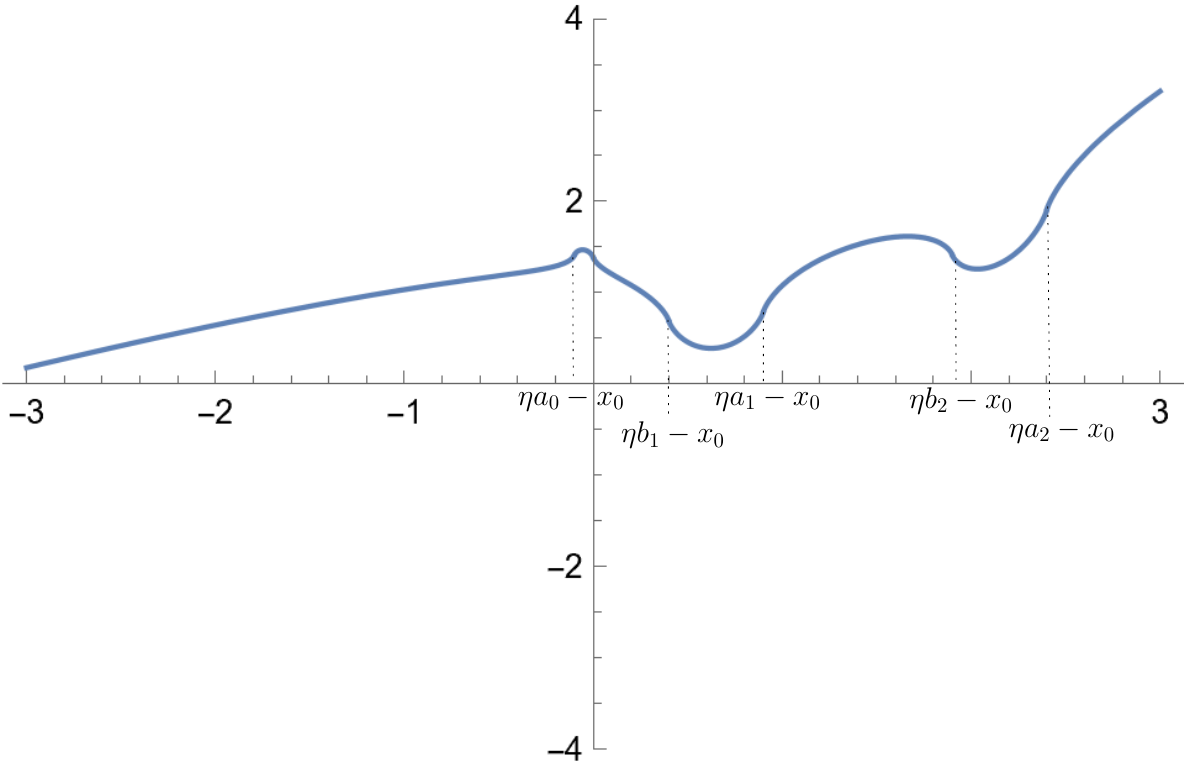}
	\hspace{1cm}
	\includegraphics[width=7cm]{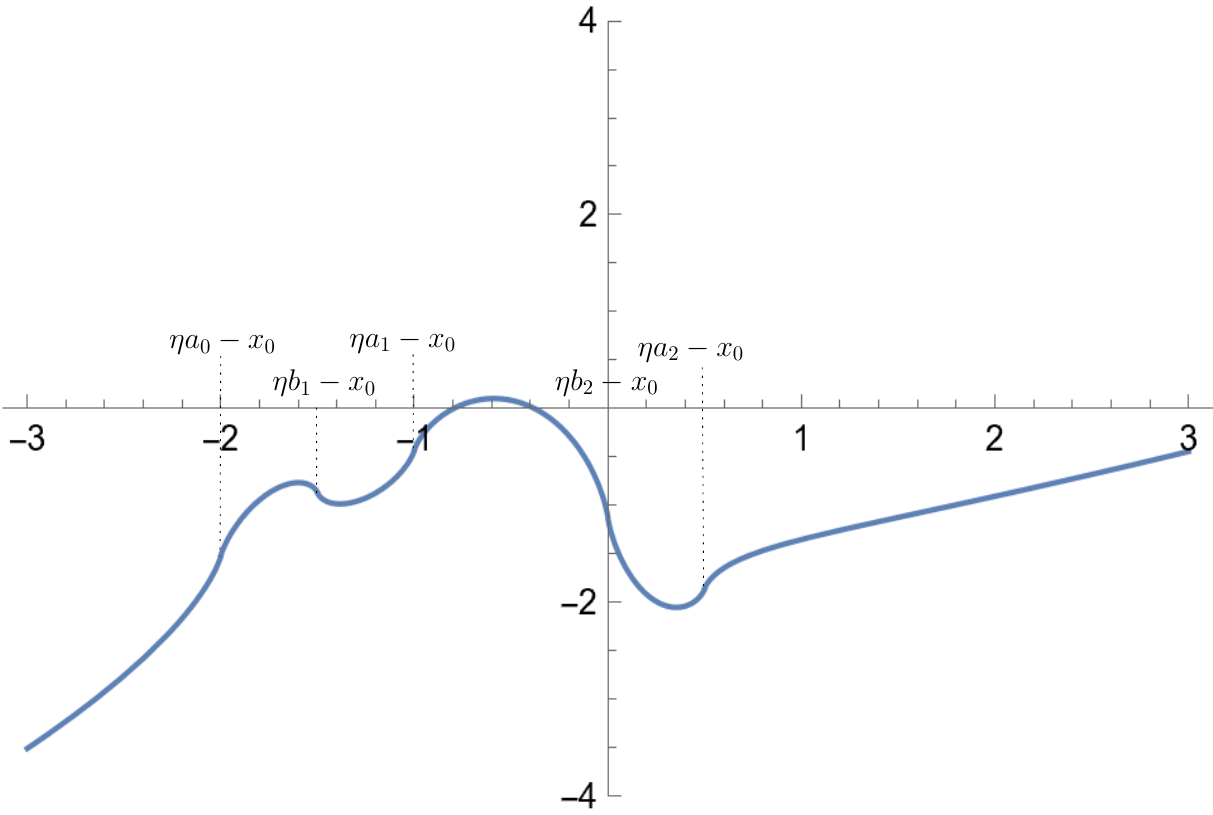}
	\captionsetup{width=\linewidth}
	\caption{\textbf{Top:} Two plots of the function $\Im S(u)$  in \eqref{eq:im_action} in the specific example of \cref{fig:liquid_region_and_landscape}. On the left we set $x_0=-0.9$, while on the right $x_0=1$. 
	\textbf{Bottom:} Two plots of the function $\Re S(u)$  in \eqref{eq:re_action} in the same specific example of \cref{fig:liquid_region_and_landscape}. Again, on the left we set $x_0=-0.9$ and $t_0=0.5$, while on the right $x_0=1$ and $t_0=0.5$.\label{fig:Imaginary_action}}
\end{figure}

\section{Asymptotics for the kernel in the liquid region}\label{sect:liquid_region}
Within this section, we assume that $(x_0,t_0)$ lies inside the liquid region, i.e.\ that the critical equation~\eqref{eq:critical_intro} has two non-real solutions,
denoted by $U_c$ and $\overline{U_c}$ and with the convention that $\Im U_c >0$.

\subsection{Landscape of the action}
 
 As a preparation for the saddle point analysis, we also need to understand to some extent the real part $\Re S(U)$ of the action $S$ introduced in \cref{eq:action}. 
 In particular, we are interested in the shape of the region 
 $$\{\Re S(U) > \Re S(U_c)\}:=\Set{U\in\mathbb C | \Re S(U) > \Re S(U_c)}.$$
 We will use similar notation for various regions below.
 We invite the reader to compare the following discussion with the pictures in \cref{fig:landscape_ReS}.
 
 Since $S$ is analytic around $U_c$ (and more generally on $\mathbb C \setminus (-\infty,\eta \,a_m-x_0)$) and $U_c$ is a simple critical point of $S$,
 we have that, locally around $U_c$
 \[S(U) = S(U_c)+\tfrac{S''(U_c)}2 \,(U-U_c)^2 +O((U-U_c)^3).\]
 In particular,\footnote{Most of the claims in this paragraph follows by analogy
 with the landscape of the complex function $F(U)=U^2$ around $0$. For instance,
 the four curves corresponding to the level sets $\{\Re S(U) = \Re S(U_c)\}$ are
 the analogue of the four curves corresponding to the level sets $\{\Re F(U) =
\Re F(0)\}=\{U=x+iy\in\mathbb C:(x+y)(x-y)=0\}$.} there are \emph{eight special  curves} leaving $U_c$ (see for instance the left-hand side of \cref{fig:landscape_ReS}): four curves corresponding to the level sets $\{\Re S(U) = \Re S(U_c)\}$
 and four other curves corresponding to the level sets $\{\Im S(U) = \Im S(U_c)\}$.
 We will refer to these curves as \emph{real} or \emph{imaginary level lines}, respectively.
 The four real level lines split the neighborhood of $U_c$ into four regions, belonging
 alternatively to the set $\{\Re S(U) > \Re S(U_c)\}$ or $\{\Re S(U) < \Re S(U_c)\}$,
 each of these regions containing one imaginary level line.
 
 Since $S$ is analytic with a non-zero derivative on $\set{V | V \ne U_c,\, \Im(V)>0}$,
  locally around such $V$ with $\Re S(V) = \Re S(U_c)$ (resp.~with $\Im S(V) = \Im S(U_c)$),
 the level set  $\{\Re S(U) = \Re S(U_c)\}$ (resp.~the level set $\{\Im S(U) = \Im S(U_c)\}$) looks like a single {simple} curve.
 Therefore the real and imaginary level lines leaving $U_c$ go either to the real line, or to infinity.
 Moreover, these lines cannot cross in the closed upper half-plane.
 Indeed, following a real level line, the analyticity of $S$ implies
 that $U \mapsto \Im S(U)$ is strictly monotone: a local extremum would violate the open mapping theorem. 
 Therefore starting from $U_c$ and following a real level line in the set  $\{\Re S(u) = \Re S(U_c)\}$,
 we never reach a point $V$ with $\Im S(V) = \Im S(U_c)$.

\begin{lemma}
	Exactly three of the real level lines leaving $U_c$ go to the real line.
\end{lemma}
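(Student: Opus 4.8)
The plan is to count how many of the four arcs of the real level set $\{\Re S=\Re S(U_c)\}$ issuing from $U_c$ reach the real axis and how many run off to infinity, using two ingredients: the strict monotonicity of $\Im S$ along real level lines, and the asymptotics \eqref{eq:as_action}.

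I would first record the local and asymptotic data. Since $U_c$ is a \emph{simple} critical point — by \cref{lem:critical_points_general} it cannot be a double root of the critical equation — we have $S''(U_c)\neq 0$, and from $S(U)-S(U_c)=\tfrac12 S''(U_c)(U-U_c)^2+O((U-U_c)^3)$ one sees that along exactly two of the four real level lines (cyclically opposite ones) $\Im S>\Im S(U_c)$ holds everywhere, while along the other two $\Im S<\Im S(U_c)$ holds everywhere; here one uses that $\Im S$, being strictly monotone along each such arc and equal to $\Im S(U_c)$ at $U_c$, keeps a constant sign relative to $\Im S(U_c)$. Moreover, since $t_0\in(0,1)$ in the liquid region, $|\log(1-t_0)|>0$, so \eqref{eq:as_action} gives: (a) along any real level line escaping to infinity, $\Re S$ is constant, which forces $\Re U=O(\log|U|)$ and hence $\Im U\to+\infty$ and $\Im S\to+\infty$ along it; and (b) for all large $R$, the semicircle $\{|U|=R,\ \Im U\ge 0\}$ meets $\{\Re S=\Re S(U_c)\}$ in exactly one point (on $\R$, $\Re S(u)\to+\infty$ as $u\to+\infty$ and $\Re S(u)\to-\infty$ as $u\to-\infty$, and $\theta\mapsto\Re S(Re^{\I\theta})$ is eventually strictly monotone on the relevant range of $\theta$).

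From (a), the two real level lines along which $\Im S<\Im S(U_c)$ cannot escape to infinity, so they reach $\R$; and by (b) at most one real level line can escape to infinity, so at least one of the two remaining ones also reaches $\R$. Hence at least three of the four arcs reach the real axis. To see that not all four do, suppose they did. The four arcs are pairwise disjoint away from $U_c$ (the only critical point of $S$ in the open upper half-plane), so they hit $\R$ at four points; these are distinct, since two arcs sharing an endpoint would bound a region on whose boundary $\Re S\equiv\Re S(U_c)$, forcing $\Re S$ to be constant there by the maximum principle for the harmonic function $\Re S$, which is absurd. The left-to-right order of the four endpoints on $\R$ is a rotation or reflection of the cyclic order of the arcs around $U_c$; since the sign of $\Im S-\Im S(U_c)$ along the four arcs is $+,-,+,-$ in cyclic order, the values of $\Im S$ at the four endpoints, read along $\R$, alternate about $\Im S(U_c)$, so by the intermediate value theorem $u\mapsto\Im S(u)$ takes the value $\Im S(U_c)$ at (at least) three points of $\R$. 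This contradicts \eqref{eq:im_action}, according to which $u\mapsto\Im S(u)$ is non-decreasing on $(-\infty,0)$ and non-increasing on $(0,+\infty)$, hence meets any level in at most two intervals. Therefore exactly three arcs reach the real axis.

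The part needing the most care is making the asymptotic statements (a) and (b) precise and uniform, in particular near the negative real axis, where $S$ fails to be analytic and one must instead invoke the explicit formula \eqref{eq:re_action} together with the continuity of $\Re S$ on $\mathbb C\setminus\{0\}$ to see that $\Re S\to-\infty$ there (so that the level set stays away from that part of a large circle); the remaining topological bookkeeping of the endpoint order is elementary given the lens/maximum-principle remark above.
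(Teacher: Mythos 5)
Your proof is correct, but it takes a genuinely different route from the paper's. The paper first observes, via the asymptotics $S(U)\sim|\log(1-t_0)|\,U$ on horizontal lines, that the number of real level arcs from $U_c$ reaching $\R$ is \emph{odd}, and then rules out the value $1$ by a contradiction argument involving the two \emph{imaginary} level lines that would be trapped between three escaping arcs — these imaginary lines would have to go to infinity inside $\{y\ge|x|\}$, where $\Im S\to+\infty$, contradicting $\Im S$ being constant along them. You instead prove the two inequalities $\ge 3$ and $\le 3$ directly: for the lower bound, you observe that the two arcs on which $\Im S<\Im S(U_c)$ cannot escape (along an escaping arc $\Im S\to+\infty$), and that at most one arc can escape because a large semicircle meets the level set $\{\Re S=\Re S(U_c)\}$ in exactly one point; for the upper bound, you note that if all four arcs reached $\R$, planarity would force the four endpoints to carry alternating signs of $\Im S-\Im S(U_c)$, hence three sign changes of $\Im S(u)-\Im S(U_c)$ on $\R$, contradicting the unimodality of $u\mapsto\Im S(u)$ described in \cref{ssec:im_real_line}. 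Your upper-bound step is in fact a variant of the same idea the paper deploys in the subsequent \cref{lem:0_inside} (comparing $\Im S$ values at the intersection points with the graph on the real line), so it dovetails nicely with what follows; your lower-bound step replaces the paper's ``imaginary level lines in between'' argument with a cleaner sign argument along the real level lines themselves, at the cost of needing the slightly delicate uniqueness-of-intersection claim (b) on large semicircles, which does hold but deserves the care you flag (it requires the uniform estimate $\Re S(Re^{\I\theta})=|\log(1-t_0)|R\cos\theta - x_0\log R+O(1)$ together with a monotonicity statement away from $\theta\in\{0,\pi\}$). Both proofs work; yours is arguably more self-contained in that it does not need to bring in imaginary level lines at all, while the paper's parity step is shorter but more implicit.
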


\begin{proof}
	Since $S(U) \sim |\log(1-t_0)| \,U$ as $|U|$ tends to infinity (as already remarked in \eqref{eq:as_action}), we have that $\Re S(U) < \Re S(U_c)$ when $\Re U$ goes to $-\infty$ with a fixed imaginary part.
	Likewise, $\Re S(U) > \Re S(U_c)$ when $\Re U$ goes to $+\infty$ with a fixed imaginary part. Therefore, there
	is an odd number of real level lines going to the real line.
	
	Assume that there is only one. Then three real level lines are going to infinity.
	Because of the asymptotics $S(U) \sim |\log(1-t_0)| \,U$, real level lines going to infinity
	should be asymptotically inside the region $\Set{y\geq |x|}:=\Set{x+iy | y\geq |x|}$ (otherwise we would have $\lim_{|U| \to +\infty} \Re S(U)=+\infty$ and this would be in contradiction with the definition of real level line).
	These three real lines determine two unbounded regions, each of them containing an imaginary level line.
	In particular, these two imaginary level lines go to infinity, inside the region $\Set{ y\geq |x|}$.
	But inside this region, we have that $\lim_{|U| \to +\infty} \Im S(U)=+\infty$
	since $S(U) \sim |\log(1-t_0)| \,U$. This is in contradiction with the definition of imaginary level line.
	We, therefore, conclude that there are exactly three of the real level lines leaving $U_c$ and going to the real line.
\end{proof}
 
 We call $A \le B \le C$ the intersection points of these three
 real level lines and the real axis.
 Also, let $D$ and $E$ be the intersection points of the imaginary level lines in between
 these three level lines and the real axis.
 Since real and imaginary level lines do not intersect, we have $A<D<B<E<C$.

\begin{lemma}\label{lem:0_inside}
With the above notation, $D<0<E$.
Moreover there is no real $x$ in $(-\infty,A)\cup(C,+\infty)$
such that $\Im S(x) = \Im S(U_c)$.
\end{lemma}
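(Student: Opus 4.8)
Recall that $S$ is analytic on $\mathbb{C}\setminus(-\infty,\eta a_m-x_0)$, and on the real line we have the explicit formula \eqref{eq:im_action} for $\Im S(u)$: it equals $-\pi x_0$ for $u<\eta a_0-x_0$, is piecewise affine with slopes alternating between $+\pi$ and $0$ on the negative axis, between $0$ and $-\pi$ on the positive axis, and equals $0$ for $u>\eta a_m-x_0$. The key point is that $\Im S(U_c)$ is a value \emph{not attained} on large stretches of the real line: I will locate it and thereby constrain where the imaginary level lines $D$ and $E$ can land.

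First I would show $\Im S(U_c)\in(\min_u \Im S(u),\,0)$, or more precisely that $\Im S(U_c)$ lies strictly between $0$ and the value $-\pi x_0$ (which is the value on $(-\infty,\eta a_0-x_0)$ when $x_0>0$, and symmetrically for $x_0<0$). Indeed, following the imaginary level line from $U_c$ to $D$, analyticity of $S$ forces $U\mapsto\Re S(U)$ to be strictly monotone along it (an interior extremum would contradict the open mapping theorem, exactly as in the discussion preceding the lemma for real level lines), so $D$ is genuinely a point where $\Im S$ takes the value $\Im S(U_c)$; and since $\Im S$ on the real axis is continuous, piecewise affine, and takes all its values in the closed interval between $0$ and $-\pi x_0$ (sign depending on that of $x_0$), we get $\Im S(U_c)$ in that range. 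To pin down the sign so as to conclude $D<0<E$: I would argue that $D$ cannot lie in an interval where $\Im S$ is locally constant (such intervals are the positive intervals $[\eta b_i-x_0,\eta a_i-x_0]$ and, on the negative side, the intervals $[\eta a_{i-1}-x_0,\eta b_i-x_0]$ plus the rays beyond $\eta a_m-x_0$ and $\eta a_0-x_0$), since on those intervals $\Im S$ equals $0$ or $-\pi x_0$ and the imaginary level line would then have to coincide with a real level line near $D$, contradicting non-crossing of real and imaginary level lines established above. So $D$ and $E$ both lie on intervals where $\Im S$ is strictly monotone, i.e. where $\Re S$ is not differentiable at the endpoints; and then a sign/parity count of how $\Im S$ moves away from $0$ to the right of $0$ versus away from $-\pi x_0$ (or its mirror) to the left of $0$ forces exactly one of $D,E$ to be negative and the other positive — equivalently $D<0<E$.

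For the second assertion — that no $x\in(-\infty,A)\cup(C,+\infty)$ satisfies $\Im S(x)=\Im S(U_c)$ — I would use the asymptotics \eqref{eq:as_action}: $S(U)\sim|\log(1-t_0)|\,U$ as $|U|\to\infty$, so on the far real ray $x\to+\infty$ we have $\Im S(x)\to 0$, and in fact $\Im S(x)=0$ identically for $x>\eta a_m-x_0$; combined with $\Im S(U_c)\neq 0$ (which follows since $U_c$ is not real and $\Im S$ vanishes only where noted — or more directly, if $\Im S(U_c)=0$ then the level set would meet the real axis too often, contradicting the "exactly three real level lines" count), this rules out $x$ near $+\infty$. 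Symmetrically, for $x<\eta a_0-x_0$ we have $\Im S(x)=-\pi x_0$, and $\Im S(U_c)=-\pi x_0$ is impossible because then an imaginary level line from $U_c$ would run parallel-at-infinity into the region $\{y\ge|x|\}$ where $\Im S\to+\infty$, contradiction. The remaining stretches of $(-\infty,A)$ and $(C,+\infty)$ not yet covered are handled by noting $A\le\eta a_0-x_0$ and $C\ge\eta a_m-x_0$: were there an $x<A$ with $\Im S(x)=\Im S(U_c)$, one could trace an imaginary level line from $U_c$ to that $x$ which would have to cross the real level line landing at $A$, again contradicting non-crossing in the closed upper half-plane.

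**Main obstacle.** The delicate part is the non-crossing/ordering bookkeeping: proving rigorously that $D$ and $E$ must avoid the flat intervals of $\Im S$, and then correctly counting parities of slope changes around $0$ to get the strict inequalities $D<0<E$ with the right signs. The topology (level lines don't cross, are monotone for $\Re S$ or $\Im S$ respectively) is already in hand from the preceding discussion, so the work is in carefully combining it with the explicit slope pattern of \eqref{eq:im_action}; I'd want to treat the cases $x_0>0$, $x_0<0$ (and the degenerate $x_0=0$, which should be excluded or trivial here since $t_-(0)=0$) symmetrically and cleanly rather than by brute enumeration.
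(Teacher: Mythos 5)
There is a genuine gap: you never invoke the key fact that makes the paper's argument work, namely that $\Im S(A)$, $\Im S(B)$ and $\Im S(C)$ are all \emph{distinct} from $\Im S(U_c)=\Im S(D)=\Im S(E)$. This is established in the paragraph just before the lemma (strict monotonicity of $\Im S$ along the three real level lines from $U_c$ to $A,B,C$, via the open mapping theorem) and it is the crucial ingredient. Once one has it, both claims drop out from the unimodality of the piecewise-affine function $u\mapsto\Im S(u)$: it is non-decreasing on $(-\infty,0)$ and non-increasing on $(0,\infty)$. If $D$ and $E$ were on the same side of $0$ (say both $\le 0$), monotonicity and $A<D<B<E<C$ would give $\Im S(D)\le\Im S(B)\le\Im S(E)$, and since $\Im S(D)=\Im S(E)$, all three would be equal, contradicting $\Im S(B)\ne\Im S(U_c)$. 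Hence $D<0<E$, with no need for any parity count or for excluding $D,E$ from flat intervals (which you leave as an unresolved obstacle). For the second assertion: $A<D<0$ and monotonicity give $\Im S(x)\le\Im S(A)<\Im S(D)=\Im S(U_c)$ for all $x<A$, and symmetrically on $(C,\infty)$; no asymptotics or level-line topology is needed.

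Beyond missing that mechanism, several intermediate claims in your plan are false or unsupported. (i) The assertion that $\Im S$ on the real line ``takes all its values in the closed interval between $0$ and $-\pi x_0$'' is wrong: the maximum of $\Im S$ on the reals is $\Im S(0)$, which generically exceeds both $0$ and $|\!-\!\pi x_0|$. (ii) On the flat intervals of $\Im S$, the value is not restricted to $\{0,-\pi x_0\}$; intermediate constants occur, so the ensuing sign reasoning does not stand. (iii) The claim that an imaginary level line landing inside a flat interval of $\Im S$ ``would coincide with a real level line near $D$'' is not correct -- those flat intervals are in general not level sets of $\Re S$. (iv) The bounds $A\le\eta a_0-x_0$ and $C\ge\eta a_m-x_0$ that you use for the second assertion are not established, and the non-crossing argument you propose there only applies to points connected to $U_c$ by a level line, whereas the statement concerns \emph{all} real $x$ in the two rays.
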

\begin{proof}
Since $D$ and $E$ belong to an imaginary level line leaving $U_c$,
and since $\Im S(u)$ is continuous on the closed upper half-plane,
we have $\Im S(D)=\Im S(E)= \Im S(U_c)$.
Furthermore, 
$\Im S(U_c)$ is distinct from the values $\Im S(A)$, $\Im S(B)$, $\Im S(C)$
since, as explained above, $\Im S$ is strictly monotone
on the real level lines going from $U_c$ to 
$A$, $B$ or $C$.
Looking at the graph of the function $u \mapsto \Im S(u)$ on the real line
(see the computations in \cref{ssec:im_real_line}),
{we see that a necessary condition to have $\Im S(D)=\Im S(E)\neq\Im S(B) $ is that $D<0<E$.}

Moreover, looking again at the shape of $\Im S(U)$ and recalling that 
\[\Im S(A) \ne \Im S(D)=\Im S(U_c)=\Im S(E) \ne \Im S(C),\] we see that
there cannot be an $x$ in either $(-\infty,A)$ or $(C,+\infty)$
such that $\Im S(x) = \Im S(U_c)$.
\end{proof}

As said above, we are interested in describing the regions
$\{\Re S(u) > \Re S(U_c)\}$ or $\{\Re S(u) < \Re S(U_c)\}$.
From the asymptotics $S(U) \sim |\log(1-t_0)|\, U$, 
we know that for $\Re(U)$ negative (resp.\ positive) and large in absolute value
(in particular larger than $K |\Im(U)|$ for some constant $K>0$),
$U$ belongs to the region $\{\Re S(U) < \Re S(U_c)\}$ (resp.\  $\{\Re S(U) > \Re S(U_c)\}$).
Thus, the real level lines leaving $U_c$ split the complex plane as shown on 
the left-hand side of
\cref{fig:landscape_ReS}.

Note that we do not exclude the existence of smaller
islands inside the main regions as shown 
on the right-hand side of \cref{fig:landscape_ReS}.
For the sake of simplicity, we will not draw such potential islands in future figures.

\subsection{Moving contours and asymptotic of the kernel}
We recall from \eqref{eq:double_int_integr}
that the renormalized kernel $\widetilde{K}_{\la_N}^{(x_0,t_0)}$
writes as a double contour integral over contours $\gz$ and $\gw$ (where the integrand is denoted by $\Int_N(W,Z)$).

Our strategy for the asymptotic analysis of $\widetilde{K}_{\la_N}^{(x_0,t_0)}$ goes as follows:
\begin{enumerate}
  \item Move the integration contours from $\gz$ and $\gw$ to
    properly chosen contours $\gzn$ and $\gwn$ given in \cref{lem:move_contour_liquid};
  \item Replace the integrand $\Int_N(W,Z)$ by the asymptotically equivalent expression 
    given in \eqref{eq:equiv_integrand}:
\[
 (\sqrt{N})^{x_2-x_1}
 \E^{\sqrt{N}(S(W) - S(Z))}\,\hh(W,Z).\]
 This second step is performed in \cref{prop:limit_Mla}.
\end{enumerate}
The contours $\gzn$ and $\gwn$ will be constructed in such a way that,
for almost all $(W,Z) \in \gwn \times \gzn$, one has $S(W)<S(Z)$.
In this way, after the second step, the integrand -- and thus the integral -- will converge to $0$.
The asymptotic expansion of the kernel $\widetilde{K}_{\la_N}^{(x_0,t_0)}$ will then be given by the potential residue term
created by the change of contours.

\smallskip

Recalling the discussion below \eqref{eq:double_int_integr}, the integrand has poles for $W=Z$,
for some values of $W$ in the interval $I_W=[\eta\, a_0 -x_0,o(1)]$
and for some values of $Z$ in the interval $I_Z=[o(1),\eta\, a_m -x_0]$. We also recall from \eqref{eq:bound_L} that we chose the contours $\gw=\partial D(0,3L)$ and $\gz=\partial D(0,4L)$ (both followed in counterclockwise order), with $L=\eta \max(|a_0|,a_m)$. (Recall that we restricted our analysis to the case $t_1 \ge t_2$, i.e.\ when $\gamma_W$ is inside $\gamma_Z$.)

\begin{lemma}
 \label{lem:move_contour_liquid}
 There exist two integration contours
 $\gwn$ and $\gzn$ (both followed in counterclockwise direction) such that
  \begin{itemize}
    \item $\gwn$ and $\gzn$ intersect each other only at $U_c$ and $\overline{U_c}$;
    \item $\gwn$ (resp.\ $\gzn$) contains $I_W$ (resp.\ $I_Z$) in its interior;
    \item $\gwn \setminus \{U_c,\overline{U_c}\}$ (resp.\ $\gzn\setminus \{U_c,\overline{U_c}\}$)
      lies inside the region $\{\Re S(U) < \Re S(U_c)\}$ (resp.\ $\{ \Re S(U) >  \Re S(U_c)\}$).
  \end{itemize}
\end{lemma}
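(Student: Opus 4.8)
The plan is to build the two contours explicitly from the description of the level lines of $\Re S$ obtained in this subsection, exploiting the conjugation symmetry $S(\overline U)=\overline{S(U)}$, valid on the domain of analyticity $\mathbb C\setminus(-\infty,\eta\,a_m-x_0]$; then it suffices to prescribe the part of each contour lying in the closed upper half-plane and close it up by reflecting across the real axis. Recall the picture obtained above (cf.\ the left-hand side of \cref{fig:landscape_ReS}): exactly three of the real level lines issued from $U_c$ reach the real axis, at points $A<B<C$, and together with the segments of the real axis they split the closed upper half-plane into four ``main'' regions lying alternately in $\{\Re S<\Re S(U_c)\}$ and $\{\Re S>\Re S(U_c)\}$; by the asymptotics \eqref{eq:as_action} the unbounded regions on the far left and far right lie respectively in $\{\Re S<\Re S(U_c)\}$ and $\{\Re S>\Re S(U_c)\}$, so that the region adjoining the segment $(A,B)$ is a $\{\Re S>\Re S(U_c)\}$ region and the one adjoining $(B,C)$ is a $\{\Re S<\Re S(U_c)\}$ region. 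Each of these four main regions has $U_c$ on its boundary, and by \cref{lem:0_inside} one has $A<D<0<E<C$, where $D\in(A,B)$ and $E\in(B,C)$ are the endpoints on the real axis of the two imaginary level lines of $U_c$ that lie strictly between the three real level lines.

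I would then take the part of $\gzn$ in the closed upper half-plane to be a simple arc from a generic real point $q_-$ to a generic real point $q_+$ passing through $U_c$, constructed as follows: starting from $q_-\in(A,B)$ chosen with $\Re S(q_-)<\Re S(U_c)$ is wrong — rather with $\Re S(q_-)>\Re S(U_c)$ and $q_-<\inf I_Z$ (possible for $N$ large, since $\inf I_Z=o(1)$ while $A<0$ is fixed), go up into the $\{\Re S>\Re S(U_c)\}$ region adjoining $(A,B)$, reach $U_c$, then continue into the far-right $\{\Re S>\Re S(U_c)\}$ region and come down to a point $q_+>\eta\,a_m-x_0$ taken large enough that $\Re S(q_+)>\Re S(U_c)$, using again \eqref{eq:as_action}. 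Closing up by reflection and orienting counterclockwise, $\gzn$ meets the real axis exactly at $q_-$ and $q_+$, so the segment $(q_-,q_+)\supset I_Z$ lies in its interior, and $\gzn\setminus\{U_c,\overline{U_c}\}$ lies in $\{\Re S>\Re S(U_c)\}$ by construction. Symmetrically, I would take the upper part of $\gwn$ to run from a point $p_-<\min(A,\eta\,a_0-x_0)$ in the far-left $\{\Re S<\Re S(U_c)\}$ region, through $U_c$, into the $\{\Re S<\Re S(U_c)\}$ region adjoining $(B,C)$, down to a point $p_+\in(B,C)$ with $p_+>\sup I_W=o(1)$ (possible for $N$ large since $C>0$ is fixed) and $\Re S(p_\pm)<\Re S(U_c)$; then $\gwn$ crosses the real axis exactly at $p_-<p_+$, encloses $(p_-,p_+)\supset I_W$, and $\gwn\setminus\{U_c,\overline{U_c}\}$ lies in $\{\Re S<\Re S(U_c)\}$. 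Finally, since the four main regions are pairwise disjoint, and near $U_c$ the two edges of $\gwn$ point into the far-left and the $(B,C)$ sectors while the two edges of $\gzn$ point into the $(A,B)$ and the far-right sectors — four distinct sectors — the curves $\gwn$ and $\gzn$ can only meet at $U_c$ and $\overline{U_c}$; the same holds at $\overline{U_c}$ by symmetry.

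The step I expect to be the main obstacle is organising all these choices so that the three requirements — enclosing $I_W$, resp.\ $I_Z$; remaining in the correct side of the level set $\{\Re S=\Re S(U_c)\}$; and meeting only at $U_c,\overline{U_c}$ — hold simultaneously in the presence of the possible ``islands'' of one region contained inside the other (cf.\ the right-hand side of \cref{fig:landscape_ReS}). These islands meet the real axis only at the finitely many solutions of $\Re S(x)=\Re S(U_c)$, away from which the points $p_\pm,q_\pm$ are chosen, so the islands can be skirted; but writing this cleanly requires the precise connectivity of the $\{\Re S\gtrless\Re S(U_c)\}$ regions near the real axis. The structural inputs for that are exactly those already established: the lemma that precisely three real level lines leave $U_c$ for the real axis, \cref{lem:0_inside} (location of $D,E$ relative to $0$, and the absence of real $x\in(-\infty,A)\cup(C,+\infty)$ with $\Im S(x)=\Im S(U_c)$), and \cref{lem:critical_points_general} together with \cref{prop:t_regions}, which locate the pole intervals $I_W=[\eta\,a_0-x_0,o(1)]$ and $I_Z=[o(1),\eta\,a_m-x_0]$ relative to $A,B,C$. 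Once the contours are in place the lemma is proved; the contour deformation producing the limiting kernel from the residue is then carried out in \cref{prop:limit_Mla}.
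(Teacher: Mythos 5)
Your plan matches the paper's at the level of the geometric picture: close each contour by conjugation symmetry, route $\gzn$ from $U_c$ to the real axis through the two $\{\Re S>\Re S(U_c)\}$ sectors (the bounded one over $(A,B)$ and the unbounded far-right one) and $\gwn$ through the two $\{\Re S<\Re S(U_c)\}$ sectors, then check that the real-axis crossings fall on the correct sides of $I_W$ and $I_Z$. However the point you flag as ``the main obstacle'' — routing a generic arc from a real anchor point to $U_c$ while staying strictly inside the correct region and skirting possible islands — is exactly where the proposal is incomplete, and where the paper's proof uses a device you do not mention: it takes the arcs to be the \emph{imaginary level lines} $\{\Im S=\Im S(U_c)\}$ emanating from $U_c$. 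Along an imaginary level line $\Re S$ is strictly monotone (open mapping theorem applied to the analytic $S$), so the one leaving $U_c$ into a $\{\Re S>\Re S(U_c)\}$ sector has $\Re S$ strictly increasing and can never re-enter $\{\Re S\le\Re S(U_c)\}$; this disposes of the island issue automatically, and simultaneously produces the real anchor points for free (the point $D<0$ for the left piece, and for the far-right piece \cref{lem:0_inside} shows the level line cannot return to the real axis and must escape to $+\infty$ inside $\{y\le|x|\}$, after which one joins the real axis at some $>M\ge\max I_Z$). Your generic-arc variant can be made to work (the relevant connected components do contain $D$ and points $>\max I_Z$, and islands of the opposite region that meet $(A,B)$ do not disconnect an open sector), but that connectivity argument is precisely the lemma the level-line construction renders unnecessary, and as written your proposal leaves it open rather than proving it.
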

We refer the reader to \cref{fig:integration_contour} 
for an illustration of the original and new integration contours.
\begin{figure}
	\[\includegraphics[height=5.5cm]{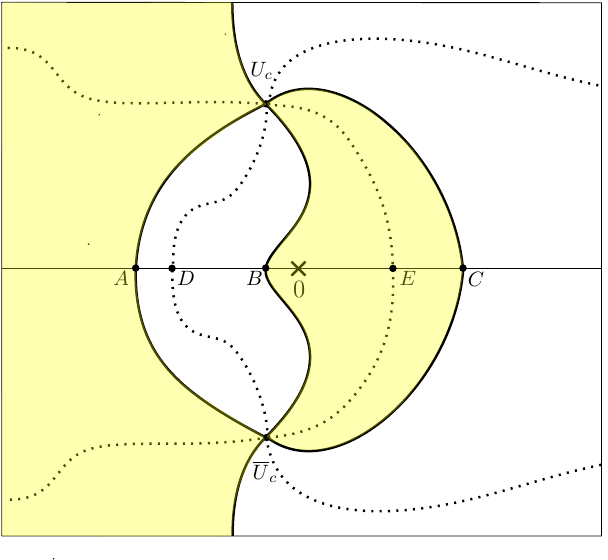}  
	\qquad \includegraphics[height=5.5cm]{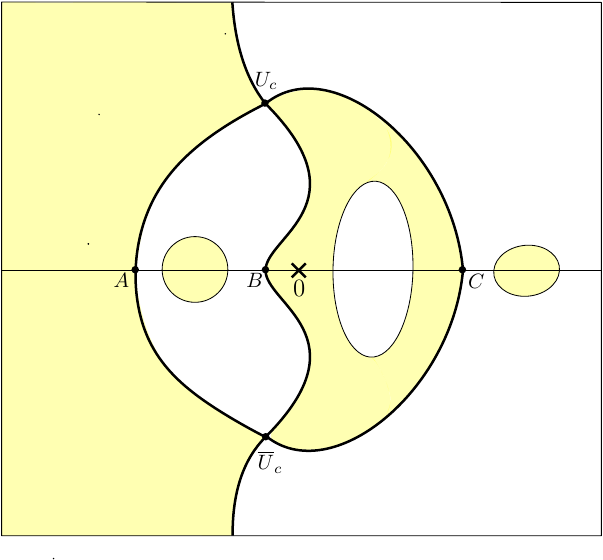}\] 
	\captionsetup{width=\linewidth}
	\caption{Two possible shapes for the landscape of $\Re S$. The black fat lines are the real level lines $\{\Re S(U) = \Re S(U_c)\}$. The yellow regions correspond to $\{\Re S(U) < \Re S(U_c)\}$, while the white regions correspond to $\{\Re S(U) > \Re S(U_c)\}$. On the left-hand side, we have also represented the imaginary level lines $\{\Im S(U) = \Im S(U_c)\}$ in dotted lines. The right-hand side shows another possible landscape of $\Re S$ with a more complicated configuration (to avoid overloading the picture, we did not draw imaginary level lines here). For the definition of the points $A,B,C,D,E$, see the discussion above \cref{lem:0_inside}.\label{fig:landscape_ReS}}

	\[\includegraphics[height=5.5cm]{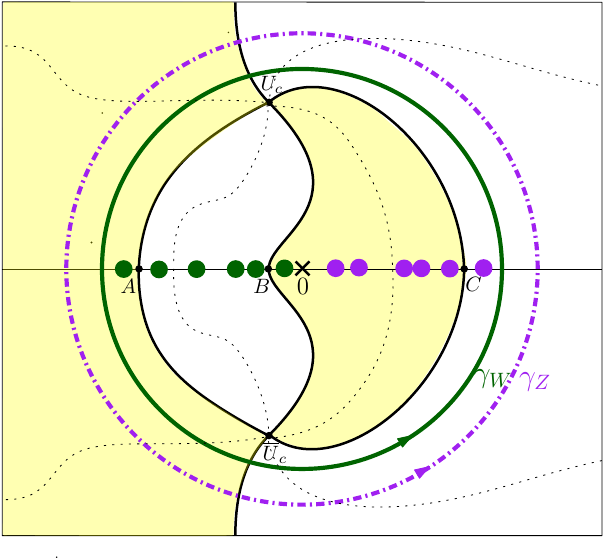} 
	\qquad 
	\includegraphics[height=5.5cm]{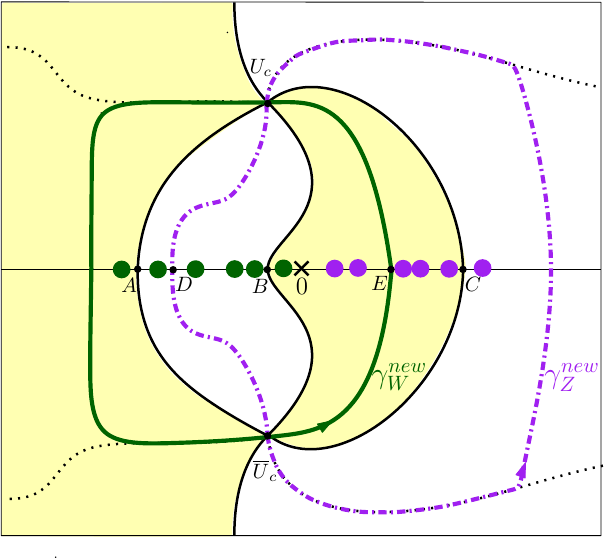} \]
	\captionsetup{width=\linewidth}
	\caption{
	 \textbf{Left:} The picture from \cref{fig:landscape_ReS} together with the original integration contours $\gw$ (in green)
	and $\gz$ (in purple) appearing in the kernel $\widetilde{K}_{\la_N}^{(x_0,t_0)}$. 
	The green and purple dots are respectively the $W$-poles and $Z$-poles of the integrand. We also recall that the yellow regions correspond to $\{\Re S(U) < \Re S(U_c)\}$, 
	while the white regions correspond to $\{\Re S(U) > \Re S(U_c)\}$.
	\textbf{Right:} The same picture with the new contours $\gzn$ and $\gwn$ from \cref{lem:move_contour_liquid}.\label{fig:integration_contour}}
\end{figure}
\begin{proof}
  We first explain how to construct $\gzn$, as the concatenation of two paths going from
  $U_c$ to $\overline{U_c}$ (reversing the right-most one so that we have a counterclockwise contour).

  Recall from the previous section (see also \cref{fig:landscape_ReS}) that there is a portion
  of $\{ \Re S(U) > \Re S(U_c) \}$ between the point $U_c$, $\overline{U_c}$,
  $A$ and $B$ and that this region contains an imaginary level line of $S$ that intersects the real axis at $D<0$ (\cref{lem:0_inside}).
  We choose this imaginary level line as the first path to construct $\gzn$.

  Around $U_c$, we can find another connected area of the region $\{ \Re S(U) > \Re S(U_c) \}$.
  This part also contains an imaginary level line of $S$. 
  This imaginary level line cannot go to the real axis, otherwise the meeting point would
	be an $x$ in $(C,+\infty)$ with $\Im S(x)=\Im S(U_c)$, contradicting \cref{lem:0_inside}. 
	 Because of the asymptotics $S(U) \sim |\log(1-t_0)|\,U$,
	this imaginary line should go to infinity inside the region  $\{y\leq |x|\}$
    and therefore meets the region $\{ x \ge M\}$, for any $M>0$.
    Moreover, we can choose $M>0$ such that $M \ge \max I_Z$ and such that if $|x| \ge M$ and $|y| \le x$ then $\Re S(x+iy) > \Re S(U_c)$.
    The second path used to construct $\gzn$ is then defined as follows:
    we first follow the imaginary level line from $U_c$ until it meets 
    the region $\{|x| \ge M,\, y\leq |x|\}$, and then join the real axis inside this region.
    We complete the path by symmetry until it hits $\overline{U_c}$.

    By construction, $\gzn$ goes through $U_c$ and $\overline{U_c}$
    and $\gzn\setminus \{U_c,\overline{U_c}\}$ lies in $\{ \Re S(U) >  \Re S(U_c)\}$.
    Also, the intersection points of $\gzn$ with the real axis are $D<0$ on the one side
    and some number larger than $ M > \max I_Z$ on the other side, so that $\gzn$ encloses $I_Z$ as wanted.
    We construct $\gwn$ in a symmetric way, completing the proof of the lemma.
\end{proof}

We are now ready to compute the asymptotics of the kernel $\widetilde{K}_{\la_N}^{(x_0,t_0)}$ 
in \eqref{eq:double_int_integr} when $(x_0,t_0)$ lies inside the liquid region
and hence compute the limit of the associated bead process $\widetilde{M}_{\la_N}^{(x_0,t_0)}$.

\begin{proposition}
  \label{prop:limit_Mla}
  Assume that $(x_0,t_0)$ is in the liquid region.
  The bead process $\widetilde{M}_{\la_N}^{(x_0,t_0)}$ converges in distribution
  to a determinantal process with correlation kernel
  \begin{equation}\label{eq:Kinfty}
    K^{(x_0,t_0)}_\infty\big((x_1,t_1),(x_2,t_2)\big):= 
     \frac{1}{2\I\pi}\int_{\gamma}  \frac{1}{1-t_0}\, \E^{\frac{W}{1-t_0}(t_1-t_2)} \, \left(\frac{W}{1-t_0}\right)^{x_2-x_1} \dd{W},
   \end{equation}
     where $\gamma$ is a path going from $\overline{U_c}$ to $U_c$ and passing on the left of $0$
     for $t_1 \ge t_2$ and on the right of $0$ for $t_1 <t_2$
     (the point 0 is a pole of the integrand in the case $x_1 < x_2$).
\end{proposition}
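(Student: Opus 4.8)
The plan is to execute the two-step program announced just before \cref{lem:move_contour_liquid}. Starting from the double contour integral \eqref{eq:double_int_integr} for $\widetilde K_{\la_N}^{(x_0,t_0)}$, I first deform the contours $\gw,\gz$ (which I may harmlessly enlarge so that $\gwn$ and $\gzn$ lie in their common interior) to the contours $\gwn,\gzn$ of \cref{lem:move_contour_liquid}, collecting a residue along the way; then I show that the remaining double integral over $\gwn\times\gzn$ tends to $0$; and finally I identify the residue term with an integral converging to $K_\infty^{(x_0,t_0)}$. The convergence in distribution will then follow from \cref{prop:dpp_conv}. A simplification used throughout is that every quantity below carries a factor $(\sqrt N)^{x_2-x_1}=g(x_1)/g(x_2)$ with $g(x)=(\sqrt N)^{-x}$: this is a conjugation factor, so by the observation closing \cref{sec:bead} it may be divided out without changing the law of the associated determinantal process, and it suffices to control the kernels up to it.

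For the first step, deforming $\gw$ to $\gwn$ crosses no pole of the integrand $\Int_N$ (the $W$-poles stay inside $\gwn$, which lies inside $\gz$), and then deforming $\gz$ to $\gzn$ with $\gwn$ fixed keeps the $Z$-poles inside $\gzn$ but sweeps the pole $Z=W$ for exactly those $W$ lying on the sub-arc $\gamma_N$ of $\gwn$ that is exterior to $\gzn$ --- an arc from $\overline{U_c}$ to $U_c$ passing to the left of $0$ when $t_1\ge t_2$. This produces an identity of the (schematic) form
\begin{equation*}
\widetilde K_{\la_N}^{(x_0,t_0)}\big((x_1,t_1),(x_2,t_2)\big) = \pm\frac{1}{2\I\pi}\int_{\gamma_N}\operatorname*{Res}_{Z=W}\Int_N(W,Z)\,\dd W \;-\; \frac{1}{(2\I\pi)^2}\oint_{\gzn}\oint_{\gwn}\Int_N(W,Z)\,\dd W\,\dd Z,
\end{equation*}
the sign depending on the orientations. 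Because $\operatorname*{Res}_{Z=W}\tfrac{1}{Z-W}=1$, the $F_{\la_N}$-factors cancel and the residue equals $\tfrac{\Gamma(\sqrt N W-x_1+1)}{\Gamma(\sqrt N W-x_2+1)}\,(1-\widetilde t_2)^{\sqrt N W-x_2}(1-\widetilde t_1)^{-\sqrt N W+x_1-1}$; feeding Stirling's formula \eqref{eq:stirl_form} into the $\Gamma$-quotient and expanding the $(1-\widetilde t_i)$-powers as in \eqref{eq:asympt1} (so that the $(1-t_0)^{\pm\sqrt N W}$ cancel) gives, uniformly for $W\in\gamma_N$ and for $t_1,t_2$ in compact sets,
\begin{equation*}
\operatorname*{Res}_{Z=W}\Int_N(W,Z)\;\simeq\;(\sqrt N)^{x_2-x_1}\,\frac{1}{1-t_0}\Big(\frac{W}{1-t_0}\Big)^{x_2-x_1}\E^{\frac{W}{1-t_0}(t_1-t_2)}.
\end{equation*}
Since $\gamma_N$ can be taken to converge to a fixed arc $\gamma$ from $\overline{U_c}$ to $U_c$ staying away from $0$, dominated convergence identifies $\tfrac{1}{2\I\pi}\int_{\gamma_N}\operatorname*{Res}_{Z=W}\Int_N\,\dd W$ with $(\sqrt N)^{x_2-x_1}\big(K_\infty^{(x_0,t_0)}((x_1,t_1),(x_2,t_2))+o(1)\big)$, which is precisely the kernel \eqref{eq:Kinfty} up to the conjugation factor.

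For the second step, I substitute the uniform estimate \eqref{eq:equiv_integrand} for $\Int_N$ on the parts of $\gwn\times\gzn$ lying in compact subsets of $D_S\times D_S$, falling back on the coarser bounds of \cref{lem:Asymp_Fla} (which, for $\eps$ chosen small enough, will be absorbed by the exponential decay obtained below) near the finitely many points at which the contours meet $\R\setminus D_S$. Thus it is enough to estimate $\big|\oint_{\gzn}\oint_{\gwn}\E^{\sqrt N(S(W)-S(Z))}\,\hh(W,Z)\,\dd W\,\dd Z\big|$. By \cref{lem:move_contour_liquid}, $\Re S(W)\le\Re S(U_c)\le\Re S(Z)$ on $\gwn\times\gzn$, with equality only at $\{U_c,\overline{U_c}\}$. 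Away from fixed small discs around $U_c$ and $\overline{U_c}$, this gives $\Re\big(S(W)-S(Z)\big)\le -c<0$, hence an $O(\E^{-c\sqrt N})$ bound, the asymptotics \eqref{eq:as_action} and the explicit form of $\hh$ handling the non-compact tails of $\gzn$ and $\gwn$. Near $U_c$ (and symmetrically $\overline{U_c}$), one uses that $U_c$ is a simple critical point, $S(U)-S(U_c)=\tfrac12 S''(U_c)(U-U_c)^2+O(|U-U_c|^3)$ with $S''(U_c)\ne 0$, and that $\gwn$ and $\gzn$ emanate from $U_c$ along a steepest-descent and a steepest-ascent direction with distinct tangents; locally this yields $\Re\big(S(W)-S(U_c)\big)\le -c|W-U_c|^2$, $\Re\big(S(Z)-S(U_c)\big)\ge c|Z-U_c|^2$ and $|Z-W|\ge c(|Z-U_c|+|W-U_c|)$, so the pole of $\hh$ on the diagonal $W=Z$ is integrable and the $\delta$-disc contribution is $O\!\Big(\int_0^\delta\!\!\int_0^\delta\frac{\E^{-c\sqrt N(r^2+\rho^2)}}{r+\rho}\,\dd r\,\dd\rho\Big)=O(N^{-1/4})$. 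Altogether the double integral is $o\big((\sqrt N)^{x_2-x_1}\big)$. The case $t_1<t_2$ is obtained from the mirror-image construction in \cref{lem:move_contour_liquid} (with $\gwn$ exterior to $\gzn$), which makes $\gamma$ pass to the right of $0$, in accordance with \eqref{eq:Kinfty}.

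Combining the two steps, $(\sqrt N)^{x_1-x_2}\,\widetilde K_{\la_N}^{(x_0,t_0)}((x_1,t_1),(x_2,t_2))\to K_\infty^{(x_0,t_0)}((x_1,t_1),(x_2,t_2))$ locally uniformly in $(t_1,t_2)$ for each fixed pair $(x_1,x_2)$; the limiting kernel is continuous because $\gamma$ avoids the only possible singularity $W=0$ of its integrand, hence locally bounded, so \cref{prop:dpp_conv} (together with the conjugation-invariance recalled above) yields the asserted convergence in distribution of $\widetilde M_{\la_N}^{(x_0,t_0)}$. I expect the main obstacle to lie in the second step, at the two saddles $U_c,\overline{U_c}$: there the exponential factor provides no gain, the prefactor $\hh$ has a pole on $W=Z$, and the contours $\gwn,\gzn$ may be forced through real points outside $D_S$ where only the weak bounds of \cref{lem:Asymp_Fla} hold --- reconciling those weak bounds with the $\E^{-c\sqrt N}$ decay, and choosing the contours so that this works, is where the real technical care is needed.
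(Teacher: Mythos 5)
Your proof takes the same route as the paper's: deform $(\gw,\gz)$ to the steepest-descent contours $(\gwn,\gzn)$ of \cref{lem:move_contour_liquid}, collect the $Z=W$ residue along the arc of $\gwn$ lying outside $\gzn$ and identify its limit with $K^{(x_0,t_0)}_\infty$ via Stirling and \eqref{eq:asympt1}, then kill the remaining double integral by dominated convergence, handling the boundary points $W_+,Z_-$ outside $\DS$ with the weaker bounds of \cref{lem:Asymp_Fla}, and invoking conjugation-invariance together with \cref{prop:dpp_conv}. The only difference is cosmetic --- your quantitative saddle-point bounds near $U_c,\overline{U_c}$ (giving an explicit $O(N^{-1/4})$ decay for the $\delta$-disc contribution) go a bit beyond the paper's argument, which simply notes that the rescaled integrand is dominated by the integrable function $\hh$ and tends to $0$ pointwise off the measure-zero set $\{U_c,\overline{U_c}\}$.
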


\begin{proof}
We first treat the case $t_1 \ge t_2$, so that the original contours in \eqref{eq:double_int_integr} are such that $\gamma_W$ is inside $\gamma_Z$.
We move the latter contours to the new contours $\gwn$ and $\gzn$ constructed in \cref{lem:move_contour_liquid}.
Since $I_W$ (resp.\ $I_Z$) is inside both $\gamma_W$ and $\gwn$ (resp.\ $\gamma_Z$ and $\gzn$), we do not cross any of the $W$-poles (resp.\ $Z$-poles).
However, since the relative positions of the contours is not the same
($\gwn$ is not inside $\gzn$), we have to take care of the pole at $Z=W$.

Fix $Z \in \gz$. {Possibly enlarging the contour $\gz$,} we can assume that the point $Z$ is outside both $\gw$ and $\gwn$.
Therefore, recalling that $\Int_N(W,Z)$ denotes the integrand in \eqref{eq:double_int_integr},
we have
\[   \oint_{\gw} \Int_N(W,Z)\, {\dd{W}} =  \oint_{\gwn} \Int_N(W,Z)\, {\dd{W}}  ,\]
which implies that
\[\widetilde{K}_{\la_N}^{(x_0,t_0)}((x_1,t_1),(x_2,t_2))
=  \frac{-1}{(2\I\pi)^2} \oint_{\gz} \oint_{\gw} \Int_N(W,Z) \dd{W} \dd{Z} 
= \frac{-1}{(2\I\pi)^2} \oint_{\gz} \oint_{\gwn} \Int_N(W,Z) \dd{W} \dd{Z}.\]

Fix now $W$ in $\gwn \setminus \{U_c,\overline{U_c}\}$.
Note that $\gwn \setminus \{U_c,\overline{U_c}\}=\gwnl \sqcup \gwnr$,
where $\gwnl$ and $\gwnr$ denote respectively the left and right parts of the contour $\gwn$; c.f.\ \cref{fig:integration_contour}.
We then move the $Z$-contour from $\gamma_Z$ (left-hand side of \cref{fig:integration_contour})
to $\gzn$ (right-hand side of \cref{fig:integration_contour}). If $W$ is in $\gwnr$,
then deforming the $Z$-contour from $\gz$ to $\gzn$ can be done without crossing any pole.
On the other hand, if $W$ is in $\gwnl$,
when deforming the $Z$-contour from $\gz$ to $\gzn$, we cross the pole $Z=W$.
By the residue theorem, for $W$ fixed in $\gwn \setminus \{U_c,\overline{U_c}\}$, we have
\[   \frac{1}{2\I\pi}   \oint_{\gz} \Int_N(W,Z) \dd{Z} 
	=  \frac{1}{2\I\pi} \oint_{\gzn} \Int_N(W,Z) \dd{Z} + \delta_{W \in \gwnl}\,
     \Res\big(\Int_N(W,Z),Z=W\big),\]
where $\Res\big(\Int_N(W,Z),Z=W\big)$ denotes the residue of $\Int_N(W,Z)$ corresponding to the pole $Z=W$.
Integrating over $W$ in $\gwn$ gives
\begin{multline}\label{eq:Kla_new_contours_liquid}
  \widetilde{K}_{\la_N}^{(x_0,t_0)}((x_1,t_1),(x_2,t_2))
= 
-\frac{1}{(2\I\pi)^2} \oint_{\gwn} \oint_{\gzn} \Int_N(W,Z) \dd{Z} \dd{W} \\
- \frac{1}{2\I\pi} \oint_{\gwnl} \!  \Res\big(\Int_N(W,Z),Z=W\big) \dd{W},
\end{multline}
where the set $\gwnl$ is interpreted as a path going from $U_c$ to $\overline{U_c}$;
note that it passes on the left of $0$.
Note that because of the pole in $Z=W$ of $\Int_N(W,Z)$,
the term $\oint_{\gzn} \Int_N(W,Z) \dd{Z}$ has a logarithmic singularity when $W$
tends to $U_c$ or $\overline{U_c}$; this singularity is integrable
so that the double integral above is well-defined.\medskip

Recalling that $\Int_N(W,Z)=\frac{F_{\la_N}(\sqrt{N}(Z+x_0))}{F_{\la_N}(\sqrt{N}(W+x_0))}\,\frac{\Gamma(\sqrt{N}W- x_1+1)}{\Gamma(\sqrt{N}Z - x_2+1)} \cdot \frac{(1-\widetilde{t}_2)^{\sqrt{N}Z - x_2}\,(1-\widetilde{t}_1)^{-\sqrt{N}W + x_1-1}}{(Z-W)}$, the right term in the latter displayed equation can be computed as follows:
\begin{align*}
  \Res\big(\Int_N(W,Z),Z=W\big) &= \frac{\Gamma(\sqrt N W-x_1+1)}{\Gamma(\sqrt N W -x_2+1)}  
\, (1-\widetilde{t}_2)^{\sqrt{N}W - x_2}\,(1-\widetilde{t}_1)^{-\sqrt{N}W + x_1-1} \\
&\simeq (\sqrt N W)^{x_2-x_1} (1-t_0)^{x_1-x_2-1} \exp\left( \frac{W}{1-t_0}\, (t_1-t_2) \right),
\end{align*}
where we used \cref{eq:asympt1} in the last estimation.
The estimate is uniform for $W$ in compact subsets of $\mathbb C \setminus \{0\}$,
implying
\begin{multline*}
	-\frac{1}{2\I\pi} \oint_{\gwnl} \!  \Res\big(\Int_N(W,Z),Z=W\big) \dd{W} \\
	\simeq\,-\frac{(\sqrt N)^{x_2-x_1}}{2\I\pi} \oint_{\gwnl} \!   \frac{1}{1-t_0}\, \E^{\frac{W}{1-t_0}(t_1-t_2)} \left(\frac{1-t_0}{W}\right)^{x_1-x_2}
	\dd{W}.
\end{multline*}
We now consider the double integral term $\oint_{\gwn} \oint_{\gzn} \Int_N(W,Z) \dd{Z} \dd{W}$ in \eqref{eq:Kla_new_contours_liquid}.
Call $Z_-$ and $Z_+$ (resp.~$W_-$ and $W_+$) the intersections points of the contour $\gzn$ (resp.~$\gwn$)
with the real axis, with the convention $Z_-<0<Z_+$ (resp.~$W_-<0<W_+$). 
By construction, $W_-$ and $Z_+$ can be chosen outside $[\eta a_0-x_0,\eta a_m-x_0]$ and hence belong
to the set $\DS$ (see its definition before \cref{lem:Asymp_Fla}).
Moreover, up to deforming slightly $\gzn$ and $\gwn$ (still keeping the properties of \cref{lem:move_contour_liquid} true),
we may assume that $W_+$ and $Z_-$ are distinct from all $\eta\, a_i-x_0$ and $\eta\, b_i-x_0$.\medskip

Fixing $\eps>0$, we have that:
\begin{itemize}
	\item Using the estimate in \eqref{eq:equiv_integrand} (which holds uniformly for $Z$ and $W$ in compact subsets of $\DS$), for $Z \in \gzn \setminus D(Z_-,\eps)$ 
	and $W \in \gwn \setminus D(W_+,\eps)$, the integrand $\Int_N(W,Z)$ is bounded by the function $2\hh(W,Z)$ in \eqref{eq:action} for all $N$ large enough,
	and tends to $0$ pointwise when $N$ tends to $+\infty$ because $\Re S(Z) > \Re S(W)$ by the third property in \cref {lem:move_contour_liquid} for $\gzn$ and $\gwn$.  (This is true except when $\{Z,W\} \subseteq \{U_c, \overline{U_c}\}$,
	where the bound by $\hh(W,Z)$ still holds but the integrand $\Int_N(W,Z)$ does not converge to zero;
    since this is a zero measure subset,
	this will not be problematic.)
	\item Using the second and third parts of \cref{lem:Asymp_Fla}, the bound by $\hh(W,Z)$ 
	and the convergence of $\Int_N(W,Z)$ to $0$ also hold when $Z \in D(Z_-,\eps)$ 
	or $W \in D(W_+,\eps)$ (or both), for $\eps$ small enough.
    Indeed, the extra factor $\exp(\eps \pi \sqrt N)$ in this case 
    is compensated by the factor $\exp(\sqrt N (S(W)-S(Z)))$ as soon as $S(W) <S(Z)-\pi \eps$,
   which happens for $W \in D(W_+,\eps)$ and $Z \in \gzn$ or for $W\in \gwn$ and $Z \in D(Z_-,\eps)$,
   if $\eps$ is small enough.
	\item The function $\hh(W,Z)$ is integrable for $(W,Z)$ on the double contour $(\gwn,\gzn)$. Indeed, it has singularities for $W=Z=U_c$ and $W=Z=\overline{U_c}$ but behaves as                 
	$O((W-Z)^{-1})$ near these singularities,
	and a standard computation shows that $(W-Z)^{-1}$ is integrable
	on $\gwn \times \gzn$, since the paths cross non-tangentially.
\end{itemize}
Hence, using the dominated convergence theorem, we know that
the double contour integral $\oint_{\gwn} \oint_{\gzn} \Int_N(W,Z) \dd{Z} \dd{W}$ 
goes to $0$ as $N$ tends to infinity.\medskip

Letting $\gamma$ to be the reverse path of $\gwnl$, we get
\begin{equation} 
  \label{eq:asymp_Ktilde}
	\lim_{N \to +\infty} 
  \frac{\widetilde{K}_{\la_N}^{(x_0,t_0)}((x_1,t_1),(x_2,t_2))}{(\sqrt{N})^{x_2-x_1}}
	= 
     \frac{1}{2\I\pi}\int_{\gamma} \!  \frac{1}{1-t_0}\, \E^{\frac{W}{1-t_0}(t_1-t_2)} \left(\frac{1-t_0}{W}\right)^{x_1-x_2} \dd{W}.
\end{equation}
Note that $\gamma$ is indeed a path from $\overline{U_c}$ to $U_c$ passing on the left of $0$, as required in the case $t_1 \ge t_2$.

\medskip

Let us now consider the case $t_1 < t_2$. We recall that in this case the initial contour
$\gamma_W$ and $\gamma_Z$ are swapped. An argument similar to the one above shows that 
\eqref{eq:asymp_Ktilde} still holds, but with the path $\gamma=\gwnr$ passing on the right of $0$.

Recall that, up to the factor $(\sqrt{N})^{x_2-x_1}$, the left-hand side of \eqref{eq:asymp_Ktilde}
is the kernel of the bead process $\widetilde{M}_{\la_N}$.
But the factor $(\sqrt{N})^{x_2-x_1}$ is a conjugation factor (see the last paragraph in \cref{sec:bead}):
adding or removing it does not change the associated bead process.

The expression on the right-hand side of \eqref{eq:asymp_Ktilde} depends continuously on $(x_1,t_1)$ and $(x_2,t_2)$
and is therefore locally bounded. Moreover, all estimates above and in particular
the convergence \eqref{eq:asymp_Ktilde} are locally uniform in these variables.
Therefore, from \cref{prop:dpp_conv}, the bead process $\widetilde{M}_{\la_N}^{(x_0,t_0)}$ converges in distribution to a determinantal point process with kernel $K^{(x_0,t_0)}_\infty$.
\end{proof}

\subsection{Recovering the bead kernel}
\label{sec:RecoveringBeadKernel}
We now want to compare the limit kernel $K^{(x_0,t_0)}_\infty$ from \cref{prop:limit_Mla}
with that of the random infinite bead process $J_{\a,\be}$ from \cref{thm:bead_kernel_cedric}. Recall from \eqref{eq:parm_liq} that, given the critical point $U_c$ associated to $(x_0,t_0)$ in the liquid region, $\a=\frac{\Im U_c}{1-t_0}$ and $\be=\frac{\Re U_c}{|U_c|}$.

\begin{lemma}\label{lem:recovering_bead_kernel}
There exists a function $g$ such that, for $t_1 \ne t_2$
  \begin{equation}\label{eq:Kinfty_Bead}
    K^{(x_0,t_0)}_\infty \big((x_1,t_1),(x_2,t_2)\big) = \frac{g(x_1,t_1)}{g(x_2,t_2)}\,
  J_{\a,\be} \big((x_1,t_1),(x_2,t_2)\big).
\end{equation}
\end{lemma}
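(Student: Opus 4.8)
The plan is to perform an explicit change of variables turning the single contour integral \eqref{eq:Kinfty} from \cref{prop:limit_Mla} into the integral \eqref{eq:BeadKernel}, extracting a conjugation factor in the process. Recall from \eqref{eq:parm_liq} that $\a=\frac{\Im U_c}{1-t_0}$ and $\be=\frac{\Re U_c}{|U_c|}$, and write $U_c=|U_c|\,(\be+\I\sqrt{1-\be^2})$, which is legitimate since $\Im U_c>0$. Set $r:=\frac{|U_c|}{1-t_0}>0$, so that $r\be=\frac{\Re U_c}{1-t_0}$ and $r\sqrt{1-\be^2}=\frac{\Im U_c}{1-t_0}=\a$. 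First I would substitute in \eqref{eq:Kinfty}
\[
W=\frac{1+u}{2}\,U_c+\frac{1-u}{2}\,\overline{U_c},
\]
the affine parametrization of the complex line through $U_c$ and $\overline{U_c}$: it sends $u=1$ to $U_c$, $u=-1$ to $\overline{U_c}$, the segment $[-1,1]$ to the segment $[\overline{U_c},U_c]$, and $\R$ to the vertical line $\mathcal L:=\{\Re W=\Re U_c\}$. Since $\frac{W}{1-t_0}=r(\be+\I u\sqrt{1-\be^2})$ and $\dd W=\I\,(1-t_0)\,\a\,\dd u$, a direct computation gives
\begin{equation*}
K^{(x_0,t_0)}_\infty\big((x_1,t_1),(x_2,t_2)\big)=\E^{r\be(t_1-t_2)}\,r^{x_2-x_1}\cdot\frac{\a}{2\pi}\int_{\widetilde\gamma}\E^{\I\a u(t_1-t_2)}\,\big(\be+\I u\sqrt{1-\be^2}\big)^{x_2-x_1}\dd u,
\end{equation*}
where $\widetilde\gamma$ is the image of $\gamma$, a path in the $u$-plane from $-1$ to $1$. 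The prefactor factors as $\E^{r\be(t_1-t_2)}r^{x_2-x_1}=\frac{g(x_1,t_1)}{g(x_2,t_2)}$ with $g(x,t):=\E^{r\be t}\,r^{-x}$ a nowhere-vanishing function of $(x,t)$ alone (the numbers $r,\be$ depending only on $(x_0,t_0)$), so the task reduces to identifying $\frac{\a}{2\pi}\int_{\widetilde\gamma}(\cdots)\dd u$ with $J_{\a,\be}\big((x_1,t_1),(x_2,t_2)\big)$.

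Next I would analyze the integrand $h(u):=\E^{\I\a u(t_1-t_2)}(\be+\I u\sqrt{1-\be^2})^{x_2-x_1}$. It is entire when $x_2\ge x_1$, and has a single pole, at $u_0:=\I\be/\sqrt{1-\be^2}$ (the image of $W=0$) of order $x_1-x_2$, when $x_1>x_2$; note that $u_0$ lies in the open upper (resp.\ lower) half-plane according to whether $\be>0$ (resp.\ $\be<0$). In the case $x_2\ge x_1$ there is no pole; the path $\gamma$ can be freely deformed inside $\mathbb{C}$ to the segment $[\overline{U_c},U_c]$ (equivalently $\widetilde\gamma$ to $[-1,1]$), and we land exactly on the first branch $\frac{\a}{2\pi}\int_{[-1,1]}h$ of \eqref{eq:BeadKernel}.

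The case $x_1>x_2$ will be the delicate one: here the pole at $W=0$ is present, so the rule ``$\gamma$ passes on the left (resp.\ right) of $0$ when $t_1\ge t_2$ (resp.\ $t_1<t_2$)'' must be reconciled with the position of the literal segment $[\overline{U_c},U_c]$, which lies on the left or right of $0$ according to the sign of $\Re U_c$. Orienting $\mathcal L$ from the $\overline{U_c}$-side to the $U_c$-side, one has $\mathcal L=\rho_-+[\overline{U_c},U_c]+\rho_+$ with rays $\rho_\pm$, and under $W\mapsto u$ the rays $\rho_\pm$ form exactly the preimage of $\R\setminus[-1,1]$ (carrying the orientation of $\int_{-\infty}^{-1}+\int_1^{+\infty}$); thus the second branch $-\frac{\a}{2\pi}\int_{\R\setminus[-1,1]}h$ of \eqref{eq:BeadKernel} corresponds, back in the $W$-variable, to $-\frac{1}{2\I\pi}\big(\int_{\mathcal L}-\int_{[\overline{U_c},U_c]}\big)(\cdots)\dd W$. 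Hence the identity to prove becomes $\int_\gamma(\cdots)\dd W-\int_{[\overline{U_c},U_c]}(\cdots)\dd W=-\int_{\mathcal L}(\cdots)\dd W$ for the integrand of \eqref{eq:Kinfty}. The left-hand side equals $2\I\pi\,\mathrm{Res}_{W=0}(\cdots)$ times the winding number of the closed loop $\gamma\cdot[\overline{U_c},U_c]^{-1}$ about $0$; the right-hand side is evaluated by closing $\mathcal L$ with a large semicircle in the half-plane where $\E^{\frac{W}{1-t_0}(t_1-t_2)}$ decays (the left one if $t_1>t_2$, the right one if $t_1<t_2$, the semicircular contribution vanishing by a standard Jordan-type estimate using that $(\frac{W}{1-t_0})^{x_2-x_1}$ decays polynomially), yielding again a multiple of $\mathrm{Res}_{W=0}(\cdots)$ that depends only on whether $0$ is enclosed and on the induced orientation. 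Checking that these two multiples of $\mathrm{Res}_{W=0}(\cdots)$ agree is then a finite verification over the four sign combinations of $\big(\mathrm{sgn}(t_1-t_2),\mathrm{sgn}(\Re U_c)\big)$, using the convention for $\gamma$; in each case the winding number comes out to be minus the residue count. The borderline configuration $\Re U_c=0$ (where $\mathcal L$ and the segment meet the pole) falls outside this argument and is disposed of by continuity in $(x_0,t_0)$, both sides of \eqref{eq:Kinfty_Bead} being continuous there. The main obstacle is precisely this bookkeeping of sides, orientations and residue signs; the rest is a routine change of variables.
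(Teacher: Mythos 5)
Your argument is correct and produces the same conjugation factor, but your handling of the $x_1>x_2$ case is a genuinely different route. The paper deforms $\gamma$ into an explicit thin rectangle of width $(\log A)^2$ and height $2A$, shows directly that the three extra sides vanish as $A\to\infty$ (the exponential kills the far vertical side, the integrand is $O(A^{-1})$ on the short horizontal sides), and is left with the two vertical rays whose parametrization yields $-\frac{\a}{2\pi}\int_{\R\setminus[-1,1]}$. You instead reduce the claim to the identity $\int_\gamma - \int_{[\overline{U_c},U_c]} = -\int_{\mathcal L}$ and evaluate both sides by residue calculus. This does work: the winding number of $\gamma\cdot[\overline{U_c},U_c]^{-1}$ about $0$ equals $-1,0,0,+1$ for $(\mathrm{sgn}(t_1-t_2),\mathrm{sgn}(\Re U_c))=(+,+),(+,-),(-,+),(-,-)$, and in each case this is minus the signed residue count obtained by closing $\mathcal L$ in the half-plane where the exponential decays. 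The trade-off is that your route needs (i) this four-way check, which you defer but do not carry out; (ii) a separate continuity argument for $\be=0$, when both the segment and $\mathcal L$ pass through the pole at $W=0$; and (iii) some care with conditional convergence when $x_1-x_2=1$, since $\int_{\mathcal L}$ and the semicircle estimate are then only conditionally convergent (which is why the paper uses a $(\log A)^2$-wide rectangle rather than a semicircle). The paper's construction sidesteps all three points by never invoking the possibly-singular segment or line $\mathcal L$. A small aside: your $g(x,t)=\E^{r\be t}r^{-x}$ is correct; the paper's printed $g(y,\eps)=\exp(\eps R\cos\theta/(1-t_0))(R/(1-t_0))^{y}$ has a sign typo (the exponent should be $-y$), which is immaterial for the lemma.
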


Before proving the lemma, we discuss its consequences.
We recall that a conjugation factor of the form $\frac{g(x_1,t_1)}{g(x_2,t_2)}$
in a kernel does not affect the associated point process.
Also changing the kernel on a set of measure $0$, 
e.g.~$\left\{\big((x_1,t_1),(x_2,t_2)\big), t_1 = t_2\right\}$,
does not change the point process either.
Thus, the above lemma
implies that $K_\infty$ and $J_{\a,\be}$ are the kernels of the same point process,
i.e.\ the random infinite bead process of intensity $\a$ and skewness $\be$ introduced in \cref{defn:bead_process}.
Together with \cref{prop:limit_Mla}, this proves the first item of \cref{thm:cv_bead_process}.
\begin{proof}[Proof of \cref{lem:recovering_bead_kernel}]
  First assume $x_2 \ge x_1$ (but we do not assume any comparison between $t_1$ and $t_2$,
  the forthcoming argument works in both cases).
  Then the integrand of the kernel $K^{(x_0,t_0)}_\infty \big((x_1,t_1),(x_2,t_2)\big)$ in \eqref{eq:Kinfty} has no poles,
and we can replace $\gamma$ by any path from $\overline{U_c}$ to $U_c$.
We will take a vertical segment, which we parametrize as 
\[\gamma(u):=R(\cos \theta + \I u\sin\theta),\qquad u \in (-1,1),\]
where $R=|U_c|$ and $\theta=\Arg(U_c)$.
We get 
\begin{equation*}
	K_\infty\big((x_1,t_1),(x_2,t_2)\big)=\frac{1}{1-t_0}\,\frac{g(x_1,t_1)}{g(x_2,t_2)}
	\int_{-1}^{1} \E^{\I u( t_1-t_2)\frac{R\sin\theta}{1-t_0}}
    (\cos\theta + \I u\sin\theta)^{x_2-x_1} \frac{R \sin\theta \dd{u}}{2\pi},
\end{equation*}
with $g(y,\eps)=\exp(\frac{\eps R\cos\theta}{1-t_0}) \,\left(\frac{R}{1-t_0}\right)^{y}$.
Comparing with \eqref{eq:BeadKernel} where we take $\a=\frac{R\sin\theta}{1-t_0}=\frac{\Im U_c}{1-t_0}$
and $\be=\cos\theta=\frac{\Re U_c}{|U_c|}$,
we obtain \eqref{eq:Kinfty_Bead} in the case $x_2 \ge x_1$.
\medskip

Assume now $x_2 <x_1$ and $t_1 > t_2$. 
In this case, in \cref{prop:limit_Mla},
we integrate over the following path $\gamma$ going from $\overline{U_c}$ to $U_c$ as shown in \cref{fig:IntegrationRectangle}
(for $A\geq 1$ large enough this path passes on the left of $0$, as needed):
\begin{enumerate}
  \item take a vertical path $\gamma_1$ going down from $\overline{U_c}$ to $\Re U_c - \I A$;
  \item take a horizontal path $\gamma_2$ going left from  $\Re U_c - \I A$ to  $\Re U_c -(\log A)^2 - \I A$;
  \item take a vertical path $\gamma_3$ going up from  $\Re U_c -(\log A)^2 - \I A$
    to $\Re U_c -(\log A)^2 + \I A$;
  \item take a horizontal path $\gamma_4$ going right from  $\Re U_c -(\log A)^2 +\I A$ to  $\Re U_c + \I A$;
  \item and finally take a vertical path $\gamma_5$ going down from $\Re U_c + \I A$ to $U_c$.
\end{enumerate}

\begin{figure}[h]
	\includegraphics[height=6cm]{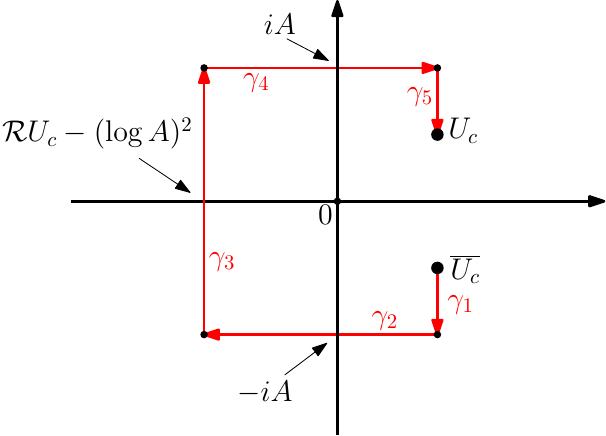}
	\captionsetup{width=\linewidth}
	\caption{The path $\gamma$ going from $\overline{U_c}$ to $U_c$ considered in the proof of \cref{lem:recovering_bead_kernel}.
	}
	\label{fig:IntegrationRectangle}
\end{figure}

Since $t_1 -t_2> 0$ and $x_1-x_2>0$,
the integrals over the second and fourth paths are bounded by
\[ \frac{(\log A)^2}{2\pi(1-t_0)}\,\E^{\frac{\Re U_c}{1-t_0}(t_1-t_2)} \left(\frac{1-t_0}{A}\right)^{x_1-x_2}. \]
This upper bound tends to $0$ as $A$ tends to infinity.
Similarly, when $t_1  > t_2$ and $x_1-x_2>0$, the integral over the third path is bounded by
\[ \frac{2A}{2\pi(1-t_0)} \,\E^{\frac{\Re U_c-(\log A)^2}{1-t_0}(t_1-t_2)} \left(\frac{1-t_0}{A}\right)^{x_1-x_2},\]
which also tends to $0$ as $A$ tends to infinity.
Consider the integral on the first path.
Reversing its direction (which yields a minus sign), it can be parametrized by 
\[\gamma_1(u)=R(\cos \theta + \I u\sin\theta),\quad u \in (-A/\Im U_c,-1).\]
Thus, doing the same computation as in the case $x_2 \ge x_1$ above,
the integral over $\gamma_1$ is equal to
\begin{equation*}
	-\frac{1}{1-t_0}\,\frac{g(x_1,t_1)}{g(x_2,t_2)}
	\int_{-A/\Im U_c}^{-1} \E^{\I u( t_1-t_2)\frac{R\sin\theta}{1-t_0}}
    (\cos\theta + \I u\sin\theta)^{x_2-x_1} \frac{R \sin\theta \dd{u}}{2\pi},
\end{equation*}
As $A$ tends to infinity, this quantity tends to 
\[-\frac{1}{1-t_0}\,\frac{g(x_1,t_1)}{g(x_2,t_2)}
	\int_{-\infty}^{-1} \E^{\I u( t_1-t_2)\frac{R\sin\theta}{1-t_0}}
    (\cos\theta + \I u\sin\theta)^{x_2-x_1} \frac{R \sin\theta \dd{u}}{2\pi}.\]
    Similarly the integral over $\gamma_5$ tends to
\[ -\frac{1}{1-t_0}\,\frac{g(x_1,t_1)}{g(x_2,t_2)}
    \int_1^\infty \E^{\I u( t_1-t_2)\frac{R\sin\theta}{1-t_0}}
    (\cos\theta + \I u\sin\theta)^{x_2-x_1} \frac{R \sin\theta \dd{u}}{2\pi}.\]
Comparing with \eqref{eq:BeadKernel}, we obtain \eqref{eq:Kinfty_Bead} in the case $x_2 <x_1$ and $t_1 > t_2$.
The case $x_2 < x_1$ and $t_1<t_2$ is similar, taking a path $\gamma$ going through $\Re U_c - \I A$,
$\Re U_c +(\log A)^2 \pm \I A$ and $\Re U_c +\I A$
(recall that for $t_1<t_2$, the path $\gamma$ in \cref{prop:limit_Mla} should pass on the right of $0$).
\end{proof}

\section{Asymptotics for the kernel in the frozen region}\label{sect:frozen_region}

\subsection{The small \texorpdfstring{$t$}{t} region}

In this section, we fix $x_0 \in [\eta\,a_0,\eta\, a_m]$ and we let $t_0 \le t_-$,
where $t_-=t_-(x_0)$ is given by \cref{prop:t_regions}.
We first assume that $x_0 \in (\eta\,a_0,0)$; the necessary modification for the cases $x_0=\eta\,a_0$ and $x_0\geq 0$ will be explained afterwards.

From \cref{prop:t_regions} and \cref{rem:pat_cases},
we know that in this regime the critical equation~\eqref{eq:critical_intro} has only real solutions and that
the smallest two, which will be denoted by $U_c \le U'_c$, are in $(-\infty,\eta\,a_0-x_0)$.
In \cref{lem:max_or_min,lem:move_contour_frozen1}, we assume $U_c < U'_c$ and discuss the case of a double critical $U_c=U'_c$ only in the proof
of the main result of the section, i.e.\ \cref{prop:convergence_kernel_frozen1}.

\begin{lemma}
  The critical points $U_c$ and $U'_c$ are respectively
  a local maximum and a local minimum of the function $u \to \Re S(u)$ on the real line
  and it holds that $\Re S(U_c) > \Re S(U'_c)$.
  \label{lem:max_or_min}
\end{lemma}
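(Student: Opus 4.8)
The statement concerns the two smallest real critical points $U_c < U_c'$ of the action, which by Proposition~\ref{prop:t_regions} both lie in $(-\infty, \eta\,a_0-x_0)$ when $x_0 \in (\eta\,a_0,0)$ and $t_0 \le t_-$. On this half-line, to the left of $\eta\,a_0-x_0$, every factor $u+x_0-\eta\,a_i$ and $u+x_0-\eta\,b_i$ is negative, and $u$ itself is negative (since $x_0<0$ and $u < \eta\,a_0-x_0 < -x_0$ would need care, but in any case $u<0$ on the relevant interval); so from the formula \eqref{eq:re_action} the map $u\mapsto \Re S(u)$ is smooth there, and I compute its second derivative. The first plan is simply to analyze the sign of $\tfrac{d}{du}\Re S(u)$ and $\tfrac{d^2}{du^2}\Re S(u)$ on $(-\infty, \eta\,a_0-x_0)$.

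\textbf{Step 1: the derivative and its behaviour at the endpoints.} Differentiating \eqref{eq:re_action}, one has
\[
\frac{d}{du}\Re S(u) = \log|u| - \log|1-t_0| - \sum_{i=0}^m \log|u+x_0-\eta\,a_i| + \sum_{i=1}^m \log|u+x_0-\eta\,b_i|,
\]
which vanishes precisely at real solutions of the critical equation \eqref{eq:critical_intro} or its companion \eqref{eq:companion_critical}, as recalled in Section~\ref{ssec:im_real_line}. On the open interval $(-\infty,\eta\,a_0-x_0)$, I claim the only zeros are $U_c$ and $U_c'$: the companion equation \eqref{eq:companion_critical} has no root here because on this interval the signs of its two sides (each a product of $m+1$, resp.\ $m+1$, negative-or-controlled-sign factors) do not match — this is the same sign bookkeeping as in the proof of Lemma~\ref{lem:critical_points_general}. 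As $u \to -\infty$, the dominant term is $-\log|u|$ (the $u\log u$ pieces cancel by \eqref{eq:int_rel}, leaving a single net $\log|u|$ with the sign coming from $-g(U)$... more precisely from counting $m+2$ vs $m+1$ logarithmic factors), so $\tfrac{d}{du}\Re S(u) \to -\infty$; as $u \uparrow \eta\,a_0-x_0$, the term $-\log|u+x_0-\eta\,a_0| \to +\infty$, so $\tfrac{d}{du}\Re S(u) \to +\infty$. Hence $\Re S$ decreases then increases overall on this interval, and with exactly two interior critical points $U_c < U_c'$, the first must be a local maximum and the second a local minimum. This also forces $\Re S(U_c) > \Re S(U_c')$ since $\Re S$ is strictly decreasing on $(U_c, U_c')$.

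\textbf{Step 2: the degenerate and boundary cases.} When $x_0 = \eta\,a_0$ the critical equation drops to degree $m$ (cf.\ Remark~\ref{rem:pat_cases}) and the pair $(U_c,U_c')$ is replaced by a single configuration handled by a minor variant of the same sign analysis; when $x_0 \ge 0$, by the first item of Proposition~\ref{prop:t_regions} the two small-$t$ roots lie in $(\eta\,a_m-x_0,+\infty)$ instead, and by the left-right symmetry of the construction (replacing $u$ by $-u$, $x_0$ by $-x_0$, and reversing the interlacing coordinates) the roles of max and min are preserved with the analogous endpoint asymptotics ($\tfrac{d}{du}\Re S \to +\infty$ at $u\to+\infty$, $\to -\infty$ at $u \downarrow \eta\,a_m-x_0$). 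The strict inequality $\Re S(U_c) > \Re S(U_c')$ persists. The genuinely degenerate case $U_c = U_c'$ is excluded from this lemma's conclusion and only arises on the boundary of the relevant region; it is deferred, as the excerpt indicates, to the proof of Proposition~\ref{prop:convergence_kernel_frozen1}.

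\textbf{Main obstacle.} The one place requiring genuine care is \textbf{Step 1's claim that there are no other real zeros of $\tfrac{d}{du}\Re S$ in $(-\infty,\eta\,a_0-x_0)$}, i.e.\ that neither the critical equation has a third root there (ruled out by Lemma~\ref{lem:critical_points_general}, which already accounts for all roots outside $(\eta\,a_0-x_0,\eta\,a_m-x_0)$ via $t_0 \le t_-$) nor the companion equation contributes one. The companion-equation exclusion is a sign-counting argument identical in spirit to Lemma~\ref{lem:critical_points_general} but must be written out: on $(-\infty,\eta\,a_0-x_0)$ the left side $u\prod(u+x_0-\eta\,b_i)$ has a fixed sign and the right side $-(1-t_0)\prod(u+x_0-\eta\,a_i)$ has the opposite fixed sign, so their difference never vanishes. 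Once that is nailed down, the max/min identification and the strict inequality are immediate from monotonicity and the intermediate value theorem applied to $\tfrac{d}{du}\Re S$.
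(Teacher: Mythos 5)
Your plan matches the paper's approach, and the companion-equation exclusion (the sign bookkeeping on $(-\infty,\eta\,a_0-x_0)$) is correct. But the asymptotic claim in Step~1, that $\tfrac{d}{du}\Re S(u)\to -\infty$ as $u\to-\infty$, is wrong, and the counting ``$m+2$ vs $m+1$ logarithmic factors'' that you use to justify it is a miscount. In $\Re S'(u) = \log|u| - \log(1-t_0) - \sum_{i=0}^m \log|u+x_0-\eta\,a_i| + \sum_{i=1}^m \log|u+x_0-\eta\,b_i|$, the $\log|u|$-type terms appear with coefficient $1 - (m+1) + m = 0$: they cancel \emph{exactly}, not up to a single surviving $\log|u|$. (The $-U\log(1-t_0)$ summand is linear, not a $g$-term, so it must not be included in that count.) Consequently $\Re S'(u) \to -\log(1-t_0) > 0$ as $u\to-\infty$, a \emph{positive constant}.

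This is not a harmless slip: if your asymptotics were correct, the derivative would start negative, pass through two simple zeros, and hence end negative ($-,+,-$), contradicting your own claim that it tends to $+\infty$ at $\eta\,a_0-x_0$; and worse, the first zero $U_c$ would then be a local \emph{minimum}, not a maximum. So the conclusion you write down does not follow from the premise you state. The correct route --- which is exactly what the paper does --- is to work with the \emph{function}, not the derivative, at $-\infty$: from \eqref{eq:as_action}, $\Re S(u)\to-\infty$ as $u\to-\infty$, so $\Re S$ is eventually increasing; combined with the positive infinite slope at $\eta\,a_0-x_0$ and the fact that $U_c<U_c'$ are the only critical points in between (your companion-equation argument), the derivative has sign pattern $+,-,+$, making $U_c$ a local maximum, $U_c'$ a local minimum, and $\Re S(U_c)>\Re S(U_c')$ by strict decrease on $(U_c,U_c')$. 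Once this one step is repaired, the rest of your argument (including the Step~2 discussion of $x_0\ge 0$ and the boundary cases) is essentially the paper's proof.
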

\begin{proof}
  Recall from \cref{ssec:im_real_line} that the map $u \mapsto \Re S(u)$;
  \begin{itemize}
  	\item is well-defined and continuous on the real line:
  	\item is differentiable as a function from $\mathbb R$ to  $\mathbb R$,
  except at the points $\eta\,a_i-x_0$ (where it has a positive infinite slope)
  and at the points $0$ and $\eta\,b_i-x_0$ (where it has a negative infinite slope);
  \item has a vanishing derivative exactly when $u$ satisfies the critical equation~\eqref{eq:critical_intro} or the companion equation
 \begin{equation}
    u  \prod_{i=1}^m (u+x_0-\eta\, b_i) = - (1-t_0)  \prod_{i=0}^m (u+x_0-\eta\, a_i).
   \label{eq:companion_critical2}
 \end{equation}
  \end{itemize} 
 This companion equation always has exactly $m+1$ solutions,
 in the intervals $[\eta\,a_{i-1}-x_0,\eta\, b_i-x_0]$ for $i \le i_0$
 and $[\eta\, b_i-x_0, \eta\, a_i-x_0]$ for $i \ge i_0$, where $i_0$ is such that $0 \in  (\eta \,a_{i_0-1} -x_0, \eta\, a_{i_0} -x_0]$.
 In particular, it does not have solutions for $u<\eta\, a_0-x_0$,
 and so $U_c$ and $U'_c$ are the only local extrema of $\Re S(u)$ in $(-\infty,\eta\,a_0-x_0)$.
 Observing that {$\lim_{u \to -\infty} \Re S(u)=-\infty$} and $\Re S(u)$ has a positive infinite slope at $\eta\,a_0-x_0$,
 this ends the proof of the lemma.
\end{proof}

We now recall from \cref{eq:double_int_integr,eq:equiv_integrand}
that the renormalized kernel $\widetilde{K}_{\la_N}^{(x_0,t_0)}$
writes as
\begin{equation}\label{eq:equiv_integrand3}
	\widetilde{K}_{\la_N}^{(x_0,t_0)}((x_1,t_1),(x_2,t_2)) 
    = - \frac{1}{(2\I \pi)^2}\oint_{\gz}\!\oint_{\gw} \Int_N(W,Z)\dd{W}\dd{Z},
\end{equation}
where $\Int_N(W,Z)$ can be approximated by
$(\sqrt{N})^{x_2-x_1} \E^{\sqrt{N}(S(W) - S(Z))}\,\hh(W,Z)$
uniformly on compact subsets of $\DS$ (defined above \cref{lem:Asymp_Fla}).
We recall also that the integrand $\Int_N(W,Z)$ has poles for $W=Z$,
for some values of $W$ in an interval $I_W=[\eta\, a_0 -x_0,o(1)]$
and for some values of $Z$ in an interval $I_Z=[o(1),\eta\, a_m -x_0-1]$.
\begin{lemma}
  \label{lem:move_contour_frozen1}
  There exist two integration contours
  $\gwn$ and $\gzn$ (both followed in counterclockwise order) such that, 
  \begin{itemize}
    \item $\gwn$ lies in the interior of $\gzn$;
    \item $\gwn$ (resp.\ $\gzn$) contains $I_W$ (resp.\ $I_Z$) in its interior;
    \item $\gwn$ (resp.\ $\gzn$) lies inside the region $\{\Re S(U) \le  \Re S(U'_c)\}$ (resp.\ $\{\Re S(U) \ge  \Re S(U_c)\}$).
  \end{itemize}
\end{lemma}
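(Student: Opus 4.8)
\textbf{Proof plan for \cref{lem:move_contour_frozen1}.}

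The plan is to construct $\gwn$ and $\gzn$ explicitly as perturbations of level curves of $\Re S$, relying on the structure of $\Re S$ on the real line established in \cref{lem:max_or_min} and in \cref{ssec:im_real_line}. First I would analyze the sublevel set $\{\Re S(U) \le \Re S(U_c')\}$ and the superlevel set $\{\Re S(U) \ge \Re S(U_c)\}$ near the real axis. From \cref{lem:max_or_min}, $U_c$ is a local max and $U_c'$ a local min of $u \mapsto \Re S(u)$ on $(-\infty,\eta a_0-x_0)$, with $\Re S(U_c) > \Re S(U_c')$; moreover $\Re S(u)\to-\infty$ as $u\to-\infty$, and $\Re S$ has a positive infinite slope at $\eta a_0-x_0$ while the remaining real critical/companion points and the asymptotic behaviour $S(U)\sim|\log(1-t_0)|U$ control $\Re S$ on the rest of the real axis and at infinity. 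The key point is that on the real interval $[U_c', \eta a_m - x_0]$ we have $\Re S(u) > \Re S(U_c')$ except at the endpoint $U_c'$ (using that the only local extremum of $\Re S$ to the right of $U_c'$ and left of the other critical points is none, so $\Re S$ is monotone increasing from $U_c'$ up to $\eta a_0 - x_0$, then the infinite-slope behaviour and the asymptotics keep it large), so the segment $I_Z \subset (0,\eta a_m-x_0)$ lies comfortably inside $\{\Re S \ge \Re S(U_c)\}$ once $t_0$ is not on the boundary; symmetrically $I_W \subset [\eta a_0-x_0, o(1)]$ lies inside $\{\Re S \le \Re S(U_c')\}$, since on $[\eta a_0-x_0,0]$ the function $\Re S$ stays below $\Re S(U_c')$ (its value at $\eta a_0 - x_0$ equals $\lim_{u\uparrow\eta a_0-x_0}$ from the left which is below the local max value... this needs the precise comparison).

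Then I would take $\gzn$ to be a slight outward deformation into the complex plane of a real level line of $\Re S$ at value $\Re S(U_c)$ (or just below), chosen to enclose $I_Z$ and to stay in $\{\Re S \ge \Re S(U_c)\}$; concretely, starting near $U_c$ (which is real and to the left of $I_Z$) I follow a steepest-ascent-type curve through the region $\{\Re S > \Re S(U_c)\}$ that loops around $I_Z$ and returns, using the asymptotics $S(U)\sim|\log(1-t_0)|U$ to guarantee that for $\Re U$ large positive the contour stays in the superlevel set and can be closed. For $\gwn$ I do the symmetric construction in the sublevel set $\{\Re S \le \Re S(U_c')\}$: since $U_c'$ is real and lies to the \emph{left} of $I_W$, and since $\Re S\to-\infty$ to the left, a contour looping around $I_W$ and passing far to the left through $\{\Re S < \Re S(U_c')\}$ works; here I must also verify it can be made to stay inside $\gzn$, which follows because $\gzn$ can be taken very large (enclosing a big disc) while $\gwn$ can be taken close to the real interval $I_W$. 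To make ``following a level line'' rigorous I would invoke the analyticity of $S$ on $\mathbb{C}\setminus(-\infty,\eta a_m-x_0)$ together with the open mapping theorem, exactly as in the liquid-region argument preceding \cref{lem:move_contour_liquid}, so that level sets of $\Re S$ away from critical points are locally simple curves and the regions $\{\Re S \gtrless c\}$ have the expected local structure; near the real axis I additionally use the explicit piecewise description of $\Re S(u)$ from \eqref{eq:re_action} to see where the real axis sits relative to these regions.

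The main obstacle I expect is handling the behaviour of the contours where they cross or approach the branch cut $(-\infty,\eta a_m-x_0]$ of $S$ and, relatedly, making sure the two contours can be chosen with $\gwn$ inside $\gzn$ while each stays in its prescribed (super/sub)level set — the level sets $\{\Re S\le\Re S(U_c')\}$ and $\{\Re S\ge\Re S(U_c)\}$ are nested in a convenient way near the real axis but I need a global topological argument that one closed curve can sit inside the other. I would resolve this by taking $\gzn$ to be essentially a large circle-like contour (valid because $\Re S(U)\to+\infty$ as $\Re U\to+\infty$ in the half-plane $\{y\le|x|\}$ and the asymptotics force $\Re S$ large on most of a large circle, with only a controlled excursion near the negative real axis handled by a detour that still loops around $I_Z$), and $\gwn$ to be a small contour hugging $I_W$ from within the sublevel set, which is possible because $I_W$ itself (minus the companion-equation complications) lies in $\{\Re S < \Re S(U_c')\}$ or on its boundary, by the monotonicity of $\Re S$ between $U_c'$ and $\eta a_0 - x_0$ and the fact that $U_c' < \eta a_0 - x_0 = \min I_W$. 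A subtle point to check carefully is the endpoint $\eta a_0 - x_0$ of $I_W$, where $\Re S$ has infinite positive slope: I need the contour to dip slightly into the complex plane there, staying in the sublevel set, which is possible precisely because the infinite slope is a cusp of the \emph{real restriction} and $\Re S$ decreases as one moves off the axis into the appropriate quadrant — this last claim I would verify from the local form of $S$ near $\eta a_0 - x_0$.
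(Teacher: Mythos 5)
There is a genuine gap, and it is concentrated in your construction of $\gwn$.

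You claim that ``$I_W$ itself\ldots lies in $\{\Re S < \Re S(U'_c)\}$ or on its boundary, by the monotonicity of $\Re S$ between $U'_c$ and $\eta a_0 - x_0$'' and propose to take $\gwn$ as a small contour hugging $I_W$. But the monotonicity you cite proves the opposite inclusion: since $U'_c$ is a local \emph{minimum} of $u \mapsto \Re S(u)$ on the real line (\cref{lem:max_or_min}) and there is no further critical point of $\Re S$ on $(U'_c,\eta a_0-x_0)$, the function $\Re S$ is strictly \emph{increasing} on $(U'_c, \eta a_0-x_0)$, so $\Re S(\eta a_0 - x_0) > \Re S(U'_c)$. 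The left endpoint $\eta a_0 - x_0$ of $I_W$ is therefore strictly \emph{outside} the prescribed sublevel set $\{\Re S \le \Re S(U'_c)\}$, and by continuity so is an open neighborhood of it, both on the real axis (where $\Re S$ keeps increasing because of the positive infinite slope at $\eta a_0 - x_0$) and off it. Any contour that encircles $I_W$ and crosses the real axis close to $\eta a_0 - x_0$ must therefore leave the sublevel set; the ``dip slightly into the complex plane'' you invoke does not rescue this, since near $\eta a_0 - x_0$ the singular term $-g(U-(\eta a_0-x_0))$ has $\Re S$ \emph{increasing} in the vertical direction, not decreasing. So the left crossing of $\gwn$ with $\mathbb{R}$ cannot lie between $U'_c$ and $\eta a_0 - x_0$: it is forced to be at or to the left of $U'_c$, and $\gwn$ is necessarily a large contour, not a hugging one.

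This is precisely what the paper's proof produces and what your plan does not: the paper's $\gwn$ is the imaginary level line $\ell'$ through $U'_c$ (the steepest descent path of $\Re S$ — i.e.\ the curve on which $\Im S$ is constant and $\Re S$ is strictly monotone by the open mapping theorem), together with its mirror image, and the bulk of the argument is the topological bookkeeping showing that $\ell'$ returns to the real axis at a uniquely determined point $\bx>0$ to the \emph{right} of $I_W$, while the companion line $\ell$ through $U_c$ escapes to $+\infty$ (used to build $\gzn$). Your plan conflates real level lines of $\Re S$ (where $\Re S$ is constant) with steepest-ascent/descent curves (where $\Im S$ is constant), and the sketched ``large circle-like'' $\gzn$ with ``a controlled excursion near the negative real axis'' is also problematic on its own — $\Re S \to -\infty$ as $\Re U \to -\infty$, so any arc with $\Re U \ll U_c$ leaves the superlevel set — but these are secondary to the fatal mislocation of $I_W$ relative to the sublevel set. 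To repair the proof you would essentially have to adopt the paper's imaginary-level-line construction, including the argument that pins down $\bx$ and rules out crossings.
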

These new contours are shown in the left-hand side of \cref{fig:new_contours_frozen}.

\begin{figure}
\[\includegraphics[height=49mm]{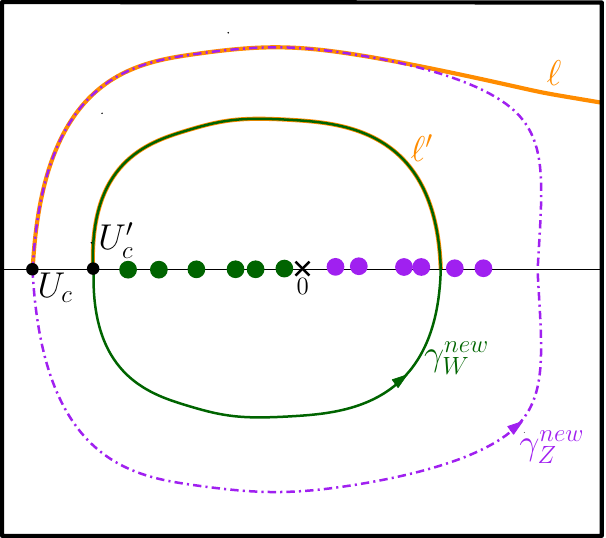}
\quad \includegraphics[height=49mm]{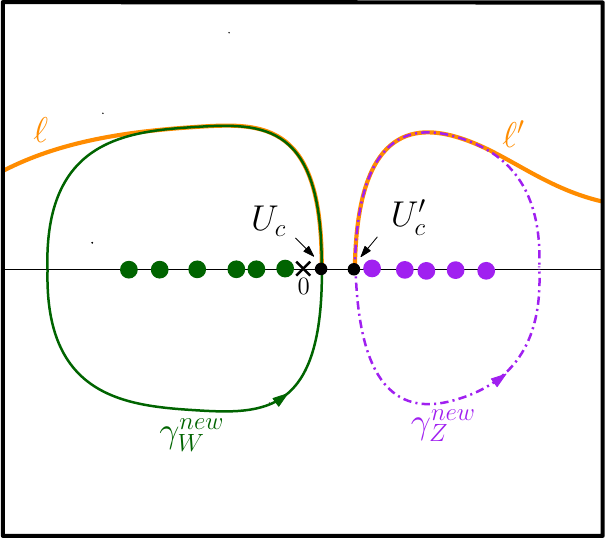}
\quad \includegraphics[height=49mm]{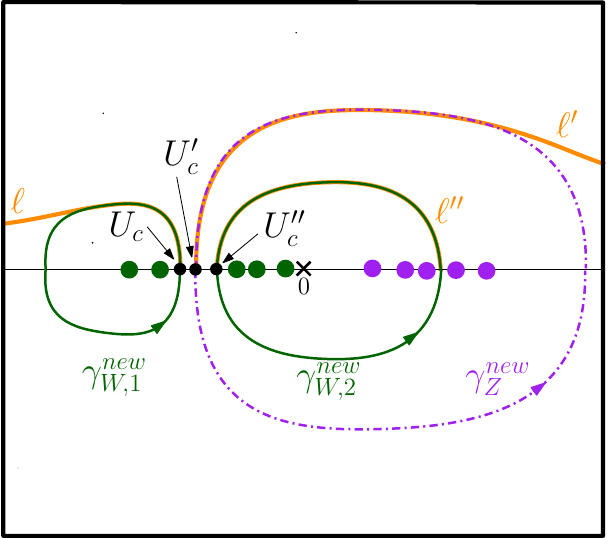}\]
\captionsetup{width=\linewidth}
\caption{\textbf{From left to right:} In green and purple, the new integration contours $\gwn$ and $\gzn$ constructed in \cref{lem:move_contour_frozen1,lem:move_contour_frozen2,lem:move_contour_frozen3}. The three situations correspond to the small $t$ region $\{0\leq t_0 \le t_-(x_0)\}$, the large $t$ region $\{t_+(x_0) \leq t_0 \leq 1\}$, and the intermediate $t$ region $\{t_-(x_0)<t_0<t_+(x_0)\}$, respectively. In orange, we highlight the level lines considered in the proofs of the three lemmas.}
\label{fig:new_contours_frozen}
\end{figure}

\begin{proof}
We start by noting that the action $S$ can be analytically continued in the upper half-plane on a
neighborhood of $U_c\in \R$ and behaves like 
\begin{equation*}
	S(U)=S(U_c)+ \frac{S''(U_c)}{2}\,(U-U_c)^2 +O((U-U_c)^3).
\end{equation*}
 Since $\Im S(u)$ is piecewise affine and $\Re S(u)$, seen as a function of a real variable $u$,
 reaches a local maximum at $U_c$ (\cref{lem:max_or_min}), 
 the coefficient $S''(U_c)$ must be real and negative.
 Since $S''(U_c)$ is real, by comparison with the map $F(U)=U^2$, there is an imaginary level line of $S$ leaving from $U_c$ orthogonally
 to the real axis and going in the upper half plane; call it $\ell$. 
 Moreover, since $S''(U_c)$ is negative, the real part of $S$ increases along $\ell$ (recall that along an imaginary level line, the function $\Re S(U)$ is strictly monotone by the open mapping theorem).
 In particular, $\ell$ is included in the region $\{\Re S(U) \ge  \Re S(U_c)\}$.
 Since $S$ does not have any critical point in the upper half-plane,
 $\ell$ cannot end inside the upper half-plane, hence it either reaches again the real axis
 or it goes to infinity.
 If it goes to infinity, we claim that $\ell$ can only go to infinity in the positive real direction. Indeed, the estimate $S(U) \sim |\log(1-t_0)|\, U$  for large $|U|$ (recall \eqref{eq:as_action}) forces $\Im U$ to stay bounded along $\ell$ and $\Re U$ to stay bounded from below along $\ell$
 (by definition $\Im S(U) $ is bounded along $\ell$
 and $\Re S(U) \ge  \Re S(U_c)$ along $\ell$).

 Similarly one can consider the imaginary line $\ell'$ of $S$ leaving from $U_c'$ 
 and going in the upper half plane. The real part of $S$ is decreasing along $\ell'$
 so that $\ell'$ stays in the region $\{ \Re S(U) \le  \Re S(U'_c)\}$.
 In particular $\ell$ and $\ell'$ cannot cross.
 Similarly as above, one can see that $\ell'$ either reaches again the real axis at some point
 or go to infinity in the negative real direction.
  \smallskip

  To determine the behavior of $\ell$ and $\ell'$, it is useful to look
  at the imaginary part of the action of the real line.
  We recall from \eqref{eq:im_action} and the discussion below it, that $\Im S(U_c) = \Im S(U'_c) = -\pi x_0$,
  and that $-\pi x_0 \in (0, \Im S(0))$.
  Since $\Im(S(u))=0$ for large real positive values of $u$, there exists $\bx>0$
  such that $\Im(S(\bx))=-\pi x_0$.
  Considering the shape of the real map $u \mapsto \Im (S(u))$ (\cref{fig:Imaginary_action}), 
  there are two possibilities:
  \begin{itemize}
    \item 
  If $\bx$ is an interval of the type $(\eta\,b_{k}-x_0,\eta\,a_{k}-x_0)$,
  then $u \mapsto \Im(S(u))$ is decreasing at $\bx$, and $\bx$ is uniquely determined
  by the conditions $\bx>0$ and  $\Im(S(\bx))=-\pi x_0$.
    \item 
      If $\bx$ is an interval of the type $(\eta\,a_{k-1}-x_0,\eta\,b_{k}-x_0)$,
      then $u \mapsto \Im(S(u))$ is constant on that interval, and any point of this interval
      can be chosen as $\bx$. 
      We then choose $\bx$ to be the unique solution of the critical equation in that interval
      (the existence being ensured by \cref{lem:critical_points_general} and
      the uniqueness by our assumption that the critical equation has two solutions in $(-\infty,\eta \,a_0-x_0)$).
  \end{itemize}
\smallskip

 \noindent\emph{\underline{Claim:} The imaginary level lines $\ell$ and $\ell'$ can only come back to the real axis at $\bx$.}
 \smallskip
 
 First note that $\ell$ and $\ell'$ can only come back to the real axis at a point $u\in\R\setminus\{U_c,U'_c\}$  such that  $\Im S(u)=\Im S(U_c) = \Im S(U'_c).$
 Moreover, let $y$ be a non-critical point around which $u \mapsto \Im S(u)$ is constant for real $u$.
 Since $y$ is non-critical, the set $\{\Im S(z)=\Im S(y)\}$ is a single curve around $y$ and thus coincide
 with the real axis. In particular $\ell$ (or $\ell'$) cannot reach the real axis at such $y$.

 The case when $y=\eta a_i-x_0$ or $y=\eta b_i-x_0$ is a bit more delicate. Locally around $y$, the function $S$
 looks like $\pm (z-y) \log (z-y)$, and thus the set $\{\Im S(z)=\Im S(y)\}$ looks locally like a half-line.
 Thus it is locally included in the real line (we know from \cref{fig:Imaginary_action} that at each
 point $y=\eta a_i-x_0$ or $y=\eta b_i-x_0$, there is a direction along the real line in which $\Im S(u)$ is constant).
 Again, $\ell$ (or $\ell'$) cannot reach the real axis at such $y$.  
 
 Finally, we observe that the interval $(-\infty,\eta a_0-x_0)$ contains no other critical point than $U_c$ and $U'_c$,
 so, from the previous discussion, $\ell$ and $\ell'$ cannot come back to the real axis inside this interval.
 Altogether, this proves the claim, i.e.~that $\ell$ and $\ell'$ can only come back to the real axis at $\bx$.
 \smallskip

 A first consequence is that at most one of $\ell$ and $\ell'$ can come back
 to the real axis. But they cannot go both to infinity, otherwise they would cross.
 So one of $\ell$ or $\ell'$ should go to the real axis, and the other to infinity.
 The non-crossing constraint and the fact that $\bx>0$ forces that
 $\ell$  goes to infinity, while $\ell'$ goes to $\bx$.
 \smallskip
 
 The contours of the lemma are now obtained as follows. For $\gwn$,
 we simply follow $\ell'$ and its mirror image in the lower half plane.
 By construction, it lies in the region  $\{ \Re S(U) \le  \Re S(U'_c)\}$.
Also, since the return point $\bx$ to the real axis satisfies $\bx>0$,
$\gwn$ encloses $I_W$.
 For $\gzn$, we follow $\ell$ until $\Re(U)$ is sufficiently large, and then go to the real axis
 following any smooth curve. We then go back to $U_c$ with the mirror image
 of the first part of the contour.
 If we follow $\ell$ until when $\Re(U)$ is sufficiently large,
 we can ensure that this contour encloses $I_Z$ and 
 lies in the region $\{ \Re S(U) \ge  \Re S(U_c)\}$.
 Finally note that $\gwn$ lies in the interior of $\gzn$ by construction. 
\end{proof}

\begin{proposition}
\label{prop:convergence_kernel_frozen1}
  Let $x_0<0$ and $0\leq t_0 \le t_-(x_0)$. Then, locally uniformly for $(x_1,t_1) \in \Z \times \R$, we have that
  \begin{equation}
    \label{eq:K_tend_to_0}
    \lim_{N \to +\infty} \widetilde{K}_{\la_N}^{(x_0,t_0)}((x_1,t_1),(x_1,t_1)) =0.
  \end{equation}
  As a consequence, $\widetilde{M}_{\la_N}^{(x_0,t_0)}$ tends in probability to the empty set.
\end{proposition}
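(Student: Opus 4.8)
The plan is to move the double contour integral in \eqref{eq:equiv_integrand3} from the original contours $\gz,\gw$ to the new contours $\gzn,\gwn$ constructed in \cref{lem:move_contour_frozen1}, and then apply dominated convergence to show that the resulting integral vanishes in the limit. First I would record that in deforming $\gw$ inward to $\gwn$ we cross no $W$-poles (both contours enclose $I_W$ and lie away from the $Z$-poles), and similarly for $\gz$ and $\gzn$; moreover, since $\gwn$ remains inside $\gzn$, no pole at $Z=W$ is crossed. Hence
\begin{equation*}
\widetilde{K}_{\la_N}^{(x_0,t_0)}((x_1,t_1),(x_1,t_1)) = - \frac{1}{(2\I\pi)^2}\oint_{\gzn}\!\oint_{\gwn} \Int_N(W,Z)\dd{W}\dd{Z}.
\end{equation*}
Here I must remember that the original relative position of the contours depends on whether $t_1\ge t_2$; but since we are evaluating on the diagonal $(x_1,t_1)=(x_2,t_2)$, we are in the case $t_1\ge t_2$, so $\gw$ is inside $\gz$ and the deformation above is legitimate with $\gwn$ inside $\gzn$.

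Next I would estimate the integrand on the new contours. By the third bullet of \cref{lem:move_contour_frozen1}, for $W\in\gwn$ and $Z\in\gzn$ we have $\Re S(W)\le \Re S(U'_c) < \Re S(U_c) \le \Re S(Z)$, where the strict inequality is \cref{lem:max_or_min}. Using the uniform approximation $\Int_N(W,Z)\simeq (\sqrt N)^{x_2-x_1}\E^{\sqrt N(S(W)-S(Z))}\,\hh(W,Z)$ valid on compact subsets of $\DS$ (together with the subexponential bounds from \cref{lem:Asymp_Fla} near the finitely many points $\eta\,a_i-x_0$, $\eta\,b_i-x_0$ that the contours may pass through, which are absorbed by the gap $\Re S(W)<\Re S(Z)-c$ for some $c>0$), the integrand is dominated by $2|\hh(W,Z)|$ for $N$ large and tends to $0$ pointwise. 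On the diagonal the factor $(\sqrt N)^{x_2-x_1}=1$, so no rescaling issue arises. The function $\hh(W,Z)$ is integrable on $\gwn\times\gzn$: its only singularities are where $W$ or $Z$ equals a point $\eta\,a_i-x_0$ (giving a mild $|U-\eta\,a_i+x_0|^{-1/2}$ from the $G$-factor) and the pole at $Z=W$, which does not occur since the contours are disjoint here. Dominated convergence then yields \eqref{eq:K_tend_to_0}, and local uniformity in $(x_1,t_1)$ follows since all estimates and the choice of contours can be made locally uniform.

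For the degenerate case $U_c=U'_c$ (double critical point, occurring exactly when $t_0=t_-(x_0)$), I would argue by a limiting/perturbation argument: one can still construct contours $\gwn,\gzn$ meeting only at $U_c$ with $\gwn\subseteq\{\Re S\le\Re S(U_c)\}$ and $\gzn\subseteq\{\Re S\ge \Re S(U_c)\}$ (the level lines through a double saddle split into more branches, but one still isolates a $W$-loop in the sublevel region enclosing $I_W$ and a $Z$-loop in the superlevel region enclosing $I_Z$), and the integrand is bounded by $2|\hh|$ with $\Re S(W)-\Re S(Z)\le 0$, vanishing strictly except on the measure-zero set where $W=Z=U_c$; dominated convergence again gives the claim. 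Alternatively one can invoke continuity of $\widetilde{K}_{\la_N}^{(x_0,t_0)}$ in $t_0$ and take a limit $t_0\uparrow t_-(x_0)$.

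Finally, to deduce that $\widetilde{M}_{\la_N}^{(x_0,t_0)}$ converges in probability to the empty set, I would note that for a determinantal point process the expected number of points in a bounded Borel set $B$ is $\int_B \widetilde K_{\la_N}^{(x_0,t_0)}((x,t),(x,t))\dd\la(x,t)$. By \eqref{eq:K_tend_to_0} and the local uniformity, together with a uniform-integrability bound (the diagonal of the kernel is uniformly bounded on compacts for $N$ large, by the domination argument above), this expectation tends to $0$ for every bounded $B$; hence $\proba(\widetilde{M}_{\la_N}^{(x_0,t_0)}(B)\ge 1)\to 0$ by Markov's inequality. Taking $B$ ranging over an exhausting sequence of boxes and recalling from \cref{sec:bead} (e.g.\ \cref{prop:cons_conv_bead}) that convergence of the point process is governed by the joint laws of counts on bounded continuity sets, this gives convergence in probability to the empty configuration. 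The main obstacle I anticipate is handling the boundary case $t_0=t_-(x_0)$ with its confluent saddle, and more generally making sure the new contours can be taken to pass through the exceptional real points $\eta\,a_i-x_0$, $\eta\,b_i-x_0$ only transversally so that the $\hh$-domination (with its square-root singularities) stays integrable while the subexponential error bounds of \cref{lem:Asymp_Fla} remain negligible against the strict gap $\Re S(W)<\Re S(Z)$.
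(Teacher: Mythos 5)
Your proof is correct and follows essentially the same route as the paper: move the contours to $\gwn,\gzn$ from \cref{lem:move_contour_frozen1} (no poles crossed on the diagonal since $\gwn$ stays inside $\gzn$), use the strict gap $\Re S(W)\le\Re S(U'_c)<\Re S(U_c)\le\Re S(Z)$ from \cref{lem:max_or_min}, dominate by $|\hh|$ and apply dominated convergence, then pass to the mean number of beads for the convergence to the empty set, with the double-saddle case $t_0=t_-(x_0)$ handled via contours meeting at $U_c$ and integrability of $(Z-W)^{-1}$. One small imprecision: the delicate point the paper isolates is not that $\gwn$ passes through the singularities $\eta a_i-x_0$, $\eta b_i-x_0$ of $\hh$ (it can and should avoid them), but that its right-hand crossing $W^+=\bx>0$ of the real axis may lie inside one of the slits removed from $\DS$, where the uniform asymptotic of \cref{lem:Asymp_Fla} fails and one must invoke the subexponential bound of its third part; your argument invokes the same bound, just attributing the need for it to the wrong set of points.
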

\begin{proof}
  We are interested in \eqref{eq:equiv_integrand3} for $x_1=x_2$ and $t_1=t_2$.
  In this case, the contours are such that 
  $\gw$ contains $I_W$, $\gz$ contains $I_Z$ and $\gw$ lies in the interior of $\gz$. 
  Therefore,
  we can transform the contours $\gw$ and $\gz$ to the contours $\gwn$ and $\gzn$
  from \cref{lem:move_contour_frozen1} without crossing any poles.
Thus, by the residue theorem, we get
\begin{equation}\label{eq:Kla_new_contours_frozen1}
    \widetilde{K}_{\la_N}^{(x_0,t_0)}((x_1,t_1),(x_1,t_1)) 
	= - \frac{1}{(2\I \pi)^2} \oint_{\gzn}\!\oint_{\gwn} \Int_N(W,Z)\dd{W}\dd{Z}.
\end{equation}
We recall that $\Int_N(W,Z) \simeq (\sqrt{N})^{x_2-x_1} \E^{\sqrt{N}(S(W) - S(Z))}\,\hh(W,Z)$ on $\DS$
and that $\Re S(W) \le  \Re S(U_c') < \Re S(U_c) \le \Re S(Z)$ for $(W,Z) \in \gwn \times \gzn$
(see \cref{lem:max_or_min,lem:move_contour_frozen1}).
A difficulty comes from the fact that the {right-most} intersection point $W^+$ of $\gwn$ with the real axis {might lie} outside $\DS$ {($W^+$ coincides with the point $\bx$ in the proof of \cref{lem:move_contour_frozen1}). Nevertheless,} the integrand can be controlled around this point thanks to the third part of \cref{lem:Asymp_Fla}.
The same arguments as in the proof of \cref{prop:limit_Mla} shows that $\Int_N(W,Z)$ 
converges pointwise to $0$ and that the integrand is bounded by the integrable function $\hh(W,Z)$.
Dominated convergence applies, proving~\eqref{eq:K_tend_to_0}. 

The convergence in distribution to the empty set then follows from the general identity for determinantal point processes
$$\esper\!\left[\widetilde{M}_{\la_N}^{(x_0,t_0)}(A)\right] = \int_A \widetilde{K}_{\la_N}^{(x_0,t_0)}((x_1,t_1),(x_1,t_1)) \dd{\la}(x_1,t_1),$$
for any bounded subset $A \subset \Z \times \R$, where we recall that $\widetilde{M}_{\la_N}^{(x_0,t_0)}(A)$ denotes the number of beads of $\widetilde{M}_{\la_N}^{(x_0,t_0)}$ contained in the set $A$. By dominated convergence, these expected numbers of beads go to $0$, so  $(\widetilde{M}_{\la_N}^{(x_0,t_0)}(B_i))_{1\leq i \leq k}$ converges in probability to $(0,\ldots,0)$ for any collection of bounded subsets $(B_i)_{1\leq i \leq k}$. The discussion from \cref{sec:bead} shows that this is equivalent to the convergence in distribution towards the empty set.

We now discuss the case where the critical equation has a double root $U_c=U'_c$ 
in the interval $(-\infty,\eta\, a_0-x_0)$.
Then $u \mapsto \Re S(u)$ is increasing on $(-\infty,\eta\, a_0-x_0)$
(this is an immediate analogue of \cref{lem:max_or_min}),
and the action writes locally as
$S(z)=S(U_c)+\frac{S'''(U_c)}{6} (z-U_c)^3 +O\left( (z-U_c)^4 \right)$,
with $S'''(U_c)>0$. \cref{fig:double_critical_point} shows the imaginary level lines
of $S$, and the $\{\Re S(U) < \Re S(U_c)\}$ and $\{\Re S(U) > \Re S(U_c)\}$ regions around $U_c$.
In particular, we can find new integration contours
as in \cref{lem:move_contour_frozen1} 
except that they now meet at $U_c=U'_c$.
Note that, in this case, the integrand in \eqref{eq:Kla_new_contours_frozen1} has a singularity
at $Z=W=U_c$, where it behaves as $O( (Z-W)^{-1})$. 
This is similar to the setting of \cref{prop:limit_Mla}
and we can therefore apply dominated convergence using the same arguments.
We conclude that 
the renormalized correlation kernel tends also to $0$, finishing the proof of the proposition.
\end{proof}

\begin{figure}[t]
\[\begin{array}{c}\includegraphics[height=5cm]{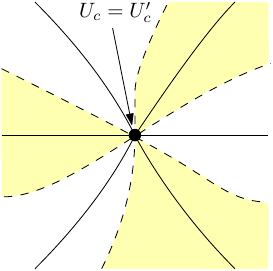}\end{array}
\qquad \begin{array}{c}\includegraphics[height=5cm]{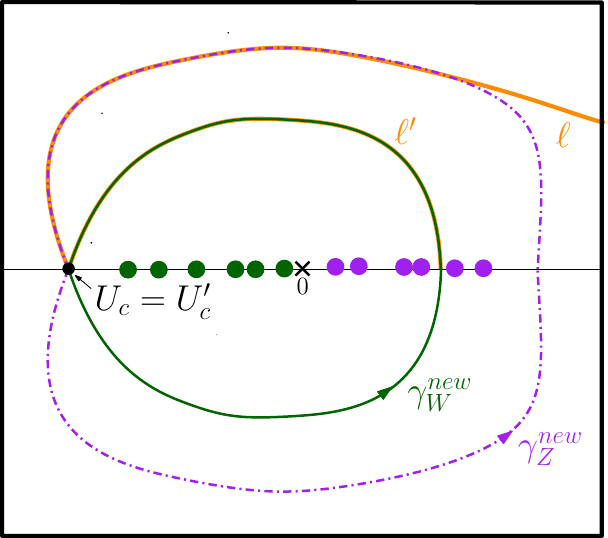}\end{array}\]
	\captionsetup{width=\linewidth}
\caption{\textbf{Left:} The landscape of the action $S$ around a double critical point
on the real line in the case of the small $t$ region $\{0\leq t_0 \le t_-(x_0)\}$. Plain lines are imaginary level lines, dotted lines are the real level lines.
We also indicated the alternation of different regions: the yellow regions correspond to $\{\Re S(U) < \Re S(U_c)\}$, 
while the white regions correspond to $\{\Re S(U) > \Re S(U_c)\}$.
\textbf{Right:} The new integration contour in the case of a double critical point
(to be compared with the left-hand side of \cref{fig:new_contours_frozen}).}
\label{fig:double_critical_point}
\end{figure}

The case $x_0 \in (0, \eta\, a_{m})$ is essentially treated in the same way, with the following modifications.
By \cref{prop:t_regions}, the two relevant real critical points $U_c$ and $U_c'$ live in $(\eta\,a_m-x_0,+\infty)$.
To simplify the discussion we assume $U_c \ne U'_c$.
Using the convention $U_c<U_c'$, \cref{lem:max_or_min} still holds.
An analog of \cref{lem:move_contour_frozen1} also holds, with the important change
that in this case, $\gzn$ lies in the interior of $\gwn$ (and not the opposite).
Since the contours in \eqref{eq:equiv_integrand} satisfy that $\gw$ is inside $\gz$ (for $t_1=t_2$),
moving them to  $\gzn$ and $\gwn$ yield a residue term related to the pole $Z=W$
(as in \eqref{eq:Kla_new_contours_liquid}, except that the residue term appears for any $W$ in $\gwn$).
For  $x_1=x_2$, $t_1=t_2$, recalling \eqref{eq:double_int_integr}, the residue of the integrand in \eqref{eq:Kla_new_contours_frozen1}
related to the pole $Z=W$ is simply $\frac{1}{1-t_0-\frac{t_1}{\sqrt N}}$ .
Therefore, \cref{eq:Kla_new_contours_frozen1} is replaced by:
\begin{align*}
    \widetilde{K}_{\la_N}^{(x_0,t_0)}((x_1,t_1),(x_1,t_1))
    = - \frac{1}{(2\I \pi)^2} \oint_{\gzn}\!\oint_{\gwn}\Int_N(W,Z) \dd{W}\dd{Z} 
- \frac{1}{2\I \pi}\oint_{\gwn} \frac{1}{1-t_0-\frac{t_1}{\sqrt N}} \dd{W}.
\end{align*}
But the second integral is obviously $0$, so that \eqref{eq:K_tend_to_0} holds also in this case.
We conclude as above and obtain that also in this regime, the bead process $\widetilde{M}_{\la_N}^{(x_0,t_0)}$ tends in distribution to the empty set.

The same conclusion also holds for the following remaining three cases: when $x_0= 0$ 
(in this case, by \cref{prop:t_regions}, the only admissible value for $t_0$ is $t_0=0$), when $x_0=\eta\,a_0$, and when $x_0=\eta\,a_m$. We leave to the reader the details of these three remaining cases, pointing out that one needs some little modifications in the same spirit as \cref{rem:pat_cases}.

Combining all the results in this section, we obtain the following final result for the small $t$ region.

\begin{proposition}\label{prop:convergence_kernel_frozen1bis}
{Let $x_0\in [\eta\,a_0,\eta\, a_m]$ and $0\leq t_0 \leq t_-(x_0)$.} Then the bead process $\widetilde{M}_{\la_N}^{(x_0,t_0)}$ tends in probability to the empty set.
\end{proposition}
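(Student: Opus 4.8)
The plan is to reduce the statement to a single analytic fact — that the diagonal of the renormalized kernel decays — and then let the determinantal structure do the rest. Since $\widetilde M_{\la_N}^{(x_0,t_0)}$ is a determinantal point process with kernel $\widetilde K_{\la_N}^{(x_0,t_0)}$, for every bounded Borel set $A\subseteq\Z\times\R$ one has
\[
\esper\!\left[\widetilde M_{\la_N}^{(x_0,t_0)}(A)\right]=\int_A\widetilde K_{\la_N}^{(x_0,t_0)}\big((x_1,t_1),(x_1,t_1)\big)\,\dd\la(x_1,t_1),
\]
so once I know that $\widetilde K_{\la_N}^{(x_0,t_0)}\big((x_1,t_1),(x_1,t_1)\big)\to 0$ locally uniformly in $(x_1,t_1)$, dominated convergence forces the expected number of beads in any bounded window to $0$; hence $\big(\widetilde M_{\la_N}^{(x_0,t_0)}(B_i)\big)_{1\le i\le k}\to(0,\dots,0)$ in probability for every finite family of bounded sets, which by the discussion in \cref{sec:bead} is precisely convergence in distribution (equivalently, in probability) to the empty configuration. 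So the whole task is to establish this diagonal decay for every $(x_0,t_0)$ with $x_0\in[\eta\,a_0,\eta\,a_m]$ and $0\le t_0\le t_-(x_0)$.

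The generic case $x_0\in(\eta\,a_0,0)$ is exactly \cref{prop:convergence_kernel_frozen1}, so what remains is to assemble it with the symmetric and degenerate cases. For $x_0\in(\eta\,a_0,0)$ the argument is a steepest-descent analysis: by \cref{prop:t_regions} the critical equation has only real roots and its two smallest roots $U_c\le U_c'$ lie in $(-\infty,\eta\,a_0-x_0)$; \cref{lem:max_or_min} shows $U_c$ is a local maximum and $U_c'$ a local minimum of $u\mapsto\Re S(u)$ with $\Re S(U_c)>\Re S(U_c')$; \cref{lem:move_contour_frozen1} then supplies nested contours $\gwn$ (inside $\gzn$) with $\gwn\subseteq\{\Re S\le\Re S(U_c')\}$ and $\gzn\subseteq\{\Re S\ge\Re S(U_c)\}$ enclosing $I_W$ and $I_Z$. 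I would deform the contours of \eqref{eq:double_int_integr} to these without crossing poles, insert the approximation $\Int_N(W,Z)\simeq(\sqrt N)^{x_2-x_1}\E^{\sqrt N(S(W)-S(Z))}\hh(W,Z)$ from \eqref{eq:equiv_integrand}, observe that $\Re S(W)<\Re S(Z)$ on the new contours so the integrand tends to $0$ pointwise while being bounded by the integrable function $\hh$, and conclude by dominated convergence; the double-root case $U_c=U_c'$ is handled identically after recording that the action is locally cubic there, just as in \cref{prop:limit_Mla}. For $x_0\in(0,\eta\,a_m)$ the picture is mirrored — the relevant critical points sit in $(\eta\,a_m-x_0,+\infty)$ and the new $\gzn$ lies inside $\gwn$, so moving the contours produces a residue at $Z=W$, which on the diagonal $x_1=x_2$, $t_1=t_2$ equals $\tfrac{1}{1-t_0-t_1/\sqrt N}$ and integrates to $0$ over the closed contour $\gwn$, leaving a double integral that again vanishes by the same estimate. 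The boundary cases $x_0=0$ (where \cref{prop:t_regions} forces $t_0=0$), $x_0=\eta\,a_0$ and $x_0=\eta\,a_m$ need only cosmetic adjustments, in the spirit of \cref{rem:pat_cases}, since the critical equation drops a degree.

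The step I expect to be delicate is not the contour bookkeeping but keeping control of $\Int_N(W,Z)$ where the new contours leave the region $\DS$ on which the Stirling-type estimate \eqref{eq:Asymp_Fla} is valid: the $W$-contour $\gwn$ has to return to the real axis at a point $\bx$ that may land inside one of the excluded real intervals. This is exactly what the second and third parts of \cref{lem:Asymp_Fla} are for — the extra factor $\E^{\eps\pi\sqrt N}$ they permit is swallowed by the genuine exponential gain $\E^{\sqrt N(S(W)-S(Z))}$, since near such crossing points one has $\Re S(W)<\Re S(Z)-\pi\eps$ for $\eps$ small. Everything else is routine: the only other subtlety is that in the double-root situation the integrand blows up like $(W-Z)^{-1}$ near $W=Z=U_c$, which is harmless because the contours cross non-tangentially and $(W-Z)^{-1}$ is then integrable.
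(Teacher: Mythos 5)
Your proposal reproduces the paper's own argument essentially line for line: reduce to decay of the diagonal kernel via the first-moment formula and dominated convergence, invoke \cref{prop:convergence_kernel_frozen1} for $x_0\in(\eta\,a_0,0)$ with the steepest-descent contours of \cref{lem:move_contour_frozen1}, handle $x_0\in(0,\eta\,a_m)$ by the mirror construction where the residue at $Z=W$ integrates to zero, and dismiss the degenerate $x_0\in\{0,\eta\,a_0,\eta\,a_m\}$ via \cref{rem:pat_cases}. You also correctly identify the one genuinely delicate point (the $W$-contour exiting $\DS$ at $\bx$, controlled by the second and third parts of \cref{lem:Asymp_Fla}), so there is nothing to add.
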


\subsection{The large \texorpdfstring{$t$}{t} region}
In this section, we fix $x_0 \in [\eta\,a_0,\eta\,a_m]$ and we let $t_+ \leq t_0 \leq 1$,
where $t_+=t_+(x_0)$ is given by \cref{prop:t_regions}.
{For the sake of brevity, we restrict ourselves to the case when there exists $i_0\geq 1$ such that $x_0 \in (\eta\,a_{i_0-1},\eta\,b_{i_0})$.}
The case $x_0 \in (\eta\,b_{i_0},\eta\,a_{i_0})$ for some $i_0\geq 1$ and the case $x_0=\eta\,a_{i_0}$ for some $i_0\geq 0$
are similar (for the case $x_0=\eta\,a_{i_0}$, we refer to the discussion in \cref{rem:pat_cases}).
{Finally, for $x_0=\eta\,b_{i_0}$, the only admissible value for $t_0$ in the large $t$ region is $t_0=1$ (since \cref{prop:t_regions} states that $t_+(\eta\,b_{i_0})=1$); also in this case we omit the simple necessary modifications.}

From \cref{prop:t_regions},
we know that the critical equation~\eqref{eq:critical_intro} has only real solutions,
two of which, say $U_c \le U'_c$, are in the interval $(0,\eta\, b_{i_0}-x_0)$.
We will discuss here the case $U_c < U'_c$, the case of a double critical
point $U_c=U'_c$ being obtained with simple modifications similar to those discussed in the previous subsection.

\begin{lemma}
  The critical point $U_c$ and $U'_c$ are respectively
  a local minimum and a local maximum of the function $u \to \Re S(u)$ on the real line
  and we have $\Re S(U_c) < \Re S(U'_c)$.
  \label{lem:max_or_min2}
\end{lemma}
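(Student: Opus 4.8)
Here is the plan. The statement is the exact analogue, in the large $t$ region, of \cref{lem:max_or_min}, so the plan is to run the same argument with the interval $(-\infty,\eta\,a_0-x_0)$ replaced by $J:=(0,\eta\,b_{i_0}-x_0)$, which is a nonempty open interval since $x_0<\eta\,b_{i_0}$ and which, by the second item of \cref{prop:t_regions} (recall we are assuming $t_0\geq t_+(x_0)$), contains both $U_c$ and $U'_c$ in its interior. First I would recall from \cref{ssec:im_real_line} (see in particular \eqref{eq:re_action}) that $u\mapsto\Re S(u)$ is continuous on $\mathbb R$, that as a real function it is differentiable off the points $\eta\,a_i-x_0$, $0$ and $\eta\,b_i-x_0$, and that
\[
(\Re S)'(u)=\log|u|-\log|1-t_0|-\sum_{i=0}^m\log|u+x_0-\eta\,a_i|+\sum_{i=1}^m\log|u+x_0-\eta\,b_i|=\log|Q(u)|,
\]
where $Q(u):=\dfrac{u\prod_{i=1}^m(u+x_0-\eta\,b_i)}{(1-t_0)\prod_{i=0}^m(u+x_0-\eta\,a_i)}$. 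Because $x_0\in(\eta\,a_{i_0-1},\eta\,b_{i_0})$, interlacing gives $\eta\,a_j-x_0\leq\eta\,a_{i_0-1}-x_0<0$ and $\eta\,b_j-x_0\leq\eta\,b_{i_0-1}-x_0<0$ for $j\leq i_0-1$, and $\eta\,a_j-x_0\geq\eta\,a_{i_0}-x_0>\eta\,b_{i_0}-x_0$ and $\eta\,b_j-x_0\geq\eta\,b_{i_0}-x_0$ for $j\geq i_0$. Hence no singular point of $\Re S$ lies in the open interval $J$, so $\Re S$ is real-analytic there, and $(\Re S)'(u)\to-\infty$ both as $u\to0^+$ (term $\log|u|$) and as $u\to(\eta\,b_{i_0}-x_0)^-$ (term $\log|u+x_0-\eta\,b_{i_0}|$).

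The key step is to show that the only zeros of $(\Re S)'$ on $J$ are $U_c$ and $U'_c$. A zero of $(\Re S)'$ on $J$ is exactly a point where $|Q(u)|=1$, i.e.\ a solution of the critical equation~\eqref{eq:critical_intro} ($Q(u)=1$) or of the companion equation~\eqref{eq:companion_critical} ($Q(u)=-1$). Using the sign information on $\eta\,a_j-x_0$ and $\eta\,b_j-x_0$ recalled above, one checks that for $u\in J$ the numerator $u\prod_{i=1}^m(u+x_0-\eta\,b_i)$ and the denominator $(1-t_0)\prod_{i=0}^m(u+x_0-\eta\,a_i)$ of $Q(u)$ both have sign $(-1)^{m-i_0+1}$, so $Q(u)>0$ on $J$; consequently the companion equation has no solution in $J$. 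On the other hand, \cref{lem:critical_points_general} locates $m-1$ roots of the critical equation outside $(\eta\,a_{i_0-1}-x_0,\eta\,a_{i_0}-x_0)$, and the second item of \cref{prop:t_regions} places the remaining two real roots, namely $U_c$ and $U'_c$, inside $J$; hence $U_c<U'_c$ are the only solutions of the critical equation in $J$, and (being distinct, since the double-root case $U_c=U'_c$ is excluded here) they are simple roots of the polynomial critical equation, so $(\Re S)'$ changes sign at each of them.

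Putting this together: $(\Re S)'$ is negative near both endpoints of $J$ and changes sign exactly at $U_c$ and at $U'_c$, hence $(\Re S)'<0$ on $(0,U_c)$, $(\Re S)'>0$ on $(U_c,U'_c)$, and $(\Re S)'<0$ on $(U'_c,\eta\,b_{i_0}-x_0)$. Therefore $U_c$ is a local minimum and $U'_c$ a local maximum of $u\mapsto\Re S(u)$ (being interior points of $J$, these are local extrema on all of $\mathbb R$), and since $\Re S$ is strictly increasing on $[U_c,U'_c]$ we obtain $\Re S(U_c)<\Re S(U'_c)$. The step I expect to be the only one requiring genuine care is the sign bookkeeping showing $Q>0$ on $J$ — that is, that the companion equation \eqref{eq:companion_critical} contributes no spurious critical point of $\Re S$ inside $J$; everything else is a transcription of the proof of \cref{lem:max_or_min}.
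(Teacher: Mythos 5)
Your proof is correct and follows the same route as the paper's (extremely terse, one-sentence) proof: observe that $\Re S$ has negative slope blowing up at both endpoints of $J=(0,\eta\,b_{i_0}-x_0)$ and that $U_c, U'_c$ are the only interior critical points, then deduce the min--max pattern. Your version simply makes explicit the one detail the paper elides, namely why the companion equation~\eqref{eq:companion_critical} contributes no extra zero of $(\Re S)'$ inside $J$: your direct sign check that $Q>0$ on $J$ is a clean way to see it, and is consistent with the approach taken in the proof of \cref{lem:max_or_min} (where the paper localizes the companion roots to intervals disjoint from the one of interest). The rest — using \cref{lem:critical_points_general} and \cref{prop:t_regions} to pin down $U_c,U'_c$ as the only critical-equation roots in $J$, noting simplicity from distinctness, and reading off the sign pattern $-,+,-$ of $(\Re S)'$ — is exactly what the paper's argument rests on.
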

\begin{proof}
The lemma follows from the fact that $u \to \Re S(u)$ 
has negative infinite slopes at the points $0$ and $\eta\,b_{i_0}-x_0$ and
that there are no local extrema other than $U_c$ and $U'_c$ in the interval 
$(0,\eta\, b_{i_0}-x_0)$.
\end{proof}

We now see how to move integration contours. To this end, let us first remark that when $x_0 \in (\eta\,a_{i_0-1},\eta\,b_{i_0})$ the $Z$ poles of $\Int_N(W,Z)$
all lie in a strictly smaller sub-interval of the interval $I_Z=[o(1),\eta\, a_m -x_0-1]$ considered so far. Indeed, from \eqref{eq:double_int_integr}, one can see that the only $Z$-poles of $\Int_N(W,Z)$ are a subset of the poles of $F_{\la_N}(\sqrt N (Z +x_0))$. But, from \eqref{eq:f_lambda} and \cref{fig:poles}, we know that the latter function has no poles in the intervals of the form $(\eta\,a_{i-1}-x_0, \eta\,b_{i}-x_0)$ and so, in particular, the $Z$ poles of $\Int_N(W,Z)$ all lie in $\widetilde I_Z=[\eta b_{i_0} -x_0,\eta\, a_m -x_0-1] \subset I_Z$.

\begin{lemma} 
  \label{lem:move_contour_frozen2}
 There exist two integration contours $\gwn$ and $\gzn$ (both followed in counterclockwise order) such that
  \begin{itemize}
    \item $\gwn$ and $\gzn$ have disjoint interiors;
    \item $\gwn$ (resp.\ $\gzn$) contains $I_W$ (resp.\ $\widetilde I_Z$) in its interior;
    \item $\gwn$ (resp.\ $\gzn$) lies inside the region $\{ \Re S(U) \le  \Re S(U_c)\}$ (resp.\ $\{ \Re S(U) \ge  \Re S(U'_c)\}$).
  \end{itemize}
\end{lemma}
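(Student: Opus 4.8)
The approach mirrors that of \cref{lem:move_contour_liquid} and, more closely, \cref{lem:move_contour_frozen1}: we analyze the landscape of $\Re S$ and $\Im S$ near the two real critical points $U_c<U'_c$ lying in $(0,\eta\,b_{i_0}-x_0)$ and track the imaginary level lines of $S$ emanating from them. The only genuinely new feature is that the two contours are required to have \emph{disjoint} interiors rather than being nested or crossing at the critical points. Recall from \cref{lem:max_or_min2} that $U_c$ (resp.\ $U'_c$) is a local minimum (resp.\ maximum) of $u\mapsto\Re S(u)$ on the real line, with $\Re S(U_c)<\Re S(U'_c)$. Since $x_0\in(\eta\,a_{i_0-1},\eta\,b_{i_0})$, formula \eqref{eq:im_action} shows that $\Im S$ is constant on the interval $(0,\eta\,b_{i_0}-x_0)$; denote this common value by $c=\Im S(U_c)=\Im S(U'_c)$. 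Finally, recall from the discussion just above the lemma statement that the $Z$-poles of $\Int_N(W,Z)$ are all contained in $\widetilde I_Z=[\eta\,b_{i_0}-x_0,\eta\,a_m-x_0-1]$, which lies strictly to the right of $U'_c$.

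The first step is the local analysis at $U_c$ and $U'_c$. Around $U_c$ the action $S$ is analytic with $S'(U_c)=0$; because $\Im S$ is constant near $U_c$ on the real axis, $S''(U_c)$ is real, and it is positive because $U_c$ is a local minimum of $\Re S|_{\mathbb R}$. Comparing with $F(U)=U^2$, there is an imaginary level line $\ell$ of $S$ leaving $U_c$ orthogonally to the real axis into the upper half-plane, and $\Re S$ strictly decreases along $\ell$ (open mapping theorem), so $\ell\setminus\{U_c\}\subset\{\Re S(U)<\Re S(U_c)\}$. Symmetrically, $S''(U'_c)$ is real and negative, there is an imaginary level line $\ell'$ leaving $U'_c$ orthogonally into the upper half-plane, and $\Re S$ strictly increases along $\ell'$, so $\ell'\setminus\{U'_c\}\subset\{\Re S(U)>\Re S(U'_c)\}$. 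Since $S$ has no critical point in the open upper half-plane, each of $\ell,\ell'$ either returns to the real axis or escapes to infinity; by the asymptotics $S(U)\sim|\log(1-t_0)|\,U$ from \eqref{eq:as_action}, if $\ell$ escapes it does so with $\Re U\to-\infty$ and $\Im U$ bounded, while if $\ell'$ escapes it does so with $\Re U\to+\infty$ and $\Im U$ bounded.

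The crucial step — and the main obstacle — is to rule out that $\ell$ or $\ell'$ returns to the real axis. Arguing as in the proof of \cref{lem:move_contour_frozen1}, a return point $y\ne U_c,U'_c$ must satisfy $\Im S(y)=c$ and be a point of $\mathbb R$ near which the level set $\{\Im S=c\}$ is not reduced to the real axis; by the local structure of $S$ this forces $y$ to be either a zero of $S'$ contained in a segment of $\mathbb R$ on which $\Im S$ is constant, or one of the branch points $0$ and $\eta\,b_{i_0}-x_0$. By the piecewise-affine profile of $\Im S$ on $\mathbb R$ one checks that $\{\Im S=c\}\cap\mathbb R=[0,\eta\,b_{i_0}-x_0]$, and that the only zeros of $S'$ in the interior of this segment are $U_c$ and $U'_c$: indeed, by \cref{lem:critical_points_general} and \cref{prop:t_regions} the critical equation \eqref{eq:critical_intro} has exactly these two roots in the cell $(\eta\,a_{i_0-1}-x_0,\eta\,a_{i_0}-x_0)$, while the remaining real critical points of $\Re S|_{\mathbb R}$ there solve the companion equation \eqref{eq:companion_critical} and hence are not zeros of $S'$. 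But $\ell$ cannot reach $U'_c$, since $\Re S<\Re S(U_c)<\Re S(U'_c)$ along $\ell$, and similarly $\ell'$ cannot reach $U_c$; and the two branch points are excluded exactly as in \cref{lem:move_contour_frozen1}, where near such a point $S$ behaves like $\pm(z-y)\log(z-y)$ so that $\{\Im S=\Im S(y)\}$ is locally a half-line lying on the real axis. Hence both $\ell$ and $\ell'$ escape to infinity, to the left and to the right respectively; since $U_c<U'_c$ and they cannot cross, $\ell$ stays entirely to the left of $\ell'$.

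The contours are now built exactly as in \cref{lem:move_contour_frozen1}. For $\gwn$: start at $U_c$, follow $\ell$ until $\Re U$ is very negative, join the real axis there by a smooth arc (which automatically lies in $\{\Re S(U)<\Re S(U_c)\}$), and close up with the mirror image in the lower half-plane; then $\gwn$ lies in $\{\Re S(U)\le\Re S(U_c)\}$, and its only real-axis intersections are $U_c>0$ and a very negative point, so it encloses $I_W=[\eta\,a_0-x_0,o(1)]$ once $N$ is large. Symmetrically, $\gzn$ follows $\ell'$ out to large positive $\Re U$, joins the real axis, and closes by reflection; it lies in $\{\Re S(U)\ge\Re S(U'_c)\}$, and its real-axis intersections are $U'_c<\eta\,b_{i_0}-x_0$ and a very positive point, so it encloses $\widetilde I_Z$. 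The interiors are disjoint because $\ell$ lies to the left of $\ell'$ and the closing arcs can be taken far to the left, respectively far to the right: the real segment $(U_c,U'_c)$ together with the region between $\ell$ and $\ell'$ (and its mirror image) separates the two interiors. Finally, in the degenerate case $U_c=U'_c$ one has $S'(U_c)=S''(U_c)=0$, $S'''(U_c)<0$, and $u\mapsto\Re S(u)$ is monotone through $U_c$; the landscape near $U_c$ is then of $U^3$-type (compare \cref{fig:double_critical_point}, with reversed orientation), and one constructs $\gwn$ and $\gzn$ as above but now meeting at the single point $U_c=U'_c$, where the integrand $\Int_N(W,Z)$ has an integrable $O((Z-W)^{-1})$ singularity handled by the same dominated-convergence argument as in the proof of \cref{prop:limit_Mla}.
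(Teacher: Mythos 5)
Your proof is correct and follows essentially the same route as the paper: Lemma \ref{lem:max_or_min2}, the orthogonal imaginary level lines $\ell,\ell'$ leaving $U_c,U'_c$, the observation that $\Im S|_{\mathbb R}$ attains its maximum exactly on $[0,\eta\,b_{i_0}-x_0]$ so the level lines cannot return to the real axis (which would force a third critical point there, contradicting \cref{lem:critical_points_general} and \cref{prop:t_regions}), and the asymptotics \eqref{eq:as_action} forcing $\ell$ left and $\ell'$ right. Your write-up is slightly more explicit than the paper's about the local structure ($S''(U_c)>0$, $S''(U'_c)<0$), the disjointness of the two interiors, and the sign of $S'''$ in the degenerate case $U_c=U'_c$, but the argument is the same.
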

These contours are shown in the middle picture of~\cref{fig:new_contours_frozen}.
\begin{proof}
The proof is similar to that of \cref{lem:move_contour_frozen1}.
We consider the imaginary level lines $\ell$ and $\ell'$ leaving from $U_c$ and $U'_c$ orthogonally to the real line and in the upper-half plane.
The level line $\ell$ (resp.\ $\ell'$) lies inside the region
$\{ \Re S(U) \le  \Re S(U_c)\}$ (resp.\ $\{ \Re S(U) \ge  \Re S(U'_c)\}$).
Moreover, they cannot go back to the the real line. 
Indeed, the value $\Im S(z)$ on the real line is maximal on the interval $(0,\eta\, b_{i_0}-x_0)$ (see \cref{ssec:im_real_line} and \cref{fig:Imaginary_action}) 
so that if $\ell$ or $\ell'$ goes back to the real line, it has to be within this interval.
But this would mean that $S$ has a third critical point in this interval, which is impossible by \cref{lem:critical_points_general} and \cref{prop:t_regions}.
We conclude that both $\ell$ and $\ell'$ go to infinity. Since $S(U) \sim |\log(1-t_0)| \, U$ for large $|U|$ by \eqref{eq:as_action}, necessarily $\ell$ goes to infinity in the negative
real direction, while $\ell'$ goes to infinity in the positive real direction.

It is then possible to follow $\ell$ long enough, and then join the negative real axis while staying in the region $\{ \Re S(U) \le  \Re S(U_c)\}$.
Completing the path by symmetry gives the contour $\gwn$.
The contour $\gzn$ is constructed similarly from $\ell'$.

We finally note that the claim in the second item follows from the discussion above the lemma statement.
\end{proof}

\begin{proposition}
\label{prop:convergence_kernel_frozen2}
Let  $x_0 \in [\eta\,a_0,\eta\,a_m]$
and $t_+(x_0) \leq t_0 \leq 1$.  Then, locally uniformly for $(x_1,t_1) \in \Z \times \R$, we have that
 \[\lim_{N \to +\infty} \widetilde{K}_{\la_N}^{(x_0,t_0)}((x_1,t_1),(x_1,t_1)) =0.\]
  As a consequence, $\widetilde{M}_{\la_N}^{(x_0,t_0)}$ 
  tends in probability to the empty set.
\end{proposition}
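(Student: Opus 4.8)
The plan is to repeat the scheme used for the small $t$ region in \cref{prop:convergence_kernel_frozen1}, now with the contours $\gwn$ and $\gzn$ of \cref{lem:move_contour_frozen2} in place of those of \cref{lem:move_contour_frozen1}. As there, it suffices to treat the diagonal value $x_1=x_2$, $t_1=t_2$ of the renormalized kernel~\eqref{eq:double_int_integr}, for which the original contours satisfy $\gw\subset\gz$. First I would move $\gw$ to $\gwn$: since $\gwn$ still encloses $I_W$ in its interior, no $W$-pole is met. Then, for each fixed $W\in\gwn$, I would deform the $Z$-contour from $\gz$ to $\gzn$. Since the $Z$-poles of $\Int_N(W,Z)$ all lie in $\widetilde I_Z$, which $\gzn$ encloses (see the discussion before \cref{lem:move_contour_frozen2}), the only pole met is $Z=W$; and because $\gwn$ and $\gzn$ have \emph{disjoint} interiors, this pole is crossed for every $W\in\gwn$. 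By the residue theorem one thus writes $\widetilde{K}_{\la_N}^{(x_0,t_0)}((x_1,t_1),(x_1,t_1))$ as the sum of the double integral $-\frac{1}{(2\I\pi)^2}\oint_{\gzn}\oint_{\gwn}\Int_N(W,Z)\dd{W}\dd{Z}$ and a residue term $-\frac{1}{2\I\pi}\oint_{\gwn}\operatorname{Res}_{Z=W}\!\big(\Int_N(W,Z)\big)\dd{W}$.

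Next I would observe that, for $x_1=x_2$ and $t_1=t_2$, a direct computation (exactly as in the case $x_0>0$ of the small $t$ region treated after \cref{prop:convergence_kernel_frozen1}) gives $\operatorname{Res}_{Z=W}\!\big(\Int_N(W,Z)\big)=\big(1-t_0-\tfrac{t_1}{\sqrt N}\big)^{-1}$, a constant, so the residue term vanishes, being the integral of a constant along the closed contour $\gwn$. It then remains to show that the double integral over $\gwn\times\gzn$ tends to $0$. By \cref{lem:max_or_min2} and the third item of \cref{lem:move_contour_frozen2}, one has $\Re S(W)\le\Re S(U_c)<\Re S(U'_c)\le\Re S(Z)$ for all $(W,Z)\in\gwn\times\gzn$, so $\Re\big(S(W)-S(Z)\big)$ is bounded above by a strictly negative constant. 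Moreover both contours are bounded (the imaginary level lines $\ell,\ell'$ are followed only finitely far before returning to the real axis), have disjoint interiors so that there is no $Z=W$ singularity on $\gwn\times\gzn$, and meet the real axis only at points of $\DS$ (namely $U_c$ and some point $<\eta\,a_0-x_0$ for $\gwn$, and $U'_c$ and some point $>\eta\,a_m-x_0$ for $\gzn$). Hence $\gwn$ and $\gzn$ are compact subsets of $\DS$, the uniform estimate~\eqref{eq:equiv_integrand} applies there, and for $N$ large the integrand is bounded on $\gwn\times\gzn$ by $C\,\E^{\sqrt N\,\Re(S(W)-S(Z))}\le C\,\E^{-c\sqrt N}$ for some $c>0$; the double integral is therefore $O(\E^{-c\sqrt N})\to 0$. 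Since all these bounds are locally uniform in $(x_1,t_1)$, this gives the claimed convergence of the diagonal kernel to $0$, locally uniformly.

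The conclusion that $\widetilde{M}_{\la_N}^{(x_0,t_0)}$ tends in probability to the empty set then follows exactly as in \cref{prop:convergence_kernel_frozen1}: for every bounded $A\subset\Z\times\R$ one has $\esper\!\big[\widetilde{M}_{\la_N}^{(x_0,t_0)}(A)\big]=\int_A\widetilde{K}_{\la_N}^{(x_0,t_0)}((x_1,t_1),(x_1,t_1))\dd{\la}(x_1,t_1)\to 0$, so each $\widetilde{M}_{\la_N}^{(x_0,t_0)}(B_i)$ tends to $0$ in probability, which by the discussion in \cref{sec:bead} is equivalent to convergence in distribution — hence in probability, the limit being deterministic — to the empty set. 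The remaining cases $x_0\in(\eta\,b_{i_0},\eta\,a_{i_0})$, the boundary values $x_0=\eta\,a_{i_0}$ and $x_0=\eta\,b_{i_0}$ (for which only $t_0=1$ is admissible), and the degenerate case of a double critical point $U_c=U'_c$, are handled by symmetric arguments together with the minor adjustments already seen in \cref{rem:pat_cases} and in the treatment of double critical points in the proof of \cref{prop:convergence_kernel_frozen1} (where $S$ behaves cubically near the merged point, the new contours meet there, and the resulting $(Z-W)^{-1}$ singularity is integrable).

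The only point requiring genuine care — and it is not a real obstacle — is that, unlike in \cref{lem:move_contour_frozen1}, here $\gwn$ and $\gzn$ have disjoint interiors, so deforming $\gw\subset\gz$ produces a residue contribution along the \emph{whole} of $\gwn$; one must therefore check that this contribution vanishes, which happens precisely because on the diagonal the residue at $Z=W$ is independent of $W$. Should one of the intersection points of the new contours with the real axis fail to lie in $\DS$, one would control the integrand near it by means of the second and third parts of \cref{lem:Asymp_Fla}, exactly as in the small $t$ region. Everything else reduces to the domination estimates already established in \cref{lem:Asymp_Fla} and used in \cref{prop:limit_Mla,prop:convergence_kernel_frozen1}.
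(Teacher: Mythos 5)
Your proposal is correct and follows the same route as the paper: move to the contours of \cref{lem:move_contour_frozen2}, observe that because $\gwn$ and $\gzn$ now have disjoint interiors (with $\gwn$ initially inside $\gz$) a residue at $Z=W$ is picked up along the whole of $\gwn$, note that on the diagonal this residue is the $W$-independent constant $(1-t_0-t_1/\sqrt N)^{-1}$ so its contour integral vanishes, and then kill the remaining double integral using $\Re S|_{\gwn}\le\Re S(U_c)<\Re S(U'_c)\le\Re S|_{\gzn}$ together with the uniform estimate~\eqref{eq:equiv_integrand}. One small imprecision: the imaginary level lines $\ell,\ell'$ of \cref{lem:move_contour_frozen2} do \emph{not} return to the real axis (they escape to infinity); the contours $\gwn,\gzn$ are bounded because one leaves $\ell,\ell'$ after a finite time and rejoins the real axis along an auxiliary arc, so the parenthetical justification should be phrased that way — the conclusion (boundedness, hence compactness inside $\DS$) is unaffected.
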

\begin{proof}
{As already explained, we give details only in the case when there exists $i_0\geq 1$ such that $x_0 \in (\eta\,a_{i_0-1},\eta\,b_{i_0})$.}
 Again, we are interested in \eqref{eq:equiv_integrand} for $x_1=x_2$ and $t_1=t_2$.
 The original contours are such that $\gw$ lies in the interior of $\gz$. 
 As in the case $x_0>0$ and $0\leq t_0\leq t_-(x_0)$ in the previous section, moving the contour to those of \cref{lem:move_contour_frozen2}
 yields the following:
\[ \widetilde{K}_{\la_N}^{(x_0,t_0)}((x_1,t_1),(x_1,t_1)) 
 = - \frac{1}{(2\I \pi)^2} \oint_{\gzn}\!\oint_{\gwn} \Int_N(W,Z)\dd{W}\dd{Z} 
 -\frac{1}{2\I \pi} \oint_{\gwn} \frac{1}{1-t_0-\frac{t_1}{\sqrt N}} \dd{W}.\]
The second integral is identically $0$,
while the first tends to $0$ as $N$ tends to $+\infty$ using the same argument as before.
This ends the proof of the proposition.
\end{proof}

\subsection{The intermediate \texorpdfstring{$t$}{t} region}
\label{ssec:frozen-intermediate}
Fix $x_0 \in [\eta\,a_0,\eta\,a_m]$.
As seen in \cref{ssec:examples}, it might happen that there is some $t_0 \in (t_-(x_0),t_+(x_0))$
in the frozen region, i.e.\ such that the critical equation
 \eqref{eq:critical_intro} has only real roots.
{In this case, thanks to the third item of \cref{prop:t_regions}, there are three roots --  denoted by $U_c \le U_c'\le U''_c$ -- that are either all inside a negative interval $(\eta\,b_{j_0} -x_0,\eta\,a_{j_0} -x_0)$ for some $j_0<i_0$,
or all inside a positive interval $(\eta\,a_{j_0}-x_0,\eta\,b_{{j_0}+1} -x_0)$ for some $j_0\geq i_0$.
These two cases are treated similarly, 
we will therefore assume that the three roots are in $(\eta\,b_{j_0} -x_0,\eta\,a_{j_0} -x_0)$ for some $j_0<i_0$.}
Also, we assume $U_c<U_c'< U''_c$, and let the reader convince himself
that multiple critical points do not create additional difficulties.
The following lemma is proven similarly as before.
\begin{lemma}
  The critical point $U_c$, $U'_c$ $U''_c$ are respectively
  local minimum, maximum and minimum of the function $u \to \Re S(u)$ on the real line
  and we have both $\Re S(U_c) < \Re S(U'_c) > \Re S(U''_c)$.
  \label{lem:max_or_min3}
\end{lemma}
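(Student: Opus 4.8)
The proof will follow the same pattern as \cref{lem:max_or_min} and \cref{lem:max_or_min2}: one only has to track the sign of the derivative of $u\mapsto\Re S(u)$ on the real line. Recall from \cref{ssec:im_real_line} that this map is continuous on $\mathbb R$, real-analytic away from the finite set $\{0\}\cup\{\eta\,a_i-x_0\}\cup\{\eta\,b_i-x_0\}$, has a positive infinite slope at each point $\eta\,a_i-x_0$ and a negative infinite slope at $0$ and at each point $\eta\,b_i-x_0$, and that its derivative vanishes exactly at the real roots of the critical equation \eqref{eq:critical_intro} or of the companion equation \eqref{eq:companion_critical}.

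First I would isolate the three critical points inside $I:=(\eta\,b_{j_0}-x_0,\eta\,a_{j_0}-x_0)$. Since $j_0<i_0$, this is a negative interval whose closure avoids every point of $\{0\}\cup\{\eta\,a_i-x_0\}\cup\{\eta\,b_i-x_0\}$ except its two endpoints, so $\Re S$ is real-analytic on $I$ and has the infinite slopes recalled above at $\eta\,b_{j_0}-x_0$ (negative) and $\eta\,a_{j_0}-x_0$ (positive). As noted in the proof of \cref{lem:max_or_min}, the companion equation has no solution in $I$ (its roots lie in $[\eta\,a_{i-1}-x_0,\eta\,b_i-x_0]$ for $i\le i_0$ and in $[\eta\,b_i-x_0,\eta\,a_i-x_0]$ for $i\ge i_0$, none of which meets $I$), so the only critical points of $\Re S$ in $I$ are $U_c<U_c'<U_c''$.

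The one point that requires a genuine argument — and which I expect to be the main (mild) obstacle — is that $\Re S'$ really changes sign across each of $U_c,U_c',U_c''$, i.e.\ that these are simple roots of $P_{x_0,t_0}$. The plan is to settle this by a root count: in the intermediate $t$ region $P_{x_0,t_0}$ has degree exactly $m+1$ (its leading coefficient equals $t_0\in(0,1)$, since the top-degree terms of its two product pieces cancel up to a factor $t_0$); \cref{lem:critical_points_general} produces one real root in each of its $m-1$ pairwise disjoint listed intervals, one of which is contained in $I$ (the corresponding root cannot sit at $\eta\,b_{j_0}-x_0$, which is not a root of $P_{x_0,t_0}$, hence it lies in the open interval $I$); and the third item of \cref{prop:t_regions} asserts that $I$ contains three roots. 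This already accounts for $m+1$ roots counted with multiplicity, so all roots of $P_{x_0,t_0}$ are simple, and in particular $\Re S'$ changes sign at $U_c$, at $U_c'$ and at $U_c''$.

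Putting this together: on the four open subintervals into which $U_c,U_c',U_c''$ split $I$, the derivative $\Re S'$ has a constant nonzero sign, and the signs must alternate because $\Re S'\to-\infty$ at the left endpoint $\eta\,b_{j_0}-x_0$ and $\Re S'\to+\infty$ at the right endpoint $\eta\,a_{j_0}-x_0$. Hence $\Re S$ is strictly decreasing on $(\eta\,b_{j_0}-x_0,U_c)$, strictly increasing on $(U_c,U_c')$, strictly decreasing on $(U_c',U_c'')$ and strictly increasing on $(U_c'',\eta\,a_{j_0}-x_0)$, so $U_c$ and $U_c''$ are local minima and $U_c'$ a local maximum of $u\mapsto\Re S(u)$, and the strict monotonicity on the middle two pieces gives $\Re S(U_c)<\Re S(U_c')>\Re S(U_c'')$. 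The degenerate subcases — the positive-interval case $j_0\ge i_0$ and the cases where two or three of the $U$'s coincide — are handled by the same argument with the obvious adjustments, exactly as in \cref{lem:max_or_min,lem:max_or_min2}.
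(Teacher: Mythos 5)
Your proof is correct and follows the same approach as the paper, which only says the lemma is ``proven similarly as before,'' i.e.\ by the same analysis of the sign of $\Re S'$ on the real line used in \cref{lem:max_or_min,lem:max_or_min2} (locate the critical and companion roots, then use the infinite slopes at the endpoints of the relevant interval). Your explicit root-count argument establishing simplicity of the three roots in $I$ (using $\deg P_{x_0,t_0}=m+1$, the $m-1$ roots from \cref{lem:critical_points_general}, and the three roots in $I$ from \cref{prop:t_regions}, together with the observation that $\eta\,b_{j_0}-x_0$ itself is not a root) is a welcome treatment of a point the paper leaves implicit.
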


This helps us to construct integration contours as follows.
With an argument similar to the one used above \cref{lem:move_contour_frozen2}, we note that there are no $W$-poles in the interval $(\eta\,b_{j_0} -x_0,\eta\,a_{j_0} -x_0)$.
A major difference is that now the $W$ integration contour is split into two disjoint parts,
the union of their interiors containing all $W$ poles.

 \begin{lemma} 
  \label{lem:move_contour_frozen3}
 There exist integration contours $\gwno$, $\gwnt$ and $\gzn$  (all followed in counterclockwise order) such that
  \begin{itemize}
    \item {$\gwno$ and $\gzn$ have disjoint interiors, and $\gwnt$ lies inside $\gzn$};
    \item $\gwno$ (resp.\ $\gwnt$  and $\gzn$) contains $(\eta\,a_0 -x_0,\eta\,b_{j_0} -x_0)$
    (resp.\ $(\eta\,a_{j_0} -x_0,0)$ and $I_Z$) in its interior;
    \item $\gwno$ (resp.\ $\gwnt$) lies inside the region $\{ \Re S(U) \le  \Re S(U_c)\}$
    (resp.\ $\{ \Re S(U) \le  \Re S(U''_c)\}$);
    \item $\gzn$ lies inside the region $\{ \Re S(U) \ge  \Re S(U'_c)\}$.
  \end{itemize}
\end{lemma}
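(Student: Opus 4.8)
The plan is to build the three contours by the same device used in \cref{lem:move_contour_frozen1,lem:move_contour_frozen2} — following the imaginary level lines of the action $S$ issued from the real critical points — the one genuinely new feature, announced just above the lemma, being that the $W$-contour must be broken into the two loops $\gwno$ and $\gwnt$. First I would set up the local picture at $U_c<U_c'<U_c''$. Since these three points all lie in the \emph{negative} interval $(\eta\,b_{j_0}-x_0,\eta\,a_{j_0}-x_0)$, on which $\Im S$ is \emph{constant} by the slope computation of \cref{ssec:im_real_line}, one has $\Im S(U_c)=\Im S(U_c')=\Im S(U_c'')=:c_0$; and since, by \cref{lem:max_or_min3}, these are non-degenerate extrema of $u\mapsto\Re S(u)$ on $\R$ (minima at $U_c$ and $U_c''$, a maximum at $U_c'$), the second derivatives of $S$ there are real, positive at the minima and negative at the maximum. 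Hence each point carries an imaginary level line $\ell$, $\ell'$, $\ell''$ of $S$ leaving the real axis orthogonally into the open upper half-plane, along which $\Re S$ is strictly monotone (open mapping theorem): strictly decreasing from $\Re S(U_c)$ along $\ell$ and from $\Re S(U_c'')$ along $\ell''$, strictly increasing from $\Re S(U_c')$ along $\ell'$. As $\Re S(U_c)<\Re S(U_c')$ and $\Re S(U_c'')<\Re S(U_c')$ (again \cref{lem:max_or_min3}), this gives $\ell,\ell''\subseteq\{\Re S<\Re S(U_c')\}$ and $\ell'\subseteq\{\Re S\ge\Re S(U_c')\}$, so $\ell'$ is disjoint from $\ell$ and from $\ell''$; and since $\ell,\ell',\ell''$ are distinct level lines of $S$ at the common height $c_0$ while $S$ has no off-axis critical point in the frozen regime, they are pairwise disjoint in the open upper half-plane (a common non-critical point would force two of them to coincide, and if $\ell$ and $\ell''$ were one curve it would join $U_c$ to $U_c''$ with $\Re S$ non-monotone along it) and keep the left-to-right order inherited from $U_c<U_c'<U_c''$.

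Next I would pin down the global behaviour of $\ell,\ell',\ell''$. Each of them either returns to the real axis or escapes to infinity, and by the asymptotics $S(U)\sim|\log(1-t_0)|\,U$ recalled in \eqref{eq:as_action}, an escaping $\ell$ or $\ell''$ runs off with $\Re U\to-\infty$ while an escaping $\ell'$ runs off with $\Re U\to+\infty$. Exactly as in \cref{lem:move_contour_frozen1}, a returning level line can meet the real axis only where $\{\Im S=c_0\}$ has another branch leaving the axis; using that $\Im S$ is constant on the whole interval carrying the three critical points (so none of its non-critical points carries an upward branch) together with the global shape of $\Im S$, one checks that — generically — the only candidate return point on the real axis is a single point $\bx>0$, none of $U_c,U_c',U_c''$ being admissible (a curve cannot return to its own foot by strict monotonicity of $\Re S$, nor reach the other two feet because of the $\Re S$-inequalities). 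Since two arcs cannot land at the non-critical $\bx$, at most one of $\ell,\ell',\ell''$ returns; on the other hand not all three can escape, for the right-most one would then have to reach $\Re U\to-\infty$ past the escaping $\ell'$, crossing it. Combined with the non-crossing order this forces the configuration: $\ell'$ cannot return (a return at $\bx>U_c''$ would trap $\ell''$ inside the loop bounded by $\ell'$ and $[U_c',\bx]$, leaving $\ell''$ with no admissible return point there), so $\ell'$ escapes to $+\infty$; $\ell''$, lying to the right of the escaping $\ell'$, then cannot escape and must return, necessarily at $\bx$; and $\ell$, which cannot reach $\bx$ without crossing $\ell'$ and $\ell''$ and has no other candidate, escapes to $-\infty$. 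The degenerate situations — a multiple critical point, $c_0$ coinciding with another flat value of $\Im S$, $x_0=\eta\,a_{i_0}$, or the three roots sitting in a positive interval — I would dispatch with the minor modifications of \cref{lem:move_contour_frozen1} and \cref{rem:pat_cases}.

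Then I would assemble the contours. For $\gwno$, follow $\ell$ from $U_c$ until $\Re U$ is very negative, join the real axis there while staying in $\{\Re S\le\Re S(U_c)\}$, and close up by mirror symmetry; the loop lies in $\{\Re S\le\Re S(U_c)\}$, meets the real axis at $U_c$ and at a point far to the left, and so its interior contains $(\eta\,a_0-x_0,\eta\,b_{j_0}-x_0)$. For $\gwnt$, follow $\ell''$ from $U_c''$ to $\bx$ and close up by mirror symmetry; this loop lies in $\{\Re S\le\Re S(U_c'')\}$, meets the real axis at $U_c''$ and $\bx$, and since $U_c''<\eta\,a_{j_0}-x_0<0<\bx$ its interior contains $(\eta\,a_{j_0}-x_0,0)$. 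For $\gzn$, follow $\ell'$ from $U_c'$ until $\Re U$ is large, then join the real axis while staying in $\{\Re S\ge\Re S(U_c')\}$ and far enough to the right to enclose $I_Z$, and close up by mirror symmetry; this loop meets the real axis at $U_c'$ and at a point $R$ far to the right, so its interior contains both $I_Z$ and $(U_c'',\bx)$. Since $U_c<U_c'$, the interiors of $\gwno$ and $\gzn$ are disjoint; and since $U_c'<U_c''<\bx<R$ while $\gwnt$ lies in $\{\Re S<\Re S(U_c')\}$ and so does not meet the curve $\gzn$, the loop $\gwnt$ lies inside $\gzn$. All the remaining assertions of the lemma then follow.

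The main difficulty, compared with the two preceding lemmas, is the global analysis of \emph{three} level lines in the second step above: proving the non-crossing picture, ruling out spurious return points from the precise shape of $\Im S$ — where it is essential that $\Im S$ is \emph{constant}, not merely coincident at isolated points, on the interval carrying the three critical points — and extracting the ``exactly one returns, the other two escape in opposite real directions'' trichotomy. It is precisely this trichotomy that creates the need to split the $W$-contour and that forces the nesting $\gwnt\subset\gzn$ with $\gwno$ outside $\gzn$; the careful treatment of the various degenerate cases would make up the bulk of the remaining, essentially routine, work.
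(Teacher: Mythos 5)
Your proposal is correct and follows exactly the strategy the paper uses (and explicitly announces): study the three imaginary level lines $\ell,\ell',\ell''$ issuing vertically from $U_c,U_c',U_c''$, use the monotonicity of $\Re S$ along them together with the non-crossing property and the asymptotics \eqref{eq:as_action} to force ``$\ell''$ returns at a point $\bx>0$, $\ell$ escapes to $-\infty$, $\ell'$ escapes to $+\infty$,'' and then symmetrize/close the three arcs. The paper's own proof of this lemma is a four-line sketch deferring to the analysis of \cref{lem:move_contour_frozen1,lem:move_contour_frozen2}; your write-up simply fills in the same details (in particular the topological trapping argument that rules out all three lines escaping, which implicitly also guarantees the existence of the return point $\bx$), so there is no substantive difference of approach.
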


These contours are shown in the right-hand side of \cref{fig:new_contours_frozen}.

\begin{proof}
The strategy of proof is again the same. We consider the three imaginary lines $\ell$,
$\ell'$ and $\ell''$ leaving from $U_c$, $U'_c$ and $U''_c$. One can prove that $\ell''$ goes back to the real axis at some point $x>0$, while $\ell$ and $\ell'$ go to infinity respectively in the negative and positive directions. 
Symmetrizing $\ell''$ gives the contour $\gwnt$. On the other hand, following $\ell$ and $\ell'$ for long enough and then joining the real line (plus symmetrizing) gives $\gwno$ and $\gzn$.
\end{proof}

\begin{proposition}
\label{prop:convergence_kernel_frozen3}
Let  $x_0 \in [\eta\,a_0,\eta\,a_m]$ and $t_0\in(t_-(x_0),t_+(x_0))$ 
such that the critical equation~\eqref{eq:critical_intro} has only real roots.
Then, locally uniformly for $(x_1,t_1) \in \Z \times \R$, we have that
 \[ \lim_{N \to +\infty} \widetilde{K}_{\la_N}^{(x_0,t_0)}((x_1,t_1),(x_1,t_1)) =0.\]
  As a consequence, $\widetilde{M}_{\la_N}^{(x_0,t_0)}$ 
  tends in probability to the empty set.
\end{proposition}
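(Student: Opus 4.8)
The plan is to follow the same three‑step strategy used in the small‑$t$ and large‑$t$ cases (\cref{prop:convergence_kernel_frozen1,prop:convergence_kernel_frozen2}): move the integration contours, replace the integrand by its asymptotic exponential form, and apply dominated convergence. Concretely, I would start from the double‑integral representation \eqref{eq:equiv_integrand3} of $\widetilde{K}_{\la_N}^{(x_0,t_0)}((x_1,t_1),(x_1,t_1))$ with its original contours $\gw \subset \gz$, and deform them into the contours $\gwno$, $\gwnt$, $\gzn$ provided by \cref{lem:move_contour_frozen3}. The novelty compared to the previous two subsections is that the $W$‑contour is now \emph{split into two disjoint loops}: $\gwno$, which together with $\gzn$ has disjoint interior, and $\gwnt$, which lies inside $\gzn$. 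Thus moving $\gw$ to $\gwno \cup \gwnt$ crosses no $W$‑poles or $Z$‑poles (all of them stay enclosed appropriately, by the second item of \cref{lem:move_contour_frozen3}), but the part of the deformation that pushes the outer loop of $W$ past $\gz$ picks up the residue at the pole $Z=W$, exactly as in \eqref{eq:Kla_new_contours_liquid}. Since the residue of the integrand in \eqref{eq:double_int_integr} at $Z=W$ (for $x_1=x_2$, $t_1=t_2$) is simply $\tfrac{1}{1-t_0-t_1/\sqrt N}$, a constant in $W$, the resulting contour integral of the residue over the closed loop $\gwnt$ vanishes. Hence we are left only with the double integral over $(\gwno \cup \gwnt) \times \gzn$.

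The second step is to insert the uniform estimate $\Int_N(W,Z) \simeq (\sqrt N)^{x_2-x_1}\,\E^{\sqrt N (S(W)-S(Z))}\,\hh(W,Z)$ from \eqref{eq:equiv_integrand}, valid on compact subsets of $\DS$, together with the exponential bounds of the second and third parts of \cref{lem:Asymp_Fla} near the finitely many real points $\eta\, a_i - x_0$, $\eta\, b_i - x_0$ that the contours may pass through or near. By \cref{lem:max_or_min3} we have $\Re S(W) \le \Re S(U_c) < \Re S(U'_c) \le \Re S(Z)$ for $W \in \gwno$ and $\Re S(W) \le \Re S(U''_c) < \Re S(U'_c) \le \Re S(Z)$ for $W \in \gwnt$ (using the strict inequalities from the third and fourth items of \cref{lem:move_contour_frozen3}); so on the whole double contour $\Re S(W) < \Re S(Z)$, and $\E^{\sqrt N(S(W)-S(Z))} \to 0$ pointwise. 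The integrand is dominated by $2\,\hh(W,Z)$ for $N$ large, which is integrable on the double contour (it has at worst $O((W-Z)^{-1})$ singularities at the points where $\gwnt$ and $\gzn$ are tangent to each other or touch, handled exactly as in the proof of \cref{prop:limit_Mla}), and near the ``bad'' real points the extra factors $\exp(\eps\pi\sqrt N)$ are absorbed because the gap $\Re S(Z)-\Re S(W)$ can be made $>\pi\eps$ there by choosing $\eps$ small; this is the same bookkeeping as in the proof of \cref{prop:limit_Mla,prop:convergence_kernel_frozen1}. Dominated convergence then gives $\widetilde{K}_{\la_N}^{(x_0,t_0)}((x_1,t_1),(x_1,t_1)) \to 0$, locally uniformly in $(x_1,t_1)$, as claimed.

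Finally, the convergence in probability to the empty set follows verbatim from the argument already given at the end of the proof of \cref{prop:convergence_kernel_frozen1}: for a determinantal point process one has $\esper[\widetilde{M}_{\la_N}^{(x_0,t_0)}(A)] = \int_A \widetilde{K}_{\la_N}^{(x_0,t_0)}((x_1,t_1),(x_1,t_1))\dd{\la}(x_1,t_1)$ for every bounded $A \subset \Z \times \R$; by the locally uniform convergence of the diagonal kernel to $0$ and dominated convergence, these expected bead counts tend to $0$, hence $(\widetilde{M}_{\la_N}^{(x_0,t_0)}(B_i))_{1\le i\le k} \to (0,\dots,0)$ in probability for any finite collection of bounded sets, which by the discussion in \cref{sec:bead} is equivalent to convergence in distribution (hence in probability) to the empty configuration. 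The symmetric case where the three collided roots lie in a positive interval $(\eta\,a_{j_0}-x_0,\eta\,b_{j_0+1}-x_0)$ is identical after reflection, and the case of coinciding critical points only changes the local geometry of the level lines (a degenerate saddle with an $O((Z-W)^{-1})$ singularity), handled as in the double‑critical‑point discussion of \cref{prop:convergence_kernel_frozen1}. The main obstacle — and the one genuinely new feature of this regime — is purely topological: verifying that the split $W$‑contour $\gwno \cup \gwnt$ can be slid past $\gz$ so that (i) all $W$‑ and $Z$‑poles remain correctly enclosed, (ii) only the pole $Z=W$ is crossed, and (iii) its residue integrates to zero over the closed loop $\gwnt$; once the contour configuration of \cref{lem:move_contour_frozen3} is in place, the analytic estimates are exactly those of the previous two subsections.
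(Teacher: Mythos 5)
Your proposal follows essentially the same three-step strategy as the paper (deform contours via \cref{lem:move_contour_frozen3}, insert the exponential asymptotic estimate, apply dominated convergence, then pass to the point process), and all the key ideas are in place. There is, however, one small bookkeeping slip: when the original $\gw$ (inside $\gz$) is deformed into the split contour $\gwno \cup \gwnt$, the loop that gets pushed past $\gzn$ is $\gwno$ (the one around the leftmost interval $(\eta\,a_0-x_0,\eta\,b_{j_0}-x_0)$, disjoint from $\gzn$), while $\gwnt$ stays inside $\gzn$; so the residue term from the pole $Z=W$ is an integral over $\gwno$, not over $\gwnt$ as you wrote. Fortunately this does not affect the outcome, since the residue $\frac{1}{1-t_0-t_1/\sqrt N}$ is constant in $W$ and hence integrates to zero over any closed loop. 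A second minor point: in this intermediate frozen regime $\gwnt$ and $\gzn$ pass through the \emph{distinct} critical points $U''_c$ and $U'_c$ with $\Re S(U''_c)<\Re S(U'_c)$, so the contours never touch; there are in fact no $O((W-Z)^{-1})$ singularities to handle on the new double contour, and your appeal to the liquid-region integrability argument is harmless but unnecessary here.
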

\begin{proof}
 Again, we consider \eqref{eq:equiv_integrand} for $x_1=x_2$ and $t_1=t_2$.
 The original contours are such that $\gw$ lies in the interior of $\gz$. 
 Moving the contour to those of \cref{lem:move_contour_frozen3}
 (note that $\gwno$ and $\gwnt$ together enclose the same $W$-poles as $\gw$)
 yields a residue term which is an integral over $\gwno$.
We get
\begin{equation*}
	\widetilde{K}_{\la_N}^{(x_0,t_0)}((x_1,t_1),(x_1,t_1)) 
	= - \frac{1}{(2\I \pi)^2} \oint_{\gzn}\!\oint_{\gwn} {\Int_N(W,Z)} \dd{W}\dd{Z}
	- \frac{1}{2\I \pi} \oint_{\gwno} {\frac{1}{1-t_0-\frac{t_1}{\sqrt N}}} \dd{W}.
\end{equation*}
Again, the second integral is identically $0$,
while the first tends to $0$ as $N$ tends to $+\infty$.
This ends the proof of the proposition.
\end{proof}

Propositions \ref{prop:convergence_kernel_frozen1bis}, \ref{prop:convergence_kernel_frozen2}, and \ref{prop:convergence_kernel_frozen3} complete the proof of the second item of \cref{thm:cv_bead_process}.

\section{The limiting height function and the continuity of the limiting surface}\label{sec:LimitShape}

In this section, we prove \cref{thm:limiting_surface_formula,thm:continuity}. Recall (from \cref{thm:limiting_surface_existence}) that $H^\infty: [\eta\, a_0, \eta\, a_m] \times [0,1]\to \R$ is the deterministic limiting height function of the sequence of random height functions $\frac{1}{\sqrt N}
H_{\lambda_N}(\lfloor x \sqrt N \rfloor, t)$ coming from the bead processes associated with a uniform random (Poissonized) Young tableaux of fixed shape $\lambda^0$. We want to show that
\begin{equation*}
	H^\infty(x,t) = \frac{1}{\pi} \int_0^t \alpha(x,s)\dd{s}, \quad \text{for all }(x,t)\in [\eta\, a_0, \eta\, a_m] \times[0,1].
\end{equation*}

\begin{proof}[Proof of \cref{thm:limiting_surface_formula}]
	We fix $(x,t)\in [\eta\, a_0, \eta\, a_m] \times[0,1]$. Recalling the boundary conditions in \eqref{eq:boundary_conditions}, we note that
	 \[ \frac{1}{\sqrt N}
	 H_{\lambda_N}(\lfloor x \sqrt N \rfloor, t)\leq\frac{1}{2{\sqrt N}} \left(\omega_{\lambda_N}(\lfloor x \sqrt N \rfloor) - |\lfloor x \sqrt N \rfloor|\right)=\frac12 (\omega_{\eta \la^0}(x)-|x|), \quad \text{for all } N>0.\]
	Therefore, by dominated convergence theorem, the convergence in \cref{thm:limiting_surface_existence} implies that for every fixed $(x,t)\in [\eta\, a_0, \eta\, a_m] \times[0,1]$,
	\[H^\infty(x,t) = \lim_{N\to+\infty}\frac{1}{\sqrt N}
	\mathbb E\big[ H_{\lambda_N}(\lfloor x \sqrt N \rfloor, t)\big]. \]
	We will prove that the right-hand side converges to $\frac{1}{\pi} \int_0^t \alpha(x,s) ds$,
	implying \cref{thm:limiting_surface_formula} by uniqueness of the limit.
	By definition, $H_{\lambda_N}(\lfloor x \sqrt N \rfloor, t)$ is the number of beads 
	in the bead process $M_{\lambda_N}$ which lie on the thread at position $\lfloor x \sqrt N \rfloor$ and with height in $[0,t]$.
	Since $M_{\lambda_N}$ is a determinantal point process with kernel $K_{\lambda_N}$, we have
	\[ \esper\big[ H_{\lambda_N}(\lfloor x \sqrt N \rfloor, t) \big]
	= \int_0^t  K_{\lambda_N}\big( (\lfloor x \sqrt N \rfloor, s), (\lfloor x \sqrt N \rfloor, s) \big) ds.\]
	With the notation in \eqref{eq:renorm_kern}, we get
	\[ \frac1{\sqrt N} K_{\lambda_N}\big( (\lfloor x \sqrt N \rfloor, s), (\lfloor x \sqrt N \rfloor, s) \big)
	=\widetilde{K}^{(x,s)}_{\lambda_N}((0,0),(0,0)).\]
	Recall now that $\a(x,s):=\frac{\Im U_c}{1-s}\mathds{1}_{(x,s)\in L}$, where $L$ denotes the liquid region. If $(x,s)$ is in the liquid region, \cref{eq:Kinfty,eq:asymp_Ktilde} imply that
	\[ \lim_{N\to +\infty} \widetilde{K}^{(x,s)}_{\lambda_N}((0,0),(0,0))=K_\infty((0,0),(0,0)) 
	=\frac{1}{2i\pi} \int_{\overline{U_c}}^{U_c} \frac{dW}{1-s}
	=\frac{1}{\pi} \frac{\Im U_c}{1-s} =\frac{\alpha(x,s)}{\pi},
	\]
	where, for the third last equality, we used a vertical path from $\overline{U_c}$ to $U_c$, as in the beginning of the proof of \cref{lem:recovering_bead_kernel}.
	On the other hand, if $(x,s)$ is outside the liquid region, we have, by
	\cref{prop:convergence_kernel_frozen1bis,prop:convergence_kernel_frozen2,prop:convergence_kernel_frozen3},
	\[ \lim_{N\to +\infty} \widetilde{K}^{(x,s)}_{\lambda_N}((0,0),(0,0))=0=\frac{\alpha(x,s)}{\pi}.\]
	Hence, we get that $\lim_{N\to +\infty} \widetilde{K}^{(x,s)}_{\lambda_N}((0,0),(0,0))=\frac{\alpha(x,s)}{\pi}$,
	for all $(x,s)\in[\eta\, a_0, \eta\, a_m] \times [0,1]$. Moreover, since integration contours,
	integrands and critical points are continuous function of $(x,s)$, and since
	all asymptotic estimates given are uniform on compact sets,
	the kernel convergence is also uniform on compact sets. 
	We conclude that 
	\[ 
	\lim_{N\to+\infty}\frac{1}{\sqrt N}\,
	\mathbb E\big[ H_{\lambda_N}(\lfloor x \sqrt N \rfloor, t)\big]=\lim_{N\to +\infty} 
	 \int_0^t  \widetilde{K}^{(x,s)}_{\lambda_N}((0,0),(0,0)) ds
	=\frac{1}{\pi} \int_0^t \alpha(x,s) ds,\]
	proving \cref{thm:limiting_surface_formula}.
\end{proof}

We now turn to the proof of the continuity criterion for the limiting surface $T^\infty$ stated in \cref{thm:continuity}.

\begin{proof}[Proof of \cref{thm:continuity}]
	Recall the definition of the quantities $T^\infty_-(x,y)$ and $T^\infty_+(x,y)$ from \eqref{eq:Tplusminus} and recall also the definition of $t_-(x)$ and $t_+(x)$ from \cref{prop:t_regions}, noting that $t_-(x)=\inf\left\{\partial L\cap(\{x\}\times [0,1])\right\}$ and $t_+(x)=\sup\left\{\partial L\cap(\{x\}\times [0,1])\right\}$.
	
	From \cref{rem:cont_points}, we know that the limiting surface $T^\infty$ is continuous in its domain $D_{\lambda^0}$ if and only if $T^\infty_-(x,y)=T^\infty_+(x,y)$ for all $(x,y)\in D_{\lambda^0}$. The latter condition is equivalent to the following property of the liquid region $L$: for all $x\in[\eta\, a_0 , \eta\, a_m]$,
	\begin{equation}\label{eq:cond}
		\text{$[t_-(x),t_+(x)]\setminus \left\{L\cap(\{x\}\times [0,1])\right\}$ does not contain any non-empty open interval.}
	\end{equation}
	Indeed, if this is not true, i.e.\ for some $x\in[\eta\, a_0 , \eta\, a_m]$ there exists an non-empty open interval $I$ contained in $[t_-(x),t_+(x)]\setminus \left\{L\cap(\{x\}\times [0,1])\right\}$, then there exists some constant $c>0$ such that $H^\infty(x,t)=c$ for all $t\in I$ and so $T^\infty_-(x,2c-|x|) < T^\infty_+(x,2c-|x|)$.
	
	Now recall the parametrization of the frozen boundary curve $\partial L$ given in \cref{prop:description_liquid_region}. We observe that a necessary and sufficient condition such that  \eqref{eq:cond} holds for all $x\in[\eta\, a_0 , \eta\, a_m]$ is that the tangent vector $(\dot{x}(s),\dot{t}(s))$ at the cusp points\footnote{We recall that for a plane curve defined by  analytic parametric equations $x(t)=f(t)$ and $y(t)=g(t)$, a \emph{cusp} is a point where both derivatives of $f$ and $g$ are zero, and the directional derivative, in the direction of the tangent, changes sign.} of $\partial L$ is vertical.
	Requesting that the tangent vector $(\dot{x}(s),\dot{t}(s))$ is vertical at a certain point $(x(s),t(s))$ of the curve $\partial L$ is equivalent to impose that its (absolute) slope is zero, that is $|G(s)|=+\infty$. The solutions to the latter equation are $s=\eta\, a_i$ for $i=0,\dots, m$.
	Moreover, the cusp points of $\partial L$ are given by the solutions to the equations:
	\begin{equation*}
		\begin{cases}
			\dot x(s)=1+\frac{\dot \Sigma(s)}{\Sigma(s)^2}=0,\\
			\dot t(s)=G(s)\left(1+\frac{\dot \Sigma(s)}{\Sigma(s)^2}\right)=0,
		\end{cases}
	\end{equation*}
	where the directional derivative, in the direction of the tangent, changes sign. One can check that $(x(s),t(s))$, for $s=\eta\, a_0$ and $s=\eta\, a_m$, are not cusp points. Indeed, these two points are the two points where the frozen boundary curve $\partial L$ is tangent to the two vertical boundaries of $[\eta\, a_0,\eta\, a_m]\times[0,1]$. 
	Hence, it is enough to impose that for all $i=1,\dots,m-1$,
	\begin{equation*}
		\begin{cases}
			\dot x(\eta\, a_i)=0,\\
			\dot t(\eta\, a_i)=0.
		\end{cases}
	\end{equation*}
	With some standard computation, one can note that $	\dot x(\eta\, a_i)=\lim_{s\to \eta\, a_i}1+\frac{\dot \Sigma(s)}{\Sigma(s)^2}=0$  for all $i=1,\dots,m-1$. Therefore, the only non-trivial condition is to impose that $\dot t(\eta\, a_i)=0$ for all $i=1,\dots,m-1$. Again, with some standard computations, one can check that 
	\begin{equation*}
		\dot t(\eta\, a_i)=\lim_{s\to \eta\, a_i}G(s)\left(1+\frac{\dot \Sigma(s)}{\Sigma(s)^2}\right)=\frac{2\left(\sum_{j=0,
					j\neq i}^m \frac{1}{\eta\,a_{i}-\eta\,a_j}-\sum_{j=1}^m \frac{1}{\eta\,a_{i}-\eta\,b_j}\right)\prod_{j=1}^m(\eta\,a_{i}-\eta\,b_j)}{\prod_{j=0,
					j\neq i}^m(\eta\,a_{i}-\eta\,a_j)},
	\end{equation*}
	concluding that the necessary and sufficient conditions for the continuity of $T^\infty$ are the ones in \eqref{eq:cont_cond}.
\end{proof}

\section{Applications}\label{sect:applications}
In this section we prove the applications discussed in \cref{sect:appl_intro}.

\begin{proof}[Proof of \cref{prop:limit_shape}]
	We first prove the formula for $H^\infty_r(x,t)$.
	From \cref{thm:limiting_surface_formula}, we have
	$$H^\infty_r(x,t) = \frac{1}{\pi}\int_0^t \alpha_r(x,s)\dd{s},$$ 
	where $\a_r(x,s)=\frac{\Im U_c}{1-s}\mathds{1}_{(x,s)\in L}$.
	In this case the critical equation becomes
	\[U \, \big(x-\sqrt r +\tfrac1{\sqrt r}+U \big)
	   = (1-t) \big(x+\tfrac1{\sqrt r}+U \big) \big(x-\sqrt r +U \big).\]
	Solving this quadratic equation and integrating the imaginary part
	of the solution gives the formula \eqref{eq:limit_H_rectangular}
	for $H^\infty_r$.
	
	The fact that the limiting surface $T^\infty_r(x,y)$ is continuous for all $r>0$ 
	is a consequence of \cref{thm:continuity}. 
\end{proof}

\begin{proof}[Proof of \cref{prop:phase_diagram}]
	From \cref{thm:continuity} we know that the surface $T^\infty_{p,q,r}$ is continuous in its domain if and only if 
	\begin{equation*}
		 \frac{1}{a_{1}-a_0}+\frac{1}{a_{1}-a_2}= \frac{1}{a_{1}-b_1}+\frac{1}{a_{1}-b_2},
	\end{equation*}
	with the coefficients $a_0,a_1,a_2,b_1,b_2$ as in \cref{eq:param_int}. That is,
	\begin{equation*}
		\frac{1}{1 + p}-\frac{2}{2 + p - q}-\frac{2}{p + q - 2 r}+\frac{1}{p - r}=0.
	\end{equation*}
	Solving the above equation, one gets the solutions claimed in the proposition statement.
\end{proof}

\section{Local limits for random Young tableaux}\label{sect:local_syt}

The goal of this section is to complete the proof of \cref{corol:local_lim_standard Young tableau}.

\subsection{Local topology for standard Young tableaux}
\label{sect:local_topo_standard Young tableau}
We start with some definitions.
We recall that a \emph{marked standard Young tableau} is a triple $(\la,T,(x,y))$ where $(\la,T)$ is a standard Young tableau of shape $\la$ and $(x,y)$ are the coordinates of a distinguished box in $\la$ (see the left-hand side of \cref{fig:Restrictions} for an example). A \emph{marked Poissonized Young tableau} is defined analogously.

We introduce a family of \emph{restriction functions} for marked standard Young tableaux/Poissonized Young tableaux. Fix $h\in \N$. Given a marked standard Young tableau/Poissonized Young tableau $(\la,T,(x,y))$, we denote by $r_h\left(\la,T,(x,y)\right)$ the standard Young tableau $(\diamond_h,R)$, where $\diamond_h$ is a square Young diagram of size $h^2$ and $R$ is a filling of the boxes of $\diamond_h$ obtained as follows:\footnote{We highlight the fact that by definition the restriction functions $r_h$ give always back a standard Young tableau (even in the case of Poissonized Young tableaux). Note that the rescaling procedure in step (2) is well-defined also when $(\la,T,(x,y))$ is a marked Poissonized Young tableau since we restrict to the case where the mapping $T$ has distinguished values.}
\begin{enumerate}
	\item first, we define $\widetilde R(x',y')=T(x+x',y+y')$ for all $(x',y')\in\diamond_h$, where we set $R(x',y')=*$ if $T(x+x',y+y')$ is not well-defined (see the middle picture of \cref{fig:Restrictions});
	\item second, we rescale the values of the map $\widetilde R$ obtaining a new map $R$ so that $(\diamond_h,R)$ is a standard Young tableau and the values of $R$ have the same relative order as the values of $\widetilde R$, and the boxes filled by $*$ are kept as they are (see the right-hand side of \cref{fig:Restrictions}).
\end{enumerate}

\begin{figure}[ht]
	\includegraphics[scale=0.8]{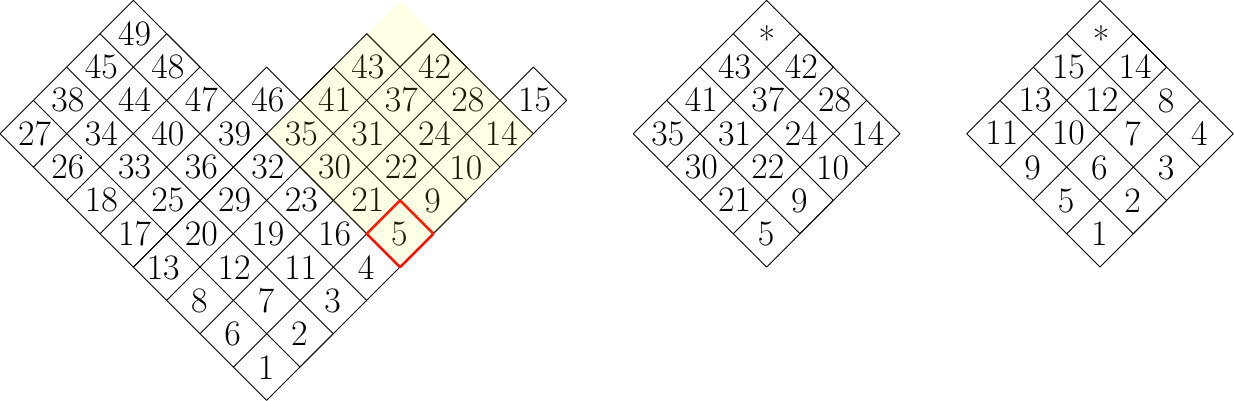}
	\captionsetup{width=\linewidth}
	\caption{\textbf{Left:} A marked standard Young tableau $\left(\la,T,(x,y)\right)$ with the marked box highlighted in red at position $(x,y)=(4,5)$. \textbf{Middle:} The pair $(\diamond_4,\widetilde R)$ obtained from the first step in the definition of the restriction function $r_4\left(\la,T,(x,y)\right)$. \textbf{Right:} The pair $(\diamond_4,R)$ obtained from the second step in the definition of the restriction function $r_4\left(\la,T,(x,y)\right)$.}
	\label{fig:Restrictions}
\end{figure}

An \emph{infinite standard Young tableau} is a pair $(\diamond_{\infty},T)$ where $\diamond_{\infty}$ is the infinite Young diagram formed by all the boxes at positions $(x,y)\in\Z^2$ such that $x+y$ is odd and $y> |x|$
(recall we are using the Russian notation; see the left-hand side of \cref{fig:interlacing1}, p.\ \pageref{fig:interlacing1}) and $T:\diamond_{\infty}\to\Z_{\geq 1}$ is a bijection that is increasing along rows and columns. An infinite standard Young tableau is always (implicitly) marked at the box $(0,1)$. An example of an infinite standard Young tableau is given in \cref{fig:InfiniteSYT}.

\begin{figure}[ht]
	\includegraphics[scale=0.8]{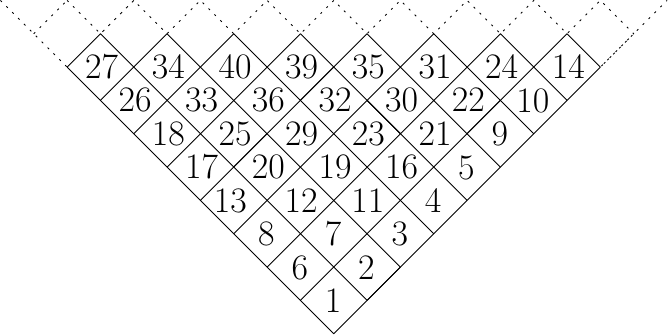}
	\captionsetup{width=\linewidth}
	\caption{An example of an infinite standard Young tableau.}
	\label{fig:InfiniteSYT}
\end{figure}

\begin{definition}\label{defn:local_conv}
	Given a sequence of marked standard Young tableaux/Poissonized Young tableaux $(\la_n,T_n,(x_n,y_n))_n$ and an infinite standard Young tableau $(\diamond_{\infty},T)$, we say that $(\la_n,T_n,(x_n,y_n))_n$ \emph{locally converges} to $(\diamond_{\infty},T)$ if for every $h\in\N$,
	$$r_h(\la_n,T_n,(x_n,y_n))\xrightarrow[n\to\infty]{}r_h\left(\diamond_{\infty},T\right).$$
\end{definition}

The space of marked standard Young tableaux/Poissonized Young tableaux and infinite standard Young tableaux with the topology induced by the local convergence defined in \cref{defn:local_conv} is a metrizable Polish space, so one can consider convergence in distribution with respect to this local topology.

\subsection{Local convergence for Poissonized Young tableaux}
We first prove that the random infinite standard Young tableau from \cref{defn:isyt} is well-defined.
Recall the definitions of the functions $H_\be$ and $R_\be$ of the infinite bead process $M_\beta$ from the discussion above \cref{defn:isyt}.

\begin{proposition}\label{prop:inifinite_tableau_well_defined}
Fix a parameter $\be \in (-1,1)$. Then the heights $$\left\{H_{\be}(x,y), \, (x,y) \in \diamond_{\infty}\right\}$$
are a.s.\ all distinct and they have a.s.\ no accumulation points.
\end{proposition}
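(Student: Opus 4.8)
The plan is to prove both assertions -- distinctness of heights and absence of accumulation points -- using the fact that $M_\beta$ is a determinantal point process with a locally trace-class, locally bounded kernel $J_{1,\beta}$, together with the defining interlacing property of bead configurations.

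\textbf{Step 1: No accumulation points.} First I would recall that the heights $\{H_\beta(x,y)\}$ indexed by $(x,y)\in\diamond_\infty$ are exactly the heights of those beads of $M_\beta$ lying on the threads $x\in\Z$ at positions above the ``staircase'' $y>|x|$; concretely, on thread $x$, these are the beads whose height exceeds that of the $(|x|+1)$-th bead counted upward from height $0$ (using the labelling convention from \cref{fig:beads_to_SYT2}). Since $M_\beta$ is a determinantal point process on $\Z\times\R$ with correlation kernel $J_{1,\beta}$, which is locally bounded (indeed the integral in \eqref{eq:BeadKernel} is over a bounded interval for $x_2\ge x_1$, and the diagonal value $J_{1,\beta}((x,t),(x,t))=\frac{1}{\pi}$ is constant), the expected number of beads of $M_\beta$ in any bounded Borel set $B\subset\Z\times\R$ is $\int_B J_{1,\beta}((x,t),(x,t))\,\dd\lambda(x,t) = \frac{\lambda(B)}{\pi}<\infty$. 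Hence $M_\beta$ is a.s.\ locally finite: for any $R>0$ and any finite set of threads, there are a.s.\ finitely many beads of height in $[-R,R]$. An accumulation point of the heights $\{H_\beta(x,y)\}$ would force either infinitely many beads on finitely many threads in a bounded height window (impossible by local finiteness), or heights escaping to $+\infty$ through a bounded region in the $(x,y)$-plane, which would again require infinitely many beads in a bounded window of $\Z\times\R$. A short argument using the translation invariance of $M_\beta$ in the vertical direction, together with the positive and finite density $\frac1\pi$, shows that on each thread the beads are a.s.\ unbounded above and below and have no finite accumulation point; this yields the claim.

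\textbf{Step 2: Distinctness of heights.} The key point is that a determinantal point process whose kernel is locally bounded is a.s.\ \emph{simple}: it almost surely has no two points sharing the same location. But here the relevant statement is subtler, since two distinct beads $(x_1,h_1)$ and $(x_2,h_2)$ with $x_1\ne x_2$ could a priori have $h_1=h_2$ -- they are distinct points of $\Z\times\R$ but have equal heights. To rule this out I would argue as follows. Fix $x_1\ne x_2$. The point process $\Pi_{x_1,x_2}:=\{(h,h')\in\R^2 : (x_1,h),(x_2,h')\in M_\beta\}$ on $\R^2$ obtained by recording the heights of beads on thread $x_1$ against those on thread $x_2$ is, by general properties of determinantal processes restricted to a product of threads, itself determinantal on $\R\times\R$ (via the kernel $J_{1,\beta}$ restricted to $\{x_1,x_2\}\times\R$), with a locally bounded kernel. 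The event that some bead on thread $x_1$ and some bead on thread $x_2$ share a height is the event that $\Pi_{x_1,x_2}$ charges the diagonal $\{h=h'\}$, a Lebesgue-null set in $\R^2$; since the first intensity of $\Pi_{x_1,x_2}$ is absolutely continuous with bounded density, $\esper[\Pi_{x_1,x_2}(\{h=h'\}\cap K)]=0$ for every compact $K$, so a.s.\ no such coincidence occurs. Taking a countable union over pairs $(x_1,x_2)\in\Z^2$ with $x_1\ne x_2$, and using Step 1 to handle beads on the same thread (which are automatically at distinct heights since they are distinct points of $\{x\}\times\R$), shows that a.s.\ all heights of all beads -- in particular all $H_\beta(x,y)$ -- are distinct.

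\textbf{Main obstacle.} I expect the main technical point to be Step 2: making rigorous that the height-pair process $\Pi_{x_1,x_2}$ on $\R^2$ is determinantal with a nice kernel, and that ``charging a null set'' has probability zero for such a process. The cleanest route is probably to avoid constructing $\Pi_{x_1,x_2}$ as a new determinantal process altogether, and instead bound directly: $\proba(\exists\text{ beads }(x_1,h),(x_2,h)\in M_\beta,\ |h|\le R)$ is at most a double integral expressing the expected number of such coincident pairs, which by the determinantal structure equals $\int\int_{[-R,R]^2} \det\big(J_{1,\beta}((x_i,s_i),(x_j,s_j))\big)_{i,j}\,\delta(s_1-s_2)\,\ldots$ -- and since this ``diagonal'' integral of a bounded $2$-point density against $\delta(s_1-s_2)$ vanishes, one gets the result. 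One must also be slightly careful that the interlacing structure of bead configurations, while crucial for the combinatorial meaning of $\diamond_\infty$ and the labelling, is not actually needed for the a.s.\ statements here -- those follow from the determinantal/local-boundedness properties alone; the interlacing is used only to know which beads correspond to boxes of $\diamond_\infty$, which is part of the setup rather than this proof.
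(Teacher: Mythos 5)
Your Step 2 (distinctness of heights) is sound and is essentially a more explicit version of the paper's argument: the paper simply observes that the joint distribution of any finite collection of bead heights has a density with respect to Lebesgue measure, hence charges no diagonal, which is exactly what your pairwise computation (expected number of coincident pairs equals a ``diagonal'' integral of a bounded two-point density, and vanishes) makes precise. No issue there.

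Step 1 (no accumulation points) has a genuine gap. Your dichotomy for an accumulation point $h^{*}<\infty$ is: either (i) infinitely many of the accumulating beads live on finitely many threads, which contradicts local finiteness, or (ii) ``heights escaping to $+\infty$ through a bounded region,'' which you claim again forces infinitely many beads in a bounded window of $\Z\times\R$. Case (i) is fine, and your per-thread observations handle it. But the actual remaining case is that the labels $(x_n,y_n)\in\diamond_{\infty}$ whose heights converge to $h^{*}$ satisfy $|x_n|\to\infty$. Those beads are \emph{not} confined to a bounded window of $\Z\times\R$, because the threads themselves escape to infinity, so local finiteness says nothing about them. Ruling this out requires showing that the ``staircase'' heights $H_{\be}(k,k+1)$ (and by symmetry $H_{\be}(-k,k+1)$) tend a.s.\ to $+\infty$ as $k\to\infty$; combined with the interlacing bound that at most $k^{2}$ boxes of $\diamond_{\infty}$ have height below $\min\big(H_{\be}(k,k+1),H_{\be}(-k,k+1)\big)$, this gives the absence of accumulation points. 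This is precisely where the paper invokes \emph{horizontal} translation invariance and Birkhoff's ergodic theorem: writing
\[
H_{\be}(k,k+1)-H_{\be}(0,1)=\sum_{j=1}^{k}\big(H_{\be}(j,j+1)-H_{\be}(j-1,j)\big),
\]
the summands form a stationary sequence with strictly positive mean $\esper\big[H_{\be}(1,2)-H_{\be}(0,1)\big]>0$ (positivity coming from the interlacing structure), so the sum grows linearly a.s. Your appeal to \emph{vertical} translation invariance and the constant density $\tfrac1\pi$ controls each fixed thread, but cannot produce the transverse growth; you need the horizontal ergodic argument or an equivalent replacement.
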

\begin{proof}
The infinite bead process $M_\beta$ is a determinantal point process. Hence the distribution of the heights has a joint density
with respect to the Lebesgue measure and so, it puts zero measure
on vectors with two equal coordinates. Therefore the  heights $\left\{H_{\be}(x,y), \, (x,y) \in \diamond_{\infty}\right\}$ are a.s.\ all distinct.

We now prove that there are a.s.\ no accumulation points. It is enough to prove that $H_{\be}(k,k+1)$ tends to $+\infty$ a.s., as $k$ tends to infinity.
Indeed, assume this is true. By symmetry $H_{\be}(-k,k+1)$ also tends to $+\infty$.
But the interlacing condition implies that there are at most $k^2$ pairs $(x,y)$ in $\diamond_{\infty}$ with $H_{\be}(x,y) \le \min(H_{\be}(k,k+1),H_{\be}(-k,k+1))$. 
Therefore, if $\min(H_{\be}(k,k+1),H_{\be}(-k,k+1))$ tends to $+\infty$,
then the set $\{H_{\be}(x,y), \, (x,y) \in \diamond_{\infty}\}$ cannot have accumulation points.

To prove that $H_{\be}(k,k+1)$ tends to $+\infty$ a.s., we write 
\[ H_{\be}(k,k+1) -H_{\be}(0,1) = \sum_{j=1}^k H_{\be}(j,j+1)-H_{\be}(j-1,j).\]
Since bead models are translation invariant,
we can use Birkhoff's ergodic theorem and conclude that, a.s.,
\[ \lim_{k\to\infty} \frac{H_{\be}(k,k+1) -H_{\be}(0,1)}{k} = \mathbb E\big[ H_{\be}(1,2)-H_{\be}(0,1)\big]>0.\qedhere\]
\end{proof}

We now complete the proof of of \cref{corol:local_lim_standard Young tableau}.

\begin{proof}[Proof of \cref{corol:local_lim_standard Young tableau}]
  Fix $h\in\N$. In order to prove the {result} it is enough to show that 
$r_h\left(\la_N,T_N,\Box_N\right)$ converges in distribution to
$r_h\left(\diamond_{\infty},R_{\be}\right)$.
Recall from \cref{ssec:local_tableau_intro} that $(\diamond_{\infty},R_{\be})$
is constructed starting from an infinite bead process $M_\beta$ of skewness $\be$ (and intensity $\alpha=1$).
Let $X_h$ be the height of the $h$-th bead above $0$ in the thread of index $0$
in $M_\beta$. 
We note that the restriction $r_h(\diamond_{\infty},R_{\be})$
is completely determined by the relative positions of beads in the window
$\{-h+1,\dots,-1,0,1,\dots, h-1\} \times [0,X_h]$.

Using the Skorohod representation theorem,
let us assume that the convergence in \cref{thm:cv_bead_process}
holds almost surely. Then, for any fixed $A>0$,
the positions of the beads of $\widetilde M^{(x_0,t_0)}_{\la_N}$ in
 $\{-h+1,\dots,-1,0,1,\dots, h-1\} \times (0,A)$ converge
 to the positions of the same beads of $M_\beta$.
 If $X_h<A$, the relative positions of these latter beads a.s.\ determine (thanks to \cref{prop:inifinite_tableau_well_defined}), for $N$ large enough (but random), both
tableaux
\[r_h(\la_N,T_N,\Box_N)  \quad\text{and}\quad r_h (\diamond_{\infty},R_{\be}),\]
where we recall that $\oblong_N$ is the box corresponding to
the first bead of $M_{\la_N}$ above height $t_0$ in the $x_0 \sqrt{N}$-th thread, i.e. the first bead of $\widetilde M^{(x_0,t_0)}_{\la_N}$ above height zero in the zero thread.

Therefore, conditionally on $X_h<A$, for $N$ large enough (but random), we a.s.\ have
 \[ r_h(\la_N,T_N,\Box_N )= r_h(\diamond_{\infty},R_{\be}).\]
Since this holds for all $A>0$ and since $X_h$ is a.s.\ finite, we have the almost sure convergence of
$r_h(\la_N,T_N,\Box_N )$ to $r_h(\diamond_{\infty},R_{\be})$
in the probability space constructed by the Skorohod representation theorem.
Almost sure convergence implies convergence in distribution of $r_h(\la_N,T_N,\Box_N )$ to $r_h(\diamond_{\infty},R_{\be})$,
which concludes the proof.
\end{proof}

\bibliographystyle{bibli_perso}
\bibliography{bibli}

\end{document}